 \newcommand\numberthis{\addtocounter{equation}{1}\tag{\theequation}}
\newcommand{\RCLRP}{rank-constrained linear regression problem}
\newcommand{\Ss}{\mathcal{S}}
\newcommand{\Wbf}{\textbf{W}}
\newtheorem{assumptions}{Assumption}
\DeclareMathOperator{\tr}{tr}
\DeclareMathOperator{\rk}{rk}
\DeclareMathOperator{\col}{col}
\DeclareMathOperator{\Ker}{Ker}
\DeclareMathOperator*{\argmin}{arg\,min}
\begin{document}
\title{The Loss Landscape of Deep Linear Neural Networks: a Second-order Analysis}

 \author{\name El Mehdi Achour 
\email achour@mathc.rwth-aachen.de \\
       \addr Department of Mathematics \\ RWTH Aachen University \\ Aachen, Germany
       \AND
       \name François Malgouyres \email francois.malgouyres@math.univ-toulouse.fr \\
       \addr Institut de Mathématiques de Toulouse ; UMR 5219\\
          Université de Toulouse ; CNRS \\
          UPS IMT F-31062 Toulouse Cedex 9, France\\
       \AND
       \name Sébastien Gerchinovitz \email sebastien.gerchinovitz@irt-saintexupery.fr \\
       \addr Institut de Recherche Technologique Saint Exupéry, Toulouse, France
       }

\editor{Martin Jaggi}
\ShortHeadings{Loss Landscape of Deep Linear Networks}{Achour, Malgouyres, and Gerchinovitz}

\maketitle

\begin{abstract}%
We study the optimization landscape of deep linear neural networks with square loss. It is known that, under weak assumptions, there are no spurious local minima and no local maxima. However, the existence and diversity of non-strict saddle points, which can play a role in first-order algorithms' dynamics, have only been lightly studied. We go a step further with a complete analysis of the optimization landscape at order $2$. Among all critical points, we characterize global minimizers, strict saddle points, and non-strict saddle points. We enumerate all the associated critical values. The characterization is simple, involves conditions on the ranks of partial matrix products, and sheds some light on global convergence or implicit regularization that has been proved or observed when optimizing linear neural networks. In passing, we provide an explicit parameterization of the set of all global minimizers and exhibit large sets of strict and non-strict saddle points.

\end{abstract}

\begin{keywords}
    Deep learning, landscape analysis, non-convex optimization, second-order geometry, strict saddle points, non-strict saddle points, global minimizers, implicit regularization
\end{keywords}

\sloppy
\section{Introduction}

Deep learning has been widely used recently due to its good empirical performances in image recognition, natural language processing, and speech recognition, among other fields. 
However, there is still a gap between theory and practice.
One of the aspects that are partially missing in the picture is why gradient-based algorithms can achieve low training error despite a non-convex objective.
Another partially open question is why they generalize well to unseen data despite many more parameters than the number of points in the training set, and how implicit regularization can help. One important research direction analyses the landscape of the empirical risk.
In this paper, we characterize  the local structures around critical points of the empirical risk, for deep linear neural networks with the square loss.

Before summarizing the related literature and our main contributions, we first recall definitions that will be key throughout the paper.

\subsection{Reminder: Minimizers, Critical Points of Order 1 or 2, Strict and Non-strict Saddle Points}

Let us recall the definitions of local structures of the landscape of the empirical risk, which are important from the statistical and optimization points of view.

For $\pmb{w} \in \mathbb{R}^n$, denote by $\pmb{w} \longmapsto L ({\pmb{w}})$ the function we want to minimize.
Assume that $\pmb{w} \longmapsto L ({\pmb{w}})$ is $C^2$, and denote by $\nabla L $ and $\nabla^2 L $ its gradient and its Hessian.\footnote{When the input parameter is not a vector, but, e.g., a sequence of matrices, the same definitions hold, where the gradient and the Hessian are computed with respect to the vectorized version of the input parameters.}
We also write $A \succeq 0$ to say that a matrix $A \in \mathbb{R}^{n \times n}$ is positive semi-definite.
Recall the following four definitions, which are nested:
\begin{itemize}
\item[$\bullet$] { $\pmb{w^*}$ is a \bf global minimizer} if and only if  $\forall \pmb{w} \in \mathbb{R}^n$, $\ L (\pmb{w}^{*}) \leq L ({\pmb{w}})$.
	
\item[$\bullet$] { $\pmb{w^*}$ is a \bf local minimizer} if and only if there exists a neighbourhood $\mathcal{O} \subset \mathbb{R}^n$ of $\pmb{w^*}$ such that 
	$\forall \pmb{w}\in\mathcal{O} , \ L ({\pmb{w}}^{*}) \leq L ({\pmb{w}})$.
	
\item[$\bullet$] { $\pmb{w^*}$ is a \bf second-order critical point} if and only if	$\nabla L ({\pmb{w}}^{*})=0\ \mbox{and}\ \nabla^2 L ({\pmb{w}}^{*})\succeq 0$. If, on the contrary, the Hessian has a negative eigenvalue, we say that the point has a negative curvature.

\item[$\bullet$] { $\pmb{w^*}$ is a \bf first-order critical point} if and only if $\nabla L ({\pmb{w}}^{*})=0$.\\

We can also distinguish a specific type of first-order critical point: saddle points.
As discussed below, they can be second-order critical points or not.\footnote{\textcolor{black}{Defining the \emph{index} of a critical point as the number of negative eigenvalues of its Hessian, we can equivalently define strict saddle points as saddle points of index greater than or equal to $1$. Similarly, non-strict saddle points are saddle points of index $0$. Note that the latter are \emph{degenerate}, i.e., their Hessian is singular.}}

\item[$\bullet$] { $\pmb{w^*}$ is a \bf saddle point} if and only if it is a first-order critical point which is neither a local minimizer nor a local maximizer.
\begin{itemize}
    \item { A saddle point $\pmb{w^*}$ is \bf strict} if and only if it is not a second-order critical point (i.e., the Hessian $\nabla^2 L ({\pmb{w}}^{*})$ has a negative eigenvalue). Figure \ref{fig:strict saddle} gives an example.
    \item { A saddle point $\pmb{w^*}$ is \bf non-strict} if and only if it is a second-order critical point. In that case, the Hessian $\nabla^2 L ({\pmb{w}}^{*})$ is positive semi-definite and has at least one eigenvalue equal to zero.
    Typically, in the direction of the corresponding eigenvectors, a higher-order term makes it a saddle point (e.g., $L(\pmb{w}) = \sum_{i=1}^n w_i^3$ at $\pmb{w}^{*}=0)$.
    Figure \ref{fig:plateau} gives an example. 
\end{itemize}
\end{itemize}

\begin{figure}
    \begin{minipage}[b]{0.45\textwidth}
    \centering
    \includegraphics[width=4cm]{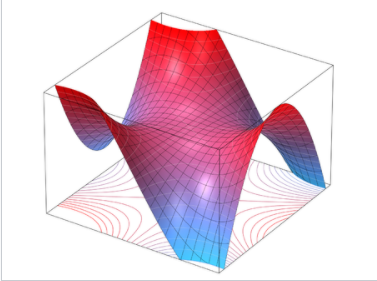}
    \caption{Example of a landscape with a plateau (non-strict saddle point).}
    \label{fig:plateau}
    \end{minipage}
    \hfill
    \begin{minipage}[b]{0.45\textwidth}
    \centering
    \includegraphics[width=4cm]{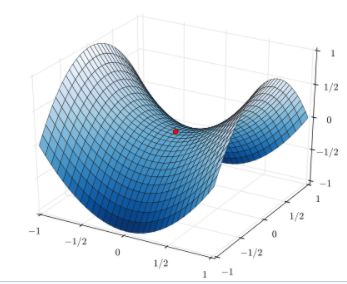}
    \caption{Example of a landscape with a strict saddle point at (0,0).}
    \label{fig:strict saddle}
    \end{minipage}
\end{figure}

%\vspace{-0.2cm}
\subsection{On the Importance of a Landscape Analysis at Order 2}
\label{sec:intro-importanceordre2}

When the function we are trying to minimize is 
smooth,
convex, and has a global minimizer, the gradient descent algorithm with a well-chosen learning rate converges to a first-order critical point, which is a global minimizer \citep{nesterov1998introductory}. However, in general, finding a global optimum of a non-convex function is an NP-complete problem \citep{murty1987some}; this is, in particular, the case for a simple 3-node neural network \citep{blum1989training}. Despite that, when optimizing neural networks, the current practice is still to use gradient-based algorithms. 

It has been known for decades that, even in the non-convex setting, for large classes of functions, gradient-based algorithms converge to a first-order critical point, in the sense that the iterates produced by the algorithm reach an arbitrary small gradient after a finite (polynomial) number of iterations \citep{nesterov1998introductory}. Recent works have shown that classical first-order algorithms
escape strict saddle points \citep{DBLP:conf/colt/LeeSJR16,lee2019first}. Well-chosen algorithms can be stopped  at an output with arbitrarily small gradient and nearly-positive semi-definite Hessian in polynomial time \citep{jin2017escape,jin2018accelerated,daneshmand2018escaping,jin2021nonconvex,gadat2022asymptotic}. Higher order algorithms, designed to escape strict saddle points, have been constructed and have a faster convergence  \citep[e.g.,][]{adolphs2019local,JMLR:v24:20-608}. However, nothing prevents these algorithms to spend many epochs in the vicinity of non-strict saddle points.  This results in a long plateau during training.

To see that this behavior actually occurs in practice, consider the simple experiment whose results are shown in Figures~\ref{fig:eg loss near saddle point} and~\ref{fig:histo escape points} (more details in Appendix~\ref{sec experiments}). For each run of this experiment, the parameters of a linear neural network of depth $5$ are optimized to fit random input/output pairs. The discrepancy is measured with the square loss and we use the ADAM optimizer. Depending on the run, the algorithm is initialized in the vicinity either of a strict saddle point (in red) or a non-strict saddle point (in blue). The distance between the random initial iterate and the saddle point is purposely not negligible: it is fixed to around $10\%$ of the norm of the saddle point. Figure~\ref{fig:eg loss near saddle point} shows the typical loss evolution for both cases. We can see that ADAM rapidly escapes from the strict saddle point but needs many epochs to escape the plateau in the vicinity of the non-strict saddle point. Figure \ref{fig:histo escape points} shows that this observation generalizes to most runs. We compare the empirical distributions of a random time (called \emph{escape epoch}) defined as the epoch at which the loss has significantly decreased from its initial value. When initialized in the vicinity of non-strict saddle points, the algorithm suffers from an often large escape epoch and might be stopped there, without the possibility to distinguish this non-strict saddle point from a global minimum. Improving the analysis beyond local minimizers and characterizing strict and non-strict saddle points are therefore key to understanding gradient descent dynamics and implicit regularization.

\begin{figure}
    \begin{minipage}[b]{0.45\textwidth}
    \centering
    \includegraphics[width=6cm]{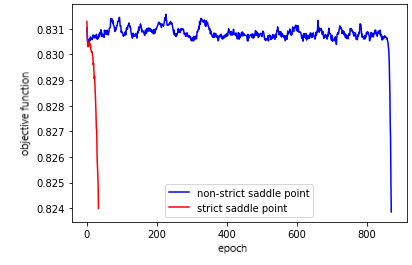}
    \caption{The loss function during the iterative process,  when initialized around a strict saddle point (in red) or a non-strict  saddle point (in blue).}
    \label{fig:eg loss near saddle point}
    \end{minipage}
    \hfill
    \begin{minipage}[b]{0.45\textwidth}
    \centering
    \includegraphics[width=7cm]{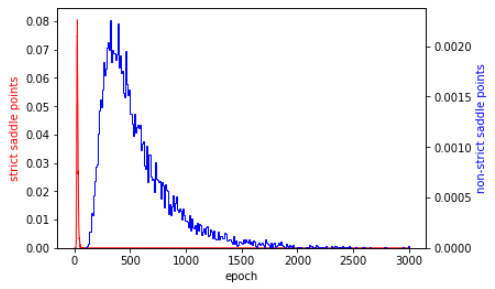}
    \caption{Histogram of escape epochs,  when initialized around a strict (in red) or a non-strict saddle point (in blue). For clarity, the $y$-axis is endowed with two scales. The right axis corresponds to the blue curve and the left to the red one.}
    \label{fig:histo escape points}
    \end{minipage}
   
\end{figure}

\subsection{Related Works on Linear Networks}
\label{sec:relatedwork}

Despite the fact that they are rarely used to solve real-world applications\footnote{They indeed compute a linear map between the input and output spaces.}, many recent works have focused on linear neural networks.
These studies are motivated by the fact that the empirical risk of linear networks is nonconvex and shares similar properties with practical nonlinear neural networks.
\textcolor{black}{Indeed, as shown in \cite{saxe2014exact}, linear networks exhibit nonlinear learning phenomena similar to those seen during the optimization of nonlinear networks, including long plateaus followed by rapid transitions to lower error solutions. Also, the implicit regularization phenomena observed for nonlinear networks \citep{safran2022effective,timor2023implicit,jacot2022implicit,marion2024implicit,belkin2021fit,bartlett2021deep} occurs also for linear networks (see the paragraph on this topic below). Studying these phenomena for linear networks is a good starting point for rigorous work.}

The study of linear neural networks can be divided into two categories. The first line of research studies the geometric landscape of the empirical risk. The second line studies the trajectory of gradient descent dynamics in linear networks. Our work falls into the first category. \\

 \textbf{Geometric landscape for linear networks:}
This first started with \cite{Baldi:1989:NNP:70359.70362}.
They proved that for a $1$-hidden layer linear network, under some conditions on the data matrices, and for the square loss, every local minimizer is a global minimizer.
\cite{kawaguchi2016deep} later generalized and extended this result to deep linear neural networks under mild conditions and again proved that every local minimizer is a global minimizer (this part has been proved later by \cite{lu2017depth} with weaker assumptions on the data and simpler proofs).
This author also proved that every other critical point is a saddle point, that for a $1$-hidden layer linear network all saddle points are strict, while for deeper networks, there exist non-strict saddle points (\cite{kawaguchi2016deep} exhibits a space of non-strict saddle points where all but one weight matrix are equal to zero).
\cite{yun2018global} gave a condition for a critical point to be either a global minimizer or a saddle point.
\cite{zhou2018critical} removed all assumptions on the data and gave analytical forms for the critical points of the empirical risk.
In the characterization, the weight matrices are defined recursively and can be found by solving equations; in particular, they gave a characterization of global minimizers.
\cite{DBLP:conf/iclr/NouiehedR18} showed  using assumptions only on the 
width of the layers
that every local minimizer is a global minimizer.
They prove that this assumption on the architecture is sharp in the sense that without it, and if we do not make assumptions on the data matrices as in previous works, then there exists a poor local minimizer.
\cite{zhu2020global} used assumptions only on the input data matrix, to prove that for a $1$-hidden layer linear network, every local minimizer is a global minimizer and every other critical point has a negative curvature.
\cite{LaurentB18} proved for different general convex losses that, under assumptions on the architecture, all local minima are global.
Finally, \cite{Trager2020Pure} and \cite{mehta2021loss} used results from algebraic geometry to give other properties about critical points of linear networks.

Most of the previous works focus on local minimizers.
None of these works provide simple necessary and sufficient conditions for a saddle point to be strict or not.\footnote{By ``simple'', we mean an easier-to-exploit condition than just looking at the smallest eigenvalue of the Hessian.} In particular, in the case of more than two hidden layers, only very specific examples of non-strict saddle points were described. Furthermore, global minimizers were characterized but not explicitly parameterized. See Section~\ref{Comparison with previous works} for more details. \\

 \textbf{Gradient dynamics and implicit regularization for linear networks:}
In this line of research,
authors study the dynamics of first-order algorithms for linear networks, which they sometimes combine with results about the loss landscape.
\cite{arora2018a} proved that gradient descent converges to a global minimum at a linear rate, under assumptions on the width of the layers, the initial iterate, and the loss at initialization.
Other works also proved similar results with different assumptions \citep{Eftekhari20,bartlett2018gradient,wu2019global}. 
However, as noted by \cite{shamir2019exponential}, these works consider strong assumptions on the loss at initialization.
Indeed, \cite{shamir2019exponential} gave a negative result on a
deep linear network of width $1$, by proving that for standard initializations, gradient descent can take exponential time to converge to the global minimizer.
The author also provided empirical examples of the same phenomenon happening
for larger widths.
On the other hand, \cite{du2019width} proved that if the layers are wide enough, convergence to a global minimimizer
can be achieved in polynomial time using a classical data-independent random Gaussian initialization (known as Xavier initialization). The required minimum width of the network depends on the norm of a global minimizer of the linear regression problem. As we will see in Section~\ref{Comparison with previous works} this global convergence result can be re-interpreted in terms of the loss landscape at order~$2$.

On a similar line of research, \cite{chitour2018geometric} proved using assumptions on the architecture of the network and the data matrices that gradient flow almost surely converges to a global minimizer for a $1$-hidden layer linear network.
Later, \cite{Bah_2021} proved the same result under weaker assumptions.
They also proved that, in deep linear networks, the gradient flow almost surely converges to global minimizers of the \RCLRP. This has been extended to gradient descent in \cite{nguegnang2021convergence}.
In \cite{jacot2021saddle}, the authors conjecture that, for deep linear networks, the gradient flow initialized randomly in the vicinity of the origin, asymptotically exhibits a saddle-to-saddle dynamics, where the rank of the linear map increases at each new saddle. 

This is related to another consequence of the landscape properties: implicit regularization.
\cite{arora2019implicit} showed that, for matrix recovery, deep linear networks converge to low-rank solutions  even when all the hidden layers are of size larger than or equal to the input and output sizes.
\cite{razin2020nipsimplicit} proved that, in deep matrix factorization, implicit regularization may not be explainable by norms, as all norms may go to infinity.
They rather suggest seeing implicit regularization as a minimization of the rank.
\cite{saxe2019mathematical} and \cite{gidel2019implicit} proved with different assumptions on the data and a vanishing
initialization that both gradient flow and discrete gradient dynamics sequentially learn solutions of a \RCLRP~with a gradually increasing rank.
Finally, \cite{gissin2019implicit} proved for a toy model that this incremental learning happens more often (with larger initialization), when the depth of the network increases.
As we will see in Section \ref{Comparison with previous works}, these results can be re-interpreted in the light of the landscape at order 2.

\subsection{Summary of our Contributions}
\label{sec:summarycontributions}
Our contributions on the optimization landscape of deep linear networks can be summarized as follows.
\begin{itemize}
    \item[$\bullet$] We characterize the square loss landscape of deep linear networks at order $2$ (see Theorem \ref{main theorem} and Figure \ref{fig:Classification of p.c}). That is, under some classical and weak assumptions on the data, we characterize, among all first-order critical points, which are global minimizers, strict saddle points, and non-strict saddle points. The characterization is simple and involves conditions on the ranks of partial matrix products. To the best of our knowledge, this is the first simple, necessary and sufficient condition that differentiates strict saddle points from non-strict saddle points. 
    
    \item[$\bullet$] Several results follow from the characterization: under the same assumptions,
    \begin{itemize}
        \item we first immediately recover the fact that all saddle points are strict for one-hidden layer linear networks;
        \item more importantly, for deeper networks, when proving that all cases considered in the characterization can indeed occur, we exhibit large sets of strict and non-strict saddle points (see Proposition \ref{existence of tightened and non tightened critical points} and its proof in Appendix  \ref{proof of existence of tightened and non tightened critical points});
        \item we show that the non-strict saddle points are associated with $r_{max}$ plateau values of the empirical risk, where $r_{max}$ is the size of the thinnest layer of the network (see Theorem \ref{main theorem}). Typically these are values of the empirical risk that first-order algorithms can take for some time, as in Figure~\ref{fig:eg loss near saddle point}, and which might be confused with a global minimum.
    \end{itemize}

    \item[$\bullet$] As a by-product of our analysis, we obtain explicit parameterizations of sets containing or included in the set of all first-order critical points (see Propositions~\ref{simplif mat} and~\ref{reciproque param}). We also derive an explicit parameterization  of the set of all global minimizers (see Proposition \ref{prop parameterization of global minimizers}).

    \end{itemize}
    
    The above results are compared in details with previous works in Section \ref{Comparison with previous works}. In particular, our second-order characterization sheds some light on two phenomena:
    \begin{itemize}
        \item[$\bullet$] Implicit regularization: we recover the fact that every non-strict saddle point corresponds to a global minimizer of the \RCLRP, as shown in \cite[Proposition~35]{Bah_2021}.  Our characterization additionally shows that only a fraction of the  critical points corresponding to rank-constrained solutions are non-strict saddle points. The others are strict saddle points. Given the differences in the behavior of first-order algorithms in the vicinity of strict and non-strict saddle points as illustrated on Figures~\ref{fig:eg loss near saddle point} and~\ref{fig:histo escape points}, our results open new research directions related to the very nature of implicit regularization and its stability.
        \item[$\bullet$] Our characterization can also be useful to understand recent global convergence results in terms of the loss landscape at order $2$. In particular, we show how to re-interpret a proof of \cite{du2019width} to see that gradient descent with Xavier initialization on wide enough deep linear networks meets no non-strict saddle points on its trajectory.
    \end{itemize}

\subsection{Outline of the Paper}
The paper is organized as follows. We define the setting in Section~\ref{Settings} and state our results in Section~\ref{main contributions}. We prove our main result (Theorem~\ref{main theorem}) in Section~\ref{sketch of proof}. More precisely, we detail the proof structure and main arguments but defer all technical derivations to the appendix. We finally conclude our work in Section~\ref{sec:conclusion}.

Most technical details can be found in the appendix, which is organized as follows.
Section \ref{Appendix Notation and general useful lemmas and properties} contains additional notation and lemmas that will be useful in all subsequent sections. In Section~\ref{Appendix lemmas for first-order critical points} we provide proofs of propositions and lemmas related to first-order critical points, while Section~\ref{Appendix Parameterization of first-order critical points and global minimizers} gathers the proofs for the parameterization of first-order critical points and global minimizers.
Sections~\ref{proof main prop partie baldi}, \ref{Appendix subtle strict saddles}, and~\ref{Appendix non strict saddles} contain proofs corresponding to each subsection of Section~\ref{sketch of proof}. Finally, in Section~\ref{sec experiments}, we describe in more details the illustrative experiment underlying Figures~\ref{fig:eg loss near saddle point} and~\ref{fig:histo escape points}.

\section{Setting}\label{Settings}

In this section we formally define our setting (deep linear networks with square loss), set some notation, and describe our assumptions on the data. \\

 \textbf{Model and notation:} We consider a fully-connected linear neural network of depth $H \geq 2$.
The neural network consists of $H$ layers and maps any input $x \in \mathbb{R}^{d_x}$ to an output $W_H \cdots W_1 x \in \mathbb{R}^{d_y}$, where $W_{H} \in \mathbb{R}^{d_{y} \times d_{H-1}},  \ldots  ,W_{h} \in \mathbb{R}^{d_{h} \times d_{h-1}}, \ldots ,W_{1} \in \mathbb{R}^{d_{1} \times d_{x}}$, are the matrices associated with the $H$ layers ($d_h$ is the width of layer $h$).
We set $d_H = d_y$ and $d_0 = d_x$.
The input layer is of size $d_x$ and the output layer is of size $d_y$.
We also define the smallest width of the layers as $r_{max} = \min(d_H, \ldots ,d_0)$.\footnote{The notation $r_{max}$ comes from the fact that it is the maximum possible rank of the product $W_H \cdots W_1$.}
We denote the parameters of the model by $\Wbf = (W_H,\ldots,W_1)$. \\

Let $\left(x_{i}, y_{i}\right)_{i=1 . . m}$ with $x_{i} \in \mathbb{R}^{d_{x}}$ and $y_{i} \in \mathbb{R}^{d_{y}}$, be the training set that we gather column-wise in matrices $X \in \mathbb{R}^{d_{x} \times m}$ and $Y \in \mathbb{R}^{d_y \times m}$.
We consider the empirical risk $L$ defined by:
$$L(\Wbf)= \sum_{i=1}^{m}\left\|W_{H} W_{H-1} \cdots W_{2} W_{1}x_i - y_i \right\|_{2}^{2}=\|W_H \cdots W_1 X-Y\|^{2} \;,$$
where $\|.\|_2$ is the Euclidean norm and $\|.\| $ denotes the Frobenius norm of a matrix.\\
We set:
$$\Sigma_{X X}=\sum_{i=1}^{m} x_{i} x_{i}^{T}=X X^T \in \mathbb{R}^{d_{x} \times d_{x}} \quad , \quad \Sigma_{Y Y}=\sum_{i=1}^{m} y_{i} y_{i}^{T} = Y Y^T \in \mathbb{R}^{d_{y} \times d_{y}},$$
$$\Sigma_{X Y}=\sum_{i=1}^{m} x_{i} y_{i}^{T} = X Y^T \in \mathbb{R}^{d_{x} \times d_{y}} \quad , \quad \Sigma_{Y X}=\sum_{i=1}^{m} y_{i} x_{i}^{T}=Y X^T \in \mathbb{R}^{d_{y} \times d_{x}},$$
where, $A^T$ denotes the transpose of $A$.\\

\begin{assumptions} \label{Assump H} %[$\mathcal{H}$]
Throughout the article, we assume that $d_y \leq d_x \leq m$, that $\Sigma_{XX}$ is invertible, and that $\Sigma_{XY}$ is of full rank $d_y$.
We define $\Sigma^{1/2} = \Sigma_{YX} \Sigma_{X X}^{-1} X \in \mathbb{R}^{d_{y} \times m}$ and $\Sigma=\Sigma^{1/2}(\Sigma^{1/2})^T = \Sigma_{Y X} \Sigma_{X X}^{-1} \Sigma_{X Y} \in \mathbb{R}^{d_{y} \times d_{y}}$.
We assume that the singular values of $\Sigma^{1/2}$ are all distinct (i.e., that $\Sigma$ has $d_y$ distinct eigenvalues). \\
\end{assumptions}
These assumptions are exactly the ones considered in \cite{kawaguchi2016deep}.
Note that we do not make any assumption on the width of the hidden layers.
As noted by \cite{Baldi:1989:NNP:70359.70362}, full-rank
matrices are dense, and deficient-rank matrices are of measure 0.
In general, $m \geq d_x \geq d_y$, which is the classical learning regime, is essentially sufficient to have the other assumptions verified, due to the randomness of the data.  \\
Let 
\begin{align} \label{svd de sigma 1/2}
    \Sigma^{1/2} = U \Delta V^T
\end{align}
be a singular value decomposition of $\Sigma^{1/2}$, where $U \in \mathbb{R}^{d_y \times d_y}$ and $V \in \mathbb{R}^{m \times m}$ are orthogonal, and the diagonal elements of $\Delta \in \mathbb{R}^{d_y \times m}$ are in decreasing order.\\
Since $\Sigma=\Sigma^{1/2}(\Sigma^{1/2})^T$,
$\Sigma$ can be diagonalized as $\Sigma = U \Lambda U^T$ where $\Lambda = \text{diag}(\lambda_1,\ldots,\lambda_{d_y})$, with $\lambda_1 > \cdots > \lambda_{d_y} \geq 0$.
Moreover, a consequence of Assumption \ref{Assump H} is that $\Sigma$ is positive definite (see Lemma \ref{sigma invertible}); therefore, we have $\lambda_{d_y}>0$.\\

 \textbf{Additional notation:} We list below some notation and conventions that will be used throughout the paper.\\
For all integers $a\leq b$, we denote by  $ \llbracket a,b \rrbracket$ the set of integers between $a$ and $b$ (including $a$ and $b$).
If $a>b$, $ \llbracket a,b \rrbracket$ is the empty set (e.g. $\llbracket1,0 \rrbracket = \emptyset$).\\
If $\Ss = \emptyset$, then $\sum_{i \in \Ss} \lambda_i = 0$. \\
Given a matrix $A \in \mathbb{R}^{p \times q}$, $\col(A)$, $\Ker(A)$ and $\rk(A)$, denote respectively the column space, the null space and the rank of $A$. \\
For a matrix $A \in \mathbb{R}^{p \times q}$, we write $A_i \in \mathbb{R}^{p}$ for the $i$-th column of $A$ and $A_{\mathcal{J}} \in \mathbb{R}^{p \times |\mathcal{J}|}$ for the sub-matrix obtained by concatenating the column vectors $A_i$, for $i \in \mathcal{J}$.
The identity matrix of size $p$ will be denoted by $I_p$.
\\
When we write $W_h \cdots W_{h'}$ for $h > h'$, the expression denotes the product of all $W_j$ from $j=h$ to $j=h'$.
To simplify later developments, we allow two additional cases:
when $h = h'$, the expression simply denotes $W_h$,
and when $h' = h + 1$, it stands for the identity matrix $I_{d_h} \in \mathbb{R}^{d_h \times d_h}$.

Considering submatrices of compatible sizes, we define a block matrix by one of the three following ways:
\begin{itemize}
    \item $[A , B]$ is the horizontal concatenation of the matrices $A$ and $B$;
    \item $\begin{bmatrix}
        G \\ 
        H
        \end{bmatrix}$ is the vertical concatenation of $G$ and $H$;
    \item $\left[
        \begin{array}{c  c}
        C & D \\
        E & F
        \end{array}
        \right]$ is a 2 $\times$ 2 block matrix.
\end{itemize}

By convention, in block matrices, some blocks can have 0 lines or 0 columns; this means that such blocks do not exist.
However if we have a product between two matrices that have 0 as the common size (the number of columns for the first matrix, of the lines for the second matrix), then their product equals a zero matrix, of the right size.
More formally, if $A \in \mathbb{R}^{n \times 0 }$ and $B \in \mathbb{R}^{0 \times p}$, then, by convention, $AB = 0_{n \times p}$.
Note that the product of block matrices is compatible with this convention (e.g.,
$ [A \ , \ B] \begin{bmatrix}
C \\ D
\end{bmatrix} = AC + BD $ is still true if $B \in \mathbb{R}^{n \times 0}$ and $D \in \mathbb{R}^{0 \times p}$).\\
Further notation that are used in the appendix can be found at the beginning of Appendix \ref{Appendix Notation and general useful lemmas and properties}.

\section{Main Results}\label{main contributions}

In this section, we state the main results of this paper.
We start with a necessary condition for being a first-order critical point of $L$ (Proposition \ref{global map and critical values}), to which we give a light reciprocal (Proposition \ref{reciproque S vers W}).
We then move to our main result (Theorem \ref{main theorem}), which is a second-order classification of all first-order critical points. It distinguishes between global minimizers, strict saddle points and non-strict saddle points.
Finally, the third result is a necessary parameterization for critical points (Proposition \ref{simplif mat}) and an explicit parameterization of all global minimizers (Proposition \ref{prop parameterization of global minimizers}).
These results are compared with previous works in Section~\ref{Comparison with previous works}.
All the proofs can be found in Section \ref{sketch of proof} or in the appendix, where most technical derivations are deferred.

\subsection{First-order Critical Points: Preliminary Results}

In the next proposition, we restate in our framework a necessary condition for being a first-order critical point, which was already present in \cite{Baldi:1989:NNP:70359.70362} and most of the papers in this line of research.
This proposition will serve later to distinguish between different types of critical points.

\begin{proposition}[Global map and critical values]\label{global map and critical values}
    Suppose Assumption \ref{Assump H} in Section \ref{Settings} holds true.
    Let $\Wbf = (W_H, \ldots , W_1)$ be a first-order critical point of $L$ and set $r = \text{rk}(W_H \cdots W_1) \in \llbracket 0,r_{max} \rrbracket$.\\
    There exists a unique subset $\Ss \subset\llbracket 1,d_y \rrbracket$ of size $r$ such that:
 $$W_H \cdots W_1 = U_{\Ss} U_{\Ss}^T \Sigma_{Y X} \Sigma_{X X}^{-1},$$
 where $U$ was defined in \eqref{svd de sigma 1/2}.
 We say that the critical point $\Wbf$ is \emph{associated with $\Ss$}.
 The associated critical value is 
 $$\qquad L(\Wbf) = \tr(\Sigma_{YY}) - \sum_{i \in \Ss} \lambda_i.$$
\end{proposition}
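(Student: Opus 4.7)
The plan is to show that any first-order critical point $\mathbf{W}$ with product $P := W_H \cdots W_1$ of rank $r$ satisfies $P = \Pi P^\star$, where $P^\star := \Sigma_{YX}\Sigma_{XX}^{-1}$ is the unconstrained linear regression solution and $\Pi$ is the orthogonal projector onto $\mathrm{col}(P)$, and then to show that $\Pi$ must commute with $\Sigma$, which (by the simple spectrum assumption on $\Sigma$) forces $\Pi = U_{\mathcal{S}} U_{\mathcal{S}}^T$ for a unique index set $\mathcal{S}$ of size $r$.

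\textbf{Step 1: gradients.} Expand $L(\mathbf{W}) = \mathrm{tr}(P \Sigma_{XX} P^T) - 2\,\mathrm{tr}(P \Sigma_{XY}) + \mathrm{tr}(\Sigma_{YY})$ and apply the chain rule to obtain, for every $h \in \llbracket 1, H \rrbracket$,
\begin{equation*}
\nabla_{W_h} L(\mathbf{W}) = 2\,(W_H \cdots W_{h+1})^T M \,(W_{h-1} \cdots W_1)^T, \qquad M := P \Sigma_{XX} - \Sigma_{YX}.
\end{equation*}
Setting $h=1$ gives $(W_H \cdots W_2)^T M = 0$, hence $P^T M = W_1^T (W_H \cdots W_2)^T M = 0$. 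Setting $h=H$ gives $M(W_{H-1} \cdots W_1)^T = 0$, hence $M P^T = M(W_{H-1}\cdots W_1)^T W_H^T = 0$. These are the only two gradient identities I will actually use.

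\textbf{Step 2: identifying $P$ on its column space.} Factor $P = U_P R_P$ with $U_P \in \mathbb{R}^{d_y \times r}$ having orthonormal columns spanning $\mathrm{col}(P)$ and $R_P \in \mathbb{R}^{r \times d_x}$ of rank $r$. From $P^T M = 0$, i.e., $P^T P \Sigma_{XX} = P^T \Sigma_{YX}$, replace $P$ by $U_P R_P$ to get $R_P^T R_P \Sigma_{XX} = R_P^T U_P^T \Sigma_{YX}$. Since $R_P^T$ has full column rank, left-cancel to obtain $R_P \Sigma_{XX} = U_P^T \Sigma_{YX}$, and invert $\Sigma_{XX}$ (available by Assumption $\mathcal{H}$) to get $R_P = U_P^T \Sigma_{YX} \Sigma_{XX}^{-1}$. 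Thus
\begin{equation*}
P = U_P U_P^T \, \Sigma_{YX} \Sigma_{XX}^{-1} = \Pi P^\star, \qquad \Pi := U_P U_P^T.
\end{equation*}
Because $P^\star$ has rank $d_y$ (as $\Sigma_{YX}$ is of full rank $d_y$ by Assumption $\mathcal{H}$), $P^\star$ is surjective, so $\mathrm{col}(P) = \mathrm{col}(\Pi)$ and in particular $\mathrm{rk}(\Pi) = r$.

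\textbf{Step 3: $\Pi$ commutes with $\Sigma$.} Substitute $P = \Pi P^\star$ into $M P^T = 0$, i.e., $P \Sigma_{XX} P^T = \Sigma_{YX} P^T$. The left-hand side is $\Pi P^\star \Sigma_{XX} (P^\star)^T \Pi = \Pi \Sigma \Pi$, while the right-hand side is $\Sigma_{YX} (P^\star)^T \Pi = \Sigma \Pi$. Hence $\Pi \Sigma \Pi = \Sigma \Pi$, which means $(I - \Pi)\Sigma \Pi = 0$. Transposing (using that $\Pi$ and $\Sigma$ are symmetric) yields $\Pi \Sigma (I - \Pi) = 0$, and adding gives $\Pi \Sigma = \Sigma \Pi$.

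\textbf{Step 4: identifying $\mathcal{S}$.} Since $\Sigma = U \Lambda U^T$ has $d_y$ distinct eigenvalues, every matrix commuting with $\Sigma$ is diagonal in the basis $U$. So $U^T \Pi U$ is diagonal; being also an orthogonal projector of rank $r$, its diagonal entries lie in $\{0,1\}$ and exactly $r$ of them equal $1$. Letting $\mathcal{S} \subset \llbracket 1, d_y \rrbracket$ index these $1$'s, we get $\Pi = U_{\mathcal{S}} U_{\mathcal{S}}^T$ and hence the desired formula $P = U_{\mathcal{S}} U_{\mathcal{S}}^T \Sigma_{YX} \Sigma_{XX}^{-1}$. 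Uniqueness of $\mathcal{S}$ follows because $\mathrm{col}(P)$ determines $\Pi$, and the orthogonality of the columns of $U$ forces $\mathcal{S}$ to be the unique subset with $U_{\mathcal{S}} U_{\mathcal{S}}^T = \Pi$.

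\textbf{Step 5: critical value.} Plug $P = \Pi P^\star$ into the expanded form of $L$: using $\Pi \Sigma = \Sigma \Pi$ and $\Pi^2 = \Pi$, one has $\mathrm{tr}(P \Sigma_{XX} P^T) = \mathrm{tr}(\Pi \Sigma \Pi) = \mathrm{tr}(\Sigma \Pi)$ and $\mathrm{tr}(P \Sigma_{XY}) = \mathrm{tr}(\Pi \Sigma) = \mathrm{tr}(\Sigma \Pi)$. Finally $\mathrm{tr}(\Sigma \Pi) = \mathrm{tr}(U_{\mathcal{S}}^T \Sigma U_{\mathcal{S}}) = \sum_{i \in \mathcal{S}} \lambda_i$ since $U^T \Sigma U = \Lambda$, yielding $L(\mathbf{W}) = \mathrm{tr}(\Sigma_{YY}) - \sum_{i \in \mathcal{S}} \lambda_i$.

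The only delicate step is Step 1, where one must be careful that the two gradient identities for $h=1$ and $h=H$ really do imply $P^T M = 0$ and $M P^T = 0$ without any assumption on the widths of the hidden layers; the rest is linear algebra, with the simple-spectrum hypothesis doing the work in Step 4.
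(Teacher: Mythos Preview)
Your proof is correct and follows essentially the same strategy as the paper: use the gradient conditions at $W_1$ and $W_H$ to show that the product $P$ equals an orthogonal projection of the least-squares solution $\Sigma_{YX}\Sigma_{XX}^{-1}$, establish that this projector commutes with $\Sigma$, and then invoke the simple-spectrum assumption to identify $\mathcal{S}$. The only technical difference is that the paper works with the projector $P_K = KK^{+}$ onto $\col(K)$, where $K = W_H \cdots W_2$, via the Moore--Penrose inverse (solving $K^T K W_1 = K^T \Sigma_{YX}\Sigma_{XX}^{-1}$ for $W_1$), whereas you work directly with the projector $\Pi$ onto $\col(P)$ via a rank factorization $P = U_P R_P$ and cancel $R_P^T$ from the left. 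Since $\Sigma_{YX}\Sigma_{XX}^{-1}$ has full row rank, $\col(P) = \col(K)$ at a critical point and $\Pi = P_K$, so the two routes coincide; yours has the minor advantage of avoiding pseudoinverse machinery and of only using the weaker consequences $P^T M = 0$ and $M P^T = 0$ rather than the full identities $K^T M = 0$ and $M(W_{H-1}\cdots W_1)^T = 0$.
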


The proof can be found in Appendix \ref{proof of global map and critical values}.
The result is true even for $r=0$, using the conventions from Section~\ref{Settings} (in this case, $\Ss = \emptyset$). \\
Note that $\Sigma_{Y X} \Sigma_{X X}^{-1}$ corresponds to the solution of the classical linear regression problem.
Therefore, we can see that for every critical point $\Wbf$ of $L$, the product $W_H \cdots W_1$ is the projection of this least-squares estimator onto a subspace generated by a subset of the eigenvectors of $\Sigma$.
Note that $\tr(\Sigma_{YY}) = \|Y\|^2$. \\

The following proposition is a light reciprocal to Proposition \ref{global map and critical values}, by showing that all subsets $\Ss$ and the corresponding critical values $\tr(\Sigma_{YY}) - \sum_{i \in \Ss} \lambda_i$ are associated to an existing critical point.
In particular, the largest critical value is reached for $\Ss=\emptyset$ and the smallest critical value for $\Ss=\llbracket 1,r_{max} \rrbracket$.
\begin{proposition}\label{reciproque S vers W}
    Suppose Assumption \ref{Assump H} in Section \ref{Settings} holds true.
    For any $\Ss \subset \llbracket 1,d_y \rrbracket$ of size $r \in \llbracket 0,r_{max} \rrbracket$, there exists a first-order critical point $\Wbf$ associated with~$\Ss$. 
\end{proposition}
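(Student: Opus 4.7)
The plan is to exhibit an explicit critical point $\Wbf = (W_H, \ldots, W_1)$ whose product matrix equals $M := U_{\Ss} U_{\Ss}^T \Sigma_{YX} \Sigma_{XX}^{-1}$ and whose gradient vanishes. The factorization exists because $r \leq r_{max} = \min_h d_h$, which leaves room for an $r \times r$ block inside every $W_h$.

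Concretely, I would first write $M = U_{\Ss} A$ where $A := U_{\Ss}^T \Sigma_{YX} \Sigma_{XX}^{-1} \in \mathbb{R}^{r \times d_x}$, and set
\[
W_1 = \begin{bmatrix} A \\ 0 \end{bmatrix}, \qquad W_h = \begin{bmatrix} I_r & 0 \\ 0 & 0 \end{bmatrix} \text{ for } 2 \leq h \leq H-1, \qquad W_H = [\, U_{\Ss} \,,\, 0 \,],
\]
with the block dimensions chosen so that $W_h \in \mathbb{R}^{d_h \times d_{h-1}}$. A direct block-matrix induction then gives $W_H \cdots W_{h+1} = [U_{\Ss}, 0] \in \mathbb{R}^{d_y \times d_h}$ for every $1 \leq h \leq H-1$, and $W_{h-1} \cdots W_1 = \begin{bmatrix} A \\ 0 \end{bmatrix} \in \mathbb{R}^{d_{h-1} \times d_x}$ for every $2 \leq h \leq H$, and in particular $W_H \cdots W_1 = U_{\Ss} A = M$, so Proposition \ref{global map and critical values}'s associated-subset identity holds.

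It then remains to verify that $\nabla_{W_h} L(\Wbf) = 0$ for every $h \in \llbracket 1, H \rrbracket$. The chain rule gives
\[
\nabla_{W_h} L(\Wbf) = 2 (W_H \cdots W_{h+1})^T \bigl( M \Sigma_{XX} - \Sigma_{YX} \bigr) (W_{h-1} \cdots W_1)^T,
\]
and since $M \Sigma_{XX} - \Sigma_{YX} = -(I_{d_y} - U_{\Ss} U_{\Ss}^T) \Sigma_{YX}$, two cases arise. For $1 \leq h \leq H-1$, the left factor is $[U_{\Ss}, 0]^T$, and the identity $U_{\Ss}^T (I_{d_y} - U_{\Ss} U_{\Ss}^T) = U_{\Ss}^T - (U_{\Ss}^T U_{\Ss}) U_{\Ss}^T = 0$ (using $U_{\Ss}^T U_{\Ss} = I_r$) kills the product. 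For $h = H$, the left factor is $I_{d_y}$ by convention and the right factor is $[A^T, 0]$; using $A^T = \Sigma_{XX}^{-1} \Sigma_{XY} U_{\Ss}$ and $\Sigma = U \Lambda U^T$, one computes $(I_{d_y} - U_{\Ss} U_{\Ss}^T) \Sigma_{YX} A^T = (I_{d_y} - U_{\Ss} U_{\Ss}^T) \Sigma U_{\Ss} = (I_{d_y} - U_{\Ss} U_{\Ss}^T) U_{\Ss} \Lambda_{\Ss} = 0$, where $\Lambda_{\Ss} = \text{diag}(\lambda_i)_{i \in \Ss}$ and the last equality again uses $U_{\Ss}^T U_{\Ss} = I_r$.

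This construction covers every $r \in \llbracket 0, r_{max} \rrbracket$: the boundary case $r=0$ is handled by the paper's conventions on zero-size blocks (all $W_h$ are then the zero matrix, $M = 0$, and the gradient trivially vanishes), and $H=2$ is handled by the convention that an empty product of middle matrices equals an identity. There is no real conceptual obstacle; the only delicate point is the careful block-dimension bookkeeping, in particular in the two degenerate cases above.
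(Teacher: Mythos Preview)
Your proof is correct and uses the same explicit construction as the paper. The only organizational difference is that the paper factors the gradient verification through the more general Proposition~\ref{reciproque param} (taking all $Z_h=0$ and all $D_h=I$), whereas you carry out that computation directly; the underlying argument is identical.
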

The proof of Proposition \ref{reciproque S vers W} is deferred to Appendix \ref{proof of reciproque S vers W}. \textcolor{black}{The proof uses Proposition \ref{reciproque param}, which is proved in Appendix \ref{proof reciproque param}, before Appendix \ref{proof of reciproque S vers W}.}

\subsection{Second-order Classification of the Critical Points of \texorpdfstring{$L$}{L}}

The main result of this section is Theorem~\ref{main theorem} below, where we classify all first-order critical points into global minimizers, strict saddle points and non-strict saddle points.
To state Theorem \ref{main theorem} we first need to introduce some definitions. \\

Let $\Wbf = (W_H, \ldots , W_1)$ be a first-order critical point of $L$.
Below, we introduce the notions of \emph{complementary block}, \emph{tightened pivot} and \emph{tightened critical point} that are key to the main results.
Consider the sequence of $H$ matrices $W_H, \ldots, W_2,W_1$ and connect them by plugging $\Sigma_{XY} $ between $W_1$ and $W_H$ so as to form a cycle as on Figure \ref{fig:cycle with 2 complementary blocks}.
Note that the dimensions of these matrices allow us to consider any product of consecutive matrices on this cycle, e.g., $W_H W_{H-1} W_{H-2}$ or $W_2 W_1 \Sigma_{XY} W_H$ (the matrix $\Sigma_{XY}$ between $W_1$ and $W_H$ is key here). Such products of consecutive matrices in the cycle are what we call "\textbf{blocks}".
In the sequel, we call "\textbf{pivot}" any pair of indices $(i,j) \in \llbracket 1 , H \rrbracket$, with $i > j$, and we consider blocks around a pivot $(i,j)$, as defined formally below.
\begin{definition}[Complementary blocks]
    Let $\Wbf = (W_H, \ldots , W_1)$ be a first-order critical point of $L$. \\
    For any pivot $(i,j) \in \llbracket 1 , H \rrbracket$, ($i>j$), we define the two complementary blocks to $(i,j)$ as:
   $$W_{j-1} \cdots W_1 \Sigma_{XY} W_H \cdots W_{i+1} \qquad \text{and} \qquad W_{i-1} \cdots W_{j+1}.$$    
\end{definition}
The general case is represented on Figure \ref{fig:cycle with 2 complementary blocks}. \\
Note that, when $i=j+1$, the second complementary block is $W_j W_{j+1}$, which using the convention in Section~\ref{Settings} is $I_{d_j}$.
Similarly, if $i=H$ and $j=1$, the first complementary block is $\Sigma_{XY}$.
First we state a proposition about the ranks of the complementary blocks which is key to our analysis.  
\begin{proposition}\label{all blocks ranks geq than r}
    Suppose Assumption \ref{Assump H} in Section \ref{Settings} holds true.
    Let $\Wbf = (W_H, \ldots , W_1)$ be a first-order critical point of $L$ and $r= \rk(W_H \cdots W_1)$.
    For any pivot $(i,j)$, the rank of each of the two complementary blocks is larger than or equal to $r$.
\end{proposition}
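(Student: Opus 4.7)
The plan is to handle the two complementary blocks separately: the second, $W_{i-1} \cdots W_{j+1}$, sits inside the factorization of $N := W_H \cdots W_1$ and requires only rank submultiplicativity, while the first, $W_{j-1} \cdots W_1 \Sigma_{XY} W_H \cdots W_{i+1}$, contains $\Sigma_{XY}$ and requires bringing in the explicit form of $N$ given by Proposition~\ref{global map and critical values}.

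For the second block, writing $W_H \cdots W_1 = (W_H \cdots W_i)(W_{i-1} \cdots W_{j+1})(W_j \cdots W_1)$ immediately yields $r = \rk(N) \leq \rk(W_{i-1} \cdots W_{j+1})$. The edge case $i = j+1$, where by convention the middle block is $I_{d_j}$ of rank $d_j \geq r_{max} \geq r$, falls into line.

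For the first block, set $A = W_H \cdots W_{i+1}$, $B = W_{j-1} \cdots W_1$, and $M = W_{i-1} \cdots W_{j+1}$, so that $N = A W_i M W_j B$. By Proposition~\ref{global map and critical values}, $N = U_{\Ss} U_{\Ss}^T \Sigma_{YX} \Sigma_{XX}^{-1}$, which forces $\col(N) \subseteq \col(U_{\Ss})$, and the equality $\col(N) = \col(U_{\Ss})$ follows since both spaces have dimension $r$. Combined with $\col(N) \subseteq \col(A)$ (which stems from $N = A \cdot (W_i M W_j B)$), this produces the key inclusion $\col(U_{\Ss}) \subseteq \col(A)$. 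From this inclusion I deduce that $U_{\Ss}^T A$ has full row rank $r$: a dependency $\sum_{k \in \Ss} c_k u_k^T A = 0$ would put $\sum_{k \in \Ss} c_k u_k$ in $\col(A)^\perp \cap \col(U_{\Ss}) \subseteq \col(A)^\perp \cap \col(A) = \{0\}$, hence $c_k = 0$ by orthonormality of the columns of $U$.

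It remains to chain these observations. Using $\Sigma = U \Lambda U^T$, one computes $N \Sigma_{XY} = U_{\Ss} U_{\Ss}^T \Sigma = U_{\Ss} \Lambda_{\Ss} U_{\Ss}^T$, where $\Lambda_{\Ss} = \mathrm{diag}(\lambda_k : k \in \Ss)$ is invertible since all $\lambda_k > 0$ under Assumption~$\mathcal{H}$ (Lemma~\ref{sigma invertible}). Consequently $N \Sigma_{XY} A = U_{\Ss} \Lambda_{\Ss}(U_{\Ss}^T A)$ has rank exactly $r$, because $U_{\Ss}$ has full column rank, $\Lambda_{\Ss}$ is invertible, and $U_{\Ss}^T A$ has full row rank $r$. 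On the other hand, $N \Sigma_{XY} A = (A W_i M W_j)(B \Sigma_{XY} A)$, and submultiplicativity of rank gives $r = \rk(N \Sigma_{XY} A) \leq \rk(B \Sigma_{XY} A)$, as required. The main obstacle is precisely this last step: the mere factorization of $N$ is too weak to control a block in which $\Sigma_{XY}$ appears between $B$ and $A$, and one must use Proposition~\ref{global map and critical values} to pass from $\col(N)$ to $\col(U_{\Ss})$ and exploit the positive definiteness of $\Sigma$ on that subspace in order to preserve rank through the multiplication by $A$ on the right.
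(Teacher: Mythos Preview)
Your proof is correct. For the second complementary block you argue more directly than the paper does: you simply use that $W_{i-1}\cdots W_{j+1}$ is a middle factor of $N=W_H\cdots W_1$, whereas the paper routes this through $\rk(W_H\cdots W_1\Sigma_{XY})$ before reducing to $r$, which is unnecessary.

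For the first complementary block the two arguments share the same skeleton---sandwich $B\Sigma_{XY}A$ inside a larger product whose rank can be computed from Proposition~\ref{global map and critical values}---but differ in the sandwich chosen. The paper multiplies on \emph{both} sides, obtaining
\[
(W_H\cdots W_j)\,(B\Sigma_{XY}A)\,(W_i\cdots W_1\Sigma_{XY}) \;=\; (N\Sigma_{XY})^2 \;=\; U_{\Ss}U_{\Ss}^T\Sigma\,U_{\Ss}U_{\Ss}^T\Sigma,
\]
and then conjugates by $U_{\Ss}^T(\cdot)\Sigma^{-1}U_{\Ss}$ to reach $\rk(U_{\Ss}^T\Sigma U_{\Ss})=r$. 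You multiply only on the left, obtaining $N\Sigma_{XY}A = U_{\Ss}\Lambda_{\Ss}(U_{\Ss}^TA)$, and supply the missing piece by the column-space observation $\col(U_{\Ss})=\col(N)\subseteq\col(A)$, which forces $U_{\Ss}^TA$ to have full row rank. The paper's two-sided sandwich is slightly slicker in that it avoids this auxiliary lemma; your one-sided version makes more explicit why the right-multiplication by $A$ cannot drop the rank. Both are clean and equivalent in strength.
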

The proof is in Appendix \ref{proof of all blocks ranks geq than r}.
The boundary case when at least one of the two ranks is equal to $r$ plays a special role in the loss landscape at order 2.

\begin{definition}[Tightened pivot]\label{def pivot tightned}
    Let $\Wbf = (W_H, \ldots , W_1)$ be a first-order critical point of $L$ and let $r= \rk(W_H \cdots W_1)$. \\
    We say that a pivot $(i,j)$ is \textbf{tightened} if and only if at least one of the two complementary blocks to $(i,j)$ is of rank $r$.
\end{definition}

\begin{definition} [Tightened critical point] \label{def W tightned}
    Let $\Wbf = (W_H, \ldots, W_1)$ be a first-order critical point of $L$.
    We say that $\Wbf$ is tightened if and only if every pivot $(i,j)$ is tightened.
\end{definition}
\textcolor{black}{When $H \geq 3$}, note that a sufficient condition for a first-order critical point $\Wbf$ to be tightened is the existence of three weight matrices $W_{h_1}$, $W_{h_2}$ and $W_{h_3}$ of rank $r=\rk(W_H \cdots W_1)$. This is a simple intuition on tightened critical points, that the reader can keep in mind when reading the article. \textcolor{black}{A special case of this is when $\Wbf$ is $0$-balanced (Definition 1 in \cite{arora2018a}), that is, when $W_{j+1}^T W_{j+1} = W_j W_j^T$ for all $j \in \llbracket 1,H-1 \rrbracket$. Indeed, in that case, the weight matrices $W_j$ have equal ranks and $(W_H \cdots W_1)(W_H \cdots W_1)^T = W_H \cdots W_2 (W_1 W_1^T) W_2^T \cdots W_H^T = W_H \cdots W_2 (W_2^T W_2) W_2^T \cdots W_H^T = W_H \cdots W_3 (W_2 W_2^T)^2 W_3^T \cdots W_H^T = \ldots = (W_H W_H^T)^H$, so that $\rk(W_j) = \rk(W_H)=\rk(W_H \cdots W_1)=r$ for all $j \in \llbracket 1,H \rrbracket$. Therefore, when $H \geq 3$, first-order critical points that are $0$-balanced are tightened.}

Note also that when $H=2$, there is no tightened critical point with $r<r_{max}$, because the pivot $(2,1)$ is not tightened (both complementary blocks $\Sigma_{XY}$ and $I_{d_1}$ are of full rank, which is larger than or equal to $r_{max} = \min\{d_y,d_1,d_x\}$).

\begin{figure}
    \centering
    \begin{tikzpicture}
%\draw[dotted] (1.5,0) arc(0:360:1.5);
\draw[dotted] (1.5*0.87,1.5*0.5) arc(30:50:1.5);
\draw[dotted] (1.5*-0.57,1.5*0.82) arc(125:155:1.5);
\draw[dotted] (1.5*-0.87,1.5*-0.5) arc(210:330:1.5);
\draw (1.5,0) circle(0.3);
\draw (-1.5,0.05) circle(0.3);
\draw (0,1.5*1) node{$ \Sigma_{XY}$};
\draw (1.5*0.42,1.5*0.90) node{$W_H$};
\draw (1.5*-0.38,1.5*0.90) node{$W_1$};
\draw (1.5*1,0) node[red]{$W_i$};
\draw (-1*1.5,0) node[red]{$W_j$};
\draw (1.5*0.94,1.5*0.34) node{$W_{i+1}$};
\draw (-1.3*0.94,-1.3*0.34) node{$W_{j+1}$};
\draw (1.5*0.94,-1.5*0.34) node{$W_{i-1}$};
\draw (-1.3*0.94,1.3*0.34) node{$W_{j-1}$};
\draw [<-] (2*0.98,2*0.17) arc(10:170:2);
\draw [<-] (-2*0.98,2*-0.17) arc(190:350:2);
\draw (0,2) node[above]{First complementary block:  $W_{j-1} \cdots W_1 \Sigma_{XY} W_H \cdots W_{i+1}$};
\draw (0,-2) node[below]{Second complementary block: $W_{i-1} \cdots W_{j+1} $};
%\draw (0,0) circle(2);
%\draw (0,0) circle(1);
\end{tikzpicture}
    \caption{Complementary blocks to the pivot $(i,j)$ \;.}
    
    \label{fig:cycle with 2 complementary blocks}
\end{figure}
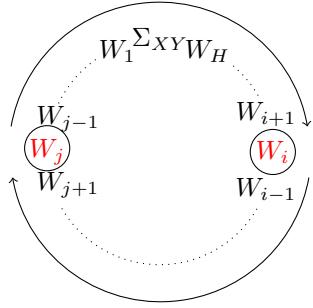

We can now state our main theorem, which characterizes the nature of any first-order critical point $\Wbf$ depending on the associated index set $\Ss$ and the tightening condition. The corresponding classification is illustrated on Figure \ref{fig:Classification of p.c}.
 Note that Theorem~\ref{main theorem} precisely differentiates between first-order critical points that are second-order critical points and those that are not. Combined with the fact that every first-order critical point is either a global minimizer or a saddle point \citep{kawaguchi2016deep}, we can distinguish global minimizers, strict saddle points and non-strict saddle points. The main and most technical contribution is, in the case $\Ss = \llbracket 1,r \rrbracket$, to distinguish between strict and non-strict saddle points.

We recall that $r_{max} = \min(d_H, \ldots, d_0)$ is the width of the thinnest layer, and that $U$ corresponds to the eigenvectors of $\Sigma$ (see \eqref{svd de sigma 1/2}).

%\newpage

\begin{theorem}[Classification of the critical points of $L$] \label{main theorem}
Suppose Assumption \ref{Assump H} in Section \ref{Settings} holds true.

Let $\Wbf = (W_H,\ldots,W_1)$ be a first-order critical point of $L$ and set $r=\rk(W_H \cdots W_1) \leq r_{max}$.
Following Proposition \ref{global map and critical values}, we consider the index set $\Ss$ associated with $\Wbf$.
\begin{itemize}
    \item When $r = r_{max}$:
    \begin{itemize}
        \item if $\Ss = \llbracket 1,r_{max} \rrbracket$, then $\Wbf$ is a global minimizer.
        \item if  $\Ss \neq \llbracket 1,r_{max} \rrbracket$, then $\Wbf$ is not a second-order critical point ($\Wbf$ is a strict saddle point).
    \end{itemize}
    \item When $ r < r_{max}$: $\Wbf$ is a saddle point.
    \begin{itemize}
        \item if $\Ss \neq \llbracket 1,r \rrbracket$, then $\Wbf$ is not a second-order critical point ($\Wbf$ is a strict saddle point).
        \item if $\Ss = \llbracket 1,r \rrbracket$: we have $W_H \cdots W_1 = U_{\Ss}U_{\Ss}^T \Sigma_{YX}\Sigma_{XX}^{-1} \in \argmin_{R \in \mathbb{R}^{d_y \times d_x}, \rk(R) \leq r} \|RX - Y\|^2.$
        \begin{itemize}
            \item if $ \Wbf$ is not tightened, then $\Wbf$ is not a second-order critical point ($\Wbf$ is a strict saddle point).
            \item if $ \Wbf$ is tightened, then $\Wbf$ is a second-order critical point ($\Wbf$ is a non-strict saddle point).
        \end{itemize}
    \end{itemize}
\end{itemize}
\end{theorem}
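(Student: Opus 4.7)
The plan is to start from Proposition~\ref{global map and critical values}, which gives $L(\Wbf) = \tr(\Sigma_{YY}) - \sum_{i\in\Ss}\lambda_i$ at every first-order critical point. Since $\lambda_1 > \cdots > \lambda_{d_y} > 0$ and $|\Ss| = r \leq r_{max}$, this value is minimized exactly when $\Ss = \llbracket 1, r_{max}\rrbracket$, and the classical rank-constrained least-squares identity shows $\tr(\Sigma_{YY}) - \sum_{i=1}^{r_{max}}\lambda_i$ is also the infimum of $L$ (since $\rk(W_H\cdots W_1) \leq r_{max}$ always holds). This settles the global minimizer case. For every remaining configuration, $\Wbf$ is automatically a saddle point by the earlier result of \cite{kawaguchi2016deep}, so the whole analysis reduces to deciding whether the Hessian has a negative eigenvalue: since $L(\Wbf + tV) = L(\Wbf) + \tfrac{t^2}{2}\langle V, \nabla^2 L(\Wbf)V\rangle + O(t^3)$ at any critical point, ``strict'' means exhibiting an explicit $V$ with $\langle V, \nabla^2 L(\Wbf)V\rangle < 0$, and ``non-strict'' means proving $\nabla^2 L(\Wbf)\succeq 0$.

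For the two ``wrong subset'' regimes (either $r = r_{max}$ with $\Ss\neq\llbracket 1, r_{max}\rrbracket$, or $r < r_{max}$ with $\Ss\neq\llbracket 1, r\rrbracket$), I would exploit the existence of indices $i^*\notin\Ss$ and $j^*\in\Ss$ with $\lambda_{i^*} > \lambda_{j^*}$. Perturbing two suitably chosen factors (each $d_h \geq r_{max}$ leaves room to manoeuvre), I would build a smooth one-parameter family $\Wbf(t)$ that, to first order in $t$, rotates the projector $U_\Ss U_\Ss^T$ appearing in Proposition~\ref{global map and critical values} so as to swap $u_{j^*}u_{j^*}^T$ for $u_{i^*}u_{i^*}^T$. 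Composing with Proposition~\ref{global map and critical values} along the family, the gain from inserting $u_{i^*}$ minus the loss from removing $u_{j^*}$ yields, after the linear-in-$t$ term is killed by criticality, $L(\Wbf(t)) - L(\Wbf) = t^2 c (\lambda_{j^*} - \lambda_{i^*}) + O(t^3)$ with $c>0$, which is strictly negative --- hence strict saddle.

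The hard case is $r < r_{max}$ with $\Ss = \llbracket 1, r\rrbracket$, where $W_H\cdots W_1$ is already the best rank-$\leq r$ fit and any descent direction must strictly raise the rank; the tightening condition of Definition~\ref{def W tightned} controls whether this is possible at order two. If some pivot $(i,j)$ is not tightened, both complementary blocks have rank $\geq r+1$, and the extra dimensions they provide let me pick perturbations $(W_i + tA, W_j + tB)$ of exactly those two factors such that the quadratic cross-term $W_H\cdots W_{i+1}\,A\,W_{i-1}\cdots W_{j+1}\,B\,W_{j-1}\cdots W_1$ in the expansion of $R = W_H\cdots W_1$ is a nonzero rank-one matrix aligned with the missing eigendirection $u_{r+1}$, the first-order piece being controlled by criticality. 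This contributes a $-t^2\lambda_{r+1}\cdot c$ term (with $c>0$) to $L$ --- negative curvature. Conversely, in the tightened case I would prove that $\nabla^2 L(\Wbf)\succeq 0$ by showing that, for arbitrary simultaneous perturbations of all $H$ factors, the second-order expansion $\partial_t^2 \|R(t)X - Y\|^2\bigl|_{t=0} = 2\|\partial_t R\cdot X\|^2 + 2\langle R X - Y,\partial_t^2 R\cdot X\rangle$ decomposes into a manifest sum of squares: the tightening at every pivot forces the second-order variation of $R$ not to introduce any new eigendirection of $\Sigma$ against which the residual $RX - Y$ could be negatively aligned, so the cross-term is nonnegative. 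I expect the main obstacle of the proof to be exactly this PSD argument in the tightened case: it requires a delicate joint parameterization of admissible perturbations of all $H$ weight matrices and a combinatorial bookkeeping that turns the many mixed partials produced by the product rule into an explicitly nonnegative quadratic form under the tightening hypothesis on every pivot $(i,j)$.
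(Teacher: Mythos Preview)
Your plan matches the paper's proof structure closely: the paper also splits into Propositions~\ref{main prop partie baldi} (global minimizer and ``wrong subset'' strict saddles, via the Baldi--Hornik rotation argument you describe), Proposition~\ref{main prop subtle strict saddles} (non-tightened case, perturbing exactly the two factors at a non-tightened pivot), and Proposition~\ref{main prop non strict saddles} (tightened case, explicit second-order expansion shown to be a sum of squares). Two points where your sketch is slightly off relative to what the paper actually does are worth flagging.

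In the non-tightened case, your formula $\partial_t^2\|R(t)X-Y\|^2\big|_{t=0} = 2\|\partial_t R\cdot X\|^2 + 2\langle RX-Y,\partial_t^2 R\cdot X\rangle$ contains a nonnegative term $2\|\partial_t R\cdot X\|^2$ that you do not address; criticality kills the first-order term of $L$, not of $R$. The paper handles this by scaling one of the two perturbations by a free parameter $\beta$, so that the second-order coefficient becomes $a\beta^2 + c\beta$ with $a\geq 0$ and $c\neq 0$, and then chooses $\beta$ to make it negative. The paper also needs four separate case distinctions depending on whether $i$ and $j$ lie at the boundary $\{1,H\}$, and in two of them first reduces to a different non-tightened pivot. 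In the tightened case, your intuition that ``the cross-term is nonnegative'' is not what the paper proves: the cross-term $ST = 2\langle RX-Y,\partial_t^2 R\cdot X\rangle$ equals $-2\langle A_3,A_4\rangle$ and can be negative; what is nonnegative is the \emph{sum} $FT+ST = a_1 + \|A_2\|^2 + \|A_3-A_4\|^2$, obtained after first passing to a canonical form via the parameterization of Proposition~\ref{simplif mat} (and Lemma~\ref{same nature} to justify the reduction), and then a lengthy computation using the tightening hypothesis through Lemma~\ref{E}. Your anticipation that this is the hardest step and requires a delicate joint parameterization is accurate.
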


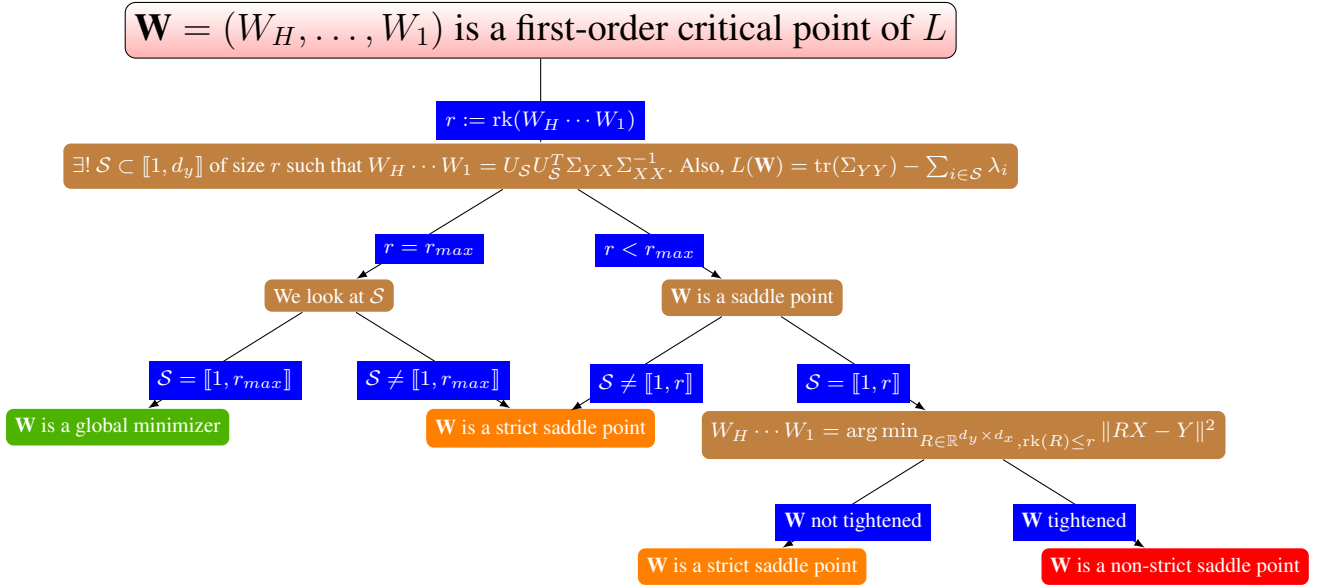
\begin{figure}
    
\scalebox{0.8}{
\tikzset{
  treenode/.style = {shape=rectangle, rounded corners,
                     draw, align=center,
                     top color=white, bottom color=blue!20},
  root/.style     = {treenode, font=\Large, bottom color=red!30},
  env/.style      = {treenode, font=\ttfamily\normalsize},
  dummy/.style    = {circle,draw}
}
\begin{tikzpicture}[sibling distance=16em,
    glob/.style={rectangle, draw=none, rounded corners=1mm, fill=green!70!red,
        text centered, anchor=north, text=white, },
    strict/.style={rectangle, draw=none, rounded corners=1mm, fill=orange,
        text centered, anchor=north, text=white},
    bad/.style={rectangle, draw=none, rounded corners=1mm, fill=red,
        text centered, anchor=north, text=white},
    middle/.style={rectangle, draw=none, rounded corners=1mm, fill=brown,
        text centered, anchor=north, text=white},
    rule/.style={rectangle, draw=none, fill=blue,
        text centered, anchor=north, text=white},
    every node/.style       = {font=\footnotesize},
    %  every node/.style = {shape=rectangle, rounded corners,draw, align=center,
    %  top color=white, bottom color=blue!20},
     edge from parent/.style = {draw, -latex}]
    \node [root] {$\Wbf = (W_H,\ldots,W_1)$ is a first-order critical point of $L$}
    child {node [middle] {$\exists!$ $\Ss \subset \llbracket 1,d_y \rrbracket$ of size $r$ such that $W_H \cdots W_1 = U_{\Ss} U_{\Ss}^T \Sigma_{Y X} \Sigma_{X X}^{-1}$. Also,
     $L(\Wbf) = \tr(\Sigma_{YY}) - \sum_{i \in \Ss} \lambda_i$}
        child { node [middle] {We look at $\Ss$} 
            child {node[glob] {$\Wbf$ is a global minimizer}
            edge from parent node[rule] {$\Ss = \llbracket 1,r_{max} \rrbracket$} 
            }
            child {node[strict] {$\Wbf$ is a strict saddle point}
            edge from parent node[rule] {$\Ss \neq \llbracket 1,r_{max} \rrbracket$} 
            }
            edge from parent node[rule] {$r=r_{max}$}
        }
        child { node[middle] {$\Wbf$ is a saddle point}
            child { node[strict] {$\Wbf$ is a strict saddle point}
            edge from parent node[rule] {$\Ss \neq \llbracket 1,r \rrbracket$} 
            }
            child { node[middle] {$W_H \cdots W_1 = \argmin_{R \in \mathbb{R}^{d_y \times d_x},\rk(R) \leq r} \|RX - Y\|^2$}
                child { node[strict] {$\Wbf$ is a strict saddle point}
                edge from parent node[rule] {$\Wbf$ not tightened}
                }
                child {node[bad] {$\Wbf$ is a non-strict saddle point}
                edge from parent node[rule] {$\Wbf$ tightened}
                }
            edge from parent node[rule] {$\Ss = \llbracket 1,r \rrbracket$}
            }
            edge from parent node[rule] {$r < r_{max}$}
        }
        edge from parent node[rule] {$r:=\rk(W_H \cdots W_1)$}
    };
\end{tikzpicture}
} % fin de la scalebox
\caption{Second-order classification of the critical points of $L$.}
    
    \label{fig:Classification of p.c}
\end{figure}

The proof of Theorem \ref{main theorem} is given in Section \ref{sketch of proof}, with most technical derivations deferred to the appendix. We now make several remarks. Note from the above that every non-strict saddle point corresponds to a global minimizer of the \RCLRP, as already shown by \cite[Proposition~35]{Bah_2021}.

The next proposition shows the existence of both tightened and non-tightened critical points for $H \geq 3$ (there are no tightened critical points when $H=2$ and $r<r_{max}$). Combining this result with Proposition~\ref{reciproque S vers W} indicates that all conclusions of Theorem \ref{main theorem} can be observed. \textcolor{black}{In particular, as already established in \cite{kawaguchi2016deep}, $L$ is not a Morse function when $H\geq 3$.}
\begin{proposition}\label{existence of tightened and non tightened critical points}
    Suppose Assumption \ref{Assump H} in Section \ref{Settings} holds true.
    For $H \geq 3$, for every $\Ss = \llbracket 1,r \rrbracket$ with $0 \leq r<r_{max}$, there exist both a tightened critical point and a non-tightened critical point associated with $\Ss$.
\end{proposition}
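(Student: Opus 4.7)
My plan is to exhibit, for each $\Ss = \llbracket 1, r \rrbracket$ with $0 \leq r < r_{max}$, an explicit tightened critical point and an explicit non-tightened one. I set $A := U_{\Ss} U_{\Ss}^{T} \Sigma_{YX} \Sigma_{XX}^{-1}$ and use the rank-$r$ factorization $A = PQ$ with $P := U_{\Ss} \in \mathbb{R}^{d_y \times r}$ and $Q := U_{\Ss}^{T} \Sigma_{YX} \Sigma_{XX}^{-1} \in \mathbb{R}^{r \times d_x}$. The key dimensional fact is $r < r_{max} \leq d_h$ for every $h$, so each layer has room to pad $P$, $Q$, and small identity blocks with zeros.

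For the tightened case I would take $W_H := [P, \, 0] \in \mathbb{R}^{d_y \times d_{H-1}}$, $W_1 := \begin{bmatrix} Q \\ 0 \end{bmatrix} \in \mathbb{R}^{d_1 \times d_x}$, and $W_h := \begin{bmatrix} I_r & 0 \\ 0 & 0 \end{bmatrix} \in \mathbb{R}^{d_h \times d_{h-1}}$ for every $h \in \llbracket 2, H-1 \rrbracket$. A direct telescoping yields $W_H \cdots W_1 = PQ = A$. Every $W_h$ has rank exactly $r$; since $H \geq 3$, at least three layers have rank $r$, so by the sufficient condition stated right after Definition \ref{def W tightned}, $\Wbf$ is tightened.

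For the non-tightened case, the idea is to inflate the upper end of the network by one extra direction and then cancel it out before reaching $W_1$. Since $r + 1 \leq r_{max} \leq d_y$, the vector $u := U_{r+1}$ exists and is orthogonal to the columns of $U_{\Ss}$. I would take $\widetilde{W}_H := [P, \, u, \, 0]$, $\widetilde{W}_h := \begin{bmatrix} I_{r+1} & 0 \\ 0 & 0 \end{bmatrix}$ for $h \in \llbracket 3, H-1 \rrbracket$ (an empty range when $H=3$), and keep $\widetilde{W}_2 := W_2$ and $\widetilde{W}_1 := W_1$ unchanged. Telescoping then yields $\widetilde{W}_H \cdots \widetilde{W}_3 = [P, u, 0]$ of rank $r+1$, while $\widetilde{W}_H \cdots \widetilde{W}_1 = PQ = A$ because $\widetilde{W}_2 \widetilde{W}_1 = \begin{bmatrix} Q \\ 0 \end{bmatrix}$ zeroes out the extra direction $u$. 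At pivot $(2,1)$, the second complementary block is the empty product $I_{d_1}$ of rank $d_1 > r$, and the first complementary block $\Sigma_{XY} \widetilde{W}_H \cdots \widetilde{W}_3$ has rank $r+1 > r$ because $\Sigma_{XY}$ has full column rank $d_y$ and therefore injects the $r+1$ linearly independent columns of $[P, u]$. Hence $\widetilde{\Wbf}$ is not tightened.

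The only genuine technical step is verifying that $\Wbf$ and $\widetilde{\Wbf}$ are first-order critical points. The gradient at layer $h$ has the form $2 (\widetilde{W}_H \cdots \widetilde{W}_{h+1})^{T} (U_{\Ss} U_{\Ss}^{T} - I) \Sigma_{YX} (\widetilde{W}_{h-1} \cdots \widetilde{W}_1)^{T}$. For every $h \geq 2$, in both constructions the rightmost factor equals $[Q^{T}, 0]$, and the middle product $(U_{\Ss} U_{\Ss}^{T} - I) \Sigma_{YX} Q^{T} = (U_{\Ss} U_{\Ss}^{T} - I) \Sigma U_{\Ss} = (U_{\Ss} U_{\Ss}^{T} - I) U_{\Ss} \Lambda_{\Ss}$ vanishes, using $\Sigma = U \Lambda U^{T}$. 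For $h=1$, the left factor collapses to $\begin{bmatrix} P^{T} \\ 0 \end{bmatrix}$ (the extra direction $u$ is killed by $\widetilde{W}_2$), and $P^{T}(U_{\Ss} U_{\Ss}^{T} - I) = U_{\Ss}^{T}(U_{\Ss} U_{\Ss}^{T} - I) = 0$. The same computation, together with Proposition \ref{global map and critical values}, identifies the associated subset as $\Ss$ in both cases. The remainder is routine block-matrix bookkeeping, and I do not foresee a substantive obstacle.
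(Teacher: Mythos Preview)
Your proof is correct and follows essentially the same strategy as the paper: construct explicit critical points in the block form of Proposition~\ref{reciproque param}, then check tightening by inspecting the ranks of the complementary blocks at a well-chosen pivot. The tightened example is identical to the paper's. For the non-tightened example the paper instead keeps $W_H=[U_{\Ss},0]$ and $W_1$ at rank $r$ and makes the product $Z_{H-1}\cdots Z_2$ nonzero so that pivot $(H,1)$ fails; your variant inflates $W_H,\ldots,W_3$ to rank $r+1$ and uses pivot $(2,1)$, which is an equally valid choice. The paper shortcuts your gradient computation by invoking Proposition~\ref{reciproque param} directly (your $\widetilde{\Wbf}$ has $Z_1=Z_2=0$, so that proposition applies), but your hand verification is fine.
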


The proof is postponed to Appendix \ref{proof of existence of tightened and non tightened critical points}.
It is constructive: we exhibit in the proof large sets of tightened and non-tightened critical points.\\
We can draw additional consequences from Theorem \ref{main theorem} and Propositions~\ref{reciproque S vers W} and~\ref{existence of tightened and non tightened critical points}:
\begin{itemize}
    \item[$\bullet$]
    For $H=2$, for any $r<r_{max}$, there exist strict saddle points satisfying $W_H \cdots W_1 \in \argmin_{R \in \mathbb{R}^{d_y \times d_x}, \rk(R) \leq r} \|RX - Y\|^2$.
    \item[$\bullet$]
    For $H \geq 3$, for any $r<r_{max}$, there exist both strict and non-strict saddle points satisfying $W_H \cdots W_1 \in \argmin_{R \in \mathbb{R}^{d_y \times d_x}, \rk(R) \leq r} \|RX - Y\|^2$.  
    \item[$\bullet$] In the special case $r=0$, we have $\Ss = \emptyset$ and $\emptyset = \llbracket 1,r \rrbracket$ by convention (see Section~\ref{Settings}), so that $\Ss = \llbracket 1,r \rrbracket$. In this case, Theorem \ref{main theorem} and Proposition \ref{existence of tightened and non tightened critical points} together imply that their exist both strict and non-strict saddle points $\Wbf$ such that $W_H \cdots W_1 = 0$ when $H \geq 3$.
\end{itemize}

\textcolor{black}{Finally, recall from a previous remark that, when $H \geq 3$, all first-order critical points that are $0$-balanced are tightened. We know from earlier works (e.g., \cite{arora2018a,arora2018optimization}) that the quantities $W_{j+1}^T W_{j+1} - W_j W_j^T$ are invariant under Gradient Flow. In particular, when we initialize the weight matrices such that these quantities are equal to zero (the so-called 0-balanced initialization), we have $W_{j+1}^T W_{j+1} - W_j W_j^T=0$ for all $j \in \llbracket 1,H-1 \rrbracket$ along the whole trajectory of Gradient Flow. In that case, all eventually visited saddle points associated to some $\Ss= \llbracket1,r\rrbracket$ 
are 0-balanced, hence tightened (by the remark after Definition \ref{def W tightned}) and therefore non-strict (by Theorem~\ref{main theorem}).}

\subsection{Parameterization of First-order Critical Points and Global Minimizers}\label{Parameterization of first-order critical points and global minimizers}

We now turn back to first-order critical points, and state all new related results.
In our analysis, these results precede the proof of Theorem \ref{main theorem}.
The presentation has been reversed in Section \ref{main contributions} to highlight the main contribution of the article.\\
The next proposition provides an explicit parameterization of first-order critical points.
Note that this is only a necessary condition.

\begin{proposition}\label{simplif mat}
    Suppose Assumption \ref{Assump H} in Section \ref{Settings} holds true.
    Let $\Wbf = (W_H, \ldots ,W_1)$ be a first-order critical point of $L$ associated with $\Ss$ (cf Proposition \ref{global map and critical values}), and let $Q = \llbracket1,d_y \rrbracket \setminus \Ss$.
    Then, there exist invertible matrices $D_{H-1} \in \mathbb{R}^{d_{H-1} \times d_{H-1}}, \ldots ,D_1 \in \mathbb{R}^{d_1 \times d_1}$ and matrices $Z_H \in \mathbb{R}^{(d_y-r) \times (d_{H-1}-r)}$, $Z_1 \in \mathbb{R}^{(d_1-r) \times d_x}$ and $Z_h \in \mathbb{R}^{(d_h-r) \times (d_{h-1}-r)}$ for $h \in \llbracket 2 , H-1 \rrbracket$ such that if we denote $\widetilde{W}_H = W_H D_{H-1}$ , $\widetilde{W}_1 = D_1^{-1} W_1$ and  $\widetilde{W}_h = D_{h}^{-1} W_h D_{h-1}$, for all $h \in \llbracket 2 , H-1  \rrbracket$, then we have
    \begin{align}
        \widetilde{W}_H &= [U_{\Ss} , U_Q Z_H] \label{simplif W_H} \\
        \widetilde{W}_1 &= \begin{bmatrix}
        U_{\Ss}^T\Sigma_{YX}\Sigma_{XX}^{-1} \\ 
        Z_1
        \end{bmatrix} \label{simplif W_1} \\
         \widetilde{W}_h &= \left[
        \begin{array}{c  c}
        I_r & 0 \\
        0 & Z_h
        \end{array}
        \right] \quad \forall h \in \llbracket 2 , H-1 \rrbracket \label{simplif W_h} \\
        \widetilde{W}_H \cdots \widetilde{W}_2 &= \left[U_{\Ss} , 0 \right]. \label{simplif W_H...W_2}
    \end{align}
    
\end{proposition}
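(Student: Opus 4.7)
The plan is to construct the change-of-basis matrices $D_h$ by carving out, at each intermediate layer $\mathbb{R}^{d_h}$, a ``signal'' subspace of dimension $r$ through which the rank-$r$ product $R := W_H\cdots W_1 = U_{\Ss} U_{\Ss}^T\Sigma_{YX}\Sigma_{XX}^{-1}$ factors, together with a complementary ``noise'' subspace of dimension $d_h-r$ that is absorbed into $\col(U_Q)$ by the remaining layers. For $h\in\llbracket 1,H-1\rrbracket$, write $F_h := W_H W_{H-1}\cdots W_{h+1}$ (so $F_{H-1}=W_H$), and let $U_Q$ be the complement of $U_{\Ss}$ in $U$.

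The key preliminary step is to show, using the first-order condition $\nabla_{W_1}L=0$ and the form of $R$ from Proposition~\ref{global map and critical values}, that $\col(F_1)=\col(U_{\Ss})$. A direct computation yields $\nabla_{W_1}L = -2F_1^T U_Q U_Q^T \Sigma_{YX}$; since $\Sigma_{YX}$ has full row rank $d_y$ by Assumption $\mathcal{H}$, so does $U_Q^T\Sigma_{YX}$, hence $U_Q^T F_1 = 0$, i.e.\ $\col(F_1)\subseteq\col(U_{\Ss})$, and the reverse inclusion follows from $\col(R) = \col(U_{\Ss})\subseteq\col(F_1)$. Writing $F_1 = F_h\cdot(W_h\cdots W_2)$ then gives $\col(U_{\Ss})\subseteq\col(F_h)$ for every $h$, and an orthogonal-projection argument based on $\mathbb{R}^{d_y} = \col(U_{\Ss})\oplus\col(U_Q)$ yields $\dim(\col F_h \cap \col U_Q) = \rk(F_h) - r$; the standard formula $\dim F_h^{-1}(V) = \dim\ker F_h + \dim(\col F_h\cap V)$ then gives $\dim F_h^{-1}(\col U_Q) = d_h - r$.

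The explicit construction is as follows. Pick $A_1\in\mathbb{R}^{d_1\times r}$ with $F_1 A_1 = U_{\Ss}$ (possible since $\col U_{\Ss}\subseteq\col F_1$) and propagate $A_h := W_h A_{h-1}$ for $h = 2,\ldots,H-1$, so that $F_h A_h = F_1 A_1 = U_{\Ss}$. For each $h\in\llbracket 1,H-1\rrbracket$, pick any $B_h$ whose columns form a basis of $F_h^{-1}(\col U_Q)$, and set $D_h := [A_h, B_h]$. Invertibility of $D_h$ follows from $\col A_h \cap F_h^{-1}(\col U_Q) = \{0\}$: if $v = A_h c\in F_h^{-1}(\col U_Q)$ then $F_h v = U_{\Ss} c \in \col U_{\Ss}\cap\col U_Q = \{0\}$, forcing $c = 0$; the dimensions $r+(d_h-r)=d_h$ then close up.

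It remains to check \eqref{simplif W_H}--\eqref{simplif W_H...W_2}. For middle $h$, $W_h A_{h-1} = A_h$ gives the first $r$ columns of $\widetilde{W}_h$; and for $v\in\col(B_{h-1}) = F_{h-1}^{-1}(\col U_Q)$ one has $F_h(W_h v) = F_{h-1} v\in\col U_Q$, so $W_h v\in F_h^{-1}(\col U_Q) = \col B_h$, yielding $W_h B_{h-1} = B_h Z_h$ and hence \eqref{simplif W_h}. The same propagation with $F_{H-1}=W_H$ gives \eqref{simplif W_H}. For \eqref{simplif W_1}, the first $r$ rows of $D_1^{-1}$ are characterized by $M[A_1,B_1]=[I_r,0]$, a system also solved by $U_{\Ss}^T F_1$ (one checks $U_{\Ss}^T F_1 A_1 = I_r$ and $U_{\Ss}^T F_1 B_1 = 0$), so these agree and the top block of $\widetilde{W}_1$ equals $U_{\Ss}^T F_1 W_1 = U_{\Ss}^T R = U_{\Ss}^T\Sigma_{YX}\Sigma_{XX}^{-1}$. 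Finally \eqref{simplif W_H...W_2} follows from $F_1 D_1 = [F_1 A_1, F_1 B_1] = [U_{\Ss}, 0]$, since $F_1 B_1 \in \col F_1 \cap \col U_Q = \col U_{\Ss} \cap \col U_Q = \{0\}$. The main obstacle is the preliminary dimension identity $\dim F_h^{-1}(\col U_Q) = d_h - r$, since without it one cannot simultaneously guarantee invertibility of every $D_h$ and compatibility of the signal/noise propagation across all layers.
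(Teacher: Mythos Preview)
Your proof is correct and takes a genuinely different route from the paper.

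The paper proceeds iteratively and algebraically: it first uses the Moore--Penrose pseudoinverse to show $W_H\cdots W_2 = [U_{\Ss},0]D$ for some invertible $D$ (Lemma~\ref{clem}), and then repeatedly applies two factorization lemmas (Lemmas~\ref{simplif 1} and~\ref{simplif 2}) which say, roughly, that any factorization of $[U_{\Ss},0]$ or of a block-diagonal matrix can be brought to block form by inserting an invertible change of basis. Each application peels off one layer $W_h$ and produces the next $D_{h-1}$, working from $h=H$ down to $h=2$.

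Your construction is global and geometric: you fix, at every intermediate level $h$, a decomposition $\mathbb{R}^{d_h} = \col A_h \oplus F_h^{-1}(\col U_Q)$ where the $A_h$ are propagated preimages of $U_{\Ss}$ and the complementary piece is identified by the dimension identity $\dim F_h^{-1}(\col U_Q) = d_h - r$. The identity $\col(F_1)=\col(U_{\Ss})$, which you derive directly from $\nabla_{W_1}L=0$ and the full-row-rank assumption on $\Sigma_{YX}$, is the same content as the paper's Lemma~\ref{P_K = U_S U_S^T}, but obtained without pseudoinverses. Your verification of \eqref{simplif W_1} via the uniqueness of the solution to $M D_1 = [I_r,0]$ is also cleaner than the paper's bookkeeping with the kernel matrix $M$.

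What each approach buys: the paper's iterative lemmas are self-contained linear-algebra facts that could be reused elsewhere, and the proof is entirely mechanical once they are in place. Your approach is more conceptual---the signal/noise picture makes the role of the tightening condition in the later Proposition~\ref{prop 4} more transparent---and it constructs all the $D_h$ at once rather than recursively, avoiding the need to track how successive changes of basis compose.
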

The proposition is proved in Appendix \ref{proof simplif mat}, and will be key to prove the last statement of Theorem \ref{main theorem}. \\
Next, we give a sufficient condition for any $\Wbf$ satisfying \eqref{simplif W_H}, \eqref{simplif W_1} and \eqref{simplif W_h}, to be a first-order critical point of $L$.
\begin{proposition}\label{reciproque param}
     Suppose Assumption \ref{Assump H} in Section \ref{Settings} holds true.
    Let $\Ss \subset \llbracket 1,d_y \rrbracket$ of size $r \in \llbracket 0,r_{max} \rrbracket$ and $Q = \llbracket 1,d_y \rrbracket \setminus \Ss$.
    Let $D_{H-1} \in \mathbb{R}^{d_{H-1} \times d_{H-1}}, \ldots ,D_1 \in \mathbb{R}^{d_1 \times d_1}$ be invertible matrices and
    let $Z_H \in \mathbb{R}^{(d_y-r) \times (d_{H-1}-r)}$, $Z_1 \in \mathbb{R}^{(d_1-r) \times d_x}$ and $Z_h \in \mathbb{R}^{(d_h-r) \times (d_{h-1}-r)}$ for $h \in \llbracket 2 , H-1 \rrbracket$.
    Let the parameter of the network $\Wbf = (W_H,\ldots,W_1)$ be defined as follows:
\begin{align*}
    {W}_H &= [U_{\Ss} , U_Q Z_H]D_{H-1}^{-1} \\
    {W}_1 &= D_1 \begin{bmatrix}
    U_{\Ss}^T\Sigma_{YX}\Sigma_{XX}^{-1} \\ 
    Z_1
    \end{bmatrix}  \\
    {W}_h &= D_h\left[
    \begin{array}{c  c}
    I_r & 0 \\
    0 & Z_h
    \end{array}
    \right]D_{h-1}^{-1} \quad \forall h \in \llbracket 2 , H-1 \rrbracket \;.
\end{align*}
If $r=r_{max}$ or if there exist $h_1 \neq h_2$ such that $Z_{h_1} = 0$ and $Z_{h_2} = 0$,
then, $\Wbf$ is a first-order critical point of $L$ associated with $\Ss$.
\end{proposition}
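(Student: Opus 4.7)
The plan is to verify directly that $\nabla_{W_h} L(\Wbf) = 0$ for every $h \in \llbracket 1, H \rrbracket$, by exploiting the block structure of the parameterization and showing that the product $W_H \cdots W_1$ simplifies to $U_{\Ss} U_{\Ss}^T \Sigma_{YX}\Sigma_{XX}^{-1}$. First I would compute $W_H \cdots W_1$: the invertible matrices $D_h$ cancel telescopically and leave
\[ W_H \cdots W_1 \;=\; U_{\Ss} U_{\Ss}^T \Sigma_{YX}\Sigma_{XX}^{-1} \;+\; U_Q \bigl( Z_H Z_{H-1} \cdots Z_1 \bigr). \]
I would then argue that under either hypothesis, the spurious term $U_Q(Z_H \cdots Z_1)$ vanishes, and more generally that every one-sided partial product $Z_H \cdots Z_{h+1}$ or $Z_{h-1} \cdots Z_1$ that will appear in the gradient is zero on the appropriate side.

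In the two-zeros case with $Z_{h_1} = Z_{h_2} = 0$ and $h_1 < h_2$, a case split on whether $h_2 \leq h$ or $h_2 \geq h+1$ shows that either $Z_{h-1} \cdots Z_1$ contains $Z_{h_1}$ or $Z_H \cdots Z_{h+1}$ contains $Z_{h_2}$. In the $r = r_{max}$ case, either $d_y = r$ (so $U_Q$ has zero columns and $U_Q U_Q^T = 0$, killing the error matrix directly), or some intermediate width satisfies $d_{h^*} = r$, in which case the convention of Section~\ref{Settings} on products with a zero-sized intermediate dimension forces the same one-side-vanishes conclusion.

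Next, using $U_{\Ss} U_{\Ss}^T + U_Q U_Q^T = I_{d_y}$, the error matrix simplifies to $E := W_H \cdots W_1 \Sigma_{XX} - \Sigma_{YX} = - U_Q U_Q^T \Sigma_{YX}$. Then the standard formula $\nabla_{W_h} L = 2 (W_H \cdots W_{h+1})^T E (W_{h-1} \cdots W_1)^T$, combined with the block expressions
\[ W_H \cdots W_{h+1} = [U_{\Ss}, U_Q Z_H \cdots Z_{h+1}]D_h^{-1}, \qquad W_{h-1} \cdots W_1 = D_{h-1}\begin{bmatrix} U_{\Ss}^T \Sigma_{YX}\Sigma_{XX}^{-1} \\ Z_{h-1} \cdots Z_1 \end{bmatrix} \]
(with the boundary conventions for $h=H$ and $h=1$ giving identity factors), yields a $2 \times 2$ block matrix in which three of the four blocks vanish thanks to $U_{\Ss}^T U_Q = 0$ and $U_Q^T \Sigma U_{\Ss} = U_Q^T U \Lambda U^T U_{\Ss} = 0$ (using $\Sigma = U \Lambda U^T$ and $\Ss \cap Q = \emptyset$). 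The single remaining block is proportional to $(Z_H \cdots Z_{h+1})^T U_Q^T \Sigma_{YX} (Z_{h-1} \cdots Z_1)^T$, which is zero by the preceding step.

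The main obstacle I anticipate is the bookkeeping at the boundary cases $h = 1$ and $h = H$ and under the zero-dimension conventions; the two arguments above must be phrased so as to cover uniformly all $H$ gradient equations, including the degenerate situation where one of the partial products is itself an empty matrix. Once this is handled, the uniqueness assertion in Proposition~\ref{global map and critical values}, combined with $W_H \cdots W_1 = U_{\Ss} U_{\Ss}^T \Sigma_{YX}\Sigma_{XX}^{-1}$, immediately yields that $\Wbf$ is a first-order critical point associated with $\Ss$.
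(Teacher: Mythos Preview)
Your proposal is correct and follows essentially the same route as the paper: compute the full product, show the $U_Q(Z_H\cdots Z_1)$ term vanishes under either hypothesis, reduce the error matrix to $-U_Q U_Q^T\Sigma_{YX}$, and then check that in each partial gradient the only surviving block is $(Z_H\cdots Z_{h+1})^T U_Q^T\Sigma_{YX}(Z_{h-1}\cdots Z_1)^T$, which is killed on one side. The one cosmetic difference is that the paper first invokes Lemma~\ref{same nature} to strip the $D_h$'s and work with $D_h=I$, whereas you keep them and let them factor out as $D_h^{-T}[\cdot]D_{h-1}^T$; both are fine.
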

The proof of Proposition \ref{reciproque param} is in Appendix \ref{proof reciproque param}.\\
Note that, combining Propositions \ref{simplif mat} and \ref{reciproque param}, we obtain an explicit parameterization of all critical points $\Wbf$ with a global map $W_H \cdots W_1$ of maximum rank $r_{max}$.
In particular, it yields the next proposition, which provides an explicit parameterization of all the global minimizers of $L$.

\begin{proposition}[Parameterization of all global minimizers]\label{prop parameterization of global minimizers}
    Suppose Assumption \ref{Assump H} in Section \ref{Settings} holds true.
    Set $\Ss_{max} = \llbracket 1,r_{max} \rrbracket$ and $Q_{max} = \llbracket 1,d_y \rrbracket \setminus \Ss_{max} = \llbracket r_{max}+1 , d_y \rrbracket$.\\
    Then, $\Wbf = (W_H, \ldots , W_1)$ is a global minimizer of $L$ if and only if there exist invertible matrices $D_{H-1} \in \mathbb{R}^{d_{H-1} \times d_{H-1}}, \ldots ,D_1 \in \mathbb{R}^{d_{1} \times d_{1}}$, and matrices $Z_H \in \mathbb{R}^{(d_y-r_{max}) \times (d_{H-1}-r_{max})}$, $Z_h \in \mathbb{R}^{(d_h-r_{max}) \times (d_{h-1}-r_{max})}$ for $h \in \llbracket 2 , H-1 \rrbracket$, and $Z_1 \in \mathbb{R}^{(d_1-r_{max}) \times d_x}$ such that:
    \begin{align*}
        {W}_H &= [U_{\Ss_{max}} , U_{Q_{max}} Z_H] D_{H-1}^{-1} \\
        {W}_1 &= D_1 \begin{bmatrix}
        U_{\Ss_{max}}^T\Sigma_{YX}\Sigma_{XX}^{-1} \\ 
        Z_1
        \end{bmatrix} \\
        {W}_h &= D_h \left[
        \begin{array}{c  c}
        I_{r_{max}} & 0 \\
        0 & Z_h
        \end{array}
        \right] D_{h-1}^{-1} \qquad \forall h \in \llbracket 2 , H-1 \rrbracket \;.
    \end{align*}
\end{proposition}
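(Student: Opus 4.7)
The proposition is essentially a corollary of the preceding results, so the plan is to combine Theorem \ref{main theorem}, Proposition \ref{simplif mat}, Proposition \ref{reciproque param}, Proposition \ref{global map and critical values}, and Proposition \ref{reciproque S vers W}, and simply check that the two directions match. No new technical machinery is needed.

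For the forward implication, suppose $\Wbf$ is a global minimizer of $L$. Since $L$ is differentiable, $\Wbf$ is a first-order critical point, and Proposition \ref{global map and critical values} associates to it a subset $\Ss$ of size $r = \rk(W_H \cdots W_1) \le r_{max}$ together with the critical value $L(\Wbf) = \tr(\Sigma_{YY}) - \sum_{i \in \Ss} \lambda_i$. By Proposition \ref{reciproque S vers W} applied to $\Ss_{max} = \llbracket 1, r_{max} \rrbracket$, the value $\tr(\Sigma_{YY}) - \sum_{i=1}^{r_{max}} \lambda_i$ is attained at some first-order critical point. Since the $\lambda_i$ are strictly positive and strictly decreasing under Assumption $\mathcal{H}$, this value is the minimum of $\tr(\Sigma_{YY}) - \sum_{i \in \Ss'} \lambda_i$ over subsets $\Ss'$ of size at most $r_{max}$, with unique minimizer $\Ss_{max}$. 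Hence $\Ss = \Ss_{max}$ and $r = r_{max}$. The parameterization then follows directly from Proposition \ref{simplif mat}: with the invertible matrices $D_1, \ldots, D_{H-1}$ and the $Z_h$ it provides, rearranging $\widetilde{W}_H = W_H D_{H-1}$, $\widetilde{W}_h = D_h^{-1} W_h D_{h-1}$, $\widetilde{W}_1 = D_1^{-1} W_1$ into $W_H = \widetilde{W}_H D_{H-1}^{-1}$, $W_h = D_h \widetilde{W}_h D_{h-1}^{-1}$, $W_1 = D_1 \widetilde{W}_1$ gives exactly the claimed expressions (with $Q_{max} = \llbracket 1, d_y \rrbracket \setminus \Ss_{max}$).

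For the reverse implication, suppose $\Wbf$ admits the displayed parameterization with some invertible $D_h$ and matrices $Z_h$. Setting $\Ss = \Ss_{max}$, which has size $r_{max}$, Proposition \ref{reciproque param} applies (its first alternative, $r = r_{max}$, is satisfied) and guarantees that $\Wbf$ is a first-order critical point of $L$ associated with $\Ss_{max}$. In particular $\rk(W_H\cdots W_1) = r_{max}$, and since $\Ss = \llbracket 1, r_{max}\rrbracket$, the first case of Theorem \ref{main theorem} applies and $\Wbf$ is a global minimizer.

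There is no real obstacle here; the only subtle point is the uniqueness step that identifies $\Ss = \Ss_{max}$ in the forward direction, which crucially uses the strict ordering and positivity of the $\lambda_i$ granted by Assumption $\mathcal{H}$. Everything else is a direct substitution into the already-established parameterization of Proposition \ref{simplif mat} and its partial converse Proposition \ref{reciproque param}.
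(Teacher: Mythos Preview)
Your proof is correct and follows essentially the same route as the paper: establish that global minimizers are exactly the first-order critical points associated with $\Ss_{max}$ (using the critical-value formula and the strict ordering/positivity of the $\lambda_i$), then invoke Proposition~\ref{simplif mat} for the forward direction and Proposition~\ref{reciproque param} for the converse. The only cosmetic difference is that for the reverse implication you cite Theorem~\ref{main theorem} to conclude ``global minimizer,'' whereas the paper argues this directly via Lemma~\ref{lemma yun} (which is precisely what underlies that case of Theorem~\ref{main theorem}); there is no circularity either way.
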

The proof is in Appendix \ref{proof of prop parameterization of global minimizers}. See in particular
a remark in the same appendix on how to interpret the above formulas precisely (some blocks $Z_h$ have $0$ lines or columns).

\subsection{Comparison with the State-of-the-art}\label{Comparison with previous works}

Next we further detail our contributions in light of earlier works.\\

\textbf{Parameterization of global minimizers.} To the best of our knowledge, Proposition~\ref{prop parameterization of global minimizers} is the first explicit parameterization of the set of all global minimizers for deep linear networks and the square loss. For $H \geq 2$, it had been previously noted by \cite{yun2018global} that a critical point $\Wbf$ is a global minimizer if and only if $\rk(W_H \cdots W_1) = r_{max}$ and $\col(W_H \cdots W_{d_{p+1}}) = \col(U_{\Ss_{max}})$, where $\Ss_{max} = \llbracket 1,r_{max} \rrbracket$ and where $p$ is any layer with the smallest width $r_{max}$.
This is an implicit characterization.

Another previous work that characterized global minimizers is \cite{zhou2018critical}, but their characterization is not explicit: the weight matrices are defined recursively and should satisfy some equations, while in Proposition~\ref{prop parameterization of global minimizers} the weight matrices are given explicitly.
The same remark holds for their characterization of first-order critical points. \\

\textbf{Saddle points.} 
Among saddle points, we give a characterization of those that are strict and those that are not. \\
Previously, for $H \geq 3$, it had been noted by \cite{kawaguchi2016deep} that $(0,\ldots,0)$ is a non-strict saddle point. This result also follows from Theorem 1 since any critical point is tightened whenever at least 3 weight matrices are of rank $r=\rk(W_H \cdots W_1)$ (which is the case for $(0,\ldots,0)$ with $r=0$).

Also, Theorem \ref{main theorem} generalizes two results from \cite{kawaguchi2016deep} and \cite{chitour2018geometric} about sufficient conditions for strict saddle points. Indeed, it is proved in
\cite{kawaguchi2016deep}  that, if $\Wbf$ is a saddle point such that $\rk(W_{H-1} \cdots W_2) = r_{max}$, then $\Wbf$ is a strict saddle point.
\cite{chitour2018geometric} proved under further assumptions on the data and the architecture that a sufficient condition for a saddle point to be strict is that $\rk(W_{H-1} \cdots W_2)> r = \rk(W_H \cdots W_1)$.
Note that both results are special cases of Theorem~\ref{main theorem}, with the pivot $(H,1)$.
More precisely, assume that $\Wbf$ is a saddle point such that either $\rk(W_{H-1} \cdots W_2) = r_{max} = r = \rk(W_H \cdots W_1)$ or $\rk(W_{H-1} \cdots W_2)> r = \rk(W_H \cdots W_1)$ (which includes both conditions above). Then, if $\Ss \neq \llbracket 1,r \rrbracket$ (whether $r=r_{max}$ or not), by Theorem~\ref{main theorem}, $\Wbf$ is a strict saddle point without any condition on $\Wbf$. But if $\Ss = \llbracket 1,r \rrbracket$ with $r<r_{max}$, 
our assumption above implies that
the pivot $(H,1)$, and therefore $\Wbf$, is not tightened (recall that $\rk(\Sigma_{XY})=d_y \geq r_{max} > r$). In any case, $\Wbf$ is a strict saddle point.

Finally, Theorem \ref{main theorem} generalizes another result of \cite{kawaguchi2016deep} stating that all saddle points are strict for one-hidden layer linear networks. Indeed, let $H=2$ and assume that we have a saddle point associated with $\Ss = \llbracket 1,r \rrbracket$ for $r<r_{max}$ (the only case where we can expect to see non-strict saddle points, by Theorem~\ref{main theorem}). Since $H=2$, there is only one pivot which is $(2,1)$; this pivot is not tightened because the complementary blocks are $I_{d_1}$ and $\Sigma_{XY}$ and both are of rank larger than or equal to $r_{max}$. Therefore, by Theorem~\ref{main theorem}, when $H=2$ (and under Assumption \ref{Assump H}), all saddle points are strict.\\

\textbf{Convergence to global minimizer: an example where gradient descent meets no non-strict saddle points.}
Some recent works on deep linear networks proved under assumptions on the data, the initialization, or the minimum width of the network, that gradient descent or variants converge to a global minimum in polynomial time \citep[e.g.,][]{arora2018a,bartlett2018gradient,Eftekhari20,du2019width}. Since for general non-convex functions, gradient descent may get stuck at a non-strict saddle point, and since non-strict saddle points exist for any linear neural network of depth $H \geq 3$, it seemed impossible to deduce convergence to a global minimum using landscape results only. Instead, papers such as \cite{du2019width} chose to ``directly analyze the trajectory generated by [...] gradient descent''.

It turns out that our characterization of strict saddle points can help re-interpret such global convergence results. Consider for instance the work of \cite{du2019width}, who proved that with high probability gradient descent with Xavier initialization converges to a global minimum for any deep linear network which is wide enough. They analyze a network where all hidden layers have a width $d_{\textrm{hidden}}$ at least proportional to the number $H$ of layers and to other quantities depending on the data $X,Y$, the output dimension $d_y$, and the desired probability level. In their analysis, \cite[Section~7]{du2019width} prove that with high probability, a condition $\mathcal{B}(t)$ holds at every iteration $t$. Importantly, this condition implies that the point $\Wbf$ output by gradient descent at iteration $t$ cannot be a non-strict saddle point. Indeed, using our notation, the condition $\mathcal{B}(t)$ yields the lower-bound\footnote{\textcolor{black}{$\sigma_{\min}(W_H \cdots W_2)$ denotes the minimum singular value of $W_H \cdots W_2 \in \mathbb{R}^{d_y \times d_{\textrm{hidden}}}$, among $\min\{d_y,d_{\textrm{hidden}}\}=d_y$ singular values in total (\citealt{{du2019width}} assume that $d_{\textrm{hidden}} \geq d_y$).}} $\sigma_{\min}(W_H \cdots W_2) \geq \frac{3}{4} d_{\textrm{hidden}}^{(H-1)/2} > 0$, which in particular entails that the matrix product $W_H \cdots W_2$ is of full rank $\min\{d_{\textrm{hidden}},d_y\} \geq r_{max}$. 
Let us check that if $\Wbf$ is a saddle point, then it is necessarily strict. By Theorem~\ref{main theorem}, either $r=\rk(W_H \cdots W_1)$ is equal to $r_{max}$, in which case the saddle point $\Wbf$ is indeed strict, or $r<r_{max}$, in which case the pivot $(H,1)$ is not tightened (since the two blocks $\Sigma_{XY}$ and $W_{H-1} \cdots W_2$ are of rank at least $r_{max}$), so that the saddle point $\Wbf$ is strict, as previously claimed.

As a consequence, our characterization of strict saddle points in Theorem~\ref{main theorem} helps re-interpret the analysis of \cite[Section~7]{du2019width}: under Assumption \ref{Assump H}, and for wide enough deep linear networks, gradient descent with Xavier initialization meets no non-strict saddle points on its trajectory.\\

\textbf{Implicit regularization.} 
Implicit regularization, in the context of linear networks, refers to statements showing  that the iterates trajectory passes in the vicinity of critical points $\Wbf$ such that $W_H\cdots W_1 = \argmin_{R \in \mathbb{R}^{d_y \times d_x}, \rk(R) \leq r} \|RX - Y\|^2$, for increasing $r\in\llbracket 0,r_{max} \rrbracket$. In such settings, the gradient dynamics sequentially finds the best linear regression predictor in
\[\mathcal{D}_r = \{R \in \mathbb{R}^{d_y \times d_x}~,~\rk(R) \leq r\},
\]
for increasing $r$. The subset $\mathcal{D}_r\subset\mathbb{R}^{d_y\times d_x}$ is independent of $X$, $Y$ and the network architecture, and plays the role of a regularization constraint in the function space.

In the parameter space however, as indicated in Theorem~\ref{main theorem} and  Proposition~\ref{existence of tightened and non tightened critical points}, there exists both non-strict and strict saddle points. As illustrated in Section \ref{sec:intro-importanceordre2}, Figures~\ref{fig:eg loss near saddle point} and~\ref{fig:histo escape points}, it takes more time to a first order algorithm to escape non-strict saddle points than strict ones. When $H \geq 3$, there exist two phenomenon: a 'light' implicit regularization, in the vicinity of strict saddle points, and  a 'strong' implicit regularization in the vicinity of non-strict saddle points.

In \cite{Bah_2021}, the authors proved that gradient flow converges almost surely to a global minimizer or non-strict saddle points of $L$. The limit point corresponds to a global minimizer of the \RCLRP.
In Theorem \ref{main theorem} and Proposition \ref{existence of tightened and non tightened critical points}, we prove the existence and characterize such points, and in addition to non-strict saddle points we prove that some $\Wbf$ leading to the solution of the \RCLRP~are strict saddle points.
Doing so, we characterize and drastically reduce the strong implicit regularization set.

In \cite{gidel2019implicit}, the authors proved that for $H=2$, for a vanishing initialization and a sufficiently small learning-rate, the gradient algorithm sequentially learns solutions of the \RCLRP~with a gradually increasing rank.
More precisely, the algorithm avoids all critical points associated with $\Ss \neq \llbracket 1,r \rrbracket$, but comes close to a critical point associated with $\Ss=\llbracket 1,r \rrbracket$, spends some time around it and decreases again. We know that for $H=2$ all saddle points are strict and that the phenomenon described by the authors corresponds to a 'light implicit regularization'.

In \cite{gissin2019implicit}, the authors proved for a toy  linear network, that, for $H=2$, the algorithms need an exponentially vanishing initialization for this incremental learning to occur, while for $H \geq 3$, a polynomially vanishing initialization is enough. This indicates that this incremental learning arises more frequently in deep networks. The difference might be explained by the 'strong' implicit regularization due to the existence of non-strict saddle points when $H\geq 3$.

Authors have put to evidence the rank related implicit regularization depicted in Theorem \ref{main theorem} and Proposition \ref{existence of tightened and non tightened critical points} for similar problems.
In \cite{arora2019implicit}, the authors exhibit that for small initializations and learning-rate, for matrix recovery, deep matrix factorization favors solutions of low-rank. In the same context, the authors of \cite{razin2020nipsimplicit} state that, implicit regularization in deep matrix completion should be seen as a minimization of rank rather than norms.

\subsection{Perspectives}

\textbf{Implicit regularization.} From Theorem \ref{main theorem}, we know that the critical points such that $W_H\cdots W_1 = \argmin_{R \in \mathbb{R}^{d_y \times d_x}, \rk(R) \leq r} \|RX - Y\|^2$ can be either strict saddle points or non-strict saddle points. From Proposition \ref{existence of tightened and non tightened critical points} we know that both cases exist. We know from the experiment described in Figures~\ref{fig:eg loss near saddle point} and~\ref{fig:histo escape points}, that first order algorithms need more time to escape from the vicinity of non-strict saddle points than strict saddle points. There are two phenomena: 'light' and 'strong' implicit regularization. To the best of our knowledge, whether the saddle points approached by the iterates trajectory are strict or non-strict and the impact of this property on the implicit regularization phenomenon have not been studied.

Though this study goes beyond the scope of this paper, 
let us sketch the main trends that we can anticipate from our results. On one side, as explained above, we anticipate the number of iterations spent by a first-order algorithm in the vicinity of a  non-strict saddle point to be larger than in the vicinity of a strict saddle point. Said differently, the 'size' of the flat region surrounding non-strict saddle points is larger than the one surrounding strict saddle points. 
On the other side, looking at the rank constraint in Definition \ref{def pivot tightned} (which corresponds to the very last item of Theorem~\ref{main theorem}), we anticipate that there are much fewer non-strict saddle points than strict saddle points. 'Strong' implicit regularization therefore occurs at fewer locations. \textcolor{black}{The influence of these two factors on the trajectory of the iterates depends on the initialization and the chosen algorithm.}

\textbf{Extent of 'flat regions'.}
Beyond the behavior of the objective function captured by the derivatives, it would be interesting to study the extent of the 'flat regions'. The goal would typically be to provide estimates of the time spent by a (stochastic) first order algorithm to escape the flat region. We observed in Figures~\ref{fig:eg loss near saddle point} and~\ref{fig:histo escape points}, that the flat regions associated to non-strict saddle points are larger but it would be interesting to extend this empirical study and to study formal estimates of the 'size' of the flat regions.

\textbf{Basins of attraction.}
Second order critical points can be limit points of gradient descent algorithms. Even worse, the basin of attraction of such points can be of positive Lebesgue measure. It would be interesting to exploit the tightness condition and the manifold of non-strict saddle points to prove that, as conjectured in \cite{chitour2018geometric} and \cite{Bah_2021}, the gradient descent algorithm almost surely converges to a global minimizer. 

\textbf{Generalizing the tightness condition.}
The tightness condition in the definitions \ref{def pivot tightned} and \ref{def W tightned} is for instance satisfied as soon as three factors are of rank $r$. It is adapted to linear networks. It would be interesting to generalize it to other problems such as matrix factorization, structured linear networks or tensor problems, sharing the same 'compositional structure'.

\section{Proof of Theorem \ref{main theorem}}\label{sketch of proof}

The proof of Theorem~\ref{main theorem} proceeds in several steps. In the end (see page~\pageref{pf:thm1}), it will directly follow from Propositions~\ref{main prop partie baldi}, \ref{main prop subtle strict saddles}, \ref{main prop non strict saddles} below and from Lemma~\ref{lemma yun} in Appendix~\ref{Appendix Notation and general useful lemmas and properties}. In this section, we outline the overall proof structure and state the main intermediate results. We also provide proof sketches for these intermediate results, but defer many technical details to the appendix. \\

In our proofs, we will not compute the Hessian $\nabla^2 L(\Wbf)$ explicitly since this might be quite tedious.
To show that a point $\Wbf$ is (or is not) a second-order critical point of $L$, we will instead Taylor-expand $L(\Wbf + t\Wbf')$ along any direction $\Wbf'$ and use the following lemma. 
Its proof follows directly from Taylor's theorem. 

\begin{lemma}[Characterization of first-order and second-order critical points]\label{Characterization of critical points in our settings}
    Let $\Wbf = (W_H, \ldots, W_1)$.
    Assume that, for all $\Wbf' = (W'_H, \ldots, W'_1)$, the loss $L(\Wbf + t \Wbf') $ admits the following asymptotic expansion when $t \to 0$:
    \begin{align} \label{Taylor expansion of loss}
        L(\Wbf + t \Wbf') &= L(\Wbf) + c_1(\Wbf,\Wbf') t + c_2(\Wbf , \Wbf') t^2 + o(t^2).
    \end{align}
    Then:
    \begin{itemize}
        \item $\Wbf$ is a first-order critical point of $L$ iff $c_1(\Wbf,\Wbf') = 0$ for all $\Wbf'$. 
        \item $\Wbf$ is a second-order critical point of $L$ iff $c_1(\Wbf,\Wbf') = 0$ and $c_2(\Wbf,\Wbf') \geq 0$ for all $\Wbf'$.\\
        Therefore if for a first-order critical point $\Wbf$, we can exhibit a direction $\Wbf'$ such that $c_2(\Wbf,\Wbf') < 0$, then $\Wbf$ is not a second-order critical point.
    \end{itemize}
\end{lemma}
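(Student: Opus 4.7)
The plan is to apply Taylor's theorem directly. Since $L(\Wbf)=\|W_H\cdots W_1 X - Y\|^2$ is a polynomial in the entries of $\Wbf$, it is $C^\infty$, so the real function $\varphi : t\mapsto L(\Wbf + t\Wbf')$ admits a unique second-order Taylor expansion at $t=0$. The assumed expansion \eqref{Taylor expansion of loss} therefore forces
$$c_1(\Wbf,\Wbf') = \varphi'(0), \qquad c_2(\Wbf,\Wbf') = \tfrac{1}{2}\varphi''(0).$$
Using the vectorization convention from the footnote of Section~1.1 together with the chain rule, these two derivatives can be rewritten as
$$c_1(\Wbf,\Wbf') = \langle \nabla L(\Wbf), \Wbf'\rangle, \qquad c_2(\Wbf,\Wbf') = \tfrac{1}{2}\langle \Wbf', \nabla^2 L(\Wbf)\,\Wbf'\rangle,$$
where the inner product is Frobenius (equivalently, Euclidean after vectorization).

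Given this identification, the two equivalences I would prove are classical. For the first, $c_1(\Wbf,\Wbf')=0$ for every $\Wbf'$ iff the linear form $\langle \nabla L(\Wbf), \cdot\rangle$ vanishes identically iff $\nabla L(\Wbf)=0$; the only nontrivial direction is handled by choosing $\Wbf'=\nabla L(\Wbf)$. For the second, under the assumption $\nabla L(\Wbf)=0$, the condition $c_2(\Wbf,\Wbf')\geq 0$ for every $\Wbf'$ is precisely the definition of $\nabla^2 L(\Wbf) \succeq 0$ applied to the symmetric Hessian (symmetry follows from Schwarz's theorem). The last bullet is then the contrapositive of the forward implication of the second equivalence.

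I do not anticipate any real obstacle: the whole argument amounts to matching Taylor coefficients and invoking the defining properties of gradient, Hessian, and positive semi-definiteness. The only point requiring care is bookkeeping—interpreting $\nabla L(\Wbf)$ and $\nabla^2 L(\Wbf)$ consistently on the vectorized parameter space so that the identifications of $c_1$ and $c_2$ above make sense. Once this convention is fixed, the proof is essentially a one-line Taylor comparison plus two one-line linear-algebra arguments.
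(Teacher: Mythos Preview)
Your proposal is correct and matches the paper's approach: the paper simply states that the lemma ``follows directly from Taylor's theorem'' without giving further details, and your argument---identifying $c_1(\Wbf,\Wbf')=\langle\nabla L(\Wbf),\Wbf'\rangle$ and $c_2(\Wbf,\Wbf')=\tfrac{1}{2}\langle\Wbf',\nabla^2 L(\Wbf)\Wbf'\rangle$ via the uniqueness of the Taylor expansion of the polynomial $t\mapsto L(\Wbf+t\Wbf')$---is exactly the intended one-line justification.
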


We divide the proof of Theorem \ref{main theorem} into three parts.
Recall that from \cite{kawaguchi2016deep}, we know that  all first-order critical points are either global minimizers or saddle points (that is, there is no local extrema apart from global minimizers).
We refine this classification.

\subsection{Global Minimizers and 'Simple' Strict Saddle Points}

In this section, we start by identifying simple sufficient conditions on the support $\Ss$ associated to a first-order critical point $\Wbf$ which guarantee that $\Wbf$ is either a global minimizer or a strict saddle point.
More subtle strict saddle points and non-strict saddle points will be addressed in Sections \ref{subtle strict saddles} and \ref{non-strict saddles}.
\begin{proposition}\label{main prop partie baldi}
    Suppose Assumption \ref{Assump H} in Section \ref{Settings} holds true.
    Let $\Wbf = (W_H, \ldots , W_1)$ be a first-order critical point of $L$ associated with $\Ss$ and set $r=\rk(W_H \cdots W_1) \leq r_{max}$.
\begin{itemize}
    \item When $r = r_{max}$:
    \begin{itemize}
        \item if $\Ss = \llbracket 1,r_{max} \rrbracket$, then $\Wbf$ is a global minimizer.
        \item if  $\Ss \neq \llbracket 1,r_{max} \rrbracket$, then $\Wbf$ is not a second-order critical point ($\Wbf$ is a strict saddle point).
    \end{itemize}
    \item When $ r < r_{max}$: $\Wbf$ is a saddle point.
    \begin{itemize}
        \item if $\Ss \neq \llbracket 1,r \rrbracket$, then $\Wbf$ is not a second-order critical point ($\Wbf$ is a strict saddle point).
        \end{itemize}
    \end{itemize}
\end{proposition}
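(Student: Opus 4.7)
For the first sub-case ($r = r_{max}$, $\Ss = \llbracket 1,r_{max}\rrbracket$), the product $(W_0)_H\cdots (W_0)_1$ associated with any $\Wbf_0$ has rank at most $r_{max}$, so $L(\Wbf_0)\ge\min_{\rk(M)\le r_{max}}\|MX-Y\|^2$. I would compute this lower bound via the identity $\|MX-Y\|^2 = \tr(\Sigma_{YY}) - \tr(\Sigma) + \|M\Sigma_{XX}^{1/2} - \Sigma_{YX}\Sigma_{XX}^{-1/2}\|^2$, the rank-preserving substitution $N = M\Sigma_{XX}^{1/2}$, and Eckart--Young--Mirsky applied to $\Sigma_{YX}\Sigma_{XX}^{-1/2}$, whose singular values are $\sqrt{\lambda_1},\ldots,\sqrt{\lambda_{d_y}}$ with left singular vectors $U$ (read off from $\Sigma = U\Lambda U^T$). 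The minimum equals $\tr(\Sigma_{YY}) - \sum_{i=1}^{r_{max}}\lambda_i$, which by Proposition~\ref{global map and critical values} is exactly $L(\Wbf)$; hence $\Wbf$ is a global minimizer.

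For the two remaining sub-cases (both corresponding to $\Ss\neq\llbracket 1,r\rrbracket$ with $r = |\Ss|$), my strategy is to apply Lemma~\ref{Characterization of critical points in our settings} by producing an explicit $\Wbf'$ with $c_2(\Wbf,\Wbf')<0$. Since the reparameterization of Proposition~\ref{simplif mat} is a linear isomorphism of weight space leaving $L$ invariant, the strict-saddle property is preserved, and I may assume the canonical form $W_H=[U_\Ss,\,U_QZ_H]$, $W_h = \begin{bmatrix} I_r & 0 \\ 0 & Z_h \end{bmatrix}$ for $h\in\llbracket 2,H-1\rrbracket$, $W_1 = \begin{bmatrix} U_\Ss^T\Sigma_{YX}\Sigma_{XX}^{-1} \\ Z_1 \end{bmatrix}$, and $W_H\cdots W_2 = [U_\Ss,0]$. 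Since $|\Ss|=r$ and $\Ss\neq\llbracket 1,r\rrbracket$, I pick $i\in\Ss$, $j\in\llbracket 1,r\rrbracket\setminus\Ss$ (so $\lambda_j>\lambda_i$), and let $k$ be the position of $i$ in $\Ss$. I consider the two-layer perturbation
\[
W'_H = [U_j e_k^T,\,0], \quad W'_h = 0 \text{ for } 2\le h\le H-1, \quad W'_1 = \begin{bmatrix} e_k U_j^T\Sigma_{YX}\Sigma_{XX}^{-1} \\ 0 \end{bmatrix}.
\]

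Expanding $M(t) = W_H(t)\cdots W_1(t) = M + t\delta_1 + t^2\delta_2$ using the canonical form should give $\delta_1 = (U_iU_j^T + U_jU_i^T)\Sigma_{YX}\Sigma_{XX}^{-1}$ and $\delta_2 = U_jU_j^T\Sigma_{YX}\Sigma_{XX}^{-1}$. Taylor-expanding $L(\Wbf + t\Wbf')$ through $\|M(t)X-Y\|^2 = \|MX-Y\|^2 + 2\tr((\delta M)^T G) + \|(\delta M)X\|^2$ with $\delta M = M(t)-M$ and $G = M\Sigma_{XX}-\Sigma_{YX}$, and noting $G = -U_QU_Q^T\Sigma_{YX}$ at first-order critical points (from $I_{d_y}=U_\Ss U_\Ss^T+U_QU_Q^T$), yields $c_2 = \|\delta_1 X\|^2 - 2\tr(\delta_2^T U_QU_Q^T\Sigma_{YX})$. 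Orthonormality of $U$, $U_j\in\col(U_Q)$, and $U_a^T\Sigma U_b=\lambda_a\delta_{ab}$ then give $\|\delta_1 X\|^2 = \lambda_i+\lambda_j$ and $\tr(\delta_2^T U_QU_Q^T\Sigma_{YX}) = \lambda_j$, so $c_2 = \lambda_i-\lambda_j<0$ and $\Wbf$ is a strict saddle by Lemma~\ref{Characterization of critical points in our settings}.

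The conceptual difficulty is identifying this specific perturbation: single-layer perturbations yield $\delta_2=0$ and hence $c_2 = \|\delta_1 X\|^2\ge 0$, so genuine negative curvature requires simultaneously perturbing at least two layers; the pairing above (the $k$-th column of $W_H$ and the $k$-th row of $W_1$, both aligned with $U_j$) is what makes $\delta_2$ contribute the necessary $-2\lambda_j$ large enough to overcome $\|\delta_1 X\|^2 = \lambda_i+\lambda_j$. The subsequent trace and orthonormality computations are routine, as is the verification that the $D_h$-reparameterization, being a linear diffeomorphism, transfers strict-saddleness back to the original $\Wbf$.
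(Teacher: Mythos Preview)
Your proposal is correct and follows essentially the same route as the paper: the global-minimizer case via the rank-constrained least-squares lower bound (the paper packages this as Lemma~\ref{lemma yun}), and for $\Ss\neq\llbracket 1,r\rrbracket$ a two-layer perturbation of $W_H$ and $W_1$ that rotates a captured singular direction toward a missed, larger one, yielding $c_2=\lambda_i-\lambda_j<0$. The only structural difference is that the paper works with the lighter normalisation of Lemma~\ref{clem} (only $W_H\cdots W_2=[U_\Ss,0]D$ and the form of $W_1$), whereas you pass to the full canonical form of Proposition~\ref{simplif mat}; your transfer of strict-saddleness through the $D_h$-reparameterisation is exactly Lemma~\ref{same nature}.

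Two small points to tighten. First, your choice ``$i\in\Ss$, $j\in\llbracket 1,r\rrbracket\setminus\Ss$ (so $\lambda_j>\lambda_i$)'' is not automatic: you must take $i\in\Ss\setminus\llbracket 1,r\rrbracket$ (which exists since $|\Ss|=r$ and $\Ss\neq\llbracket 1,r\rrbracket$), so that $i>r\ge j$ and hence $\lambda_j>\lambda_i$. Second, the clause ``When $r<r_{max}$: $\Wbf$ is a saddle point'' (even when $\Ss=\llbracket 1,r\rrbracket$) is not covered by your argument; the paper dispatches it by citing Kawaguchi's result that every critical point is a global minimizer or a saddle, together with $L(\Wbf)=\tr(\Sigma_{YY})-\sum_{i\in\Ss}\lambda_i>\tr(\Sigma_{YY})-\sum_{i=1}^{r_{max}}\lambda_i$.
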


The proof is postponed to Appendix \ref{proof main prop partie baldi}.
To prove that $\Wbf$ associated with $\Ss \neq \llbracket 1,r \rrbracket$ , $r\leq r_{max}$ is not a second-order critical point,
we explicitly exhibit a direction $\Wbf'$ such that the second-order coefficient $c_2(\Wbf,\Wbf')$ in the Taylor expansion of $L(\Wbf + t \Wbf') $ around $t=0$, in \eqref{Taylor expansion of loss}, is negative. 
Using Lemma \ref{Characterization of critical points in our settings}, we conclude that $\Wbf$ is not a second-order critical point.\\
Recall from Proposition \ref{global map and critical values} that the loss at any first-order critical point is given by $tr(\Sigma_{YY}) - \sum_{i \in \Ss} \lambda_i$.
The spirit of the proof is that critical points associated with $\Ss \neq \llbracket 1,r \rrbracket$ capture a smaller singular value $\lambda_j$ instead of a larger one $\lambda_i$ with $i < j$.
Thus, to see that the loss can be further decreased at order 2 (and is therefore not a second-order critical point by Lemma \ref{Characterization of critical points in our settings}), a natural proof strategy is to perturb the singular vector corresponding to $\lambda_j$ along the direction of the singular vector corresponding to $\lambda_i$.
This part of the proof is an adaption of the proof of \cite{Baldi:1989:NNP:70359.70362}.

\subsection{Strict Saddle Points Associated with \texorpdfstring{$\Ss = \llbracket 1,r \rrbracket, \  r< r_{max}$}{}} \label{subtle strict saddles}
We now address situations that to our knowledge, have never been addressed, in the literature.
We prove the following.
\begin{proposition}\label{main prop subtle strict saddles}
Suppose Assumption \ref{Assump H} in Section \ref{Settings} holds true.
Let $\Wbf = (W_H, \ldots, W_1)$ be a first-order critical point of $L$ associated with $\Ss = \llbracket 1,r \rrbracket$, with $0 \leq r < r_{max}$. \\
If $\Wbf$ is not tightened, then $\Wbf$ is not a second-order critical point ($\Wbf$ is a strict saddle point).
\end{proposition}
We sketch the main arguments below.
We will again construct a direction $\Wbf'$ such that the second-order coefficient $c_2(\Wbf,\Wbf')$ in the asymptotic expansion of $L(\Wbf + t \Wbf') $ around $t=0$, in \eqref{Taylor expansion of loss}, is negative. \\
More precisely, for a first-order critical point $\Wbf$, for any $\beta \in \mathbb{R}$, we will consider a well-chosen $\Wbf'_{\beta}$  such that $c_2(\Wbf , \Wbf'_{\beta}) = a \beta^2 + c \beta$ for some constants $a,c$ (possibly depending on $\Wbf$) such that $a \geq 0$ and $c \neq 0$. Taking 
\begin{equation} \label{choosing beta}
    \beta =
    \begin{cases}
        -c \quad \text{if} \quad a=0 \\
        -\frac{c}{2a} \quad \text{if} \quad a>0 \\
    \end{cases}
\end{equation}
we obtain
$$c_2(\Wbf , \Wbf'_{\beta}) = \begin{cases}
-c^2 \quad \text{if} \quad a=0 \\
-\frac{c^2}{4a} \quad \text{if} \quad a>0 \\
\end{cases} $$
and therefore $$ c_2(\Wbf , \Wbf'_{\beta}) < 0.$$
Using Lemma \ref{Characterization of critical points in our settings}, we can conclude that $\Wbf$ is not a second-order critical point. \\
We now provide intuitions on how to choose $\Wbf'$.
Since $\Wbf$ is not tightened, there exists a pivot $(i,j)$, with $i>j$, which is not tightened. 
Depending on the values of $i$ and $j$ we will construct $\Wbf'$ differently.
However, the strategy for constructing $\Wbf'$ is the same in all cases. \\
Recall again that from Proposition \ref{global map and critical values}, at any first-order critical point $\Wbf$, the value of the loss is given by $tr(\Sigma_{YY}) - \sum_{i \in \Ss} \lambda_i$.
Contrary to the previous section, since $\Ss=\llbracket 1,r \rrbracket$ there is no immediate way to decrease the loss (at order 2) without increasing the rank of the product of the weight matrices.
Indeed, we have $W_H \cdots W_1 = U_{\Ss}U_{\Ss}^T \Sigma_{YX}\Sigma_{XX}^{-1} \in \argmin_{\rk(R) \leq r} \|RX-Y\|^2 $. \\
Therefore, to be able to decrease the value of the loss, we need to perturb $\Wbf$ in a way that the product of the perturbed parameter weight matrices becomes of rank strictly larger than $r$. 
Also, to prove that $\Wbf$ is not a second-order critical point, we need to decrease the loss at order 2.
This is possible when $\Wbf$ is not tightened.
For the non-tightened pivot $(i,j)$, we choose a perturbation $\Wbf'$ with all $W'_h=0$ except for $W'_i$ and $W'_j$.
Furthermore, our construction of $W'_i$ and $W'_j$ depends on whether $i$ and/or $j$ are on the boundary $\{1,H\}$.
This is due to the fact that $H$ and $1$ play a special role in the product of the perturbed weights $(W_H + tW'_H) \cdots (W_1 + tW'_1)$.
This is why we distinguish the four cases below:

\begin{itemize}
    \item 1st case: $i \in \llbracket 2,H-1 \rrbracket$ and $j=1$. This case is treated in Appendix \ref{1st case}.
    \item 2nd case: $i=H$ and $j=1$. This case is treated in Appendix \ref{2nd case}.
    \item 3rd case: $i=H$ and $j \in \llbracket 2,H-1 \rrbracket$. This case is treated in Appendix \ref{3rd case}.
    \item 4th case: $i,j \in \llbracket 2,H-1 \rrbracket$ with $i>j$. This case is treated in Appendix \ref{4th case}.
\end{itemize}

\subsection{Non-strict Saddle Points}\label{non-strict saddles}
We now provide a sketch of the proof for the converse of Proposition \ref{main prop subtle strict saddles}, as stated in Proposition \ref{main prop non strict saddles} below.
All the proofs related to this section are deferred to Appendix \ref{Appendix non strict saddles}.

\begin{proposition} \label{main prop non strict saddles}
Suppose Assumption \ref{Assump H} in Section \ref{Settings} holds true.
Let $\Wbf = (W_H, \ldots, W_1)$ be a first-order critical point of $L$ associated with $\Ss = \llbracket 1,r \rrbracket$, $0 \leq r<r_{max}$. \\
If $\Wbf$ is tightened, then $\Wbf$ is a second-order critical point ($\Wbf$ is a non-strict saddle point).
\end{proposition}

To prove Proposition \ref{main prop non strict saddles}, we first state a proposition which indicates that multiplications by invertible matrices do not change the nature of the critical point.
\begin{lemma}\label{same nature}
    For all $h \in \llbracket 1 , H-1 \rrbracket$, let $D_h \in \mathbb{R}^{d_{h} \times d_{h}}$ be an invertible matrix.
    We define  $\widetilde{W}_H = W_H D_{H-1}$ , $\widetilde{W}_1 = D_1^{-1} W_1$ and  $\widetilde{W}_h = D_{h}^{-1} W_h D_{h-1}$, for all $h \in \llbracket 2 , H-1 \rrbracket$. Then
    \begin{itemize}
        \item $\Wbf = (W_H , \ldots , W_1)$ is a first-order critical point of $L$ if and only if $\widetilde{\Wbf} = (\widetilde{W}_H , \ldots , \widetilde{W}_1)$ is a first-order critical point of $L$.
        \item $\Wbf = (W_H , \ldots , W_1)$ is a second-order critical point of $L$ if and only if $\widetilde{\Wbf} = (\widetilde{W}_H , \ldots , \widetilde{W}_1)$ is a second-order critical point of $L$.
    \end{itemize}
\end{lemma}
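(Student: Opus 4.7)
The plan is to observe that the transformation $\Wbf \mapsto \widetilde{\Wbf}$ is a linear bijection on the parameter space that leaves the loss $L$ invariant, and then to transfer the first- and second-order optimality conditions through this bijection using Lemma~\ref{Characterization of critical points in our settings}.

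First I would check that $\widetilde{W}_H \widetilde{W}_{H-1} \cdots \widetilde{W}_1 = W_H W_{H-1} \cdots W_1$ by a telescoping computation, since every factor $D_h$ introduced on the right of $\widetilde{W}_{h+1}$ cancels the $D_h^{-1}$ appearing on the left of $\widetilde{W}_h$. Consequently $L(\widetilde{\Wbf}) = L(\Wbf)$. Next, consider the block linear map
\[
\Phi : (W_H, \ldots, W_1) \longmapsto (W_H D_{H-1},\ D_{H-1}^{-1} W_{H-1} D_{H-2},\ \ldots,\ D_1^{-1} W_1).
\]
Its inverse is given by the analogous formula with each $D_h$ replaced by $D_h^{-1}$, so $\Phi$ is a linear automorphism of $\mathbb{R}^{d_y \times d_{H-1}} \times \cdots \times \mathbb{R}^{d_1 \times d_x}$. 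In particular, $\Wbf' \mapsto \Phi(\Wbf')$ is a bijection on the space of directions.

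The key identity I would exploit is that, by linearity of $\Phi$ and loss invariance,
\[
L(\widetilde{\Wbf} + t\, \Phi(\Wbf')) \;=\; L\bigl(\Phi(\Wbf + t\Wbf')\bigr) \;=\; L(\Wbf + t \Wbf'),
\]
for every direction $\Wbf'$ and every $t \in \mathbb{R}$. Matching coefficients in the Taylor expansion \eqref{Taylor expansion of loss} on both sides yields
\[
c_1\bigl(\widetilde{\Wbf}, \Phi(\Wbf')\bigr) \;=\; c_1(\Wbf, \Wbf') \qquad \text{and} \qquad c_2\bigl(\widetilde{\Wbf}, \Phi(\Wbf')\bigr) \;=\; c_2(\Wbf, \Wbf').
\]

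Finally I would conclude using Lemma~\ref{Characterization of critical points in our settings}: since $\Phi$ is a bijection on directions, the set $\{\Phi(\Wbf') : \Wbf'\}$ ranges over all admissible directions $\widetilde{\Wbf}'$. Therefore $c_1(\widetilde{\Wbf}, \widetilde{\Wbf}') = 0$ for every $\widetilde{\Wbf}'$ if and only if $c_1(\Wbf, \Wbf') = 0$ for every $\Wbf'$, yielding the first-order equivalence; and likewise $c_2(\widetilde{\Wbf}, \cdot) \geq 0$ everywhere if and only if $c_2(\Wbf, \cdot) \geq 0$ everywhere, yielding the second-order equivalence. I do not expect a serious obstacle: the argument is essentially a change-of-variables through a linear isomorphism under which $L$ is invariant, and the only bookkeeping to be careful about is the telescoping cancellation of the $D_h$'s in the global product.
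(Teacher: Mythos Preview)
Your proof is correct and follows essentially the same idea as the paper: exploit that the telescoping product $\widetilde{W}_H \cdots \widetilde{W}_1 = W_H \cdots W_1$ makes $L$ invariant under $\Phi$, then transfer directions through the bijection $\Phi$ and match Taylor coefficients via Lemma~\ref{Characterization of critical points in our settings}. The only minor difference is that the paper handles the first-order part by writing out the explicit partial gradients from Lemma~\ref{partial derivatives of L} and stripping off the invertible $D_h$ factors, whereas you treat both orders uniformly through the $c_1,c_2$ characterization; your version is a bit cleaner but not substantively different.
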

The lemma is proved in Appendix \ref{Proof same nature}. \\

Proposition \ref{main prop non strict saddles} is then obtained using Proposition \ref{simplif mat} (note that when $\Wbf$ is tightened, $\widetilde{\Wbf}$ is also tightened since
the rank of a matrix does not change when multiplied by invertible matrices), by showing
that $\widetilde{\Wbf} = (\widetilde{W}_H, \ldots ,\widetilde{W}_1)$ as given by Proposition \ref{simplif mat} is a second-order critical point of $L$ and using Lemma \ref{same nature} to conclude that $\Wbf$ is a second-order critical point.
This is easier since $\widetilde{\Wbf}$ has a simpler form.

More precisely, we have the following result, from which Proposition \ref{main prop non strict saddles} follows (see Appendix \ref{Proof of main prop non strict saddles} for details).

\begin{proposition}\label{prop 4}
Suppose Assumption \ref{Assump H} in Section \ref{Settings} holds true.
Let $\Wbf = (W_H,\ldots,W_1)$ be a first-order critical point of $L$ associated with $\Ss = \llbracket 1,r \rrbracket$ with $0 \leq r< r_{max}$ such that there exist matrices $Z_H \in \mathbb{R}^{(d_y-r) \times (d_{H-1}-r)}$, $Z_1 \in \mathbb{R}^{(d_1-r) \times d_x}$ and $Z_h \in \mathbb{R}^{(d_h-r) \times (d_{h-1}-r)}$ for $h \in \llbracket 2 , H-1 \rrbracket$ with
\begin{align}
    {W}_H &= [U_{\Ss} , U_Q Z_H] \label{W_H simplified} \\
    {W}_1 &= \begin{bmatrix}
    U_{\Ss}^T\Sigma_{YX}\Sigma_{XX}^{-1} \\ 
    Z_1
    \end{bmatrix} \label{W_1 simplified} \\
    {W}_h &= \left[
    \begin{array}{c  c}
    I_r & 0 \\
    0 & Z_h
    \end{array}
    \right] \quad \forall h \in \llbracket 2 , H-1 \rrbracket \label{W_h simplified} \\
    {W}_H \cdots {W}_2 &= \left[U_{\Ss} , 0 \right], \label{W_H...W_2 simplified}
\end{align} 
where $Q = \llbracket1,d_y \rrbracket \setminus \Ss$. \\
If $\Wbf$ is tightened,
then $\Wbf$ is a second-order critical point of $L$.
\end{proposition}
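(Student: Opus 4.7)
The plan is to apply Lemma \ref{Characterization of critical points in our settings} directly: I would Taylor-expand $L(\Wbf + t\Wbf')$ in $t$ along an arbitrary direction $\Wbf' = (W'_H, \ldots, W'_1)$ and show that the second-order coefficient $c_2(\Wbf, \Wbf')$ is non-negative. Writing the product $(W_H + tW'_H) \cdots (W_1 + tW'_1) = M_0 + tM_1 + t^2 M_2 + O(t^3)$ where $M_0 = W_H \cdots W_1$, $M_1 = \sum_{h=1}^{H} W_H \cdots W_{h+1} W'_h W_{h-1} \cdots W_1$, and $M_2 = \sum_{1 \leq j < i \leq H} W_H \cdots W_{i+1} W'_i W_{i-1} \cdots W_{j+1} W'_j W_{j-1} \cdots W_1$, and using that $\Wbf$ is a first-order critical point, one obtains
\begin{align*}
c_2(\Wbf, \Wbf') \;=\; \|M_1 X\|^2 \;+\; 2\langle M_0 X - Y,\, M_2 X \rangle.
\end{align*}

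The next step is to exploit the simplified form \eqref{W_H simplified}--\eqref{W_H...W_2 simplified} to get a workable expression for the cross term. A short calculation yields the residual identity $(M_0 X - Y) X^T = -U_Q U_Q^T \Sigma_{YX}$: the residual correlates with the data only along the orthogonal complement of $\col(U_\Ss)$. By cyclicity of the trace, the summand of $2\langle M_0 X - Y, M_2 X\rangle$ indexed by the pivot $(i,j)$ rewrites as
\begin{align*}
-2\tr\!\left(\bigl[W_{j-1}\cdots W_1 \Sigma_{XY} U_Q U_Q^T W_H \cdots W_{i+1}\bigr]\, W'_i \, \bigl[W_{i-1}\cdots W_{j+1}\bigr]\, W'_j\right),
\end{align*}
in which the two complementary blocks to $(i,j)$ appear explicitly. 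Using \eqref{W_h simplified} one checks that $W_{i-1}\cdots W_{j+1}$ has rank $r$ iff the ``$Z$-product'' $Z_{i-1}\cdots Z_{j+1}$ vanishes; and an analogous computation shows that the first complementary block has rank $r$ iff a corresponding boundary product of $Z$-matrices (involving $Z_1$ and $Z_H$) vanishes.

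I would then decompose each perturbation $W'_h$ into blocks compatible with the block structure of $W_h$. For interior pivots ($1 < j$ and $i < H$), the tightened condition forces one of the two $Z$-products above to vanish, and the resulting sandwich structure makes the entire pivot contribution collapse to zero. For boundary pivots ($j = 1$ or $i = H$), the cross term does not vanish outright, but the tightened condition reduces it to a bilinear expression in the block components of $W'_j, W'_i$ that can be paired with specific squared terms coming from $\|M_1 X\|^2$ — in particular the $h = 1$ and $h = H$ summands of $M_1$, which have simple forms built on the ``top'' rows of $W'_1$ and on the blocks of $W'_H$. Regrouping the pivot cross-terms with the appropriate squared pieces of $\|M_1 X\|^2$ would then express $c_2(\Wbf, \Wbf')$ as a sum of non-negative terms.

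The main obstacle is this last regrouping: there are $H(H-1)/2$ pivots to process, each with its own case-split depending on which complementary block has rank $r$, and one must coordinate the decomposition so that every pivot's potentially negative cross contribution is matched by a non-negative piece without double-counting. A natural strategy is to process pivots from the outside in (starting with $(H,1)$) and to exploit the orthogonality between $U_\Ss$ and $U_Q$ throughout. Once the sum-of-squares decomposition is achieved, Lemma \ref{Characterization of critical points in our settings} immediately concludes that $\Wbf$ is a second-order critical point.
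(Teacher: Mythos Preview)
Your overall framework matches the paper's: expand $L(\Wbf+t\Wbf')$, write $c_2(\Wbf,\Wbf')=\|M_1X\|^2+2\langle M_0X-Y,M_2X\rangle$, use the residual identity $(M_0X-Y)X^T=-U_QU_Q^T\Sigma_{YX}$, and try to organize $c_2$ as a sum of squares. The rewriting of each pivot summand as a trace involving the two complementary blocks is also correct.

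The gap is your claim that for interior pivots ($1<j<i<H$) the tightened condition makes the summand collapse to zero. It does not. Take the case where the second complementary block has rank~$r$, i.e., $Z_{i-1}\cdots Z_{j+1}=0$, so that $W_{i-1}\cdots W_{j+1}=\bigl[\begin{smallmatrix}I_r&0\\0&0\end{smallmatrix}\bigr]$. A direct block computation shows the pivot contribution equals
\[
-2\,\tr\!\bigl(Z_{j-1}\cdots Z_1\,\Sigma_{XY}U_Q\,Z_HZ_{H-1}\cdots Z_{i+1}\,(W'_i)_{r+1:d_i,\,1:r}\,(W'_j)_{1:r,\,r+1:d_{j-1}}\bigr),
\]
which is nonzero for generic $W'_i,W'_j$ whenever neither of the boundary $Z$-products $Z_{j-1}\cdots Z_1\Sigma_{XY}U_Q$ or $Z_H\cdots Z_{i+1}$ vanishes. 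So interior pivots can, and do, contribute nontrivial cross terms that must be absorbed into the squared part; your proposed ``outside-in'' regrouping does not address this.

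What the paper does instead is to use the tightened condition not pivot by pivot, but on a few well-chosen pivots $\bigl((2,1),(p,1),(H,H-1),(H,q)\bigr)$ to extract two global indices $p\in\llbracket 3,H\rrbracket$ and $q\in\llbracket 1,\min(p-1,H-2)\rrbracket$ with the property that $Z_H\cdots Z_p=0$, $Z_{p-1}\cdots Z_2=0$, $Z_q\cdots Z_1\Sigma_{XY}U_Q=0$, and $Z_{H-1}\cdots Z_{q+1}=0$ (this is Lemma~\ref{E}). These four identities give a uniform partition $\llbracket 1,H\rrbracket=\{1\}\cup J_3\cup J_2\cup J_1\cup\{H\}$, and every $T_i$ and $T_{i,j}$ simplifies according to which block the indices fall in (Lemmas~\ref{simplif all T_i} and~\ref{simplif all T_ij}). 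In particular, the only surviving cross terms have $i\in\{H\}\cup J_1$ and $j\in J_3\cup\{1\}$, and these regroup with two specific pieces of $\|M_1X\|^2$ into $\|A_3-A_4\|^2$; the remainder is $a_1+\|A_2\|^2\geq 0$. Without this global organization, the per-pivot case splits do not coordinate and the sum-of-squares decomposition does not close.
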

Proposition \ref{prop 4} is proved in details in Section \ref{proof prop 4}.
We provide a proof sketch below. \\
We denote, for $t$ in the neighborhood of 0, and $h \in \llbracket 1,H \rrbracket$,  $W_h(t) = W_h + t W'_h$ where $W'_h \in \mathbb{R}^{d_h \times d_{h-1}}$ is arbitrary.\\
We define $\Wbf(t) := (W_H(t), \ldots ,W_1(t))$ and $W(t) := W_H(t) \cdots W_1(t)$.
As in the previous two sections, we use Lemma \ref{Characterization of critical points in our settings}.
However, this time, we show that the second-order coefficient $c_2(\Wbf,\Wbf')$ is non-negative for all directions $\Wbf'$. \\
To compute the loss $\|W(t)X-Y\|^2$, we expand
\begin{align*}
    W(t) &= W_H(t) \cdots W_1(t) \\
    &= (W_H + t W'_H) \cdots (W_1 + t W'_1) \\
    &= W_H \cdots W_1 + t\sum_{i=1}^{H} W_H \cdots W_{i+1} W'_i W_{i-1} \cdots W_1\\ &\quad + t^2 \sum_{H \geq i>j \geq 1} W_H \cdots W_{i+1} W'_i W_{i-1} \cdots W_{j+1} W'_j W_{j-1} \cdots W_1 +o(t^2) \;.
\end{align*}
Therefore,
\begin{align*}
    L(\Wbf(t)) &= \left\| W_H \cdots W_1 X - Y + t\sum_{i=1}^{H} W_H \cdots W_{i+1} W'_i W_{i-1} \cdots W_1 X \right. \\ 
    & \quad  \left. + t^2 \sum_{H \geq i>j \geq 1} W_H \cdots W_{i+1} W'_i W_{i-1} \cdots W_{j+1} W'_j W_{j-1} \cdots W_1 X +o(t^2) \right\|^2 \;.
\end{align*}
We can now easily calculate the second-order coefficient $c_2(\Wbf,\Wbf')$ in the Taylor expansion of $L(\Wbf(t))$ around $t=0$ (in \eqref{Taylor expansion of loss}). \\
Recalling that $c_2(\Wbf, \Wbf')$ is such that 
$L(\Wbf(t)) = L(\Wbf) + c_2(\Wbf , \Wbf') t^2 + o(t^2) $ (since $\Wbf$ is a first-order critical point), we have
\begin{align*}
    c_2(\Wbf , \Wbf') &= \left\|\sum_{i=1}^{H} W_H \cdots W_{i+1} W'_i W_{i-1} \cdots W_1 X \right\|^2 \\
    &\quad + 2 \left\langle  \sum_{H \geq i>j \geq 1} W_H \cdots W_{i+1} W'_i W_{i-1} \cdots W_{j+1} W'_j W_{j-1} \cdots W_1 X \ ,\ W_H \cdots W_1 X - Y \right\rangle \;,
\end{align*}
where $\left\langle  A,B \right\rangle = \tr(AB^T)$.
In order to simplify the notation and equations, we define, for all $i \in \llbracket 1 , H \rrbracket$,  
\begin{align}
    T_i = W_H \cdots W_{i+1} W'_i W_{i-1} \cdots W_1 X \;, \label{T_i}
\end{align}
and for all $i,j \in \llbracket 1 , H \rrbracket$ with $i > j$: 
\begin{align}
    T_{i,j} = \left\langle  W_H \cdots W_{i+1} W'_i W_{i-1} \cdots W_{j+1} W'_j W_{j-1} \cdots W_1 X  \ ,\ W_H \cdots W_1 X - Y \right\rangle \;. \label{T_ij}  
\end{align}
Then we set
\begin{align}
    FT = \left\|\sum_{i=1}^{H} T_i \right\|^2 \;, \label{FT}
\end{align}
and 
\begin{align}
    ST = 2 \sum_{H \geq i>j \geq 1} T_{i,j} \;. \label{ST}
\end{align}
The coefficient becomes 
\begin{align*}
    c_2(\Wbf , \Wbf') &= \left\|\sum_{i=1}^{H} T_i \right\|^2 + 2 \sum_{H \geq i>j \geq 1} T_{i,j} = FT + ST \;.
\end{align*}
Using the fact that $\Wbf$ is tightened, some weight products become simple (see Lemma \ref{E}) and  we can simplify $T_i$ and $T_{i,j}$ (see Lemmas \ref{simplif all T_i} and \ref{simplif all T_ij} in Appendix \ref{Appendix non strict saddles}). \\
This allows us to establish that, for any $\Wbf'$, there exist matrices $A_2$, $A_3$, $A_4$ and a non-negative scalar $a_1$ such that $FT = a_1 + \|A_2\|^2 + \|A_3\|^2 + \|A_4\|^2$ (see Appendix \ref{ section simplifying FT}) and $ST = -2 \left\langle  A_3 , A_4 \right\rangle$ (see Appendix \ref{section simplifying ST}).
Therefore 
$$c_2(\Wbf , \Wbf' ) = FT + ST = a_1 + \|A_2\|^2 + \|A_3-A_4\|^2 \geq 0 \;,$$
and using Lemma \ref{Characterization of critical points in our settings}
we conclude that $\Wbf$ is a second-order critical point. \\

We are now in a position to prove Theorem~\ref{main theorem} as a direct corollary from the above results.

\begin{proof}[Proof of Theorem~\ref{main theorem}]
\label{pf:thm1}
The classification into global minimizers, strict saddle points, and non-strict saddle points follows directly from Propositions~\ref{main prop partie baldi}, \ref{main prop subtle strict saddles}, and \ref{main prop non strict saddles} above. As for the fact that
\[
W_H \cdots W_1 = U_{\Ss}U_{\Ss}^T \Sigma_{YX}\Sigma_{XX}^{-1} \in \argmin_{R \in \mathbb{R}^{d_y \times d_x}, \rk(R) \leq r} \|RX - Y\|^2
\]
when $\Ss = \llbracket 1,r \rrbracket$, it follows from Proposition~\ref{global map and critical values} above and from Lemma~\ref{lemma yun} in Appendix~\ref{Appendix Notation and general useful lemmas and properties}.
\end{proof}

\section{Conclusion}\label{sec:conclusion}
We studied the optimization landscape of linear neural networks of arbitrary depth with the square loss.
We first derived a necessary condition for being a first-order critical point by associating any of them with a set of eigenvectors of a data-dependent matrix.
We then provided a complete characterization of the landscape at order $2$ by distinguishing between global minimizers, strict saddle points, and non-strict saddle points. As a by-product of this analysis, we exhibited large sets of strict and non-strict saddle points and derived an explicit parameterization of all global minimizers. Our second-order characterization also sheds some light on the implicit regularization that may be induced by first-order algorithms, by proving that non-strict saddle points and some strict saddle points are among the global minimizers of the \RCLRP. It also helps re-interpret a recent convergence result, stating that gradient descent with Xavier initialization converges to a global minimum for any wide enough deep linear network.

%\subsection*{Acknowledgements}
%\begin{acknowledgements}
\acks{
Our work has benefited from the AI Interdisciplinary Institute ANITI.
ANITI is funded by the French "Investing for the Future – PIA3" program under the Grant agreement n°ANR-19-PI3A-0004.
The authors gratefully acknowledge the support of the DEEL project.\footnote{\url{https://www.deel.ai/}} 
El Mehdi Achour acknowledges as well  funding by the Deutsche Forschungsgemeinschaft (DFG, German Research Foundation) – SFB 1481 – 442047500.
\textcolor{black}{We would like to thank the anonymous reviewer who made the connection between 0-balancedness and tightenedness.}
}
%\end{acknowledgements}

\appendix

\section{Notation and Useful Properties}\label{Appendix Notation and general useful lemmas and properties}

In this section, we define some additional notation and terminology that will be used through all subsequent appendices. We also state simple linear algebra facts (Section~\ref{sec:simpleLAfacts}), together with some properties about the Moore-Penrose inverse (Section~\ref{annexe pseudo inverse}).
Since most of the proofs rely on linear algebra, we recommend the unfamiliar reader to check classical textbooks. \\

 \textbf{Additional notation:} If a matrix $A$ has already a subscript like $W_H$ for example, we denote  by $(W_H)_{.,i}$ the $i$-th column and by $(W_H)_{.,J}$ the sub-matrix obtained by concatenating the column vectors $(W_H)_{.,i}$, for all $i \in \mathcal{J}$. Also $(W_H)_{i,.}$ denotes the $i$-th row of $W_H$ and $(W_H)_{\mathcal{I},.}$ the sub-matrix obtained by concatenating the line vectors $(W_H)_{i,.}$, for all $i \in \mathcal{I}$.
More generally $(W_H)_{\mathcal{I}, \mathcal{J}}$ denotes the matrix $W_H$ restricted to the index set $\mathcal{I} \times \mathcal{J}$.
For instance, $(W_H)_{1:r,r+1:d_{H-1}} \in \mathbb{R}^{r \times (d_{H-1}-r)}$ is the matrix formed from $W_H$ by keeping the rows from $1$ to $r$ and the columns from $r+1$ to $d_{H-1}$.
The symbol $\delta_{i,j}$ denotes the Kronecker index which equal to 0 if $i \neq j$ and 1 if $i=j$. \\

Also, we define the partial gradients with respect to each weight matrix as follows.

\subsection{Partial Gradients}
\begin{definition}[gradient and partial gradients of $L$]
Since the input $\Wbf = (W_H, \ldots, W_1)$ of $L(\Wbf)$ is not a vector but a sequence of matrices, we define the gradient $\nabla L(\Wbf)$ of $L$ at $\Wbf$ with a similar format :
$$ \nabla L(\Wbf) = \left( \nabla_{W_H} L(\Wbf), \ldots , \nabla_{W_1} L(\Wbf)\right)\;, $$
where each partial gradient $\nabla_{W_h} L(\Wbf) \in \mathbb{R}^{d_h \times d_{h-1}}$ is the matrix whose entries are the partial derivatives $\frac{\partial L}{\partial (W_h)_{i,j}}$ for $i=1,..,d_h$ and $j=1,..,d_{h-1}$
\end{definition}

The next lemma provides explicit formulas for the partial gradients of $L$.
A proof can be found at the end of \cite{yun2018global}.
\begin{lemma} \label{partial derivatives of L}
    Let $h \in \llbracket 2,H-1 \rrbracket$.
    The partial gradient of $L$ with respect to $W_h$ is:
    $$\nabla_{W_h} L(\Wbf)  = 2 (W_H \cdots W_{h+1})^T(W_H \cdots W_1 \Sigma_{X X} - \Sigma_{Y X})(W_{h-1} \cdots W_1)^T \;.$$
    We also have the partial gradient with respect to $W_H$:
    $$\nabla_{W_H} L(\Wbf)  = 2 (W_H \cdots W_1 \Sigma_{X X} - \Sigma_{Y X})(W_{H-1} \cdots W_1)^T \;.$$
    Finally, the partial gradient with respect to $W_1$ is:
    $$\nabla_{W_1} L(\Wbf)  = 2 (W_H \cdots W_{2})^T(W_H \cdots W_1 \Sigma_{X X} - \Sigma_{Y X}) \;.$$
\end{lemma}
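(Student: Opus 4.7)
The plan is to compute the partial gradients by a direct first-order perturbation argument, exploiting the multilinearity of the matrix product $W_H \cdots W_1$ in each factor $W_h$. I would work the general case $h \in \llbracket 2, H-1 \rrbracket$ first and then obtain the boundary cases $h=H$ and $h=1$ by specialization using the empty-product convention from Section~\ref{Settings}.

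For $h \in \llbracket 2, H-1 \rrbracket$ and an arbitrary direction $E \in \mathbb{R}^{d_h \times d_{h-1}}$, I would expand
\[
W_H \cdots (W_h + tE) \cdots W_1 = W_H \cdots W_1 + t\, W_H \cdots W_{h+1}\, E\, W_{h-1} \cdots W_1,
\]
substitute into $L(\Wbf) = \|W_H\cdots W_1 X - Y\|^2 = \tr\bigl((W_H\cdots W_1 X - Y)^T(W_H\cdots W_1 X - Y)\bigr)$, and read off the coefficient of $t$. Setting $A = W_H \cdots W_1 X - Y$, $B = W_H \cdots W_{h+1}$, $C = W_{h-1} \cdots W_1 X$, this coefficient is
\[
2\,\tr\!\bigl(A^T B\, E\, C\bigr) \;=\; 2\,\tr\!\bigl( (B^T A C^T)^T E\bigr),
\]
where I have used the cyclicity of the trace to move $C$ to the front. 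By definition of the Frobenius inner product and of the partial gradient, this identifies $\nabla_{W_h} L(\Wbf) = 2\, B^T A C^T$, i.e.
\[
\nabla_{W_h} L(\Wbf) = 2\,(W_H \cdots W_{h+1})^T \bigl(W_H \cdots W_1 X - Y\bigr)(W_{h-1}\cdots W_1 X)^T.
\]
The final cosmetic step is to factor $X^T$ through the rightmost product and use $XX^T = \Sigma_{XX}$ and $YX^T = \Sigma_{YX}$ to rewrite $(W_H\cdots W_1 X - Y) X^T = W_H \cdots W_1 \Sigma_{XX} - \Sigma_{YX}$, which yields the stated expression.

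For the boundary cases I would not redo the perturbation computation from scratch; instead I would invoke the convention from Section~\ref{Settings} that products of the form $W_h \cdots W_{h+1}$ (empty products) equal the identity $I_{d_h}$ of the appropriate size. Setting $h = H$ makes the factor $(W_H \cdots W_{H+1})^T$ collapse to $I_{d_H}$, and setting $h = 1$ makes $(W_0 \cdots W_1)^T$ collapse to $I_{d_0}$; this yields directly the two simpler formulas in the statement.

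There is no real obstacle here: everything is a one-line Taylor expansion plus cyclic manipulation of the trace. The only mild points of attention are keeping the order of matrices straight when rewriting $\tr(A^T B E C) = \tr((B^T A C^T)^T E)$, and being careful with the empty-product convention so that the boundary cases match without a separate argument.
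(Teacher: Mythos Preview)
Your argument is correct. The paper does not actually prove this lemma; it simply states the formulas and cites \cite{yun2018global} for a proof. Your direct perturbation computation is the standard route and matches what one finds in that reference: expand $L$ to first order in $t$ along a direction $E$, use cyclicity of the trace to isolate $E$, and read off the gradient from the Frobenius inner product identity $\langle G, E\rangle = \tr(G^T E)$. Your use of the empty-product convention from Section~\ref{Settings} to recover the boundary cases $h=H$ and $h=1$ without a separate computation is also exactly in the spirit of the paper's conventions.
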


\subsection{Simple Linear Algebra Facts}
\label{sec:simpleLAfacts}
Recall that $\Sigma^{1/2} = \Sigma_{Y X} \Sigma_{X X}^{-1} X$ and $\Sigma = \Sigma^{1/2} (\Sigma^{1/2})^T = \Sigma_{Y X} \Sigma_{X X}^{-1} \Sigma_{X Y}.$
Recall also from \eqref{svd de sigma 1/2} that $\Sigma^{1/2} = U \Delta V^T$ is a Singular Value Decomposition, where $U \in \mathbb{R}^{d_y \times d_y}$ and $V \in \mathbb{R}^{m \times m}$ are orthogonal matrices.
\begin{lemma}\label{sigma invertible}
    Suppose Assumption \ref{Assump H} in Section \ref{Settings} holds true.
    Then $\Sigma$ is invertible.
\end{lemma}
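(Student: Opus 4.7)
The plan is to show that $\Sigma \in \mathbb{R}^{d_y \times d_y}$ has trivial kernel, using the fact that $\Sigma_{XX}$ is positive definite and $\Sigma_{XY}$ has full column rank $d_y$.

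First, I would note that $\Sigma_{XX} = XX^T$ is symmetric, positive semi-definite, and invertible by Assumption $\mathcal{H}$; hence $\Sigma_{XX}^{-1}$ is also symmetric and positive definite. Consequently, $\Sigma = \Sigma_{YX} \Sigma_{XX}^{-1} \Sigma_{XY}$ is a symmetric positive semi-definite matrix (indeed $\Sigma_{YX} = \Sigma_{XY}^T$, so this is of the form $A^T M A$ with $M \succ 0$).

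Next, to establish invertibility, I would take an arbitrary $v \in \mathbb{R}^{d_y}$ with $\Sigma v = 0$ and show $v = 0$. Setting $u := \Sigma_{XY} v \in \mathbb{R}^{d_x}$, the equation $v^T \Sigma v = 0$ becomes $u^T \Sigma_{XX}^{-1} u = 0$; since $\Sigma_{XX}^{-1}$ is positive definite, this forces $u = 0$, i.e. $\Sigma_{XY} v = 0$. Now $\Sigma_{XY} \in \mathbb{R}^{d_x \times d_y}$ has rank $d_y$ by Assumption $\mathcal{H}$, and since $d_x \geq d_y$, this means $\Sigma_{XY}$ has full column rank, so $v = 0$.

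The proof is routine and contains no real obstacle: the only thing to be careful about is invoking the correct direction of the rank condition (full column rank of $\Sigma_{XY}$, guaranteed by $d_y \leq d_x$ combined with $\operatorname{rk}(\Sigma_{XY}) = d_y$). Alternatively, one could argue directly from the SVD $\Sigma^{1/2} = U \Delta V^T$: since $\Sigma^{1/2} = \Sigma_{YX} \Sigma_{XX}^{-1/2} \cdot \Sigma_{XX}^{-1/2} X$ has the same column space as $\Sigma_{YX}$, which is all of $\mathbb{R}^{d_y}$, the matrix $\Sigma^{1/2}$ has rank $d_y$ and so $\Sigma = \Sigma^{1/2}(\Sigma^{1/2})^T$ is invertible. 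I would use the first, more elementary, argument.
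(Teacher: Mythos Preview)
Your proof is correct. It differs from the paper's argument, which instead works with $\Sigma^{1/2} = \Sigma_{YX}\Sigma_{XX}^{-1}X$: the paper shows $\rk(\Sigma^{1/2}) = \rk(\Sigma_{YX}) = d_y$ via the two-sided inequality $\rk(\Sigma^{1/2}) \leq \rk(\Sigma_{YX})$ and $\rk(\Sigma^{1/2}) \geq \rk(\Sigma_{YX}\Sigma_{XX}^{-1}XX^T) = \rk(\Sigma_{YX})$, and then concludes $\rk(\Sigma) = \rk(\Sigma^{1/2}(\Sigma^{1/2})^T) = \rk(\Sigma^{1/2}) = d_y$. Your kernel argument using positive definiteness of $\Sigma_{XX}^{-1}$ is arguably more direct and avoids introducing $\Sigma^{1/2}$ altogether; the paper's route has the minor side benefit of explicitly recording that $\Sigma^{1/2}$ has full row rank, a fact used implicitly later when working with its SVD. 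Your alternative sketch via $\Sigma^{1/2}$ is essentially the paper's approach.
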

\begin{proof}
Given the definition of $\Sigma^{1/2}$ %in Section \ref{Settings}
,
it is a standard fact of linear algebra that $\rk(\Sigma^{1/2}) = \rk(\Sigma_{Y X} \Sigma_{X X}^{-1} X) \leq \rk(\Sigma_{YX})$.
On the other hand, $\rk(\Sigma^{1/2}) = \rk(\Sigma_{Y X} \Sigma_{X X}^{-1} X) \geq \rk(\Sigma_{Y X} \Sigma_{X X}^{-1} XX^T) = \rk(\Sigma_{Y X})$ since $\Sigma_{XX} = XX^T$.
Therefore $\rk(\Sigma^{1/2}) = \rk(\Sigma_{YX}) = d_y$ by Assumption \ref{Assump H}.
Finally, using another fact of linear algebra we have $\rk(\Sigma) = \rk(\Sigma^{1/2} (\Sigma^{1/2})^T) = \rk(\Sigma^{1/2})$, and therefore $\rk(\Sigma) = d_y$.
Hence, $\Sigma$ is invertible.
\end{proof}
The next lemma is about global minimizers of the \RCLRP.
\begin{lemma}\label{lemma yun}
    Suppose Assumption \ref{Assump H} in Section \ref{Settings} holds true.
    Let $\Ss = \llbracket 1,r \rrbracket$. We have
    $$U_{\Ss}U_{\Ss}^T \Sigma_{YX}\Sigma_{XX}^{-1} \in \argmin_{R \in \mathbb{R}^{d_y \times d_x},\rk(R) \leq r} \|RX - Y\|^2.$$
\end{lemma}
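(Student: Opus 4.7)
The plan is to reduce the rank-constrained regression problem to a standard low-rank matrix approximation problem, and then apply the Eckart--Young--Mirsky theorem. The main tool is a Pythagorean decomposition around the unconstrained ordinary-least-squares solution.

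First, I would set $R^{\star} := \Sigma_{YX}\Sigma_{XX}^{-1}$ (the unconstrained minimizer of $R\mapsto\|RX-Y\|^2$) and verify the orthogonality identity
\[
\|RX-Y\|^2 \;=\; \|(R-R^{\star})X\|^2 \;+\; \|R^{\star}X-Y\|^2,
\]
for every $R\in\mathbb{R}^{d_y\times d_x}$. The cross term equals $2\tr\bigl((R-R^{\star})^T(R^{\star}X-Y)X^T\bigr)$ and vanishes because $(R^{\star}X-Y)X^T = R^{\star}\Sigma_{XX}-\Sigma_{YX} = 0$. Consequently the problem reduces to minimizing $\|(R-R^{\star})X\|^2$ over $\{R : \rk(R)\leq r\}$.

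Next, because $\Sigma_{XX}$ is invertible by Assumption~$\mathcal{H}$, I would set $\Sigma_{XX}^{1/2}$ to be its symmetric positive-definite square root and rewrite
\[
\|(R-R^{\star})X\|^2 \;=\; \bigl\|(R-R^{\star})\Sigma_{XX}^{1/2}\bigr\|^2 \;=\; \bigl\|R\,\Sigma_{XX}^{1/2} - \Sigma_{YX}\Sigma_{XX}^{-1/2}\bigr\|^2.
\]
Introducing $S := R\,\Sigma_{XX}^{1/2}$ gives a bijection between $\{R:\rk(R)\leq r\}$ and $\{S\in\mathbb{R}^{d_y\times d_x}:\rk(S)\leq r\}$ (since $\Sigma_{XX}^{1/2}$ is invertible). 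The problem becomes
\[
\min_{\rk(S)\leq r}\bigl\|S - \Sigma_{YX}\Sigma_{XX}^{-1/2}\bigr\|^2,
\]
which is exactly low-rank Frobenius approximation of the fixed matrix $M := \Sigma_{YX}\Sigma_{XX}^{-1/2}\in\mathbb{R}^{d_y\times d_x}$.

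The key observation is that $MM^T = \Sigma_{YX}\Sigma_{XX}^{-1}\Sigma_{XY} = \Sigma = U\Lambda U^T$, so the left singular vectors of $M$ are the columns of $U$ (the eigenvectors of $\Sigma$), and its nonzero singular values are $\sqrt{\lambda_1}>\cdots>\sqrt{\lambda_{d_y}}>0$, which are distinct by Assumption~$\mathcal{H}$. By Eckart--Young--Mirsky, the best rank-$r$ Frobenius approximation to $M$ is obtained by projecting $M$ onto the span of its top $r$ left singular vectors:
\[
S^{\star} \;=\; U_{\Ss}U_{\Ss}^T M \;=\; U_{\Ss}U_{\Ss}^T\,\Sigma_{YX}\Sigma_{XX}^{-1/2},\qquad \Ss=\llbracket1,r\rrbracket.
\]
Undoing the change of variables, the corresponding optimizer in $R$ is $S^{\star}\Sigma_{XX}^{-1/2} = U_{\Ss}U_{\Ss}^T\,\Sigma_{YX}\Sigma_{XX}^{-1}$, which is precisely the claimed matrix.

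The only delicate point is to justify the Eckart--Young step cleanly when $r\leq d_y\leq d_x$: one must note that the approximant lives in $\mathbb{R}^{d_y\times d_x}$ and that the constraint set $\{\rk(S)\leq r\}$ is the same whether one views $S$ through the thin SVD of $M$ or the full one; since $r\leq r_{\max}\leq d_y$ and the top $r$ singular values of $M$ are strictly positive and distinct, the rank-$r$ truncation is unambiguously defined and minimal, which completes the argument.
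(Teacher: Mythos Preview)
Your proof is correct and self-contained. The paper itself does not prove this lemma; it simply states ``A proof can be found in \cite{yun2018global}.'' Your argument---Pythagorean decomposition around the OLS solution, change of variables via $\Sigma_{XX}^{1/2}$, then Eckart--Young--Mirsky on $M=\Sigma_{YX}\Sigma_{XX}^{-1/2}$---is the standard reduced-rank regression derivation and is almost certainly what the cited reference does as well, so there is no genuine methodological difference to discuss.
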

\begin{proof}
A proof can be found in \cite{yun2018global}.
\end{proof}

We now present a lemma with elementary properties that we will use frequently and that are related to the orthogonality of $U$. The proof is straightforward.

\begin{lemma} \label{properties of orhtogonality}
    We have the following properties related to the orthogonality of the matrix $U$:
    \begin{itemize}
        \item We have $I_{d_y} = U U^T = U^T U$.
        \item For any $i, j \in \llbracket 1 , d_y \rrbracket$, we have $U_i^T U_j = \delta_{i,j}$.
        \item For any $I,J \subset \llbracket 1,d_y \rrbracket$ such that $I \cap J = \emptyset$, we have $U_I^T U_J = 0_{|I| \times |J|}$.
        \item For any $I,J \subset \llbracket 1,d_y \rrbracket$ such that $I \cap J = \emptyset$ and $I \cup J = \llbracket 1,d_y \rrbracket$, we have $ I_{d_y} = U_{I} U_{I}^T + U_{J} U_{J}^T$.
        \item For any $J \subset \llbracket 1,d_y \rrbracket$, we have $U_J^T U_J = I_{|J|}$ and $\rk(U_J U_J^T) = |J|$.
    \end{itemize}
\end{lemma}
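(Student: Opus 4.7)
The plan is to derive each of the five bullets directly from the single fact that $U \in \mathbb{R}^{d_y \times d_y}$ is a (square) orthogonal matrix, which is given by the singular value decomposition \eqref{svd de sigma 1/2}. No deep results are needed; everything reduces to reading off entries of the identity matrix and splitting sums of rank-one terms. I do not expect any real obstacle; the difficulty is purely bookkeeping about index sets.

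First, by definition of an orthogonal square matrix, $U^T U = U U^T = I_{d_y}$, which is bullet~1. For bullet~2, the $(i,j)$ entry of $U^T U$ is exactly $U_i^T U_j$, so the identity $U^T U = I_{d_y}$ gives $U_i^T U_j = \delta_{i,j}$. For bullet~3, if $I = \{i_1,\dots,i_p\}$ and $J = \{j_1,\dots,j_q\}$ are disjoint, then the $(a,b)$ entry of $U_I^T U_J$ equals $U_{i_a}^T U_{j_b}$; since $i_a \neq j_b$ whenever $I \cap J = \emptyset$, bullet~2 forces this entry to vanish, hence $U_I^T U_J = 0_{|I|\times|J|}$.

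For bullet~4, expand the outer product column-wise:
\[
I_{d_y} \;=\; U U^T \;=\; \sum_{k=1}^{d_y} U_k U_k^T \;=\; \sum_{k\in I} U_k U_k^T + \sum_{k\in J} U_k U_k^T \;=\; U_I U_I^T + U_J U_J^T,
\]
where the second equality is the standard identity $AA^T = \sum_k A_{\cdot,k}A_{\cdot,k}^T$ applied to $A=U$, the third uses that $\{I,J\}$ partitions $\llbracket 1,d_y\rrbracket$, and the last rewrites each partial sum as an outer product of the corresponding submatrix.

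Finally, for bullet~5, the $(a,b)$ entry of $U_J^T U_J$ is $U_{j_a}^T U_{j_b} = \delta_{j_a,j_b} = \delta_{a,b}$ by bullet~2, so $U_J^T U_J = I_{|J|}$. This identity implies that $U_J \in \mathbb{R}^{d_y\times|J|}$ has full column rank $|J|$; consequently $\col(U_J)$ is of dimension $|J|$. Since $U_J U_J^T$ is the orthogonal projector onto $\col(U_J)$ (using $U_J^T U_J = I_{|J|}$, one checks $(U_J U_J^T)^2 = U_J U_J^T$ and $(U_J U_J^T)^T = U_J U_J^T$, with image $\col(U_J)$), its rank equals the dimension of its image, namely $|J|$.
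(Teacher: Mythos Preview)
Your proof is correct and complete. The paper itself does not give a proof beyond stating that it is straightforward, so your explicit verification of each bullet from the orthogonality of $U$ is exactly the intended argument, just written out in full.
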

Note that the same applies also to the other orthogonal matrix $V \in \mathbb{R}^{m \times m}$ appearing in the Singular Value Decomposition of $\Sigma^{1/2}$ (we only replace $d_y$ by $m$).

Another useful lemma is the following:

\begin{lemma} \label{U_S sigma U_Q = 0}
    For any $I,J \subset \llbracket 1,d_y \rrbracket$ such that $I \cap J = \emptyset$, we have
    $$U_I^T \Sigma U_J = 0_{|I| \times |J|}.$$
    In particular, for any $\Ss \subset \llbracket1,d_y\rrbracket$ and $Q= \llbracket1,d_y\rrbracket \setminus \Ss$, we have $ U_{\Ss}^T \Sigma U_Q = 0 \;.$
\end{lemma}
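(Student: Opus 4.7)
The plan is to exploit the spectral decomposition $\Sigma = U\Lambda U^T$, which is available by construction (see the discussion following \eqref{svd de sigma 1/2}), and then to reduce everything to the orthogonality relations among columns of $U$ that are collected in Lemma~\ref{properties of orhtogonality}.

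First, I would substitute $\Sigma = U\Lambda U^T$ to obtain
\[
U_I^T \Sigma U_J \;=\; U_I^T U \Lambda U^T U_J \;=\; (U^T U_I)^T \, \Lambda \, (U^T U_J).
\]
By Lemma~\ref{properties of orhtogonality}, the matrix $U^T U_J \in \mathbb{R}^{d_y \times |J|}$ has columns $U^T U_j = e_j$ for $j\in J$, so $U^T U_J$ is nothing but the submatrix $(I_{d_y})_{\cdot,J}$ formed by the standard basis vectors indexed by $J$. Similarly for $U^T U_I$.

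Next, I would compute entrywise. Enumerate $I = \{i_1, \ldots, i_{|I|}\}$ and $J = \{j_1, \ldots, j_{|J|}\}$. The $(a,b)$-entry of $U_I^T \Sigma U_J$ equals
\[
e_{i_a}^T \Lambda e_{j_b} \;=\; \lambda_{j_b}\, \delta_{i_a, j_b}.
\]
Since $I \cap J = \emptyset$, we have $i_a \neq j_b$ for every pair $(a,b)$, so every entry vanishes, which gives $U_I^T \Sigma U_J = 0_{|I|\times|J|}$. The particular case with $Q = \llbracket 1, d_y \rrbracket \setminus \Ss$ then follows immediately since $\Ss \cap Q = \emptyset$.

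There is no real obstacle here: the entire argument is a one-line spectral computation. The only thing to be careful about is to invoke the correct items of Lemma~\ref{properties of orhtogonality} (namely $U^T U = I_{d_y}$ so that $U^T U_j = e_j$), and to note that the distinctness of the eigenvalues assumed in Assumption~$\mathcal{H}$ plays no role in this statement—only the orthogonality of $U$ matters.
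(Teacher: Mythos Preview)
Your proof is correct and follows essentially the same approach as the paper: both arguments compute the $(a,b)$-entry of $U_I^T \Sigma U_J$ and reduce it to $\lambda_{j_b}\,\delta_{i_a,j_b}$ via the eigenvector relation $\Sigma U_k = \lambda_k U_k$ (equivalently, the decomposition $\Sigma = U\Lambda U^T$) together with the orthogonality of the columns of $U$. The only cosmetic difference is that you pass through $U^T U_j = e_j$ explicitly, whereas the paper writes $U_j^T \Sigma U_k = \lambda_k U_j^T U_k = 0$ directly.
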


\begin{proof}
    We have, for any $k \in \llbracket 1,d_y \rrbracket$, $\Sigma U_k = \lambda_k U_k$. Hence for $j \neq k$ we have   $U_j^T \Sigma U_k = \lambda_k U_j^T U_k = 0$ since $U$ is orthogonal.
    Therefore, if we take two disjoint sets $J=\{j_1, \ldots ,j_p\}, K=\{k_1, \ldots ,k_n\} \subset \llbracket1 , d_y \rrbracket$, the coefficient in the position $(l,m)$ of the matrix $U_J^T \Sigma U_K$ is equal to $U_{j_l} \Sigma U_{k_m}$ which is zero, since $j_l \neq k_m$.
    Therefore, $U_J^T \Sigma U_K = 0$.
    In particular, $U_{\Ss}^T \Sigma U_Q = 0$.
\end{proof}

\subsection{The Moore-Penrose Inverse and its Properties}\label{annexe pseudo inverse}
The Moore-Penrose inverse is the most known and used generalized inverse\footnote{\url{en.wikipedia.org/wiki/Moore-Penrose_inverse}}.
It is defined as follows: \\
For $A \in \mathbb{R}^{m \times n}$, the pseudo-inverse of $A$ is defined as the matrix $A^{+} \in \mathbb{R}^{n \times m}$ which satisfies the 4 following criteria known as the Moore-Penrose conditions:
\begin{enumerate}
    \item $A A^{+} A=A$.
    \item $A^{+} A A^{+}=A^{+}$.
    \item $\left(A A^{+}\right)^{T}=A A^{+}$.
    \item $\left(A^{+} A\right)^{T}=A^{+} A$.
\end{enumerate}
$A^{+}$ exists for any matrix $A$ and is unique.
We also have the following properties:
\begin{enumerate}[label=(\roman*)]
    \item $A^{+}=\left(A^{T} A\right)^{+} A^{T}.$
    \item $\rk(A) = \rk(A^{+}) = \rk(AA^{+}) = \rk(A^{+}A).$
    \item If the linear system $Ax=b$ has any solutions, they are all given by $$ x = A^{+} b + (I - A^{+}A) w $$ for arbitrary vector $w$. This is equivalent to $$ x = A^{+} b + u $$ for arbitrary $u \in \text{Ker}(A)$.
    \item $P_A := AA^{+}$ is the orthogonal projection onto the range of $A$, and is therefore symmetric ($P_A^T = P_A$) (follows from 3) and idempotent ($P_A^2 = P_A$) (follows from 1). 
    \item $I_n - A^{+}A$ is the orthogonal projector onto the kernel of $A$.
\end{enumerate}

\section{Propositions and Lemmas for First-order Critical Points}\label{Appendix lemmas for first-order critical points}

In this section, we prove all lemmas about first-order critical points.
We start by stating some preliminary results.
\subsection{Preliminaries}\label{Appendix B.1}

The following lemma gives a necessary condition for $\Wbf$ to be a first-order critical point.
It also provides the global map of the network, defined by $W_H \cdots W_1$.
Finally, it states that the projection matrix $P_K$ and $\Sigma$ commute, where $K=W_H \cdots W_2$.
This is key in the rest of the analysis.

\begin{lemma} \label{lemma W_1 + global map and commutation}
Suppose Assumption \ref{Assump H} in Section \ref{Settings} holds true.
Let $\Wbf = (W_H,\ldots,W_1)$ be a first-order critical point of $L$.
We define $K = W_H \cdots W_2$ and $W = W_H W_{H-1} \cdots  W_1 = K W_1$.
Then, we have $$ W_1 = K^{+} \Sigma_{Y X} \Sigma_{X X}^{-1} + M \;,$$ where  $M \in \mathbb{R}^{d_1 \times d_x}$ is such that $KM=0$ and $K^{+}$ is the Moore-Penrose inverse of $K$ (see Appendix \ref{annexe pseudo inverse}).
As a consequence,
$$
\begin{cases}
    W = P_K \Sigma_{Y X} \Sigma_{X X}^{-1} \\
    \rk(W) = \rk(P_K) = \rk(K)
\end{cases}
$$
where we recall that $P_K = KK^{+} \in \mathbb{R}^{d_y \times d_y}$ is the matrix of the orthogonal projection onto the range of $K$.
Finally,
$$\Sigma P_K = P_K \Sigma \;.$$
\end{lemma}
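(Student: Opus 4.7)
My plan is to exploit two of the first-order optimality conditions from Lemma~\ref{partial derivatives of L}, namely $\nabla_{W_1}L(\Wbf)=0$ and $\nabla_{W_H}L(\Wbf)=0$, in combination with classical properties of the Moore-Penrose inverse recalled in Appendix~\ref{annexe pseudo inverse}. Throughout I will use the identity $W=KW_1$ together with the fact that $\Sigma_{XX}$ is invertible under Assumption~$\mathcal{H}$.

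First I would use $\nabla_{W_1}L(\Wbf)=0$, which by Lemma~\ref{partial derivatives of L} reads $K^T(W\Sigma_{XX}-\Sigma_{YX})=0$. Right-multiplying by $\Sigma_{XX}^{-1}$ and rearranging gives the linear system $K^TKW_1=K^T\Sigma_{YX}\Sigma_{XX}^{-1}$. Since $K^+=(K^TK)^+K^T$ and $\Ker(K^TK)=\Ker(K)$, the general-solution formula in Appendix~\ref{annexe pseudo inverse} yields
\[
W_1=K^+\Sigma_{YX}\Sigma_{XX}^{-1}+M,\qquad KM=0,
\]
which is the first assertion. Multiplying on the left by $K$ and using $KK^+=P_K$ and $KM=0$ produces $W=P_K\Sigma_{YX}\Sigma_{XX}^{-1}$. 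For the rank statement, I would argue that $\Sigma_{YX}\Sigma_{XX}^{-1}\in\mathbb{R}^{d_y\times d_x}$ has rank $d_y$ (because $\Sigma_{YX}$ has full rank $d_y$ by Assumption~$\mathcal{H}$ and $\Sigma_{XX}^{-1}$ is invertible), so its column space is all of $\mathbb{R}^{d_y}$; consequently $\col(W)=P_K(\mathbb{R}^{d_y})=\col(P_K)$, and $\rk(W)=\rk(P_K)=\rk(K)$ follows from standard pseudo-inverse identities.

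The core of the proof is the commutation $\Sigma P_K=P_K\Sigma$; this is the step I expect to require the most care, because it relies on a second optimality condition and on being clever enough to project everything back on $\col(K)$. I would invoke $\nabla_{W_H}L(\Wbf)=0$, which by Lemma~\ref{partial derivatives of L} gives $(W\Sigma_{XX}-\Sigma_{YX})B^T=0$ with $B:=W_{H-1}\cdots W_1$. Substituting $W\Sigma_{XX}=P_K\Sigma_{YX}$ (obtained above) yields $P_K\Sigma_{YX}B^T=\Sigma_{YX}B^T$. Right-multiplying by $W_H^T$ and using $W_HB=W$ gives $P_K\Sigma_{YX}W^T=\Sigma_{YX}W^T$. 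Finally, substituting $W^T=\Sigma_{XX}^{-1}\Sigma_{XY}P_K$ (since $P_K^T=P_K$) transforms this into
\[
P_K\Sigma P_K=\Sigma P_K.
\]
Taking the transpose and using $P_K^T=P_K$ and $\Sigma^T=\Sigma$ yields $P_K\Sigma P_K=P_K\Sigma$, and combining the two identities gives $\Sigma P_K=P_K\Sigma$, as desired.

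The main obstacle, as mentioned, is obtaining the commutation: one has to use an optimality condition other than $\nabla_{W_1}L=0$, and the right-multiplication trick by $W_H^T$ to pass from $B^T$ to $W^T$ is what makes the argument work. I do not anticipate difficulties in Steps~1 and~2 beyond bookkeeping about the pseudo-inverse and rank of a product involving $\Sigma_{YX}\Sigma_{XX}^{-1}$.
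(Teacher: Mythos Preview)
Your proposal is correct and follows essentially the same approach as the paper: you use $\nabla_{W_1}L(\Wbf)=0$ to obtain the normal equation $K^TKW_1=K^T\Sigma_{YX}\Sigma_{XX}^{-1}$, solve it via the pseudo-inverse, and then use $\nabla_{W_H}L(\Wbf)=0$ (right-multiplied by $W_H^T$) together with $W=P_K\Sigma_{YX}\Sigma_{XX}^{-1}$ to derive the commutation. The only cosmetic difference is in the last step: the paper observes that $W\Sigma_{XX}W^T$ is symmetric, hence $\Sigma_{YX}W^T=W\Sigma_{XY}$, which after substitution gives $\Sigma P_K=P_K\Sigma$ directly, whereas you first obtain $P_K\Sigma P_K=\Sigma P_K$ and then transpose; both routes are equivalent.
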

Note that $\Sigma_{Y X} \Sigma_{X X}^{-1}$ is the global minimizer of the problem with one layer (i.e the classical linear regression problem).
Therefore, the global map $W_H \cdots W_1$ of any first-order critical point of $L$ is equal to the
global minimizer of the linear regression projected onto the column space of $K$.
\begin{proof}
Let $\Wbf = (W_H,\ldots,W_1)$ be a first-order critical point of $L$.
In particular, the partial gradients of $L$ with respect to $W_1$ and $W_H$ are equal to zero at $\Wbf$.
Using Lemma \ref{partial derivatives of L}, this implies

\[
\left\{
\begin{array}{r@{\hspace{1mm}}c@{\hspace{1mm}}l}
(W_H \cdots W_2)^T W_{H} \cdots W_{1} \Sigma_{X X} &=& (W_H \cdots W_2)^T \Sigma_{Y X}  \\
W_{H} \cdots W_{1}\Sigma_{X X} (W_{H-1} \cdots W_1)^T &=&  \Sigma_{Y X} (W_{H-1} \cdots W_1)^T \;.
\end{array}
\right.
\]
We substitute in these equations $K = W_H W_{H-1} \cdots W_2$ and $W = W_H W_{H-1} \cdots  W_1 = K W_1$. Using that $\Sigma_{XX} $ is invertible, and multiplying the second equation on the right by $W_H^{T}$, we obtain that any critical point of $L$ satisfies
\begin{equation} \label{eq:2}
\begin{cases}
K^{T}K W_1 = K^{T} \Sigma_{Y X} \Sigma_{X X}^{-1}  \\
W \Sigma_{XX} W^{T} = \Sigma_{Y X} W^{T} \;.
\end{cases}
\end{equation}
The first equation implies $W_1 = (K^{T}K)^{+}K^T \Sigma_{Y X} \Sigma_{X X}^{-1} + M$, where  $M \in \mathbb{R}^{d_1 \times d_x}$ is such that $K^{T}KM=0$ (see Property (iii) in the reminder on Moore-Penrose inverse in Appendix \ref{annexe pseudo inverse}). \\
We have $(K^{T}K)^{+}K^{T}$ = $K^{+}$(see Property (i) in Appendix \ref{annexe pseudo inverse}) and a standard fact of linear algebra is that $\text{Ker}(K^T K) = \text{Ker}(K)$. \\
Therefore, using these properties, we obtain $W_1 = K^{+}\Sigma_{Y X} \Sigma_{X X}^{-1} + M$, where $KM=0$.
This proves the first statement of the lemma.
We then have,
\begin{align}
W &= K W_1 =K K^{+} \Sigma_{Y X} \Sigma_{X X}^{-1} + KM = P_K \Sigma_{Y X} \Sigma_{X X}^{-1} \;. \label{eq:3}
\end{align}
where $P_K=KK^{+}$ is the orthogonal projection matrix onto the column space of $K$ (see Appendix \ref{annexe pseudo inverse}).
Using Assumption \ref{Assump H}, we have that $\Sigma_{Y X} \Sigma_{X X}^{-1}$ is of full row rank, hence 
\begin{align}\label{rk(W) = rk(K)}
    \rk(W) = \rk(P_K \Sigma_{Y X} \Sigma_{X X}^{-1}) = \rk(P_K) = \rk(K) \;,
\end{align}
where the last equality comes from the property (ii) in Section \ref{annexe pseudo inverse}.
Therefore, \eqref{eq:2} and \eqref{rk(W) = rk(K)} prove the second statement of the lemma. \\
To prove that $\Sigma P_K = P_K \Sigma$, we remark that, using the second equation in \eqref{eq:2}, $\Sigma_{YX} W^T = W\Sigma_{X X}W^T $ and since $W\Sigma_{X X}W^T$ is symmetric and $(\Sigma_{Y X})^T = \Sigma_{X Y}$, we have
$$  \Sigma_{YX} W^T = W \Sigma_{X Y} \;.$$
Substituting the expression of $W$ from \eqref{eq:3}, and since $P_K$ and $\Sigma_{X X}^{-1}$ are symmetric, we have
\begin{equation*}
\Sigma_{Y X} \Sigma_{X X}^{-1} \Sigma_{X Y} P_K = P_K \Sigma_{Y X} \Sigma_{X X}^{-1} \Sigma_{X Y} \;.
\end{equation*}
Using the definition of $\Sigma$, this can be rewritten as
$$\Sigma P_K = P_K \Sigma \;,$$
which concludes the proof.
\end{proof}

\begin{lemma}\label{P_K = U_S U_S^T}
    Suppose Assumption \ref{Assump H} in Section \ref{Settings} holds true.
    Let $\Wbf = (W_H, \ldots, W_1)$ be a first-order critical point of $L$.
    We set $K = W_H \cdots W_2$ and $r = \rk(W_H \cdots W_1)$.\\
    There exists a unique subset $\Ss \subset\llbracket 1,d_y \rrbracket$ of size $r$ such that: $$P_K =U \mathcal{I}^{\Ss} U^T = U_{\Ss} U_{\Ss}^T,$$
    where $\mathcal{I}^{\Ss} \in \mathbb{R}^{d_y \times d_y}$ is the diagonal matrix such that, for all $i \in \llbracket 1,d_y \rrbracket$, $(\mathcal{I}^{\Ss})_{i,i} = 1$ if $i \in \Ss$ and $0$ otherwise. \\
\end{lemma}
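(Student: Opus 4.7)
The plan is to combine two facts already established in Lemma \ref{lemma W_1 + global map and commutation}: that $P_K$ is an orthogonal projection of rank $r$, and that it commutes with $\Sigma$. Together with the fact from Assumption $\mathcal{H}$ that $\Sigma$ has $d_y$ distinct eigenvalues, this will force $P_K$ to be diagonal in the eigenbasis of $\Sigma$ with $0/1$ entries, yielding the claimed form.

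First I would conjugate into the eigenbasis by setting $D := U^T P_K U$. Since $\Sigma = U \Lambda U^T$, the identity $\Sigma P_K = P_K \Sigma$ rewrites as $\Lambda D = D \Lambda$, i.e.\ $(\lambda_i - \lambda_j) D_{ij} = 0$ for all $i,j$. Because $\lambda_1 > \cdots > \lambda_{d_y} > 0$ (distinctness is part of Assumption $\mathcal{H}$, positivity comes from Lemma \ref{sigma invertible}), we get $D_{ij} = 0$ whenever $i \neq j$, so $D$ is diagonal.

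Next, since $P_K$ is an orthogonal projection it satisfies $P_K^2 = P_K$, hence $D^2 = D$, so each diagonal entry of $D$ lies in $\{0,1\}$. Define $\Ss := \{i \in \llbracket 1, d_y \rrbracket : D_{ii} = 1\}$. Then $D = \mathcal{I}^{\Ss}$, so $P_K = U \mathcal{I}^{\Ss} U^T$, and a direct column-by-column expansion gives $U \mathcal{I}^{\Ss} U^T = U_{\Ss} U_{\Ss}^T$. The cardinality is then pinned down by
$$|\Ss| = \tr(\mathcal{I}^{\Ss}) = \tr(D) = \tr(P_K) = \rk(P_K) = r,$$
using the invariance of the trace under conjugation by $U$, the fact that the trace of an orthogonal projection equals its rank, and the rank equality $\rk(P_K) = \rk(K) = r$ from Lemma \ref{lemma W_1 + global map and commutation}.

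For uniqueness, if $U_{\Ss} U_{\Ss}^T = U_{\Ss'} U_{\Ss'}^T$ then multiplying on the left by $U^T$ and on the right by $U$ yields $\mathcal{I}^{\Ss} = \mathcal{I}^{\Ss'}$, and comparing diagonal entries gives $\Ss = \Ss'$. There is no real obstacle: the whole argument is the classical fact that a symmetric matrix commuting with a symmetric matrix of simple spectrum is simultaneously diagonalizable in the same eigenbasis, specialized to the projection $P_K$.
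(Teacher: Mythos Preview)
Your proof is correct and follows essentially the same approach as the paper: conjugate $P_K$ into the eigenbasis of $\Sigma$, use the commutation relation from Lemma~\ref{lemma W_1 + global map and commutation} together with the distinctness of the eigenvalues to force diagonality, then use the projection property and the rank equality $\rk(P_K)=r$ to identify the $0/1$ diagonal and the size of $\Ss$. The only cosmetic difference is that you argue via $D^2=D$ entrywise and a trace computation for $|\Ss|$, whereas the paper phrases the same step in terms of the eigenvalues of an orthogonal projection.
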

\begin{proof}
Let $\Wbf = (W_H, \ldots, W_1)$ be a first-order critical point of $L$.
Using Lemma \ref{lemma W_1 + global map and commutation}, we have  $ \Sigma P_K = P_K \Sigma $.
Substituting the diagonalization of $\Sigma$ from Section \ref{Settings}, this becomes $ U \Lambda U^T P_K = P_K U \Lambda U^T$.
Since $U$ is orthogonal, multiplying by $U^T$ on the left and by $U$ on the right we obtain $\Lambda U^T P_K U = U^T P_K U \Lambda$. Hence, $U^T P_K U$ commutes with a diagonal matrix whose diagonal elements are all distinct.
Therefore, $\Gamma :=U^T P_K U$ is diagonal, and $P_K = U \Gamma U^T$ is a diagonalization of $P_K$.
From Lemma \ref{lemma W_1 + global map and commutation}, we also have $r = \rk(P_K)$.
But, we know that $P_K = KK^{+} \in \mathbb{R}^{d_y \times d_y}$ is the matrix of an orthogonal projection.
Therefore, its eigenvalues are 1 with multiplicity $r$ and 0 with multiplicity $d_y - r$. \\

Therefore, there exists an index set
$\Ss \subset \llbracket 1,d_y \rrbracket$ of size $r$
such that $\Gamma = \mathcal{I}^{\Ss}$ where $\mathcal{I}^{\Ss} \in \mathbb{R}^{d_y \times d_y}$ is the diagonal matrix such that, for all $i \in \llbracket 1,d_y \rrbracket$, $(\mathcal{I}^{\Ss})_{i,i} = 1$ if $i \in \Ss$ and $0$ otherwise. \\
Therefore,$$P_K=U \mathcal{I}^{\Ss}U^T=U \mathcal{I}^{\Ss} \mathcal{I}^{\Ss}U^T=U_{\Ss} U_{\Ss}^T.$$
If there exist $\Ss'$ such that $\Gamma = \mathcal{I}^{\Ss'}$, we get $P_K = U\mathcal{I}^{\Ss}U^T = U \mathcal{I}^{\Ss'} U^T$ which implies $\mathcal{I}^{\Ss} = \mathcal{I}^{\Ss'}$, hence $\Ss = \Ss'$.
Therefore, $\Ss$ is unique.
\end{proof}

\subsection{Proof of Proposition \ref{global map and critical values} } \label{proof of global map and critical values}

In this proof, we use Lemmas \ref{lemma W_1 + global map and commutation} and \ref{P_K = U_S U_S^T} stated and proved in the previous section. \\
Recall that $\lambda_1 > \cdots > \lambda_{d_y}$ are the eigenvalues of $\Sigma = \Sigma_{YX} \Sigma_{XX}^{-1} \Sigma_{XY} \in \mathbb{R}^{d_y \times d_y}$.\\
Let $\Wbf = (W_H, \ldots, W_1)$ be a first-order critical point of $L$.
We set $K= W_H \cdots W_2$, $r =\rk(W_H \cdots W_1)$. 
Using Lemma \ref{P_K = U_S U_S^T}, there exists a unique subset $\Ss \subset\llbracket 1,d_y \rrbracket$ of size $r$ such that: $$P_K = U_{\Ss} U_{\Ss}^T.$$
Therefore, using Lemma \ref{lemma W_1 + global map and commutation},
$$ W_H \cdots W_1 = P_K \Sigma_{Y X} \Sigma_{X X}^{-1} = U_{\Ss} U_{\Ss}^T \Sigma_{Y X} \Sigma_{X X}^{-1}.$$
This proves the first statement of Proposition \ref{global map and critical values}.\\
To prove the second statement, notice that we have
\begin{align*}
    L(\Wbf) &= \|WX-Y\|^2 \\
    &= \|WX\|^2 - 2 \left\langle  WX \ , \ Y  \right\rangle + \|Y\|^2 \\
    &= \tr(W \Sigma_{XX}W^T) - 2 \tr(W \Sigma_{XY}) + \tr(\Sigma_{YY}) \\
    &= \tr( U_{\Ss} U_{\Ss}^T \Sigma_{YX} \Sigma_{XX}^{-1} \Sigma_{XX}  \Sigma_{XX}^{-1} \Sigma_{XY} U_{\Ss} U_{\Ss}^T)  - 2 \tr( U_{\Ss} U_{\Ss}^T \Sigma_{YX} \Sigma_{XX}^{-1} \Sigma_{XY}) + \tr(\Sigma_{YY}) \\
    &= \tr( U_{\Ss} U_{\Ss}^T U_{\Ss} U_{\Ss}^T \Sigma) - 2 \tr(U_{\Ss} U_{\Ss}^T \Sigma) + \tr(\Sigma_{YY}) \\
\end{align*}
Since $U_{\Ss}^T U_{\Ss} = I_r$ (see Lemma \ref{properties of orhtogonality}), using Lemma \ref{P_K = U_S U_S^T} and the fact that $U$ diagonalizes $\Sigma$, this becomes
\begin{align*}
    L(\Wbf) &= \tr(\Sigma_{YY}) - \tr(U_{\Ss} U_{\Ss}^T \Sigma)  \\
    &= \tr(\Sigma_{YY}) - \tr( U \mathcal{I}^{\Ss} U^T U \Lambda U^T ) \\
    &= \tr(\Sigma_{YY}) - \tr( \mathcal{I}^{\Ss} U^T U \Lambda U^T U) \\
    &= \tr(\Sigma_{YY}) - \tr( \mathcal{I}^{\Ss} \Lambda) \\
    &= \tr(\Sigma_{YY}) - \sum_{i \in \Ss} \lambda_i \;.
\end{align*}
This proves the second and last statement of Proposition \ref{global map and critical values}.

\subsection{Lemma \ref{clem}}\label{proof of clem}
In this section we state and prove a lemma about first-order critical points which will be useful in various proofs.
This lemma gives a simpler form for $K = W_H \cdots W_2$ and $W_1$.

\begin{lemma}\label{clem}
Suppose Assumption \ref{Assump H} in Section \ref{Settings} holds true.
Let $\Wbf = (W_H, \ldots , W_1)$ be a first-order critical point of $L$ associated with $\Ss$.
We set $r = \rk(W_H \cdots W_1)$. \\
Then there exists an invertible matrix $D \in \mathbb{R}^{d_{1} \times d_{1}}$, a matrix $M \in \mathbb{R}^{d_1 \times d_x}$ satisfying $W_H \cdots W_2 M=0$, 
such that:
$$K=W_H \cdots W_2 = \biggl[ U_{\Ss} \quad 0_{d_y \times (d_1-r)} \biggr] D$$ and $$W_1 = D^{-1} \left[
\begin{array}{c}
U_{\Ss}^T \Sigma_{Y X} \Sigma_{X X}^{-1} \\
0_{(d_1-r) \times d_x}
\end{array}
\right] + M \;.$$
\end{lemma}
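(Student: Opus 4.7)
The plan is to exploit Lemmas \ref{lemma W_1 + global map and commutation} and \ref{P_K = U_S U_S^T} together with Proposition \ref{global map and critical values}. The first lemma already provides a generic form $W_1 = K^{+} \Sigma_{YX}\Sigma_{XX}^{-1} + M$ with $K M = 0$, while the second pins down $P_K = U_{\Ss} U_{\Ss}^T$, which identifies $\col(K)$ with $\col(U_{\Ss})$. Combining these two structural facts should directly produce the claimed block factorization for both $K$ and $W_1$.

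First I would use $\col(K) = \col(U_{\Ss})$ to write $K = U_{\Ss} E$ for a unique $E \in \mathbb{R}^{r \times d_1}$. Since $U_{\Ss}$ has full column rank $r$ and $\rk(K) = r$ by Lemma \ref{lemma W_1 + global map and commutation}, the matrix $E$ also has rank $r$, so its rows are linearly independent. Applying the incomplete basis theorem, I would extend these rows to a basis of $\mathbb{R}^{d_1}$ by appending some $E' \in \mathbb{R}^{(d_1-r) \times d_1}$, and then define $D := \begin{bmatrix} E \\ E' \end{bmatrix} \in \mathbb{R}^{d_1 \times d_1}$, which is invertible by construction (the completion step is vacuous when $r = d_1$, in which case $D = E$ and the zero block has width $0$ under the convention of Section \ref{Settings}). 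A direct block computation then gives $[U_{\Ss}, 0_{d_y \times (d_1-r)}] D = U_{\Ss} E = K$, which establishes the first half of the claim.

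To recover the expression for $W_1$, I would introduce the auxiliary matrix $\widetilde W_1 := D^{-1} \begin{bmatrix} U_{\Ss}^T \Sigma_{YX}\Sigma_{XX}^{-1} \\ 0_{(d_1-r) \times d_x} \end{bmatrix}$, and verify by the same block multiplication that $K \widetilde W_1 = U_{\Ss} U_{\Ss}^T \Sigma_{YX}\Sigma_{XX}^{-1}$. By Proposition \ref{global map and critical values}, this equals $W_H \cdots W_1 = K W_1$, so $K(W_1 - \widetilde W_1) = 0$. Setting $M := W_1 - \widetilde W_1$ then yields the desired decomposition $W_1 = \widetilde W_1 + M$ with $K M = 0$. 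The only non-trivial ingredient in the whole argument is the row-completion producing $D$, which is a standard linear-algebra fact; everything else reduces to block matrix bookkeeping supported by the two lemmas and the proposition cited above.
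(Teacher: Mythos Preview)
Your argument is correct and complete. The approach differs from the paper's in one concrete way worth noting. The paper constructs $D$ by invoking an auxiliary Lemma~\ref{existence of D invertible}: starting from $KK^{+} = U_{\Ss}U_{\Ss}^T$, that lemma manufactures $D$ using a rank factorization of the projector $I_{d_1} - K^{+}K$ onto $\Ker(K)$, and as a by-product obtains the explicit form $K^{+} = D^{-1}\begin{bmatrix} U_{\Ss}^T \\ 0 \end{bmatrix}$. The expression for $W_1$ then follows by substituting this $K^{+}$ into the formula $W_1 = K^{+}\Sigma_{YX}\Sigma_{XX}^{-1} + M$ from Lemma~\ref{lemma W_1 + global map and commutation}. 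Your route is more direct: you read off $K = U_{\Ss}E$ from $\col(K) = \col(U_{\Ss})$, complete the rows of $E$ to obtain $D$, and then verify the $W_1$ identity by checking $K\widetilde W_1 = U_{\Ss}U_{\Ss}^T\Sigma_{YX}\Sigma_{XX}^{-1} = K W_1$ via Proposition~\ref{global map and critical values}, without ever computing $K^{+}$. Your construction is a bit more elementary and self-contained; the paper's has the minor advantage of yielding $K^{+}$ explicitly, which ties the $W_1$ formula back to the pseudo-inverse representation already established in Lemma~\ref{lemma W_1 + global map and commutation}. Either way the content is the same.
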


Note that the result is still true when $r=0$, provided that $U_{\emptyset} \in \mathbb{R}^{d_y \times 0}$.

To prove Lemma \ref{clem}, we use Lemmas \ref{lemma W_1 + global map and commutation} and \ref{P_K = U_S U_S^T} stated and proved in the preliminaries of Appendix \ref{Appendix B.1}.
We will also need the following lemma
\begin{lemma} \label{existence of D invertible}
    Let $n$ be a positive integer and $\emptyset \neq \Ss \subset\llbracket 1,d_y \rrbracket $ such that $n \geq r:=|\Ss|$.
    Let $A \in \mathbb{R}^{d_y \times n}$ such that $AA^{+} = U_{\Ss} U_{\Ss}^T$. Then there exists an invertible matrix $D \in \mathbb{R}^{n \times n}$ such that $$A = [ U_{\Ss} \quad 0_{d_y \times (n-r)} ] D$$ and $$A^{+} = D^{-1}\left[
    \begin{array}{c}
    U_{\Ss}^T \\
    0_{(n-r) \times d_y}
    \end{array}
    \right] \;.$$
\end{lemma}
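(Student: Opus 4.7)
The plan is to exploit the fact that $AA^{+}$ is always the orthogonal projector onto $\col(A)$, so the hypothesis $AA^{+} = U_{\Ss}U_{\Ss}^{T}$ immediately forces $\col(A) = \col(U_{\Ss})$ and $\rk(A) = r$. Since $U_{\Ss}$ has full column rank $r$, each column of $A$ is a unique linear combination of the columns of $U_{\Ss}$, so there is a unique $B \in \mathbb{R}^{r \times n}$ with $A = U_{\Ss} B$. Because $\rk(A) = r$ and $U_{\Ss}$ is injective on $\mathbb{R}^{r}$, $B$ has full row rank $r$.

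Next, to get the factorization $A = [U_{\Ss},\,0]D$, I would take $D \in \mathbb{R}^{n \times n}$ with its top $r \times n$ block equal to $B$ and its bottom $(n-r) \times n$ block equal to a matrix $C$ whose rows form a basis of $\Ker(B) \subset \mathbb{R}^{n}$. With this choice, invertibility of $D$ is clean: any $x \in \Ker(D)$ satisfies $Bx = 0$, so $x \in \Ker(B)$; but $x$ is also annihilated by $C$, whose rows span $\Ker(B)$, so $x \perp \Ker(B)$ and thus $x = 0$. The identity $[U_{\Ss},\,0] D = U_{\Ss} B = A$ is then immediate.

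For the pseudo-inverse formula, the first step is to identify $A^{+}$ intrinsically. I would verify by checking the four Moore--Penrose axioms that
\[
A^{+} = B^{+} U_{\Ss}^{T},
\]
where each axiom collapses using $U_{\Ss}^{T} U_{\Ss} = I_{r}$ together with the corresponding axiom for $B^{+}$. It then remains to show $D^{-1} \bigl[\begin{smallmatrix} U_{\Ss}^{T} \\ 0 \end{smallmatrix}\bigr] = B^{+} U_{\Ss}^{T}$. Writing the first $r$ columns of $D^{-1}$ as $F_{1} \in \mathbb{R}^{n \times r}$, the identity $D D^{-1} = I_{n}$ forces $B F_{1} = I_{r}$ and $C F_{1} = 0$. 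The second equation says the columns of $F_{1}$ are orthogonal to the rows of $C$, which span $\Ker(B)$, so the columns of $F_{1}$ lie in $\Ker(B)^{\perp} = \col(B^{T})$. Writing $F_{1} = B^{T} M$ and plugging back into $B F_{1} = I_{r}$ gives $M = (B B^{T})^{-1}$, so $F_{1} = B^{T}(BB^{T})^{-1} = B^{+}$.

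The main obstacle is step two: one must resist the tempting shortcut of picking any completion $C$ that makes $D$ invertible. For a generic such $C$, the first $r$ columns of $D^{-1}$ form only \emph{some} right inverse of $B$, not the Moore--Penrose inverse $B^{+}$ specifically, and the claimed formula for $A^{+}$ fails (equivalently, the fourth Moore--Penrose axiom $(XA)^{T} = XA$ breaks down). Tracking exactly which geometric condition on $C$ is needed---namely $BC^{T} = 0$, i.e.\ rows of $C$ spanning $\Ker(B)$---and verifying that this choice simultaneously secures invertibility of $D$ and forces $F_{1} = B^{+}$, is the key technical point of the proof.
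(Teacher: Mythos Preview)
Your proof is correct. The construction of $D$ coincides with the paper's: both take the top $r\times n$ block to be $B := U_{\Ss}^{T}A$ and the bottom $(n-r)\times n$ block to have rows spanning $\Ker(A)=\Ker(B)$ (the paper obtains this block from a rank factorization $I_n - A^{+}A = B'C$ of the projector onto $\Ker(A)$, which amounts to the same thing). Where you diverge is in the verification. The paper writes down an explicit candidate for $D^{-1}$, namely $[A^{+}U_{\Ss},\,B']$, checks in one line that it really is the inverse, and then plugs it into both claimed identities, invoking only the defining axioms $AA^{+}A=A$ and $A^{+}AA^{+}=A^{+}$. Your route is more structural: you first prove $A^{+}=B^{+}U_{\Ss}^{T}$ by the four Moore--Penrose axioms, and then argue that the specific choice of $C$ forces the first $r$ columns of $D^{-1}$ to be $B^{+}$ rather than an arbitrary right inverse of $B$. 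Your emphasis on why a generic completion $C$ would fail is a point the paper leaves implicit; conversely, the paper's explicit $D^{-1}$ makes the final check shorter.
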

\begin{proof}[Proof of Lemma \ref{existence of D invertible}]

The matrix $I_n - A^{+}A$ is the orthogonal projection onto $\text{Ker(A)}$ (see Appendix \ref{annexe pseudo inverse}), hence $$\rk(I_n - A^{+}A) = \text{dim Ker}(A) = n-\rk(A) $$
But we have (see Property (ii) in Appendix \ref{annexe pseudo inverse}) $\rk(A^{+}A) = \rk(A) = \rk(AA^{+})$ and, using Lemma \ref{properties of orhtogonality},  $\rk(A^{+}A) = \rk(U_{\Ss} U_{\Ss}^T) = r$.
Therefore, $\rk(A)=r$ and
$$\rk(I_n - A^{+}A) = n-r.$$

Let $B \in \mathbb{R}^{n \times (n-r)}$ and $C \in \mathbb{R}^{(n-r) \times n}$ be such that $ I_n - A^{+}A = BC $ (such matrices can be obtained by considering the Singular Value Decomposition of $I_n - A^{+}A$). \\
Denoting $D = \left[
    \begin{array}{c}
    U_{\Ss}^T A \\
    C
    \end{array}
    \right] \in \mathbb{R}^{n \times n}$, we have
\begin{align*}
    [A^{+} U_{\Ss} \ , \ B ] D &= [A^{+} U_{\Ss} \ , \ B ] \left[
    \begin{array}{c}
    U_{\Ss}^T A \\
    C
    \end{array}
    \right] = A^{+} U_{\Ss} U_{\Ss}^T A + BC = A^{+}AA^{+}A + I_n - A^{+}A \;.
\end{align*} 
Using Criteria 1 in Appendix \ref{annexe pseudo inverse} we obtain
$$ [A^{+} U_{\Ss} \ , \ B ] D = A^{+}A + I_n - A^{+}A = I_n \;.$$
Therefore, $D$ is invertible and $D^{-1} = [A^{+} U_{\Ss} \ , \ B ]$. We have 
\begin{align*}
    [U_{\Ss} \ , \ 0_{d_y \times (n-r)} ] D = [ U_{\Ss} \ , \ 0_{d_y \times (n-r)} ] \left[
    \begin{array}{c}
    U_{\Ss}^T A \\
    C
    \end{array}
    \right] = U_{\Ss} U_{\Ss}^T A = AA^{+}A = A \;,
\end{align*}
where the last equality follows from Criteria 1 in Appendix \ref{annexe pseudo inverse}.
This proves the first equality of Lemma \ref{existence of D invertible}
Finally,
\begin{align*}
    D^{-1} \left[
    \begin{array}{c}
    U_{\Ss}^T \\
    0_{(n-r) \times d_y}
    \end{array}
    \right] = [A^{+} U_{\Ss} \ , \ B ] \left[
    \begin{array}{c}
    U_{\Ss}^T \\
    0_{(n-r) \times d_y}
    \end{array}
    \right] 
    = A^{+} U_{\Ss} U_{\Ss}^T 
    = A^{+}AA^{+} 
    = A^{+} \;,
\end{align*}
where the last equality follows again from Criteria 2 in Appendix \ref{annexe pseudo inverse}.
This concludes the proof of Lemma \ref{existence of D invertible}.
\end{proof}

Now we prove Lemma \ref{clem}.

\begin{proof}[Proof of Lemma \ref{clem}]

Let $\Wbf = (W_H, \ldots , W_1)$ be a first-order critical point of $L$ associated with $\Ss$ and $r=\rk(W_H \cdots W_1)$.\\
Using Lemma \ref{lemma W_1 + global map and commutation}, we have $r=\rk(W_H \cdots W_2).$ \\
If $r=0$, the conclusion of Lemma \ref{clem} is trivial because of the convention $U_{\emptyset} \in \mathbb{R}^{d_y \times 0}$.\\
When $r \geq 1$,
using Lemma \ref{lemma W_1 + global map and commutation} and Proposition \ref{global map and critical values}, we have $W_H \cdots W_1 = P_K \Sigma_{Y X} \Sigma_{X X}^{-1} = U_{\Ss} U_{\Ss}^T \Sigma_{Y X} \Sigma_{X X}^{-1} $.
Since $\Sigma_{Y X}$ is of full row rank this implies
$P_K = KK^{+} = U_{\Ss} U_{\Ss}^T $.
Therefore, we can apply Lemma \ref{existence of D invertible} with $n=d_1$ and $A=K$ to conclude that there exists an invertible matrix $D \in \mathbb{R}^{d_1 \times d_1}$ such that 
$$ K = [U_{\Ss} \ , \ 0_{d_y \times (d_1-r)} ] D$$
which is the form of $K$ in Lemma \ref{clem}.
Moreover, Lemma \ref{existence of D invertible} also guarantees that
$$ K^{+} = D^{-1}\left[
    \begin{array}{c}
    U_{\Ss}^T \\
    0_{(d_1-r) \times d_y}
    \end{array}
    \right] \;. $$
Using Lemma \ref{lemma W_1 + global map and commutation}, we have $W_1 = K^{+}\Sigma_{Y X} \Sigma_{X X}^{-1} + M$ with $KM=0$.
Therefore, $W_1 = D^{-1} \left[
\begin{array}{c}
U_{\Ss}^T \Sigma_{Y X} \Sigma_{X X}^{-1} \\
0_{(d_1-r) \times d_x}
\end{array}
\right] + M$, with $KM=0$.
This concludes the proof of Lemma \ref{clem}.
\end{proof}

\subsection{Proof of Lemma \ref{same nature}}\label{Proof same nature}
For any $h \in \llbracket 1 , H-1 \rrbracket$ let $D_h \in \mathbb{R}^{d_{h} \times d_{h}}$ be an invertible matrix.
We define $ \widetilde{\Wbf} = (\widetilde{W}_H, \ldots, \widetilde{W}_1) $ by
$\widetilde{W}_H = W_H D_{H-1},$ $\widetilde{W}_1 = D_1^{-1} W_1$ and  $\widetilde{W}_h = D_{h}^{-1} W_h D_{h-1}$ for all $h \in \llbracket 2 , H-1 \rrbracket$. \\
Assume that $\Wbf = (W_H, \ldots ,W_1)$ is a first-order critical point.
Then using Lemma \ref{partial derivatives of L} this is equivalent to 

\begin{align} \label{systeme gradient nul}
\begin{cases}
\nabla_{W_h} L(\Wbf)  = 2 (W_H \cdots W_{h+1})^T(W_H \cdots W_1 \Sigma_{X X} - \Sigma_{Y X})(W_{h-1} \cdots W_1)^T = 0 \quad  \forall h \in \llbracket 2,H-1 \rrbracket \\
\nabla_{W_H} L(\Wbf)  = 2 (W_H \cdots W_1 \Sigma_{X X} - \Sigma_{Y X})(W_{H-1} \cdots W_1)^T = 0 \\
\nabla_{W_1} L(\Wbf)  = 2 (W_H \cdots W_{2})^T(W_H \cdots W_1 \Sigma_{X X} - \Sigma_{Y X}) = 0 \;.
\end{cases}
\end{align}
Using the definition of $\widetilde{\Wbf}$ above, we have
$$ 
\begin{cases}
W_H \cdots W_1 = \widetilde{W}_H \cdots \widetilde{W}_1 \\
W_H \cdots W_{h+1} = \widetilde{W}_H \cdots \widetilde{W}_{h+1} D_h^{-1} \quad  \forall h \in \llbracket 1,H-1 \rrbracket \\
W_{h-1} \cdots W_1 = D_{h-1} \widetilde{W}_{h-1} \cdots \widetilde{W}_{1} \quad  \forall h \in \llbracket 2,H \rrbracket  \;. \\
\end{cases}
$$
Therefore \eqref{systeme gradient nul} is equivalent to
\begin{align*}
\begin{cases}
    (D_h^{-1})^T(\widetilde{W}_H \cdots \widetilde{W}_{h+1})^T(\widetilde{W}_H \cdots \widetilde{W}_1\Sigma_{X X} - \Sigma_{Y X})(\widetilde{W}_{h-1} \cdots \widetilde{W}_1)^T D_{h-1}^T = 0 \quad  \forall h \in \llbracket 2,H-1 \rrbracket  \\
    (\widetilde{W}_H \cdots \widetilde{W}_1\Sigma_{X X} - \Sigma_{Y X})(\widetilde{W}_{H-1} \cdots \widetilde{W}_1)^T D_{H-1}^T = 0 \\
    (D_1^{-1})^T(\widetilde{W}_H \cdots \widetilde{W}_{2})^T(\widetilde{W}_H \cdots \widetilde{W}_1 \Sigma_{X X} - \Sigma_{Y X}) = 0 \;.
\end{cases}
\end{align*}
This is equivalent to
\begin{align*} 
\begin{cases}
\nabla_{W_h} L(\widetilde{\Wbf})  = 2 (\widetilde{W}_H \cdots \widetilde{W}_{h+1})^T(\widetilde{W}_H \cdots \widetilde{W}_1 \Sigma_{X X} - \Sigma_{Y X})(\widetilde{W}_{h-1} \cdots \widetilde{W}_1)^T = 0 \quad  \forall h \in \llbracket 2,H-1 \rrbracket \\
\nabla_{W_H} L(\widetilde{\Wbf})  = 2 (\widetilde{W}_H \cdots \widetilde{W}_1 \Sigma_{X X} - \Sigma_{Y X})(\widetilde{W}_{H-1} \cdots \widetilde{W}_1)^T = 0  \\
\nabla_{W_1} L(\widetilde{\Wbf})  = 2 (\widetilde{W}_H \cdots \widetilde{W}_{2})^T(\widetilde{W}_H \cdots \widetilde{W}_1 \Sigma_{X X} - \Sigma_{Y X}) = 0 \;.
\end{cases}
\end{align*}
which is equivalent to $\nabla_{W_h} L(\widetilde{\Wbf})= 0$, for all $h \in \llbracket 1 , H \rrbracket \;.$
Therefore, $\Wbf$ is a first-order critical point if and only if $\widetilde{\Wbf}$ is a first-order critical point.
This proves the first part of the proposition. \\
Now assume that $\Wbf = (W_H, \ldots ,W_1)$ is a first-order critical point such that it is not a second-order critical point.
Note that from the first part of the proof $\widetilde{\Wbf} = (\widetilde{W}_H, \ldots ,\widetilde{W}_1)$ is also a first-order critical point.
Let us prove that $\widetilde{\Wbf}$ is not a second-order critical point.
Using Lemma \ref{Characterization of critical points in our settings}, since $\Wbf$ is not a second-order critical point, there exist $\Wbf' = (W'_H, \ldots ,W'_1)$ such that, if we denote $\Wbf(t) = \Wbf + t \Wbf'$, the second-order term of $L(\Wbf(t))$ is strictly negative i.e $c_2(\Wbf,\Wbf')<0$.
We will prove that there exist $\widetilde{\Wbf}'$ such that $c_2(\widetilde{\Wbf}, \widetilde{\Wbf}') < 0$ and, using again Lemma \ref{Characterization of critical points in our settings}, we conclude. \\
As already said, we set $W_h(t) = W_h + t W'_h$, for all $h \in \llbracket 1, H \rrbracket$.
We denote 
$$ 
\begin{cases}
    \widetilde{W}_H(t) = \widetilde{W}_H + t \widetilde{W}'_H  = \widetilde{W}_H + t W'_H D_{H-1} \\
    \widetilde{W}_1(t) = \widetilde{W}_1 + t \widetilde{W}'_1  = \widetilde{W}_1 + t D_1^{-1}W'_1 \\
    \widetilde{W}_h(t) = \widetilde{W}_h + t \widetilde{W}'_h  = \widetilde{W}_h + t D_h^{-1}W'_h D_{h-1} \quad \forall h \in \llbracket 2,H-1 \rrbracket \\
    \widetilde{\Wbf}' = (\widetilde{W}'_H , \ldots , \widetilde{W}'_1) \;.
\end{cases}
$$
Hence, we have (where $\prod_{h=H-1}^{2} A_h$ should read as $A_{H-1} \cdots A_2$)
\begin{align*}
    & \widetilde{W}_H(t) \cdots \widetilde{W}_1(t) \\ &= (W_H D_{H-1} + t W'_H D_{H-1}) \left(\prod_{h=H-1}^{2}(D_{h}^{-1} W_{h} D_{h-1} + t D_{h}^{-1} W'_{h} D_{h-1})\right)(D_1^{-1}W_1 + t D_1^{-1}W'_1) \\
    &= (W_H + t W'_H) \cdots (W_1 + t W'_1) \\
    &= W_H(t) \cdots W_1(t) \;.
\end{align*}
Therefore, $L(\widetilde{\Wbf}(t)) = L(\Wbf(t))$ and
\begin{align*}
    c_2(\widetilde{\Wbf}, \widetilde{\Wbf}') = c_2(\Wbf,\Wbf') \;.
\end{align*}
Since by hypothesis $c_2(\Wbf, \Wbf') < 0$, we conclude that $c_2(\widetilde{\Wbf}, \widetilde{\Wbf}') < 0$.
Hence  $(\widetilde{W}_H, \ldots ,\widetilde{W}_1)$ is not a second-order critical point. \\
We prove that if $\widetilde{\Wbf}$ is not a second-order critical point then $\Wbf$ is not a second-order critical point
in the same way, by changing $D_h$ with $D_h^{-1}$ for all $h \in \llbracket 1, H \rrbracket$. This proves the second part of the proposition and concludes the proof.

\subsection{Proof of Proposition \ref{reciproque param}} \label{proof reciproque param}
Let $\Ss \subset \llbracket 1,d_y \rrbracket$ of size $r \in \llbracket 0,r_{max} \rrbracket$ and $Q = \llbracket 1,d_y \rrbracket \setminus \Ss$.
    Let $Z_H \in \mathbb{R}^{(d_y-r) \times (d_{H-1}-r)}$, $Z_1 \in \mathbb{R}^{(d_1-r) \times d_x}$ and $Z_h \in \mathbb{R}^{(d_h-r) \times (d_{h-1}-r)}$ for $h \in \llbracket 2 , H-1 \rrbracket$.
    Let the parameter of the network $\Wbf = (W_H,\ldots,W_1)$ be defined as follows:
\begin{align}\label{eq param W associé à S}
    \left\{ \begin{array}{l}
    {W}_H = [U_{\Ss} , U_Q Z_H] \\
    {W}_1 = \begin{bmatrix}
    U_{\Ss}^T\Sigma_{YX}\Sigma_{XX}^{-1} \\ 
    Z_1
    \end{bmatrix}  \\
    {W}_h = \left[
    \begin{array}{c  c}
    I_r & 0 \\
    0 & Z_h
    \end{array}
    \right] \quad \forall h \in \llbracket 2 , H-1 \rrbracket \;.
    \end{array}
    \right.
\end{align}
Note that the above definition of $\Wbf$ does not involve the matrices $D_h \in \mathbb{R}^{d_h \times d_h}$.
In fact, using Lemma \ref{same nature}, it suffices to prove that, when $r=r_{max}$ or there exist $h_1 \neq h_2$ such that $Z_{h_1} = 0 $ and $Z_{h_2} = 0$, the $\Wbf$ defined above is a first-order critical point to conclude that Proposition \ref{reciproque param} holds .\\
We have 
\begin{align*}
    W_H \cdots W_1 &=  [U_{\Ss} , U_Q Z_H] \left[
    \begin{array}{c  c}
    I_r & 0 \\
    0 & Z_{H-1}
    \end{array}
    \right]
    \cdots
    \left[
    \begin{array}{c  c}
    I_r & 0 \\
    0 & Z_2
    \end{array}
    \right]
    \begin{bmatrix}
    U_{\Ss}^T\Sigma_{YX}\Sigma_{XX}^{-1} \\ 
    Z_1
    \end{bmatrix} \\
    &= U_{\Ss} U_{\Ss}^T\Sigma_{YX}\Sigma_{XX}^{-1} + U_Q Z_H Z_{H-1} \cdots Z_2 Z_1
\end{align*}
If there exists $h_1 \neq h_2$ such that $Z_{h_1} = 0$ and $Z_{h_2} = 0$, it immediately follows that $W_H \cdots W_1 = U_{\Ss} U_{\Ss}^T\Sigma_{YX}\Sigma_{XX}^{-1} $.\\
If $r=r_{max}$, then there exists $h \in \llbracket 0,H \rrbracket$ such that $r = d_h$.
\begin{itemize}
    \item If $r = d_H = d_y$, then $U_Q \in \mathbb{R}^{d_y \times 0}$ and $Z_H \in \mathbb{R}^{0 \times (d_{H-1}-r)}$, which, using conventions in Section \ref{Settings}, gives 
    \begin{align}\label{block vide 1}
        U_Q Z_H = 0_{d_y \times (d_{H-1}-r)}.
    \end{align}
    Therefore, $W_H \cdots W_1 = U_{\Ss} U_{\Ss}^T\Sigma_{YX}\Sigma_{XX}^{-1} $.
    \item If $r= d_0 = d_x$, then, since $d_x \geq d_y$, we have $r=d_y$, which we have already treated in the previous item.
    \item If $r=d_h$ for some $h \in \llbracket2,H-1\rrbracket$, then
    $Z_{h+1} \in \mathbb{R}^{(d_{h+1}-r) \times 0}$ and $ Z_h \in \mathbb{R}^{0 \times (d_{h-1}-r)}$, which, using the conventions on Section \ref{Settings}, gives 
    \begin{align}\label{block vide 2}
        Z_{h+1}Z_h = 0_{(d_{h+1}-r) \times (d_{h-1}-r)}.
    \end{align}
    Therefore, $W_H \cdots W_1 = U_{\Ss} U_{\Ss}^T\Sigma_{YX}\Sigma_{XX}^{-1} $.
    \item If $r=d_1$, then $Z_{2} \in \mathbb{R}^{(d_{2}-r) \times 0}$ and $ Z_1 \in \mathbb{R}^{0 \times d_x}$, which, using the conventions on Section \ref{Settings}, gives
    \begin{align}\label{block vide 3}
        Z_{2}Z_1 = 0_{(d_{2}-r) \times d_x}.
    \end{align}
    Therefore, $W_H \cdots W_1 = U_{\Ss} U_{\Ss}^T \Sigma_{YX}\Sigma_{XX}^{-1} $.
\end{itemize}
Note that these results still hold if there is more than one layer with the minimum width.\\
Therefore, in all cases, when $r=r_{max}$ or there exist $h_1 \neq h_2$ such that $Z_{h_1}=0$ and $Z_{h_2}=0$ we have, 
\begin{align}\label{eq inter}
W_H \cdots W_1 = U_{\Ss} U_{\Ss}^T \Sigma_{YX}\Sigma_{XX}^{-1} \;.
\end{align}
Let us prove that the gradient of $L$ at $\Wbf$ is equal to zero.\\
Recall that from Lemma \ref{partial derivatives of L} we have
\begin{align*}
\nabla_{W_h} L(\Wbf)  &= 2 (W_H \cdots W_{h+1})^T(W_H \cdots W_1 \Sigma_{X X} - \Sigma_{Y X})(W_{h-1} \cdots W_1)^T \quad \forall h \in \llbracket 2,H-1 \rrbracket \\
\nabla_{W_H} L(\Wbf) & = 2 (W_H \cdots W_1 \Sigma_{X X} - \Sigma_{Y X})(W_{H-1} \cdots W_1)^T \\
\nabla_{W_1} L(\Wbf)  &= 2 (W_H \cdots W_{2})^T(W_H \cdots W_1 \Sigma_{X X} - \Sigma_{Y X}) \;.
\end{align*}
Using \eqref{eq inter} and Lemma \ref{properties of orhtogonality}, we have
\begin{align*}
    W_H \cdots W_1 \Sigma_{X X} - \Sigma_{Y X} &= U_{\Ss} U_{\Ss}^T \Sigma_{YX}\Sigma_{XX}^{-1} \Sigma_{X X} - \Sigma_{Y X} \\
    &= (U_{\Ss} U_{\Ss}^T - I_{d_y})\Sigma_{Y X} \\
    &= - U_{Q} U_{Q}^T \Sigma_{Y X} \;.
\end{align*}
Also, using \eqref{eq param W associé à S}, for all $h \in \llbracket 1,H-1 \rrbracket$,
\begin{align*}
    W_H \cdots W_{h+1} &= [U_{\Ss} , U_Q Z_H]  \left[
    \begin{array}{c  c}
    I_r & 0 \\
    0 & Z_{H-1}
    \end{array}
    \right]
    \cdots
    \left[
    \begin{array}{c  c}
    I_r & 0 \\
    0 & Z_{h+1}
    \end{array}
    \right] \\
    &= [U_{\Ss} , U_Q Z_H Z_{H-1} \cdots Z_{h+1}]
\end{align*}
and, for all $h \in \llbracket 2,H \rrbracket$,
\begin{align*}
    W_{h-1} \cdots W_1 &= \left[
    \begin{array}{c  c}
    I_r & 0 \\
    0 & Z_{h-1}
    \end{array}
    \right]
    \cdots
    \left[
    \begin{array}{c  c}
    I_r & 0 \\
    0 & Z_{2}
    \end{array}
    \right]
    \begin{bmatrix}
    U_{\Ss}^T\Sigma_{YX}\Sigma_{XX}^{-1} \\ 
    Z_1
    \end{bmatrix} \\
    &= \begin{bmatrix}
    U_{\Ss}^T\Sigma_{YX}\Sigma_{XX}^{-1} \\ 
    Z_{h-1} \cdots Z_2 Z_1
    \end{bmatrix} \;.
\end{align*}
We have, for all $h \in \llbracket 2,H-1 \rrbracket$,
\begin{align*}
    \frac{1}{2}\left(\nabla_{W_h} L(\Wbf)\right)^T &= (W_{h-1} \cdots W_1)(W_H \cdots W_1 \Sigma_{X X} - \Sigma_{Y X})^T (W_H \cdots W_{h+1}) \\
    &= - \begin{bmatrix}
    U_{\Ss}^T\Sigma_{YX}\Sigma_{XX}^{-1} \\ 
    Z_{h-1} \cdots Z_2 Z_1
    \end{bmatrix}
    (U_{Q} U_{Q}^T \Sigma_{Y X})^T
    [U_{\Ss} , U_Q Z_H Z_{H-1} \cdots Z_{h+1}] \\
    &= - \begin{bmatrix}
    U_{\Ss}^T\Sigma_{YX}\Sigma_{XX}^{-1} \\ 
    Z_{h-1} \cdots Z_2 Z_1
    \end{bmatrix}
    \Sigma_{X Y} U_{Q} U_{Q}^T 
    [U_{\Ss} , U_Q Z_H Z_{H-1} \cdots Z_{h+1}] \\
    &= - \begin{bmatrix}
    U_{\Ss}^T\Sigma_{YX}\Sigma_{XX}^{-1} \Sigma_{X Y} U_{Q}
    \\
    Z_{h-1} \cdots Z_2 Z_1 \Sigma_{X Y} U_{Q}
    \end{bmatrix}
    [U_{Q}^T U_{\Ss} , U_{Q}^T U_Q Z_H Z_{H-1} \cdots Z_{h+1}] \;.
\end{align*}
Using the definition of $\Sigma$, Lemma \ref{properties of orhtogonality} and Lemma \ref{U_S sigma U_Q = 0}, we have
\begin{align*}
    \frac{1}{2}\left(\nabla_{W_h} L(\Wbf)\right)^T &= - \begin{bmatrix}
    U_{\Ss}^T\Sigma U_{Q}
    \\
    Z_{h-1} \cdots Z_2 Z_1 \Sigma_{X Y} U_{Q}
    \end{bmatrix}
    [0_{(d_y-r) \times r} , Z_H Z_{H-1} \cdots Z_{h+1}] \\
    &= - \begin{bmatrix}
    0_{r \times (d_y-r)}
    \\
    Z_{h-1} \cdots Z_2 Z_1 \Sigma_{X Y} U_{Q}
    \end{bmatrix}
    [0_{(d_y-r) \times r} , Z_H Z_{H-1} \cdots Z_{h+1}] \\
    &= - \begin{bmatrix}
    0_{r \times r} & 0_{r \times (d_h-r)} \\
    0_{(d_{h-1}-r) \times r} & Z_{h-1} \cdots Z_2 Z_1 \Sigma_{X Y} U_{Q} Z_H Z_{H-1} \cdots Z_{h+1}
    \end{bmatrix} \;.
\end{align*}
Proceeding similarly, we obtain
\begin{align*}
    \frac{1}{2}\left(\nabla_{W_H} L(\Wbf)\right)^T &= - \begin{bmatrix}
    0_{r \times d_y} \\
    Z_{H-1} \cdots Z_2 Z_1 \Sigma_{X Y} U_{Q} U_{Q}^T
    \end{bmatrix}
\end{align*}
and
\begin{align*}
    \frac{1}{2}\left(\nabla_{W_1} L(\Wbf)\right)^T &= - [
    0_{d_x \times r} \ , \ \Sigma_{X Y} U_{Q} Z_H Z_{H-1} \cdots Z_{2}
    ] \;.
\end{align*}
If there exists $h_1 \neq h_2$ such that $Z_{h_1} = 0$ and $Z_{h_2} = 0$, we can easily see that the gradient is equal to zero, i.e., $\Wbf$ is a first-order critical point. \\
If $r=r_{max}$, then there exists $h' \in \llbracket 1,H \rrbracket$ such that $r=d_{h'}$.
Using the same arguments as above that yielded \eqref{block vide 1}, \eqref{block vide 2} and \eqref{block vide 3},
we have,
\begin{itemize}
    \item For $h=1$, 
    \begin{itemize}
        \item if $r=d_1$, we have $Z_2 \in \mathbb{R}^{(d_2-r) \times 0}$ and therefore $\Sigma_{X Y} U_{Q} Z_H Z_{H-1} \cdots Z_{2} \in \mathbb{R}^{d_x \times 0}$.
        \item if $r = d_{H}$, then $U_Q Z_{H} = 0_{d_y \times (d_{H-1}-r)}$ and therefore $\Sigma_{X Y} U_{Q} Z_H Z_{H-1} \cdots Z_{2}  = 0_{d_x \times (d_1-r)}$.
        \item if $r = d_{h'}$ for some $h' \in \llbracket2,H-1 \rrbracket$, then $Z_{h'+1} Z_{h'} = 0_{(d_{h'+1}-r) \times (d_{h'-1}-r)}$ and therefore $\Sigma_{X Y} U_{Q} Z_H Z_{H-1} \cdots Z_{2}  = 0_{d_x \times (d_1-r)}$.
    \end{itemize} 
Hence, in all cases, $\nabla_{W_1} L(\Wbf) = 0 $.
    \item For $h=H$,
    \begin{itemize}
        \item if $r = d_{H} = d_y$, then $U_Q U_Q^T = 0_{d_y \times d_y}$ and therefore $Z_{H-1} \cdots Z_2 Z_1 \Sigma_{X Y} U_{Q} U_{Q}^T = 0_{(d_{H-1}-r) \times d_y}$ .
        \item if $r = d_{H-1}$, then $Z_{H-1} \in \mathbb{R}^{0 \times (d_{H-2}-r)}$ and therefore $Z_{H-1} \cdots Z_2 Z_1 \Sigma_{X Y} U_{Q} U_{Q}^T \in \mathbb{R}^{0 \times d_y} $.
        \item if $r=d_{h'}$ for some $h' \in \llbracket 2,H-2 \rrbracket$, then $Z_{h'+1} Z_{h'} = 0_{(d_{h'+1}-r) \times (d_{h'-1}-r)}$ and therefore $Z_{H-1} \cdots Z_2 Z_1 \Sigma_{X Y} U_{Q} U_{Q}^T = 0_{(d_{H-1}-r) \times d_y}$.
        \item if $r=d_{1}$, then $Z_{2} Z_{1} = 0_{(d_{2}-r) \times d_{x}}$ and therefore $Z_{H-1} \cdots Z_2 Z_1 \Sigma_{X Y} U_{Q} U_{Q}^T = 0_{(d_{H-1}-r) \times d_y}$.
    \end{itemize}
Hence, in all cases, $\nabla_{W_H} L(\Wbf) = 0 $.
    \item For $h \in \llbracket2,H-1 \rrbracket$,
    \begin{itemize}
        \item if $r=d_{h-1}$, then $Z_{h-1} \in \mathbb{R}^{0 \times (d_{h-2}-r)}$ and therefore $Z_{h-1} \cdots Z_2 Z_1 \Sigma_{X Y} U_{Q} Z_H Z_{H-1} \cdots Z_{h+1} \in \mathbb{R}^{0 \times (d_h-r)}$.
        \item if $r=d_{h}$, then $Z_{h+1} \in \mathbb{R}^{(d_{h+1}-r) \times 0}$ and therefore $Z_{h-1} \cdots Z_2 Z_1 \Sigma_{X Y} U_{Q} Z_H Z_{H-1} \cdots Z_{h+1} \in \mathbb{R}^{(d_{h-1}-r) \times 0}$.
        \item if $r=d_{H}$, then $U_Q Z_{H} = 0_{d_y \times (d_{H-1}-r)}$ and therefore $Z_{h-1} \cdots Z_2 Z_1 \Sigma_{X Y} U_{Q} Z_H Z_{H-1} \cdots Z_{h+1} = 0_{(d_{h-1}-r) \times (d_h-r)}$.
        \item if $r=d_{1}$, then $Z_{2} Z_{1} = 0_{(d_{2}-r) \times d_x}$ and therefore $Z_{h-1} \cdots Z_2 Z_1 \Sigma_{X Y} U_{Q} Z_H Z_{H-1} \cdots Z_{h+1} = 0_{(d_{h-1}-r) \times (d_h-r)}$.
        \item if $r=d_{h'}$ for some $h' \in \llbracket 2,H-1 \rrbracket \setminus \{h,h-1\}$, then $Z_{h'+1} Z_{h'} = 0_{(d_{h'+1}-r) \times (d_{h'-1}-r)}$ and therefore \\ $Z_{h-1} \cdots Z_2 Z_1 \Sigma_{X Y} U_{Q} Z_H Z_{H-1} \cdots Z_{h+1} = 0_{(d_{h-1}-r) \times (d_h-r)}$.
    \end{itemize}
Hence, in all cases, $\nabla_{W_h} L(\Wbf) = 0 $.
\end{itemize}
Therefore, when $r=r_{max}$, $\Wbf$ is also a first-order critical point of $L$.

\subsection{Proof of Proposition \ref{reciproque S vers W} } \label{proof of reciproque S vers W}

Let $\mathcal{S} \subset \llbracket 1,d_y \rrbracket$ such that $|\Ss| = r \leq r_{max}$, and $Q = \llbracket 1,d_y \rrbracket \setminus \Ss$. \\
We define $\Wbf = (W_H,\ldots,W_1)$ by:
\begin{align*}
        {W}_H &= [U_{\Ss} , 0_{d_y \times (d_{H-1}-r)}]  \\
        {W}_h &= \left[
        \begin{array}{c  c}
        I_r & 0_{r \times (d_{h-1}-r)}  \\
        0_{(d_h-r) \times r} & 0_{(d_h-r) \times (d_{h-1}-r)}
        \end{array}
        \right] \quad \forall h \in \llbracket 2 , H-1 \rrbracket \\
        {W}_1 &= \begin{bmatrix}
        U_{\Ss}^T\Sigma_{YX}\Sigma_{XX}^{-1} \\ 
        0_{(d_1-r) \times d_x}
        \end{bmatrix},
\end{align*}

By Proposition \ref{reciproque param}, $\Wbf$ is a first-order critical point of $L$.
Moreover, we have $W_H \cdots W_1 = U_{\Ss} U_{\Ss}^T\Sigma_{YX}\Sigma_{XX}^{-1}$.
Therefore, $\Wbf$ is a first-order critical point associated with $\Ss$.

\subsection{Proof of Proposition \ref{all blocks ranks geq than r}}\label{proof of all blocks ranks geq than r}

Let $\Wbf = (W_H, \ldots , W_1)$ be a first-order critical point and $r=\rk(W_H \cdots W_1)$, using Proposition \ref{global map and critical values} there exists a unique $\Ss \subset \llbracket1,d_y \rrbracket$ of size $r$ such that
$$ W_H \cdots W_1 = U_{\Ss} U_{\Ss}^T \Sigma_{YX} \Sigma_{XX}^{-1} ,$$
which implies
$$ W_H \cdots W_1 \Sigma_{XY} = U_{\Ss} U_{\Ss}^T \Sigma .$$
Let $i,j \in \llbracket1,H \rrbracket$ such that $i>j$.
The complementary blocks are $W_{j-1}\cdots W_1\Sigma_{XY}W_H \cdots W_{i+1}$ and $W_{i-1} \cdots W_{j+1}$.
\\
Using Lemma \ref{sigma invertible} and $U_{\Ss}^TU_{\Ss} = I_r$, we have, for the second complementary block,
\begin{align*}
   \rk(W_{i-1} \cdots W_{j+1}) & \geq \rk( W_H \cdots W_1 \Sigma_{XY})
   = \rk( U_{\Ss} U_{\Ss}^T \Sigma ) \geq \rk( U_{\Ss}^T(U_{\Ss} U_{\Ss}^T \Sigma) \Sigma^{-1}U_{\Ss} ) =\rk(I_r)  = r \;.
\end{align*}
For the first complementary block, using the same arguments, we have
\begin{align*}
    \rk(W_{j-1} \cdots W_1 \Sigma_{XY} W_H \cdots W_{i+1}) & \geq \rk(W_H \cdots W_1 \Sigma_{XY} W_H \cdots W_1 \Sigma_{XY}) \\
    &= \rk(U_{\Ss} U_{\Ss}^T \Sigma U_{\Ss} U_{\Ss}^T \Sigma) \\
    & \geq \rk\left( U_{\Ss}^T (U_{\Ss} U_{\Ss}^T \Sigma U_{\Ss} U_{\Ss}^T \Sigma)\Sigma^{-1} U_{\Ss} \right) \\
    &= \rk(U_{\Ss}^T \Sigma U_{\Ss}) \;.
\end{align*}
Recall that, from the diagonalization of $\Sigma$, we have $\Sigma U = U \Lambda$, hence, $\Sigma U_{\Ss} = U_{\Ss} \text{diag}((\lambda_s)_{s \in \Ss})$
\begin{align*}
    \rk(W_{j-1} \cdots W_1 \Sigma_{XY} W_H \cdots W_{i+1}) & \geq \rk( U_{\Ss}^T U_{\Ss} \text{diag}((\lambda_s)_{s \in \Ss})) \\
    &= \rk( \text{diag}((\lambda_s)_{s \in \Ss})) \\
    &= r \;.
\end{align*}
This concludes the proof.

\subsection{Proof of Proposition \ref{existence of tightened and non tightened critical points}}\label{proof of existence of tightened and non tightened critical points}

Let $H \geq 3$, $\Ss=\llbracket 1,r \rrbracket$ with $0 \leq r<r_{max}$.
We define $\Wbf$ as follows:
\begin{equation}\label{eqpourex}
\begin{cases}
    W_H = [U_{\Ss} , 0] \\
    W_h = \left[
        \begin{array}{c  c}
        I_r & 0 \\
        0 & Z_h
        \end{array}
        \right] \quad \text{for} \quad h \in \llbracket 2,H-1\rrbracket \\
    W_1 = \begin{bmatrix}
        U_{\Ss}^T\Sigma_{YX}\Sigma_{XX}^{-1} \\ 
        0
        \end{bmatrix}.
\end{cases}
\end{equation}
Using Proposition \ref{reciproque param}, $\Wbf$ is a first-order critical point associated with $\Ss$.
Let us show that depending on the choice of $(Z_h)_{h=2..H-1}$, $\Wbf$ can be tightened or non-tightened.\\
Since $H \geq 3$, there exists $h \in \llbracket 2,H-1 \rrbracket$.
If we choose $Z_{H-1},\ldots,Z_2$ such that $Z_{H-1} \cdots Z_2 \neq 0$ (e.g. when only the top left entry of each $Z_h$ is nonzero, which is possible since $r<r_{max} = \min(d_H,\ldots,d_0)$) then $\Wbf$ is non-tightened.
Indeed, the pivot $(H,1)$ is non-tightened because $\rk(\Sigma_{XY}) = d_y > r$ and $\rk(W_{H-1} \cdots W_2) = \rk\left(\left[
        \begin{array}{c  c}
        I_r & 0 \\
        0 & Z_{H-1} \cdots Z_2
        \end{array}
        \right]\right) > r$. \\
If we choose $Z_{H-1},\ldots,Z_2$ such that $Z_{H-1} \cdots Z_2 = 0$ (e.g. $Z_2=0$), then $\Wbf$ is tightened.
Indeed, the pivot $(H,1)$ is tightened because 
$W_{H-1} \cdots W_2 =
\left[
\begin{array}{c  c}
I_r & 0 \\
0 & 0
\end{array}
\right]$
is of rank $r$, and by construction we have $\rk(W_H)= \rk(W_1) = r$.
Hence, all the other pivots are tightened because at least one of their complementary blocks includes $W_H$ or $W_1$, and therefore, using Proposition \ref{all blocks ranks geq than r}, is of rank $r$. 
Therefore, $\Wbf$ is tightened.

\section{Parameterization of First-order Critical Points and Global Minimizers}\label{Appendix Parameterization of first-order critical points and global minimizers}

In this section, we prove Propositions \ref{simplif mat} and \ref{prop parameterization of global minimizers} that were stated in Section \ref{Parameterization of first-order critical points and global minimizers}.

\subsection{Proof of Proposition \ref{simplif mat}}\label{proof simplif mat}
Before proving Proposition \ref{simplif mat}, we introduce and prove two lemmas.

\begin{lemma}\label{simplif 1}
    Let $r$ be a nonnegative integer, and let $n$ and $p$ be two positive integers larger than or equal to $r$.
    Let $\Ss \subset \llbracket 1,d_y \rrbracket$ of size $r$ and let $Q = \llbracket 1,d_y \rrbracket \setminus \Ss$.
    Let $A \in \mathbb{R}^{d_y \times n}$ and $B \in \mathbb{R}^{n \times p}$ be two matrices such that 
    \begin{align*}
        AB = [U_{\Ss} , 0] \;.   
    \end{align*}
    Then, there exist an invertible matrix $D \in \mathbb{R}^{n \times n}$ and two matrices $N \in \mathbb{R}^{(d_y-r) \times (n-r)}$ and $B_{DR} \in \mathbb{R}^{(n-r) \times (p-r)}$ such that 
    \begin{align}
        AD &= [U_{\Ss} , U_Q N] \label{AD} \\
        D^{-1}B &= \left[
        \begin{array}{c  c}
        I_r & 0 \\
        0 & B_{DR}
        \end{array}
        \right]. \label{D-1B}
    \end{align}
\end{lemma}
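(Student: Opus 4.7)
I would split $B$ according to the block structure of $AB$, writing $B = [B_L, B_R]$ with $B_L \in \mathbb{R}^{n \times r}$ and $B_R \in \mathbb{R}^{n \times (p-r)}$, so that $AB_L = U_{\Ss}$ and $AB_R = 0$. The plan is to set $D_L = B_L$ and to pick the remaining $n-r$ columns $D_R$ of $D$ as a basis of a well-chosen subspace $W \subset \mathbb{R}^n$, then verify that both identities \eqref{AD} and \eqref{D-1B} follow.

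The right choice of subspace is $W := \Ker(U_{\Ss}^T A)$. Since $U_{\Ss}^T A B_L = U_{\Ss}^T U_{\Ss} = I_r$ by Lemma \ref{properties of orhtogonality}, the $r \times n$ matrix $U_{\Ss}^T A$ has rank exactly $r$, so $\dim W = n - r$. Moreover, writing $V_1 = \col(B_L)$, the same identity $U_{\Ss}^T A B_L = I_r$ forces $B_L$ to have $r$ linearly independent columns and yields $V_1 \cap W = \{0\}$ (if $v = B_L u \in W$ then $0 = U_{\Ss}^T A v = u$). Dimensions then give $\mathbb{R}^n = V_1 \oplus W$. I pick $D_R \in \mathbb{R}^{n \times (n-r)}$ to be any matrix whose columns form a basis of $W$, and set $D = [B_L, D_R]$, which is invertible since its columns span $\mathbb{R}^n$.

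For \eqref{AD}: $AD = [AB_L, AD_R] = [U_{\Ss}, AD_R]$, and since $U_{\Ss}^T A D_R = 0$, Lemma \ref{properties of orhtogonality} gives $AD_R = (U_{\Ss} U_{\Ss}^T + U_Q U_Q^T) A D_R = U_Q (U_Q^T A D_R)$, so setting $N := U_Q^T A D_R \in \mathbb{R}^{(d_y-r) \times (n-r)}$ yields $AD = [U_{\Ss}, U_Q N]$. For \eqref{D-1B}: the relation $AB_R = 0$ shows $\col(B_R) \subset \Ker(A) \subset W = \col(D_R)$, hence there exists $B_{DR} \in \mathbb{R}^{(n-r) \times (p-r)}$ with $B_R = D_R B_{DR}$. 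Then
\begin{equation*}
D \begin{bmatrix} I_r & 0 \\ 0 & B_{DR} \end{bmatrix} = [B_L, D_R B_{DR}] = [B_L, B_R] = B,
\end{equation*}
so multiplying by $D^{-1}$ gives \eqref{D-1B}.

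I do not anticipate a major obstacle: the only real content is identifying $W = \Ker(U_{\Ss}^T A)$ as the correct complement, and the key computation is the identity $U_{\Ss}^T A B_L = I_r$, which simultaneously gives $\dim W = n-r$, the direct sum $V_1 \oplus W = \mathbb{R}^n$, and injectivity of $B_L$. Edge cases where $r = 0$, $n = r$, or $p = r$ are handled by the paper's convention that empty blocks are absent (e.g., for $r = 0$ one has $V_1 = \{0\}$, $W = \mathbb{R}^n$ and $D = D_R$ is any invertible matrix with $B_R = D_R B_{DR}$).
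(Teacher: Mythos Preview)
Your proof is correct and, in fact, a bit cleaner than the paper's. Both arguments start the same way: split $B=[B_L,B_R]$, note $AB_L=U_{\Ss}$ so that $U_{\Ss}^T A B_L=I_r$, and take the first $r$ columns of $D$ to be $B_L$. The difference lies in how the remaining $n-r$ columns are chosen. The paper completes $B_L$ to an \emph{arbitrary} basis $E=[B_L,E_{r+1},\ldots,E_n]$, observes that $AE$ then has the form $[U_{\Ss},\,U_{\Ss}L+U_QN]$ for some $L,N$, and removes the unwanted $U_{\Ss}L$ part by post-multiplying $E$ with the shear $F^{-1}=\begin{bmatrix}I_r & -L\\ 0 & I_{n-r}\end{bmatrix}$; it then checks separately that the resulting $D^{-1}B$ has a zero upper-right block, using $AB=[U_{\Ss},0]$ once more. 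You instead choose the complement $W=\Ker(U_{\Ss}^TA)$ from the outset, which forces $AD_R$ to lie in $\col(U_Q)$ automatically and makes the inclusion $\col(B_R)\subset\Ker(A)\subset W$ immediately deliver the block form of $D^{-1}B$. Your route avoids the correction step and the extra verification; the paper's route has the minor advantage of not requiring one to spot the right subspace up front.
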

In the proof below, we can easily see that the result still holds for $r=0$ and $r=\min(d_y,n,p)$ with the conventions adopted in Section \ref{Settings}.
\begin{proof}
Let $n$ and $p$ be non-negative integers such that $n,p \geq r$ and $A \in \mathbb{R}^{d_y \times n}$ and $B \in \mathbb{R}^{n \times p}$ such that 
\begin{align}
    AB = [U_{\Ss} , 0] . \label{AB lemme 1}
\end{align}

Recall that for any matrix $C$ with $n$ columns we write $C = [C_1,C_2, \ldots ,C_n]$ where $C_i$ represents the $i$-th column of $C$. \\
We have from \eqref{AB lemme 1},
\begin{align}\label{AB_r = U_s}
    A[B_1,B_2, \ldots ,B_r] = U_{\Ss} 
    \ .
\end{align}
Since the columns of $U$ are linearly independent, we have
$$\rk(A[B_1,B_2, \ldots ,B_r]) = \rk(U_{\Ss}) = r $$
and $\{B_1, \ldots ,B_r\}$ are necessarily also linearly independent.
Using the incomplete basis theorem, we complement $(B_1, \ldots ,B_r)$ to form a basis $(B_1, \ldots ,B_r,E_{r+1}, \ldots ,E_{n})$. We set $E=[B_1, \ldots ,B_r,E_{r+1}, \ldots ,E_{n}] \in \mathbb{R}^{n \times n}$.
By construction, the matrix $E$ is invertible.

We now set $A' = A E$ and $B' = E^{-1} B$. In particular $A'B' = A B$. \\
Also, note that $$ E \begin{bmatrix}
I_r \\ 0
\end{bmatrix} = [B_1, \ldots ,B_r] \;, $$
so that 
$$ E^{-1} [B_1, \ldots ,B_r] = \begin{bmatrix}
I_r \\ 0
\end{bmatrix} \;.$$
Therefore, we can write
\begin{align}
     B' = E^{-1} B = \left[
\begin{array}{c  c}
I_r & B_{UR} \\
0 & B_{DR}
\end{array}
\right]  \ , \label{B' lemme 1}
\end{align}
with $B_{UR} \in \mathbb{R}^{r \times (p-r)}$ and $B_{DR} \in \mathbb{R}^{(n-r) \times (p-r)}$ such that $$\begin{bmatrix}
B_{UR} \\ B_{DR}
\end{bmatrix} = E^{-1} [B_{r+1}, \ldots ,B_{p}] \;.$$
We define $L \in \mathbb{R}^{r \times (n - r)} $ and $N \in \mathbb{R}^{(d_y - r) \times (n - r)}$ by $\begin{bmatrix}
L \\ N
\end{bmatrix} = [U_{\Ss} , U_Q]^{-1}[AE_{r+1}, \ldots ,AE_n]$. 
We have
\begin{align}\label{L,N}
    [AE_{r+1}, \ldots ,AE_n] = [U_{\Ss} , U_Q] \begin{bmatrix}
    L \\ N
    \end{bmatrix} = U_{\Ss}L + U_Q N \;.
\end{align}
We also define the invertible matrix $F =  \left[
\begin{array}{c  c}
I_r & L \\
0 & I_{n-r}
\end{array}
\right] \in \mathbb{R}^{n \times n}$. Using \eqref{AB_r = U_s} and \eqref{L,N} we have
\begin{align*}
    A' &= A E \\
    &= A [B_1, \ldots ,B_r,E_{r+1}, \ldots ,E_{n}] \\
    &= [U_{\Ss} , U_{\Ss}L + U_Q N] \\
    &= [U_{\Ss} , U_Q N] \left[
    \begin{array}{c  c}
    I_r & L \\
    0 & I_{n-r}
    \end{array}
    \right] \\
    &= [U_{\Ss} , U_Q N] F \;.
\end{align*}
Therefore, defining the invertible matrix $D = E F^{-1} \in \mathbb{R}^{n \times n}$, we finally have
\begin{align}\label{17}
    AD = AE F^{-1} =  [U_{\Ss} , U_Q N] \;.
\end{align}
This proves \eqref{AD}. \\
We also have, using \eqref{B' lemme 1} and the definition of $F$
\begin{align}\label{18}
    D^{-1}B &= F E^{-1} B \nonumber \\
    &= FB' \nonumber \\
    &= \left[
    \begin{array}{c  c}
    I_r & L \\
    0 & I_{n-r}
    \end{array}
    \right] \left[
    \begin{array}{c  c}
    I_r & B_{UR} \\
    0 & B_{DR}
    \end{array}
    \right] \nonumber \\
    &= \left[
    \begin{array}{c  c}
    I_r & B_{UR} + L B_{DR} \\
    0 & B_{DR}
    \end{array}
    \right] \;.
\end{align}
However, noticing that, since \eqref{AB lemme 1} holds,
$$(AD)(D^{-1}B) = AB = [U_{\Ss} , 0] \ ,$$
and using \eqref{17} and \eqref{18} we obtain
\begin{align*}
    [U_{\Ss} , U_Q N] \left[
    \begin{array}{c  c}
    I_r & B_{UR} + L B_{DR} \\
    0 & B_{DR}
    \end{array}
    \right] &= [U_{\Ss} , 0] \ .
\end{align*}
Therefore $U_{\Ss}(B_{UR} + L B_{DR}) + U_Q N B_{DR} = 0$ .
Since $[U_{\Ss},U_{Q}]$ is invertible we get $B_{UR} + L B_{DR}=0$ and $N B_{DR}=0$ . \\
Finally, \eqref{18} becomes
$$D^{-1}B = \left[
\begin{array}{c  c}
I_r & 0 \\
0 & B_{DR}
\end{array}
\right] \;.$$
This proves \eqref{D-1B} and concludes the proof.
\end{proof}
The second lemma states that if the product of two factors takes the format of \eqref{D-1B}, then up to the product by an invertible matrix, the two factors have the same format.
In the proof of Proposition \ref{simplif mat}, we will use this property several times to establish \eqref{simplif W_h}.      
\begin{lemma}\label{simplif 2}
    Let $r$, $q$, $n$ and $p$ be positive integers such that $r \leq \min(q,n,p)$. Let $B \in \mathbb{R}^{q \times n}$, $C \in  \mathbb{R}^{n \times p}$ and $P \in \mathbb{R}^{(q-r) \times (p-r)}$ such that 
    \begin{align*}
        BC = \left[
        \begin{array}{c  c}
        I_r & 0 \\
        0 & P
        \end{array}
        \right] \ .
    \end{align*}
    Then, there exist an invertible matrix $D \in  \mathbb{R}^{n \times n}$ and two matrices $B_{DR} \in \mathbb{R}^{(q-r) \times (n-r)}$ and $C_{DR} \in \mathbb{R}^{(n-r) \times (p-r)}$ such that
    \begin{align}
        BD &= \left[
        \begin{array}{c  c}
        I_r & 0 \\
        0 & B_{DR}
        \end{array}
        \right] \label{BD} \\
        D^{-1}C &= \left[
        \begin{array}{c  c}
        I_r & 0 \\
        0 & C_{DR}
        \end{array}
        \right] . \label{D-1C}
    \end{align}
\end{lemma}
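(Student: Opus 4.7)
The plan is to closely mirror the proof of Lemma \ref{simplif 1}, working column-wise on the product. Inspecting the first $r$ columns of the identity $BC = \begin{bmatrix} I_r & 0 \\ 0 & P \end{bmatrix}$ shows that $B[C_1,\ldots,C_r] = \begin{bmatrix} I_r \\ 0 \end{bmatrix}$, whose columns are linearly independent. Hence $C_1,\ldots,C_r$ themselves are linearly independent in $\mathbb{R}^n$ and can be extended (via the incomplete basis theorem) to a basis $C_1,\ldots,C_r,E_{r+1},\ldots,E_n$ of $\mathbb{R}^n$. Let $E \in \mathbb{R}^{n \times n}$ denote the invertible matrix whose columns are these basis vectors.

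By construction $E^{-1}C$ has its first $r$ columns equal to $e_1,\ldots,e_r$, so it takes the form $E^{-1}C = \begin{bmatrix} I_r & C_{UR} \\ 0 & C_{DR} \end{bmatrix}$ for some $C_{UR} \in \mathbb{R}^{r \times (p-r)}$ and $C_{DR} \in \mathbb{R}^{(n-r) \times (p-r)}$. Likewise, because $BC_i = e_i$ for $i \leq r$, a direct inspection yields $BE = \begin{bmatrix} I_r & L \\ 0 & N \end{bmatrix}$ for some $L \in \mathbb{R}^{r \times (n-r)}$ and $N \in \mathbb{R}^{(q-r) \times (n-r)}$ (namely, the top and bottom blocks of $[BE_{r+1},\ldots,BE_n]$).

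Next I would introduce $F = \begin{bmatrix} I_r & -L \\ 0 & I_{n-r} \end{bmatrix}$, which is invertible with inverse $\begin{bmatrix} I_r & L \\ 0 & I_{n-r} \end{bmatrix}$, and set $D = EF$. A direct block computation gives $BD = (BE)F = \begin{bmatrix} I_r & 0 \\ 0 & N \end{bmatrix}$, which has the required form upon setting $B_{DR} = N$. On the other side, $D^{-1}C = F^{-1} E^{-1} C = \begin{bmatrix} I_r & C_{UR} + L C_{DR} \\ 0 & C_{DR} \end{bmatrix}$, which still has a potentially nonzero top-right block.

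The closing observation, which I expect to be the crux of the argument, is that the hypothesis on $BC$ forces this residual block to vanish. Indeed, computing $(BD)(D^{-1}C) = BC$ with the forms above yields $\begin{bmatrix} I_r & C_{UR} + L C_{DR} \\ 0 & N C_{DR} \end{bmatrix} = \begin{bmatrix} I_r & 0 \\ 0 & P \end{bmatrix}$, so $C_{UR} + L C_{DR} = 0$ (and incidentally $N C_{DR} = P$). Hence $D^{-1}C = \begin{bmatrix} I_r & 0 \\ 0 & C_{DR} \end{bmatrix}$, as required. The main subtlety is producing a single invertible $D$ that simultaneously cleans up both the right multiplication on $B$ and the left multiplication on $C$; the triangular factor $F$ combined with the automatic vanishing imposed by the assumption on $BC$ is precisely what makes this simultaneous simplification possible.
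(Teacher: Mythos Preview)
Your proposal is correct and follows essentially the same approach as the paper: extend $C_1,\ldots,C_r$ to a basis to form $E$, correct with the upper-triangular factor $F=\begin{bmatrix} I_r & -L \\ 0 & I_{n-r}\end{bmatrix}$ (the paper calls your $L,N$ the blocks $B_{UR},B_{DR}$), and set $D=EF$. The only cosmetic difference is the order of verification---the paper first derives $C_{UR}+B_{UR}C_{DR}=0$ from $B'C'=BC$ and then checks both $BD$ and $D^{-1}C$, whereas you first compute $BD$ and $D^{-1}C$ and then read off the vanishing from $(BD)(D^{-1}C)=BC$.
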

In the proof below, we can easily see that the result still holds for $r=0$ and $r=\min(q,n,p)$ with the conventions adopted in Section \ref{Settings}.
\begin{proof}
Let $r$, $q$, $n$ and $p$ be positive integers such that $r \leq \min(q,n,p)$.
Let $B \in \mathbb{R}^{q \times n}$, $C \in  \mathbb{R}^{n \times p}$ and $P \in \mathbb{R}^{(q-r) \times (p-r)}$ such that 
\begin{align}\label{BC}
    BC = \left[
    \begin{array}{c  c}
    I_r & 0 \\
    0 & P
    \end{array}
    \right] \ .
\end{align}
We have 
\begin{align}
    B[C_1,C_2, \ldots ,C_r] = \begin{bmatrix}
    I_r \\ 0
    \end{bmatrix}. \label{B C_r = I_r} 
\end{align}
Since the columns of $\begin{bmatrix}
I_r \\ 0
\end{bmatrix}$ are linearly independent, $$ \rk(B[C_1,C_2, \ldots ,C_r]) = r $$ and the vectors $C_1, \ldots ,C_r$ are necessarily also linearly independent. Using the incomplete basis theorem, we complement $(C_1, \ldots ,C_r)$ to form a basis $(C_1, \ldots ,C_r,E_{r+1}, \ldots ,E_{n})$. We denote $E=[C_1, \ldots ,C_r,E_{r+1}, \ldots ,E_{n}] \in \mathbb{R}^{n \times n}$.
By construction, the matrix $E$ is invertible. \\
We now set $B' = BE$ and $C' = E^{-1} C$. In particular 
\begin{align}\label{B'C'=BC}
    B'C' = BC.
\end{align}
Also notice that $$ E \begin{bmatrix}
I_r \\ 0
\end{bmatrix} = [C_1, \ldots ,C_r] \;, $$
so that 
$$ E^{-1} [C_1, \ldots ,C_r] = \begin{bmatrix}
I_r \\ 0
\end{bmatrix} \;. $$
Therefore, we can write
\begin{align}\label{C'}
     C' = E^{-1} C = \left[
\begin{array}{c  c}
I_r & C_{UR} \\
0 & C_{DR}
\end{array}
\right]  \ , 
\end{align}
where $C_{UR} \in \mathbb{R}^{r \times (p-r)}$ and $C_{DR} \in \mathbb{R}^{(n-r) \times (p-r)}$ are such that $\begin{bmatrix}
C_{UR} \\ C_{DR}
\end{bmatrix} = E^{-1} [C_{r+1}, \ldots ,C_{p}]$. \\
Now notice that, using \eqref{B C_r = I_r},
\begin{align*}
    B' &= BE \\
    &= B [C_1, \ldots ,C_r,E_{r+1}, \ldots ,E_{n}] \\
    &= \left[
    \begin{array}{c  c}
    I_r & B_{UR} \\
    0 & B_{DR}
    \end{array}
    \right] \ , \numberthis \label{B'}
\end{align*}
where $B_{UR} \in \mathbb{R}^{r \times (n-r)}$ and $B_{DR} \in \mathbb{R}^{(q-r) \times (n-r)}$ are such that $\begin{bmatrix}
B_{UR} \\ B_{DR}
\end{bmatrix} = B[E_{r+1}, \ldots ,E_n]$ \;. \\
Plugging \eqref{B'}, \eqref{C'} and \eqref{BC} in the equality \eqref{B'C'=BC}, we obtain
$$\left[
\begin{array}{c  c}
I_r & B_{UR} \\
0 & B_{DR}
\end{array}
\right]  \left[
\begin{array}{c  c}
I_r & C_{UR} \\
0 & C_{DR}
\end{array}
\right] = \left[
\begin{array}{c  c}
I_r & 0 \\
0 & P
\end{array}
\right]\;,$$
which yields
$$\left[
\begin{array}{c  c}
I_r & C_{UR} + B_{UR} C_{DR} \\
0 & B_{DR} C_{DR}
\end{array}
\right] =\left[
\begin{array}{c  c}
I_r & 0 \\
0 & P
\end{array}
\right]\;.$$
Therefore, $C_{UR} + B_{UR} C_{DR} = 0$ or, equivalently ,
\begin{align}\label{C_UR}
    C_{UR} = -B_{UR} C_{DR} \;.
\end{align}
Define $F = \left[
\begin{array}{c  c}
I_r & -B_{UR} \\
0 & I_{n-r}
\end{array}
\right]$. The matrix $F$ is invertible. Moreover, using \eqref{C'} and \eqref{C_UR} we have
\begin{align*}
    C' &= \left[
    \begin{array}{c  c}
    I_r & -B_{UR} C_{DR} \\
    0 & C_{DR}
    \end{array}
    \right] \\
    &= \left[
    \begin{array}{c  c}
    I_r & -B_{UR} \\
    0 & I_{n-r}
    \end{array}
    \right]  \left[
    \begin{array}{c  c}
    I_r & 0 \\
    0 & C_{DR}
    \end{array}
    \right] \\
    &= F \left[
    \begin{array}{c  c}
    I_r & 0 \\
    0 & C_{DR}
    \end{array}
    \right] \;.
\end{align*}
Therefore, if we define $D = EF$, $D$ is invertible and
$$D^{-1}C = F^{-1} E^{-1} C = F^{-1} C' = \left[
\begin{array}{c  c}
I_r & 0 \\
0 & C_{DR}
\end{array}
\right].$$
This proves \eqref{D-1C}. \\
In order to prove \eqref{BD}, we remark that, using \eqref{B'} and the definition of $F$, we also have
\begin{align*}
    BD &= BEF \\
    &= B'F \\
    &= \left[
    \begin{array}{c  c}
    I_r & B_{UR} \\
    0 & B_{DR}
    \end{array}
    \right]  \left[
    \begin{array}{c  c}
    I_r & -B_{UR} \\
    0 & I_{n-r}
    \end{array}
    \right] \\
    &= \left[
    \begin{array}{c  c}
    I_r & 0 \\
    0 & B_{DR}
    \end{array}
    \right] \;.
\end{align*}
This proves \eqref{BD} and concludes the proof.
\end{proof}

Now we prove Proposition \ref{simplif mat}.
\begin{proof}[Proof of Proposition \ref{simplif mat}]

Let $\Wbf = (W_H, \ldots ,W_1)$ be a first-order critical point of $L$. Then using Lemma \ref{clem} there exist $D \in \mathbb{R}^{d_{1} \times d_{1}} $ invertible and a matrix $M \in \mathbb{R}^{d_{1} \times d_{x}}$ which satisfies $W_H \cdots W_2M = 0$ such that 
\begin{align}
    W_H \cdots W_2 &= [U_{\Ss} \ , \ 0 ] D \label{W_H...W_2} \\
    W_1 &= D^{-1} 
    \begin{bmatrix}
    U_{\Ss}^T\Sigma_{YX}\Sigma_{XX}^{-1} \\ 
    0 
    \end{bmatrix}
    + M \;. \label{W_1}
\end{align}
Denoting $D_1 = D^{-1}$ and using \eqref{W_H...W_2}, we have $W_H \cdots W_2 D_1= [U_{\Ss} , 0]$. Then applying Lemma \ref{simplif 1} with $A=W_H$ and $B=W_{H-1} \cdots W_2 D_1$ , there exist an invertible matrix $D_{H-1} \in \mathbb{R}^{d_{H-1} \times d_{H-1}}$, and matrices $Z_H \in \mathbb{R}^{(d_y - r) \times (d_{H-1} - r)}$ and $B_{DR} \in \mathbb{R}^{(d_{H-1} - r) \times (d_1 - r)}$ such that 
\begin{align}
    \widetilde{W}_H := W_H D_{H-1} &= [U_{\Ss} , U_Q Z_H] \nonumber \\
    D_{H-1}^{-1} W_{H-1} \cdots W_2 D_1 &= \left[
    \begin{array}{c  c}
    I_r & 0 \\
    0 & B_{DR}
    \end{array}
    \right] \;. \label{tema}
\end{align}
The first equality proves \eqref{simplif W_H}. \\
Then applying Lemma \ref{simplif 2} to \eqref{tema} with  $B = D_{H-1}^{-1} W_{H-1}$ and $C = W_{H-2} \cdots W_2 D_1$ we get the existence of an invertible matrix $D_{H-2} \in \mathbb{R}^{d_{H-2} \times d_{H-2}}$, $C_{DR} \in \mathbb{R}^{(d_{H-2}-r) \times (d_1-r)}$ and $Z_{H-1} \in \mathbb{R}^{(d_{H-1}-r) \times (d_{H-2}-r)}$ such that
$$ \widetilde{W}_{H-1} := D_{H-1}^{-1} W_{H-1} D_{H-2} = \left[
\begin{array}{c  c}
I_r & 0 \\
0 & Z_{H-1}
\end{array}
\right]\;,$$ and
$$D_{H-2}^{-1} W_{H-2} \cdots W_2 D_1 = \left[
\begin{array}{c  c}
I_r & 0 \\
0 & C_{DR}
\end{array}
\right] \;.$$
Reiterating the process by using Lemma \ref{simplif 2} multiple times with $B = D_{h}^{-1} W_h$ and $C = W_{h-1} \cdots W_2 D_1$ for $h$ decreasing from $H-2$ to 3, we can conclude that there exist invertible matrices $D_h \in \mathbb{R}^{d_{h} \times d_{h}}$ and matrices $Z_h \in \mathbb{R}^{(d_h-r) \times (d_{h-1}-r)}$, for $h \in \llbracket 2,H-1 \rrbracket$, such that $$ \widetilde{W}_h := D_h^{-1}W_h D_{h-1} = \left[
\begin{array}{c  c}
I_r & 0 \\
0 & Z_h
\end{array}
\right] \quad \forall h \in \llbracket 2,H-1 \rrbracket \;. $$ \\
This entails \eqref{simplif W_h}. \\
We also have from \eqref{W_1} that
$W_1 =D_1 
\begin{bmatrix}
U_{\Ss}^T\Sigma_{YX}\Sigma_{XX}^{-1} \\ 
0 
\end{bmatrix}
+ M$  with $W_H \cdots W_2 M = 0$.
Therefore,
$$D_1^{-1}W_1 = 
\begin{bmatrix}
U_{\Ss}^T\Sigma_{YX}\Sigma_{XX}^{-1} \\ 
0 
\end{bmatrix}
+ D_1^{-1}M \;.$$ 
Using \eqref{W_H...W_2}, $D_1 = D^{-1}$ and $W_H \cdots W_2 M = 0$, we obtain
$$[U_{\Ss} \ , \ 0] D_1^{-1} M = 0 \;.$$
Writing $D_1^{-1}M = \begin{bmatrix}
Z_0 \\ Z_1
\end{bmatrix}$, where $Z_0 \in \mathbb{R}^{r \times d_x}$ and $Z_1 \in \mathbb{R}^{(d_1 - r) \times d_x}$, we have
\begin{align*}
    0 &= [U_{\Ss} \ , \ 0] D_1^{-1}M \\
    &= [U_{\Ss} \ , \  0]\begin{bmatrix}
    Z_0 \\ Z_1
     \end{bmatrix} \\
     &= U_{\Ss}Z_0 \;.
\end{align*}
Multiplying on the left by $U_{\Ss}^T$ we obtain 
$$ Z_0 = 0.$$
Therefore $D_1^{-1} M = \begin{bmatrix}
0 \\ Z_1
\end{bmatrix}$, which yields
$$ \widetilde{W}_1 := D_1^{-1}W_1 = \begin{bmatrix}
U_{\Ss}^T\Sigma_{YX}\Sigma_{XX}^{-1} \\ 
Z_1  
\end{bmatrix} \;.$$
This proves \eqref{simplif W_1}. \\
Finally we have 
\begin{align*}
    \widetilde{W}_H \cdots \widetilde{W}_2 &= (W_H D_{H-1}) (D_{H-1}^{-1} W_{H-1} D_{H-2}) \cdots (D_2^{-1} W_2 D_1) \\
    &= W_H \cdots W_2 D_1 \\
    &= [ U_{\Ss} \ , \ 0] \ ,
\end{align*}
where the last equality is due to \eqref{W_H...W_2} and $D_1 = D^{-1}$.
This entails \eqref{simplif W_H...W_2} and concludes the proof.
\end{proof}

\subsection{Proof of Proposition \ref{prop parameterization of global minimizers}}\label{proof of prop parameterization of global minimizers}

We first make a comment about notational subtleties to help understand the statement of Proposition~\ref{prop parameterization of global minimizers}, and then prove the proposition.

Recall that $r_{max} = \min(d_H, \ldots, d_0)$, and $d_x=d_0 \geq d_y=d_H$ by assumption.
Therefore, in the statement of Proposition \ref{prop parameterization of global minimizers}, some blocks $Z_h$ have 0 lines or 0 columns, and thus do not exist.
For example, depending on the value of $r_{max}$, we have
\begin{align*}
    \begin{cases}
        W_H = U_{\Ss_{max}} D_{H-1}^{-1}  &\text{if} \ r_{max} = d_{H-1} \\
        W_1 = D_1 U_{\Ss_{max}}^T\Sigma_{YX}\Sigma_{XX}^{-1}  &\text{if} \ r_{max} = d_{1}
    \end{cases}
\end{align*}
and for $h \in \llbracket 2,H-1 \rrbracket$
\begin{align*}
    W_h =
    \begin{cases}
        D_{h}\left[
        \begin{array}{c  c}
        I_{r_{max}} & 0 
        \end{array}
        \right]D_{h-1}^{-1} \ &\text{if} \  r_{max} = d_h < d_{h-1}  \\ 
        D_{h}\left[
        \begin{array}{c  c}
        I_{r_{max}} \\
        0 
        \end{array}
        \right]D_{h-1}^{-1} \ &\text{if} \ r_{max} = d_{h-1} < d_h \\
        D_{h}I_{r_{max}}D_{h-1}^{-1} \ &\text{if} \ r_{max} = d_h = d_{h-1}
    \end{cases}
\end{align*}
Also, if $r_{max}=d_y$, then $Q_{max} = \emptyset$, hence $U_{Q_{max}} \in \mathbb{R}^{d_y \times 0} $ and $Z_H \in \mathbb{R}^{0 \times (d_{H-1}-r_{max})}$. Then, using the convention in Section \ref{Settings}, $U_{Q_{max}} Z_H = 0_{d_y \times (d_{H-1}-r_{max})}$, so that $ {W}_H = [U_{\Ss_{max}} , 0_{d_y \times (d_{H-1}-r_{max})}] D_{H-1}^{-1} \in \mathbb{R}^{d_y \times d_{H-1}}$.

We are now ready to prove the proposition.
\begin{proof}[Proof of Proposition \ref{prop parameterization of global minimizers}]

Let $\Ss_{max} = \llbracket 1,r_{max} \rrbracket$.
Let us first prove that $\Wbf$ is a global minimizer of $L$ if and only if $\Wbf$ is a first-order critical point of $L$ associated with $\Ss_{max}$.
From Lemma \ref{lemma yun}, we have 
\begin{align*}
     U_{\Ss_{max}} U_{\Ss_{max}}^T \Sigma_{YX} \Sigma_{XX}^{-1} \in \argmin_{\substack{R \in \mathbb{R}^{d_y \times d_x} \\ \rk(R) \leq r_{max}}} \|RX - Y \|^2 \;.
\end{align*}
Let $\Wbf$ be a first-order critical point associated with $\Ss_{max}$ (note that from Proposition \ref{reciproque S vers W}, such $\Wbf$ exist).
We have $W_H \cdots W_1 = U_{\Ss_{max}} U_{\Ss_{max}}^T \Sigma_{YX} \Sigma_{XX}^{-1}$, hence, for all $\Wbf' = (W'_H,\ldots,W'_1)$, since $\rk(W'_H \cdots W'_1) \leq r_{max}$, we have
$$ L(\Wbf') \geq \min_{\substack{R \in \mathbb{R}^{d_y \times d_x} \\ \rk(R) \leq r_{max}}} \|RX - Y \|^2 = \|W_H \cdots W_1 X - Y \|^2 = L(\Wbf) \;.$$
As a consequence, $\Wbf$ is a global minimizer of $L$. \\
Conversely, if $\Wbf$ is a global minimizer of $L$, then $\Wbf$ is a first-order critical point of $L$.
From Proposition \ref{global map and critical values}, there exist $\Ss \subset \llbracket 1,d_y \rrbracket$ of size $r \in \llbracket0,r_{max} \rrbracket$ such that $W_H \cdots W_1 = U_{\Ss} U_{\Ss}^T \Sigma_{YX} \Sigma_{XX}^{-1}$, and we have $L(\Wbf) = \tr(\Sigma_{YY}) - \sum_{i \in \Ss} \lambda_i$.
But we have from Assumption \ref{Assump H}, $\lambda_1 > \ldots > \lambda_{d_y}$, and, since $\Sigma$ is invertible (see Lemma \ref{sigma invertible}), then $\lambda_{d_y}>0$.
Therefore, using Proposition \ref{reciproque S vers W}, $\Wbf$ is a global minimizer of $L$ implies that $\Ss = \llbracket 1,r_{max} \rrbracket = \Ss_{max}$.
Hence, $\Wbf$ is a global minimizer of $L$ if and only if $\Wbf$ is a first-order critical point of $L$ associated with $\Ss_{max}$.\\
Let us now prove Proposition \ref{prop parameterization of global minimizers}.\\
Let $\Wbf = (W_H, \ldots , W_1)$ be a first-order critical point associated with $\Ss_{max} = \llbracket 1,r_{max} \rrbracket$.
Using Proposition \ref{simplif mat}, there exist invertible matrices $D_{H-1} \in \mathbb{R}^{d_{H-1} \times d_{H-1}}, \ldots ,D_1 \in \mathbb{R}^{d_{1} \times d_{1}}$, and matrices $Z_H \in \mathbb{R}^{(d_y-r_{max}) \times (d_{H-1}-r_{max})}$, $Z_h \in \mathbb{R}^{(d_h-r_{max}) \times (d_{h-1}-r_{max})}$ for $h \in \llbracket 2 , H-1 \rrbracket$, and $Z_1 \in \mathbb{R}^{(d_1-r_{max}) \times d_x}$ such that:
    \begin{align*}
        {W}_H &= [U_{\Ss_{max}} , U_{Q_{max}} Z_H] D_{H-1}^{-1} \\
        {W}_1 &= D_1 \begin{bmatrix}
        U_{\Ss_{max}}^T\Sigma_{YX}\Sigma_{XX}^{-1} \\ 
        Z_1
        \end{bmatrix} \\
        {W}_h &= D_h \left[
        \begin{array}{c  c}
        I_{r_{max}} & 0 \\
        0 & Z_h
        \end{array}
        \right] D_{h-1}^{-1} \qquad \forall h \in \llbracket 2 , H-1 \rrbracket \;.
    \end{align*}
    
Conversely, consider matrices $D_h$, for $h \in \llbracket1,H-1 \rrbracket$ and $Z_h$, for $h \in \llbracket1,H \rrbracket$ as in Proposition \ref{prop parameterization of global minimizers}, and
\begin{align*}
        {W}_H &= [U_{\Ss_{max}} , U_{Q_{max}} Z_H] D_{H-1}^{-1} \\
        {W}_1 &= D_1 \begin{bmatrix}
        U_{\Ss_{max}}^T\Sigma_{YX}\Sigma_{XX}^{-1} \\ 
        Z_1
        \end{bmatrix} \\
        {W}_h &= D_h \left[
        \begin{array}{c  c}
        I_{r_{max}} & 0 \\
        0 & Z_h
        \end{array}
        \right] D_{h-1}^{-1} \qquad \forall h \in \llbracket 2 , H-1 \rrbracket \;.
    \end{align*}
Since $|\Ss_{max}| = r_{max}$, using Proposition \ref{reciproque param}, we have that $\Wbf$ is a first-order critical point associated with $\Ss_{max}$.
This concludes the proof.
\end{proof}

\section{Global Minimizers and Simple Strict Saddle Points (Proof of Proposition \ref{main prop partie baldi})} \label{proof main prop partie baldi}

Recall that $r_{max} = \min(d_H,\ldots,d_0)$.

Let $\Wbf = (W_H, \ldots , W_1)$ be a first-order critical point of $L$ associated with $\Ss$ of size $r= \rk(W_H \cdots W_1) \leq r_{max}$. \\

\textbf{Case 1:} $\Ss = \llbracket 1,r_{max} \rrbracket = \Ss_{max}$.
In this case, using Lemma \ref{lemma yun},
\begin{align*}
    W_H \cdots W_1 = U_{\Ss_{max}} U_{\Ss_{max}}^T \Sigma_{YX} \Sigma_{XX}^{-1} \in \argmin_{\substack{R \in \mathbb{R}^{d_y \times d_x} \\ \rk(R) \leq r_{max}}} \|RX - Y \|^2 \;.
\end{align*}
Moreover, for all $\Wbf' = (W'_H,\ldots,W'_1)$, since $\rk(W'_H \cdots W'_1) \leq r_{max}$, we have
$$ L(\Wbf') \geq \min_{\substack{R \in \mathbb{R}^{d_y \times d_x} \\ \rk(R) \leq r_{max}}} \|RX - Y \|^2 = \|W_H \cdots W_1 X - Y \|^2 = L(\Wbf) \;.$$
As a consequence, $\Wbf$ is a global minimizer of $L$.

\textbf{Case 2:}
In order to prove the two remaining statements, we assume that $\Ss \neq \llbracket 1,r \rrbracket$ with $0 < r \leq r_{max}$, and show that $\Wbf$ is not a second-order critical point. \\
To do this we will find $\Wbf' = (W'_H, \ldots , W'_1)$ such that $c_2(\Wbf,\Wbf')<0$ (see Lemma \ref{Characterization of critical points in our settings} ).
More precisely, we find a linear trajectory of the form $W_h(t) = W_h + t W'_h$ such that the second-order coefficient of the asymptotic expansion of $L(\left(W_{h}(t)\right)_{h=1 . . H})$ around $t=0$ is negative.
This proves that $\Wbf$ is not a second-order critical point.\\

Since $\Ss \neq \llbracket 1,r \rrbracket$, and the eigenvalues ${(\lambda_k)}_{k \in \llbracket 1,d_y \rrbracket}$ are distinct and in decreasing order (see Section \ref{Settings}), there exist $j \in \Ss$ and $ i \not \in \Ss $ such that
\begin{align}\label{lambda i > lambda j}
    \lambda_i > \lambda_j \;.
\end{align}
We denote by $\mathcal{S}=\left\{i_{1}, \dots, i_{r}\right\}$, hence there exists $g \in \llbracket 1,r \rrbracket$ such that $j = i_g $. \\
Note that,
\begin{align*}
    U_{\Ss} = U \sum_{k=1}^r E_{i_k,k}
\end{align*}
where $E_{l,k} \in \mathbb{R}^{d_y \times r}$ is the matrix whose entries are all 0 except the one in position $(l,k)$ which is equal to 1. \\
Denote by $U_t$ the matrix formed by replacing in $U_{\Ss}$ the column corresponding to $u_j$ by $u_j + t u_i$. 
More precisely, set
\begin{align*}
    U_t = U_{\Ss} + t U E_{i,g} \;.
\end{align*}
Set $V = U E_{i,g} \in \mathbb{R}^{d_y \times r}$ and
\begin{align}\label{expression V_t}
    V_t = \sum_{k=1}^r E_{i_k,k} + t E_{i,g} \in \mathbb{R}^{d_y \times r}.
\end{align}
Hence we have
\begin{align} \label{U_t = U V_t}
    U_t = U_{\Ss} + t V = U V_t \;.
\end{align}
Considering $D \in \mathbb{R}^{d_1 \times d_1}$ as provided by Lemma \ref{clem},
we set
$$
\begin{cases}
    W'_1 = D^{-1} \left[
    \begin{array}{c}
    V^T\Sigma_{YX}\Sigma_{XX}^{-1} \\
    0_{(d_1-r) \times d_x}
    \end{array}
    \right] \\
    W'_h=0 \qquad \forall h \in \llbracket 2,H-1 \rrbracket \\
    W'_H = VU_{\Ss}^T W_H \;. \\
\end{cases}
$$
and for all $ h \in \llbracket 1,H \rrbracket $, $W_h(t) = W_h + t W'_h$.
Note that
$$W_H(t) = W_H + t W'_H = (I_{d_y} + tVU_{\Ss}^T) W_H \;,$$
and therefore
$$ K(t) := W_H(t) \cdots W_2(t) = (I_{d_y} + tVU_{\Ss}^T)K \;,$$
where $K = W_H \cdots W_2$.
Using Lemma \ref{clem}, there exists $M \in \mathbb{R}^{d_1 \times d_x}$ satisfying $KM=0$ such that 
$$W_1 = D^{-1} \left[
\begin{array}{c}
U_{\Ss}^T\Sigma_{YX}\Sigma_{XX}^{-1} \\
0_{(d_1-r) \times d_x}
\end{array}
\right] + M \;.$$
Hence,
$$ W_1(t) = D^{-1} \left[
\begin{array}{c}
U_{\Ss}^T\Sigma_{YX}\Sigma_{XX}^{-1} \\
0_{(d_1-r) \times d_x}
\end{array}
\right] + M + tW'_1
=
D^{-1} \left[
\begin{array}{c}
(U_{\Ss}^T +tV^T)\Sigma_{YX}\Sigma_{XX}^{-1} \\
0_{(d_1-r) \times d_x}
\end{array}
\right] + M \;,$$
where $M \in \mathbb{R}^{d_1 \times d_x}$ is such that $KM=0$.
Therefore
\begin{align*}
W_t: &= W_H(t) \cdots W_1(t) \\
& = K(t) W_1(t) \\
&= (I_{d_y} + tVU_{\Ss}^T) \left( K D^{-1} \left[
\begin{array}{c}
(U_{\Ss}^T +tV^T)\Sigma_{YX}\Sigma_{XX}^{-1} \\
0_{(d_1-r) \times d_x}
\end{array}
\right] + KM\right) \;.
\end{align*}
From Lemma \ref{clem}, using that $KM=0$ and $K=[U_{\Ss} \quad 0_{d_y \times (d_1-r)} ] D$, this becomes
\begin{align*}
W_t & = (I_{d_y} + t V U_{\Ss}^T) [U_{\Ss} \quad 0_{d_y \times (d_1-r)} ] D D^{-1} \left[
\begin{array}{c}
(U_{\Ss}^T +t V^T)\Sigma_{YX}\Sigma_{XX}^{-1} \\
0_{(d_1-r) \times d_x}
\end{array}
\right]\\
& = (I_{d_y} + t V U_{\Ss}^T) U_{\Ss}(U_{\Ss}^T + t V^T)\Sigma_{YX}\Sigma_{XX}^{-1} \;. \\
\end{align*}
Using that $U_{\Ss}^T U_{\Ss} = I_r$ (see Lemma \ref{properties of orhtogonality}), we obtain
\begin{align*}
W_t  = (U_{\Ss} + t V)(U_{\Ss}^T + t V^T)\Sigma_{YX}\Sigma_{XX}^{-1}  = U_t U_t^T \Sigma_{YX}\Sigma_{XX}^{-1} \;. \numberthis \label{W_t baldi}
\end{align*}
Recall that our goal is to show that the asymptotic expansion of $\eqref{L(W_t)}$ around $t=0$ has a negative second-order coefficient.
We calculate
\begin{align}
    L(\left(W_{h}(t)\right)_{h=1 . . H}) &= \|W_t X - Y\|^2 \nonumber \\
    &= \tr\left(W_t \Sigma_{XX} W_t^T\right) - 2 \tr(W_t\Sigma_{XY}) + \tr(\Sigma_{YY}) \;. \label{L(W_t)}
\end{align}
Let us simplify $\tr(W_t \Sigma_{XX} W_t^T)$ first. Using \eqref{W_t baldi}, we have 
\begin{align*}
W_t\Sigma_{XX}W_t^T = U_t U_t^T \Sigma_{YX}\Sigma_{XX}^{-1} \Sigma_{XX} \Sigma_{XX}^{-1} \Sigma_{XY} U_t U_t^T = U_t U_t^T \Sigma U_t U_t^T \;.  
\end{align*}
Using \eqref{U_t = U V_t}, $U^TU = I_{d_y}$, $\Sigma = U \Lambda U^T$ and the cyclic property of the trace, we obtain
\begin{align*}
\tr\left(W_t\Sigma_{XX}W_t^T\right) = \tr\left(UV_t V_t^T U^T U \Lambda U^T U V_t V_t^T U^T\right) = \tr\left(V_t V_t^T \Lambda V_t V_t^T\right) = \tr\left(\left(V_t V_t^T\right)^2 \Lambda\right) \;.
\end{align*}
We define $(\overline{E}_{k,l})_{k=1..d_y,l=1..d_y}$ the canonical basis of $\mathbb{R}^{d_y \times d_y}$.
More precisely, $\overline{E}_{k,l} \in \mathbb{R}^{d_y \times d_y}$ has all its entries equal to 0, except a $1$ at position $(k,l)$.
Note that for all $a,c \in \llbracket 1,d_y \rrbracket$ and $b,d \in \llbracket 1,r \rrbracket$
\begin{align*}
    E_{a,b} E_{c,d}^T = \delta_{b,d} \overline{E}_{a,c} \;,
\end{align*}
where $\delta_{b,d}$ equals $1$ if $b=d$ and $0$ otherwise.
Using the definition of $V_t$ in \eqref{expression V_t} and $j=i_g$, for $g \in \llbracket 1,r \rrbracket$, we have
\begin{align*}
V_t V_t^T &= \left(\sum_{k=1}^r E_{i_k,k} + t E_{i,g}\right)\left(\sum_{k'=1}^r E^T_{i_{k'},k'} + t E^T_{i,g}\right) \\
&= \left(\sum_{k=1}^r \overline{E}_{i_k,i_k}\right) + t \overline{E}_{i_g,i} + t \overline{E}_{i,i_g} + t^2 \overline{E}_{i,i} \\
&= \left(\sum_{k \in \Ss} \overline{E}_{k,k}\right) + t \overline{E}_{j,i} + t \overline{E}_{i,j} + t^2 \overline{E}_{i,i} \;. \numberthis \label{V_t V_t^T}
\end{align*}
We also have for all $a,b,c,d \in \llbracket 1,d_y \rrbracket$ 
\begin{align*}
    \overline{E}_{a,b} \overline{E}_{c,d} = \delta_{b,c} \overline{E}_{a,d} \;.
\end{align*}
Recalling that $j \in \Ss$ and $i \notin \Ss$, we obtain
\begin{align*}
(V_t V_t^T)^2 &= \left(\left(\sum_{k \in \Ss} \overline{E}_{k,k}\right) + t \overline{E}_{j,i} + t \overline{E}_{i,j} + t^2 \overline{E}_{i,i}\right) \left(\left(\sum_{k' \in \Ss} \overline{E}_{k',k'}\right) + t \overline{E}_{j,i} + t \overline{E}_{i,j} + t^2 \overline{E}_{i,i}\right) \\
&= \left(\left(\sum_{k \in \Ss} \overline{E}_{k,k}\right) + t \overline{E}_{j,i} + 0 + 0\right) + (0 + 0 + t^2 \overline{E}_{j,j} + t^3 \overline{E}_{j,i}) \\
& \quad + (t \overline{E}_{i,j} + t^2 \overline{E}_{i,i} + 0 + 0) + (0 + 0 + t^3 \overline{E}_{i,j} + t^4 \overline{E}_{i,i}) \\
&= \left(\sum_{k \in \Ss} \overline{E}_{k,k}\right) + t^2(1+t^2) \overline{E}_{i,i} + t^2 \overline{E}_{j,j} + t(1+t^2) \overline{E}_{i,j} + t(1+t^2) \overline{E}_{j,i} \;.
\end{align*}
Finally, since for all $a,b \in \llbracket 1,d_y \rrbracket$
\begin{align}\label{E a b Lambda}
    \overline{E}_{a,b} \Lambda = \lambda_b \overline{E}_{a,b}
\end{align}
we have 
\begin{align}
    \tr\left(W_t \Sigma_{XX} W_t^T\right) = \tr\left(\left(V_t V_t^T\right)^2 \Lambda \right) = \sum_{k \in \Ss} \lambda_k + t^2(1+t^2)\lambda_i + t^2 \lambda_j \;. \label{tr(W_t sigma xx W_t^T}
\end{align}
Coming back to \eqref{L(W_t)}, we calculate the other term $\tr(W_t\Sigma_{XY})$.
Using \eqref{W_t baldi}, \eqref{U_t = U V_t} and $\Sigma = U \Lambda U^T $, we obtain
\begin{align*}
\tr(W_t \Sigma_{XY}) = \tr(U_t U_t^T \Sigma) = \tr(UV_tV_t^TU^TU\Lambda U^T) = \tr(V_t V_t^T \Lambda) \;.
\end{align*}
Combining with \eqref{V_t V_t^T} and \eqref{E a b Lambda}, we get 
\begin{align*}
    \tr(W_t \Sigma_{XY}) = \tr(V_t V_t^T \Lambda) 
    = \sum_{k \in \Ss} \lambda_k + t^2 \lambda_i \;. \numberthis \label{tr(W_t sigma x y)}
\end{align*}
Finally, substituting \eqref{tr(W_t sigma xx W_t^T} and \eqref{tr(W_t sigma x y)} in \eqref{L(W_t)}, we have
\begin{align*}
L(\left(W_{h}(t)\right)_{h=1 . . H}) &= \tr(\Sigma_{YY}) +  \sum_{k \in \Ss} \lambda_k + t^2(1+t^2) \lambda_i + t^2 \lambda_j -2 \sum_{k \in \Ss} \lambda_k -2 t^2 \lambda_i \\
&= \tr(\Sigma_{YY}) -  \sum_{k \in \Ss} \lambda_k + t^2(\lambda_j - \lambda_i) + \lambda_i t^4 \;.
\end{align*}
Using Proposition \ref{global map and critical values} and recalling \eqref{lambda i > lambda j}, we finally get as $t \to 0$,  $$L(\left(W_{h}(t)\right)_{h=1 . . H}) = L(\Wbf) + c t^2 + o(t^2) \quad \textrm{with} \quad c = \lambda_j - \lambda_i<0 \;.$$
Therefore, we conclude from Lemma \ref{Characterization of critical points in our settings} that $\Wbf = (W_H, \ldots ,W_1)$ is not a second-order critical point.

\section{Strict Saddle Points with \texorpdfstring{$\Ss = \llbracket 1,r \rrbracket, \ r<r_{max}$}{} (Proof of Proposition \ref{main prop subtle strict saddles})} \label{Appendix subtle strict saddles}

We refer the reader to Section \ref{subtle strict saddles}, which introduces the 4 cases proved below.
Recall that $\Ss=\llbracket 1,r \rrbracket$ and we set $Q = \llbracket 1,d_y \rrbracket \setminus \Ss = \llbracket r+1,d_y \rrbracket$.\\
In this section, for each vector space $\mathbb{R}^{d_h}$, we will denote by $e_m$ the $m$-th element of the canonical basis of $\mathbb{R}^{d_h}$.
That is, the entries of $e_m \in \mathbb{R}^{d_h}$ are all equal to $0$ except for the $m$-th coordinate which is equal to $1$. The size of $e_m$ will not be ambiguous, once in context, so we do not include it in the notation.\\
\textbf{Remark about $r=0$:}
Using the conventions of Section \ref{Settings}, in this case we have $\Ss = \emptyset$ and $Q = \llbracket 1,d_y \rrbracket$.
Hence $U_{\Ss}$ is the matrix with no column, $U_Q = U$,  and $U_{\Ss} U_{\Ss}^T = 0_{d_y \times d_y}$.  For example, we still have $I_{d_y} = U_{\Ss} U_{\Ss}^T + U_{Q} U_{Q}^T$.
We can easily follow the proofs below with these conventions and see that the result still holds.

\subsection{1st Case:  \texorpdfstring{$i \in \llbracket 2 , H-1 \rrbracket$}{} and \texorpdfstring{$j=1$}{}} \label{1st case}
In this case, the two complementary blocks are $\Sigma_{XY} W_H \cdots W_{i+1}$ and $W_{i-1} \cdots W_2$.
Recall that $\Ss = \llbracket 1,r \rrbracket$ and $r<r_{max} = \min(d_H, \ldots,d_0)$.
Note that $\rk(\Sigma_{XY}W_H \cdots W_{i+1}) = \rk(W_H \cdots W_{i+1})$ because $\Sigma_{XY}$ is of full column rank (see Assumption \ref{Assump H}, in Section \ref{Settings}) .\\
Since the pivot $(i,j)$ is not tightened, using Proposition \ref{all blocks ranks geq than r}, we have 
\begin{align}\label{(i,1) not tightened}
    \begin{cases}
     \rk(W_H \cdots W_{i+1}) >r \\ \rk(W_{i-1} \cdots W_2)>r.
    \end{cases}
\end{align}
Let us first show that there exists $k \in \llbracket r+1, d_y \rrbracket$ and $l \in \llbracket 1,d_i \rrbracket$ such that 
\begin{align} \label{ U_k^T(W_H cdots W_i+1)_.,l neq 0}
    U_k^T(W_H \cdots W_{i+1})_{.,l} \neq 0 \;.
\end{align}
Indeed, assume by contradiction that for all $ k \in \llbracket r+1, d_y \rrbracket$ and $ l \in \llbracket 1,d_i \rrbracket$ we have 
$$U_k^T(W_H \cdots W_{i+1})_{.,l} = 0.$$
Recalling that $Q =  \llbracket 1, d_y \rrbracket \setminus \Ss = \llbracket r+1, d_y \rrbracket$, we obtain $U_Q^T W_H \cdots W_{i+1} = 0$.
Using from Lemma \ref{properties of orhtogonality} that $I_{d_y} = U_{\Ss} U_{\Ss}^T + U_Q U_Q^T$, we have
\begin{align*}
    W_H \cdots W_{i+1} &= (U_{\Ss} U_{\Ss}^T + U_Q U_Q^T) W_H \cdots W_{i+1} \\
    &= U_{\Ss} U_{\Ss}^T W_H \cdots W_{i+1}.
\end{align*}
Therefore,
$$ \rk(W_H \cdots W_{i+1}) = \rk(U_{\Ss} U_{\Ss}^T W_H \cdots W_{i+1}). $$
The latter is impossible since $\rk(U_{\Ss} U_{\Ss}^T W_H \cdots W_{i+1}) \leq |\Ss| = r $, which is not compatible with \eqref{(i,1) not tightened}.
Therefore \eqref{ U_k^T(W_H cdots W_i+1)_.,l neq 0} holds.\\
Since $\Wbf$ is a first-order critical point, using Lemma \ref{clem}, there exists an invertible matrix $D \in \mathbb{R}^{d_1 \times d_1}$ such that
\begin{align} \label{W_H cdots W_2 = [U_Ss,0]D 1er cas}
    W_H \cdots W_2 = [U_{\Ss},0_{d_y \times (d_1-r)}]D
\end{align}
and since $\Wbf$ is associated with $\Ss$, we have
\begin{align}\label{W_H cdots W_1 1er cas}
    W_H \cdots W_1 = U_{\Ss} U_{\Ss}^T \Sigma_{YX}\Sigma_{XX}^{-1} \;.
\end{align}
\sloppy Using \eqref{(i,1) not tightened} and $D$ invertible, we have $\textit{rk}(W_{i-1} \cdots W_2 D^{-1}) = \textit{rk}(W_{i-1} \cdots W_2 ) > r$.
Hence there exists $g \in \llbracket r+1,d_1 \rrbracket$ such that 
\begin{align*}
    (W_{i-1} \cdots W_2D^{-1})_{.,g} \neq 0 \;.
\end{align*}
Therefore, there exists $a \in \mathbb{R}^{d_{i-1}}$ such that 
\begin{align}\label{eq pour a 1er cas}
    a^T(W_{i-1} \cdots W_2 D^{-1})_{.,g} = 1 \;.
\end{align}
Recall that $k,l$ satisfy \eqref{ U_k^T(W_H cdots W_i+1)_.,l neq 0}.
We define $\Wbf'_{\beta} = (W'^{\beta}_H, \ldots , W'^{\beta}_1)$ by 
\begin{align*}
\begin{cases}
    W'^{\beta}_i = \beta W'_i = \beta e_l a^T \in \mathbb{R}^{d_i \times d_{i-1}}, \text{ where } e_l \in \mathbb{R}^{d_i}
    \\
    W'^{\beta}_1 = W'_1 =  D^{-1} e_g U_k^T \Sigma_{YX}\Sigma_{XX}^{-1} \in \mathbb{R}^{d_1 \times d_x}, \text{ where } e_g \in \mathbb{R}^{d_1} \\
    W'^{\beta}_h = 0 \quad \forall h \in \llbracket 2,H \rrbracket \setminus \{i\}
\end{cases}
\end{align*}
We set $\Wbf^{\beta}(t) = (W^{\beta}_H(t), \ldots , W^{\beta}_1(t))$ such that 
$W^{\beta}_h(t) = W_h + t W'^{\beta}_h$ for $h \in \llbracket 1 , H \rrbracket$.
We have
\begin{align*}
W^{\beta}(t) :&= W^{\beta}_H(t) \cdots W^{\beta}_1(t) \\
&= W_H \cdots W_{i+1} (W_i + t \beta W'_i) W_{i-1} \cdots W_2 (W_1 + t W'_1) \\
&= W_H \cdots W_1 + t(\beta W_H \cdots W_{i+1} W'_i W_{i-1} \cdots W_1 + W_H \cdots W_2 W'_1) \\
& \quad +\beta t^2 W_H \cdots W_{i+1} W'_i W_{i-1} \cdots W_2 W'_1 \;.
\end{align*}
Using \eqref{W_H cdots W_2 = [U_Ss,0]D 1er cas} and \eqref{W_H cdots W_1 1er cas}, we obtain
\begin{align*}
W^{\beta}(t) &= U_{\Ss} U_{\Ss}^T \Sigma_{YX}\Sigma_{XX}^{-1} + t (\beta W_H \cdots W_{i+1} W'_i W_{i-1} \cdots W_1 + [U_{\Ss},0]D D^{-1} e_{g} U_k^T \Sigma_{YX}\Sigma_{XX}^{-1}) \\
& \quad + \beta t^2 (W_H \cdots W_{i+1})_{.,l} a^T (W_{i-1} \cdots W_2 D^{-1})_{.,g} U_k^T \Sigma_{YX}\Sigma_{XX}^{-1} \;.
\end{align*}
Using \eqref{eq pour a 1er cas} and $g \in \llbracket r+1,d_1 \rrbracket$, we have
\begin{align*}
W^{\beta}(t) &= U_{\Ss} U_{\Ss}^T \Sigma_{YX}\Sigma_{XX}^{-1} + t \beta W_H \cdots W_{i+1} W'_i W_{i-1} \cdots W_1 + \beta t^2 (W_H \cdots W_{i+1})_{.,l} U_k^T \Sigma_{YX}\Sigma_{XX}^{-1} \;.
\end{align*}
Denoting $N = W_H \cdots W_{i+1} W'_i W_{i-1} \cdots W_1$, we have
\begin{align*}
L(\Wbf^{\beta}(t)) &= \| W^{\beta}(t) X - Y \|^2 \\
&= \| U_{\Ss} U_{\Ss}^T \Sigma_{YX}\Sigma_{XX}^{-1}X - Y + t\beta NX + \beta t^2 (W_H \cdots W_{i+1})_{.,l} U_k^T \Sigma_{YX}\Sigma_{XX}^{-1}X \|^2 \;.
\end{align*}
Expanding the square, 
the second-order term $c_2(\Wbf , \Wbf'_{\beta}) t^2$ has a coefficient equal to
\begin{align*}
c_2(\Wbf , \Wbf'_{\beta}) &= \beta^2\|NX\|^2 + 2 \beta \tr((W_H \cdots W_{i+1})_{.,l} U_k^T \Sigma_{YX}\Sigma_{XX}^{-1}X X^T  \Sigma_{XX}^{-1}\Sigma_{XY} U_{\Ss} U_{\Ss}^T) \\
& \quad -2 \beta \tr((W_H \cdots W_{i+1})_{.,l} U_k^T \Sigma_{YX}\Sigma_{XX}^{-1}XY^T) \\
&= \beta^2\|NX\|^2 + 2 \beta \tr((W_H \cdots W_{i+1})_{.,l} U_k^T \Sigma U_{\Ss} U_{\Ss}^T) -2 \beta \tr((W_H \cdots W_{i+1})_{.,l} U_k^T \Sigma) \\
&= \beta^2\|NX\|^2 -2 \beta \lambda_k U_k^T (W_H \cdots W_{i+1})_{.,l} \;,
\end{align*}
where the last equality follows from Lemma \ref{U_S sigma U_Q = 0} and $k \notin \Ss$, and $U^T \Sigma = \Lambda U^T$ and the cyclic property of the trace.\\ 
Using Lemma \ref{sigma invertible} and \eqref{ U_k^T(W_H cdots W_i+1)_.,l neq 0}, we have $\lambda_k U_k^T (W_H \cdots W_{i+1})_{.,l} \neq 0$, hence we can choose $\beta$ according to \eqref{choosing beta}, such that $c_2(\Wbf , \Wbf'_{\beta})<0$.
Therefore, $\Wbf$ is not a second-order critical point.

\subsection{2nd Case:  \texorpdfstring{$i=H$}{} and \texorpdfstring{$j=1$}{}} \label{2nd case}
In this case, the two complementary blocks are $\Sigma_{XY}$ and $W_{H-1} \cdots W_2$.
We follow again the same lines as above.
Since the pivot $(i,j)$ is not tightened, using Proposition \ref{all blocks ranks geq than r}, we have
\begin{align} \label{(H,1) not tightened}
    \rk(W_{H-1} \cdots W_2)>r \;.
\end{align}
Again, since $\Wbf$ is a first-order critical point, using Lemma \ref{clem}, there exists an invertible matrix $D \in \mathbb{R}^{d_1 \times d_1}$ such that
\begin{align}\label{W_H cdots W_2 = [U_Ss,0]D 2e cas}
    W_H \cdots W_2 = [U_{\Ss},0_{d_y \times (d_1-r)}]D
\end{align}
and since $\Wbf$ is associated with $\Ss$, we have
\begin{align}\label{W_H cdots W_1 2e cas}
    W_H \cdots W_1 = U_{\Ss} U_{\Ss}^T \Sigma_{YX}\Sigma_{XX}^{-1} \;.
\end{align}
Using \eqref{(H,1) not tightened} and $D$ invertible, we have $\textit{rk}(W_{H-1} \cdots W_2 D^{-1}) = \textit{rk}(W_{H-1} \cdots W_2 ) > r$.
Hence there exists $g \in \llbracket r+1,d_1 \rrbracket$ such that 
\begin{align*}
    (W_{i-1} \cdots W_2D^{-1})_{.,g} \neq 0 \;.
\end{align*}
Therefore, there exists $a \in \mathbb{R}^{d_{H-1}}$ such that 
\begin{align}\label{eq a 2nd case}
    a^T(W_{H-1} \cdots W_2 D^{-1})_{.,g} = 1 \;.
\end{align}
We define $\Wbf'_{\beta} = (W'^{\beta}_H, \ldots , W'^{\beta}_1)$ by
\begin{align*}
    \begin{cases}
        W'^{\beta}_H = \beta W'_H = \beta U_{r+1} a^T \in \mathbb{R}^{d_y \times d_{H-1}} 
        \\ 
        W'^{\beta}_1 = W'_1 = D^{-1} e_{g} U_{r+1}^T \Sigma_{YX}\Sigma_{XX}^{-1} \in \mathbb{R}^{d_1 \times d_x}, \text{ where } e_g \in \mathbb{R}^{d_1} 
        \\
        W'^{\beta}_h = 0 \quad \forall h \in \llbracket 2,H-1 \rrbracket \;.
    \end{cases}
\end{align*}
We set $\Wbf^{\beta}(t) = (W^{\beta}_H(t), \ldots , W^{\beta}_1(t))$ such that 
$W^{\beta}_h(t) = W_h + t W'^{\beta}_h$, for all $h \in \llbracket 1 , H \rrbracket$.
We have
\begin{align*}
W^{\beta}(t) :&= W^{\beta}_H(t) \cdots W^{\beta}_1(t) \\
&= (W_H + t \beta W'_H) W_{H-1} \cdots W_2 (W_1 + t W'_1) \\
&= W_H \cdots W_1 + t(\beta W'_H W_{H-1} \cdots W_1 + W_H \cdots W_2 W'_1) +\beta t^2 W'_H W_{H-1} \cdots W_2 W'_1 \;.
\end{align*}
Using \eqref{W_H cdots W_2 = [U_Ss,0]D 2e cas} and \eqref{W_H cdots W_1 2e cas}, then \eqref{eq a 2nd case} and $g \in \llbracket r+1,d_1 \rrbracket$, we obtain
\begin{align*}
W^{\beta}(t) &= U_{\Ss} U_{\Ss}^T \Sigma_{YX}\Sigma_{XX}^{-1} + t (\beta W'_H W_{H-1} \cdots W_1 + [U_{\Ss},0]D D^{-1} e_{g} U_{r+1}^T \Sigma_{YX}\Sigma_{XX}^{-1}) \\
& \quad + \beta t^2 U_{r+1} a^T(W_{H-1} \cdots W_{2}D^{-1})_{.,g} U_{r+1}^T \Sigma_{YX}\Sigma_{XX}^{-1} \\
&= U_{\Ss} U_{\Ss}^T \Sigma_{YX}\Sigma_{XX}^{-1} + t \beta W'_H W_{H-1} \cdots W_1 + \beta t^2 U_{r+1} U_{r+1}^T \Sigma_{YX}\Sigma_{XX}^{-1} \;.
\end{align*}
Denoting by $N = W'_H W_{H-1} \cdots W_1$, we have
\begin{align*}
L(\Wbf^{\beta}(t)) &= \| W^{\beta}(t) X - Y \|^2 \\
&= \| U_{\Ss} U_{\Ss}^T \Sigma_{YX}\Sigma_{XX}^{-1}X - Y + t \beta NX + \beta t^2 U_{r+1} U_{r+1}^T \Sigma_{YX}\Sigma_{XX}^{-1}X \|^2 \;.
\end{align*}
As previously, expanding the square,
we can see that the second-order coefficient $c_2(\Wbf , \Wbf'_{\beta})$ of the polynomial $L(\Wbf^{\beta}(t))$ is given by
\begin{align*}
c_2(\Wbf , \Wbf'_{\beta}) &= \beta^2\|NX\|^2 + 2 \beta \tr(U_{r+1} U_{r+1}^T \Sigma_{YX}\Sigma_{XX}^{-1}X X^T  \Sigma_{XX}^{-1}\Sigma_{XY} U_{\Ss} U_{\Ss}^T) \\ & \quad  -2 \beta \tr(U_{r+1} U_{r+1}^T \Sigma_{YX}\Sigma_{XX}^{-1}XY^T) \\
&= \beta^2\|NX\|^2 + 2 \beta \tr(U_{r+1} U_{r+1}^T \Sigma U_{\Ss} U_{\Ss}^T)-2 \beta \tr(U_{r+1} U_{r+1}^T \Sigma) \;.
\end{align*}
Using the cyclic property of the trace, $U_{\Ss}^T U_{r+1} = 0$ (see Lemma \ref{properties of orhtogonality}), and $\Sigma U_{r+1} = \lambda_{r+1} U_{r+1}$, we obtain
\begin{align*}
c_2(\Wbf , \Wbf'_{\beta}) &= \beta^2\|NX\|^2 -2 \beta \lambda_{r+1} U_{r+1}^T U_{r+1} \\
&= \beta^2\|NX\|^2 -2 \beta \lambda_{r+1} \;.
\end{align*}
Using Lemma \ref{sigma invertible}, we have $\lambda_{r+1} \neq 0$, hence we can choose $\beta$ according to \eqref{choosing beta} such that $c_2(\Wbf , \Wbf'_{\beta})<0$.
Therefore $\Wbf$ is not a second-order critical point.

\subsection{3rd Case: \texorpdfstring{$i=H$}{} and \texorpdfstring{$j \in \llbracket 2,H-1 \rrbracket$}{}} \label{3rd case}
In this case, the two complementary blocks are $W_{j-1} \cdots W_1 \Sigma_{XY}$ and $W_{H-1} \cdots W_{j+1}$.
We follow again the same lines as above.
Since the pivot $(i,j)$ is not tightened, using Proposition \ref{all blocks ranks geq than r}, we have
\begin{align}\label{(H,j) not tightened}
    \begin{cases}
     \rk(W_{H-1} \cdots W_{j+1})>r \\
     \rk(W_{j-1} \cdots W_1 \Sigma_{XY})>r \;.
    \end{cases}
\end{align}
Let us first show that there exist $k \in \llbracket r+1 , d_y \rrbracket$ and $l \in \llbracket 1, d_{j-1}\rrbracket$ such that 
\begin{align} \label{(W_j-1 cdots W_1)_l,. Sigma_XY U_k neq 0}
    (W_{j-1} \cdots W_1)_{l,.} \Sigma_{XY} U_k \neq 0 \;.
\end{align}
Indeed, assume by contradiction that for all $k \in \llbracket r+1 , d_y \rrbracket$ and $ l \in \llbracket 1,d_{j-1} \rrbracket$ we have
$$(W_{j-1} \cdots W_1)_{l,.} \Sigma_{XY} U_k = 0.$$
Recalling that $Q = \llbracket1,d_y \rrbracket \setminus \Ss = \llbracket r+1,d_y \rrbracket$, we obtain $W_{j-1} \cdots W_1 \Sigma_{XY} U_Q = 0$, and using, from Lemma \ref{properties of orhtogonality}, that $I_{d_y} = U_{\Ss} U_{\Ss}^T + U_Q U_Q^T$, we have
\begin{align*}
    W_{j-1} \cdots W_1 \Sigma_{XY} &= W_{j-1} \cdots W_1 \Sigma_{XY} (U_{\Ss} U_{\Ss}^T + U_{Q} U_{Q}^T) \\
    &= W_{j-1} \cdots W_1 \Sigma_{XY} U_{\Ss} U_{\Ss}^T \;.
\end{align*}
Therefore, $$ \rk(W_{j-1} \cdots W_1 \Sigma_{XY}) = \rk(W_{j-1} \cdots W_1 \Sigma_{XY} U_{\Ss} U_{\Ss}^T). $$
The latter is impossible since $\rk(W_{j-1} \cdots W_1 \Sigma_{XY} U_{\Ss} U_{\Ss}^T) \leq |\Ss| = r$ is not compatible with \eqref{(H,j) not tightened}.
Therefore \eqref{(W_j-1 cdots W_1)_l,. Sigma_XY U_k neq 0} holds. \\
We know that $\rk(W_H \cdots W_{j+1}) \geq \rk(W_H \cdots W_{1}) = r$.
Therefore, depending on the value of $\rk(W_H \cdots W_{j+1})$, we distinguish two situations: either $\rk(W_H \cdots W_{j+1})>r$ or $\rk(W_H \cdots W_{j+1})=r$.\\
When $\rk(W_H \cdots W_{j+1})>r$, since $\Sigma_{XY}$ is of full column rank, we have $\rk(\Sigma_{XY} W_H \cdots W_{j+1}) = \rk(W_H \cdots W_{j+1})>r$.
Also, using \eqref{(H,j) not tightened}, we have $\rk(W_{j-1} \cdots W_2) \geq \rk(W_{j-1} \cdots W_1 \Sigma_{XY}) > r$.
Hence, in this case, the pivot $(j,1)$ is not tightened either.
We have already proved in Section \ref{1st case} (beware that the pivot is denoted $(i,1)$, not $(j,1)$, in Section \ref{1st case}) that, when such a pivot is not tightened, $\Wbf$ is not a second-order critical point.
This concludes the proof in the case $\rk(W_H \cdots W_{j+1}) > r.$ \\
In the rest of the section we assume that $\rk(W_H \cdots W_{j+1})=r$. \\
Using \eqref{(H,j) not tightened}, we have $\rk(W_{H-1} \cdots W_{j+1}) > r = \rk(W_H \cdots W_{j+1})$.
Applying the rank-nullity theorem
we obtain $$\text{Ker}(W_{H-1} \cdots W_{j+1}) \subsetneq \text{Ker}(W_{H} \cdots W_{j+1}).$$
Therefore there exists $b \in \mathbb{R}^{d_j}$ such that
\begin{align} \label{b 3rd case}
    \begin{cases}
        b \in \text{Ker}(W_H \cdots W_{j+1}) \\
        b \notin \text{Ker}(W_{H-1} \cdots W_{j+1}) \;.
    \end{cases}
\end{align}
Hence, there also exists $a \in \mathbb{R}^{d_{H-1}}$ such that 
\begin{align}\label{eq a 3rd case}
    a^T W_{H-1} \cdots W_{j+1} b = 1 \;.
\end{align}
Recall that $k,l$ satisfy \eqref{(W_j-1 cdots W_1)_l,. Sigma_XY U_k neq 0}.
We define $\Wbf'_{\beta} = (W'^{\beta}_H, \ldots , W'^{\beta}_1)$ by
\begin{align*}
    \begin{cases}
        W'^{\beta}_H = \beta W'_H = \beta U_k a^T \in \mathbb{R}^{d_y \times d_{H-1}} 
        \\
        W'^{\beta}_j = W'_j = b e_l^T \in \mathbb{R}^{d_j \times d_{j-1}}, \text{ where } e_l \in \mathbb{R}^{d_{j-1}} \\
        W'^{\beta}_h = 0 \quad \forall h \in \llbracket 1,H \rrbracket \setminus \{i,j\}
    \end{cases}
\end{align*}
We set $\Wbf^{\beta}(t) = (W^{\beta}_H(t), \ldots , W^{\beta}_1(t))$ such that 
$W^{\beta}_h(t) = W_h + t W'^{\beta}_h$ for $h \in \llbracket 1 , H \rrbracket$.
We have
\begin{align*}
W^{\beta}(t) :&= W^{\beta}_H(t) \cdots W^{\beta}_1(t) \\
&= (W_H + t \beta W'_H) W_{H-1} \cdots W_{j+1} (W_j + t W'_j) W_{j-1} \cdots W_1 \\
&= W_H \cdots W_1 + t(\beta W'_H W_{H-1} \cdots W_1 + W_H \cdots W_{j+1} W'_j W_{j-1} \cdots W_1) \\
& \quad + t^2 \beta W'_H \cdots W_{j+1} W'_j W_{j-1} \cdots W_1 \;.
\end{align*}
Using Proposition \ref{global map and critical values} and the definition of $\Wbf'_{\beta}$ above, we obtain
\begin{align*}
W^{\beta}(t) &= U_{\Ss} U_{\Ss}^T \Sigma_{YX}\Sigma_{XX}^{-1} + t (\beta W'_H W_{H-1} \cdots W_1 + W_H \cdots W_{j+1} b e_l^T W_{j-1} \cdots W_1) \\
& \quad + \beta t^2 U_k a^T W_{H-1} \cdots W_{j+1} b(W_{j-1} \cdots W_1)_{l,.} \\
&= U_{\Ss} U_{\Ss}^T \Sigma_{YX}\Sigma_{XX}^{-1} + t \beta W'_H W_{H-1} \cdots W_1 + \beta t^2 U_k (W_{j-1} \cdots W_1)_{l,.} \;,
\end{align*}
where the last equality follows from \eqref{b 3rd case} and \eqref{eq a 3rd case}
. \\
Denoting $N = W'_H W_{H-1} \cdots W_1$, we have
\begin{align*}
L(\Wbf^{\beta}(t)) &= \| W^{\beta}(t) X - Y \|^2 \\
&= \| U_{\Ss} U_{\Ss}^T \Sigma_{YX}\Sigma_{XX}^{-1}X - Y + t \beta NX + \beta t^2 U_k (W_{j-1} \cdots W_1)_{l,.}X \|^2 \;.
\end{align*}
Using the cyclic property of the trace, and, since $k \notin \Ss$, $U_{\Ss}^T U_k = 0$, we get in this case a second-order coefficient equal to
\begin{align*}
c_2(\Wbf , \Wbf'_{\beta}) &= \beta^2\|NX\|^2 + 2 \beta \tr\left(U_k (W_{j-1} \cdots W_1)_{l,.}X X^T  \Sigma_{XX}^{-1}\Sigma_{XY} U_{\Ss} U_{\Ss}^T\right) \\ & \quad  -2 \beta \tr(U_k(W_{j-1} \cdots W_1)_{l,.}\Sigma_{XY}) \\
&= \beta^2\|NX\|^2 -2 \beta (W_{j-1} \cdots W_1)_{l,.} \Sigma_{XY} U_k \;.
\end{align*}
Since from \eqref{(W_j-1 cdots W_1)_l,. Sigma_XY U_k neq 0}, $ (W_{j-1} \cdots W_1)_{l,.} \Sigma_{XY} U_k \neq 0$, we can choose $\beta$ according to \eqref{choosing beta}, such that $c_2(\Wbf , \Wbf'_{\beta})<0$.
Therefore $\Wbf$ is not a second-order critical point.

\subsection{4th Case: \texorpdfstring{$i,j \in \llbracket 2,H-1 \rrbracket$}{}, with \texorpdfstring{$i>j$}{}} \label{4th case}
In this case, the two complementary blocks are $W_{j-1} \cdots W_1 \Sigma_{XY} W_H \cdots W_{i+1}$ and $W_{i-1} \cdots W_{j+1}$.
We follow again the same lines as above.
Since the pivot $(i,j)$ is not tightened,
using Proposition \ref{all blocks ranks geq than r}, we have 
\begin{align}\label{(i,j) not tightened}
    \begin{cases}
     \rk(W_{i-1} \cdots W_{j+1})>r \\
     \rk(W_{j-1} \cdots W_1\Sigma_{XY}W_H \cdots W_{i+1})>r \;.
    \end{cases}
\end{align}
Let us first show that there exist $k \in \llbracket 1, d_i\rrbracket$ and $l \in \llbracket 1, d_{j-1}\rrbracket$ such that 
\begin{align} \label{W_j-1...W_1 sigma xy U_Q U_Q^T W_H... W_i+1 neq 0}
    (W_{j-1} \cdots W_1)_{l,.}\Sigma_{XY} U_Q U_Q^T (W_{H} \cdots W_{i+1})_{.,k} \neq 0 \;.
\end{align}
Indeed, assume by contradiction that, for all $ k \in \llbracket 1, d_i\rrbracket$ and $ l \in \llbracket 1, d_{j-1}\rrbracket$, we have $$(W_{j-1} \cdots W_1)_{l,.}\Sigma_{XY} U_Q U_Q^T (W_{H} \cdots W_{i+1})_{.,k} = 0 .$$ Then $W_{j-1} \cdots W_1\Sigma_{XY} U_Q U_Q^T W_{H} \cdots W_{i+1} = 0$, and so, using $I_{d_y} = U_{\Ss} U_{\Ss}^T + U_Q U_Q^T$, we would have
\begin{align*}
W_{j-1} \cdots W_1\Sigma_{XY} W_{H} \cdots W_{i+1} &= W_{j-1} \cdots W_1\Sigma_{XY}I_{d_y} W_{H} \cdots W_{i+1} \\
&= W_{j-1} \cdots W_1\Sigma_{XY} \left(U_{\Ss}U_{\Ss}^T + U_Q U_Q^T\right) W_{H} \cdots W_{i+1} \\
&= W_{j-1} \cdots W_1\Sigma_{XY} U_{\Ss}U_{\Ss}^T W_{H} \cdots W_{i+1} \;.
\end{align*}
Therefore,
$$ \rk(W_{j-1} \cdots W_1\Sigma_{XY} W_{H} \cdots W_{i+1}) = \rk(W_{j-1} \cdots W_1\Sigma_{XY} U_{\Ss}U_{\Ss}^T W_{H} \cdots W_{i+1}) .$$
The latter is impossible since $\rk(W_{j-1} \cdots W_1\Sigma_{XY} U_{\Ss}U_{\Ss}^T W_{H} \cdots W_{i+1}) \leq |\Ss| = r$ is not compatible with \eqref{(i,j) not tightened}.
Therefore \eqref{W_j-1...W_1 sigma xy U_Q U_Q^T W_H... W_i+1 neq 0} holds.\\

We know that $\rk(W_H \cdots W_{j+1}) \geq \rk(W_H \cdots W_{1}) = r$.
Therefore, depending on the value of $\rk(W_H \cdots W_{j+1})$, we distinguish two situations: either $\rk(W_H \cdots W_{j+1})>r$ or $\rk(W_H \cdots W_{j+1})=r$.\\
When $\rk(W_H \cdots W_{j+1})>r$, since $\Sigma_{XY}$ is of full column rank, we have $\rk(\Sigma_{XY} W_H \cdots W_{j+1}) = \rk(W_H \cdots W_{j+1})>r$.
Also, using \eqref{(i,j) not tightened}, we have $\rk(W_{j-1} \cdots W_2) \geq \rk(W_{j-1} \cdots W_1 \Sigma_{XY} W_H \cdots W_{i+1}) > r$.
Hence, in this case, the pivot $(j,1)$ is not tightened either.
We have already proved in Section \ref{1st case} (beware that the pivot is denoted $(i,1)$, not $(j,1)$, in Section \ref{1st case}) that, when such a pivot is not tightened, $\Wbf$ is not a second-order critical point.
This concludes the proof when $\rk(W_H \cdots W_{j+1}) > r .$ \\
In the rest of the section we assume that $\rk(W_H \cdots W_{j+1})=r$. \\
Using \eqref{(i,j) not tightened}, we have $\rk(W_{i-1} \cdots W_{j+1}) > r = \rk(W_H \cdots W_{j+1})$.
Applying the rank-nullity theorem, we obtain
$$\text{Ker}(W_{i-1} \cdots W_{j+1}) \subsetneq \text{Ker}(W_{H} \cdots W_{j+1}).$$
Therefore there exists $b \in \mathbb{R}^{d_j}$ such that
\begin{align} \label{b 4th case}
    \begin{cases}
        b \in \text{Ker}(W_H \cdots W_{j+1}) \\
        b \notin \text{Ker}(W_{i-1} \cdots W_{j+1})\;.
    \end{cases}
\end{align}
Hence, there also exists $a \in \mathbb{R}^{d_{i-1}}$ such that 
\begin{align}\label{eq a 4th case}
    a^T W_{i-1} \cdots W_{j+1} b = 1 \;.
\end{align}
Recall that $k,l$ satisfy \eqref{W_j-1...W_1 sigma xy U_Q U_Q^T W_H... W_i+1 neq 0}.
We define 
$\Wbf'_{\beta} = (W'^{\beta}_H, \ldots , W'^{\beta}_1)$ by
\begin{align*}
    \begin{cases}
        W'^{\beta}_i = \beta W'_i = \beta e_k a^T \in \mathbb{R}^{d_i \times d_{i-1}} \text{ where } e_k \in \mathbb{R}^{d_{i}}
        \\
        W'^{\beta}_j = W'_j = b e_l^T \in \mathbb{R}^{d_j \times d_{j-1}} \text{ where } e_l \in \mathbb{R}^{d_{j-1}} \\
        W'^{\beta}_h = 0 \quad \forall h \in \llbracket 1,H \rrbracket \setminus \{i,j\} \;.
    \end{cases}
\end{align*}
We set $\Wbf^{\beta}(t) = (W^{\beta}_H(t), \ldots , W^{\beta}_1(t))$ with 
$W^{\beta}_h(t) = W_h + t W'^{\beta}_h$ for all $h \in \llbracket 1 , H \rrbracket$.
We have,
\begin{align*}
W^{\beta}(t) :&= W^{\beta}_H(t) \cdots W^{\beta}_1(t) \\
&= W_H \cdots W_{i+1} (W_i + t \beta W'_i) W_{i-1} \cdots W_{j+1} (W_j + t W'_j) W_{j-1} \cdots W_1 \\ 
&= W_H \cdots W_1 + t(\beta W_H  \cdots W_{i+1}W'_i W_{i-1} \cdots W_1 + W_H \cdots W_{j+1} W'_j W_{j-1} \cdots W_1) \\
& \quad + \beta t^2 W_H \cdots W_{i+1}W'_i W_{i-1} \cdots W_{j+1} W'_j W_{j-1} \cdots W_1 \;.
\end{align*}
Using Proposition \ref{global map and critical values} and the definition of $\Wbf'_{\beta}$ above, we obtain
\begin{align*}
W^{\beta}(t) &= U_{\Ss} U_{\Ss}^T \Sigma_{YX}\Sigma_{XX}^{-1} + t (\beta W_H  \cdots W_{i+1}W'_i W_{i-1} \cdots W_1 + W_H \cdots W_{j+1} b e_l^T W_{j-1} \cdots W_1) \\
& \quad + \beta t^2 (W_{H} \cdots W_{i+1})_{.,k}a^T W_{i-1} \cdots W_{j+1} b (W_{j-1} \cdots W_1)_{l,.} \\
&= U_{\Ss} U_{\Ss}^T \Sigma_{YX}\Sigma_{XX}^{-1} + t \beta W_H  \cdots W_{i+1}W'_i W_{i-1} \cdots W_1 + \beta t^2 (W_{H} \cdots W_{i+1})_{.,k} (W_{j-1} \cdots W_1)_{l,.} \;,
\end{align*}
where the last equality follows from \eqref{b 4th case} and \eqref{eq a 4th case}
. \\
Denoting $N = W_H  \cdots W_{i+1}W'_i W_{i-1} \cdots W_1$, we have
\begin{align*}
L(\Wbf^{\beta}(t)) &= \| W^{\beta}(t) X - Y \|^2 \\
&= \| U_{\Ss} U_{\Ss}^T \Sigma_{YX}\Sigma_{XX}^{-1}X - Y + t \beta NX
+ \beta t^2(W_{H} \cdots W_{i+1})_{.,k} (W_{j-1} \cdots W_1)_{l,.}X \|^2 \;.
\end{align*}
The second-order coefficient of $L(\Wbf^{\beta}(t))$ is equal to
\begin{align*}
c_2(\Wbf , \Wbf'_{\beta}) &= \beta^2\|NX\|^2
+ 2 \beta \tr\left((W_{H} \cdots W_{i+1})_{.,k} (W_{j-1} \cdots W_1)_{l,.}X X^T  \Sigma_{XX}^{-1}\Sigma_{XY} U_{\Ss} U_{\Ss}^T\right) \\
& \quad - 2 \beta \tr\left((W_{H} \cdots W_{i+1})_{.,k} (W_{j-1} \cdots W_1)_{l,.}\Sigma_{XY}\right) \\
&= \beta^2\|NX\|^2 + 2 \beta \tr\left((W_{H} \cdots W_{i+1})_{.,k} (W_{j-1} \cdots W_1)_{l,.}\Sigma_{XY} (U_{\Ss} U_{\Ss}^T - I_{d_y})\right) \;.
\end{align*}
Using, from Lemma \ref{properties of orhtogonality}, that $U_{\Ss} U_{\Ss}^T - I_{d_y} = - U_Q U_Q^T$, and then the cyclic property of the trace, we obtain
\begin{align*}
c_2(\Wbf , \Wbf'_{\beta}) &= \beta^2\|NX\|^2 - 2 \beta \tr\left((W_{H} \cdots W_{i+1})_{.,k} (W_{j-1} \cdots W_1)_{l,.}\Sigma_{XY}U_Q U_Q^T\right) \\
&= \beta^2\|NX\|^2 -2 \beta  (W_{j-1} \cdots W_1)_{l,.} \Sigma_{XY} U_Q U_Q^T(W_{H} \cdots W_{i+1})_{.,k} \;.
\end{align*}
Since from \eqref{W_j-1...W_1 sigma xy U_Q U_Q^T W_H... W_i+1 neq 0}, $ (W_{j-1} \cdots W_1)_{l,.} \Sigma_{XY} U_Q U_Q^T(W_{H} \cdots W_{i+1})_{.,k} \neq 0$, we can choose $\beta$ according to \eqref{choosing beta} such that $c_2(\Wbf , \Wbf'_{\beta})<0$.
Therefore, $\Wbf$ is not a second-order critical point.\\

\section{Non-strict Saddle Points}\label{Appendix non strict saddles}

In this section, we prove the results related to non-strict saddle points (see Section \ref{non-strict saddles}).

\subsection{Proof of Proposition \ref{prop 4}} \label{proof prop 4}
To prove Proposition \ref{prop 4}, we show that for any $\Wbf'$, $c_2(\Wbf,\Wbf') \geq 0$, which is equivalent to say (see Lemma \ref{Characterization of critical points in our settings}) that $\Wbf$ is a second-order critical point.
We follow the proof strategy sketched in Section \ref{non-strict saddles} after the statement of Proposition \ref{prop 4}, and use the same notation introduced therein.
Note that a first-order critical point can only be tightened if $H \geq 3$.
Therefore, in all of this section we make the assumption $H \geq 3$.
Recall that $m$ is the number of examples in our sample, $\Ss = \llbracket 1,r \rrbracket $, with $r<r_{max}$.
We set $Q = \llbracket r+1,d_y \rrbracket$.\\
Recall also that
\begin{align*}
    \Sigma^{1/2} = \Sigma_{YX}\Sigma_{XX}^{-1}X \in \mathbb{R}^{d_y \times m}.
\end{align*}
and
\begin{align*}
    \Sigma^{1/2} = U \Delta V^T 
\end{align*}
is a Singular Value Decomposition of $\Sigma^{1/2}$, where $\Delta \in \mathbb{R}^{d_y \times m}$ is such that $\Delta_{ii} = \sqrt{\lambda_i}$ for all $ i \in  \llbracket 1,d_y \rrbracket$, and ${(\lambda_i)}_{i=1..d_y}$ are the eigenvalues of $\Sigma$. \\
We denote 
\begin{align}\label{delta S}
    \Delta^{({\Ss})} = \text{diag}(\sqrt{\lambda_{1}}, \ldots ,\sqrt{\lambda_{r}}) \in \mathbb{R}^{r \times r}
\end{align}
and
\begin{align}\label{delta Q}
    \Delta^{(Q)} = \text{diag}(\sqrt{\lambda_{r+1}}, \ldots ,\sqrt{\lambda_{d_y}}) \in \mathbb{R}^{(d_y-r) \times (d_y-r)} \;.
\end{align}
Recall that, from Section \ref{non-strict saddles}, $c_2(\Wbf,\Wbf') = FT + ST$ .\\
In what follows, we are going to present a key lemma, then various quick technical lemmas, then we simplify the expressions of $FT$ and $ST$ and conclude the proof of Proposition \ref{prop 4}.
Then, we prove all the lemmas of Appendix \ref{proof prop 4}.\\
We present a lemma which uses that $\Wbf$ is tightened to simplify some products of weight matrices and lighten further calculations.
This is a key lemma as it introduces indices $p$ and $q$ which will be used multiple times in the proof.
\begin{lemma}\label{E}
    Suppose Assumption \ref{Assump H} in Section \ref{Settings} holds true.
    Let $\Wbf = (W_H, \ldots, W_1)$ be a first-order critical point of $L$ verifying the hypotheses of Proposition \ref{prop 4}, and $r$, $\Ss$, $Q$, $(Z_h)_{h=1..H}$ as in Proposition \ref{prop 4}.
    If $\Wbf$ is tightened,
    then, there exist $p \in \llbracket 3 , H \rrbracket$ and $q \in \llbracket 1, \min(p-1 , H-2) \rrbracket$ such that:
    \begin{align}
        \forall i \in \llbracket 1,p-1 \rrbracket, \qquad
        & W_H \cdots W_{i+1} = \left[ U_{\Ss}\ , \ 0 \right] \label{W_H...W_p simplified} \\
        \forall i \in \llbracket p,H\rrbracket, \qquad 
        & W_{i-1} \cdots W_2 = \left[
        \begin{array}{c  c}
        I_r & 0 \\
        0 & 0
        \end{array}
        \right] \label{W_i-1...W_2 simplified} \\
        \forall i \in \llbracket q+1 , H \rrbracket, \qquad 
        & Z_{i-1} \cdots Z_2 Z_1 \Sigma_{XY} U_Q = 0 \label{hammou} \\
        \forall i \in \llbracket 1,q \rrbracket, \qquad
        & W_{H-1} \cdots W_{i+1} = \left[
        \begin{array}{c  c}
        I_r & 0 \\
        0 & 0
        \end{array}
        \right] \;. \label{W_H-1...W_k+1 simplified}
    \end{align}
\end{lemma}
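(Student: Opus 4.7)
My plan is to exploit the block structure of $\Wbf$ to compute closed forms for the complementary-block products, then translate tightenedness at the pivots $(i,1)$ and $(H,j)$ into algebraic vanishing conditions on products of the $Z_h$'s, and finally define $p$ and $q$ as carefully chosen extremal indices. Using \eqref{W_H simplified}--\eqref{W_H...W_2 simplified}, a direct computation gives $W_H \cdots W_{i+1} = [U_\Ss, U_Q Z_H \cdots Z_{i+1}]$ for $i \in \llbracket 1, H-1 \rrbracket$, and $W_{i-1} \cdots W_2 = \left[\begin{smallmatrix} I_r & 0 \\ 0 & Z_{i-1} \cdots Z_2 \end{smallmatrix}\right]$ for $i \in \llbracket 3, H \rrbracket$, and analogously $W_{H-1} \cdots W_{i+1} = \left[\begin{smallmatrix} I_r & 0 \\ 0 & Z_{H-1} \cdots Z_{i+1} \end{smallmatrix}\right]$ for $i \in \llbracket 1, H-2 \rrbracket$. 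Using $\Sigma = U \Lambda U^T$, one has $U_\Ss^T \Sigma = \Lambda_\Ss U_\Ss^T$ with $\Lambda_\Ss = \operatorname{diag}(\lambda_1, \ldots, \lambda_r)$ invertible; combining this with \eqref{W_1 simplified} yields $W_{j-1} \cdots W_1 \Sigma_{XY} = \left[\begin{smallmatrix} \Lambda_\Ss U_\Ss^T \\ Z_{j-1} \cdots Z_1 \Sigma_{XY} \end{smallmatrix}\right]$ for $j \in \llbracket 2, H \rrbracket$. Right-multiplying by $U = [U_\Ss, U_Q]$ and taking a Schur complement of the invertible block $\Lambda_\Ss$ shows the rank equals $r + \rk(Z_{j-1} \cdots Z_1 \Sigma_{XY} U_Q)$, and orthogonality of $U$ gives $\rk([U_\Ss, U_Q N]) = r + \rk(N)$.

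With these rank formulas, tightenedness becomes algebraic. Pivot $(H,1)$, whose first complementary block $\Sigma_{XY}$ has rank $d_y > r$, forces $Z_{H-1} \cdots Z_2 = 0$. For each $i \in \llbracket 2, H-1 \rrbracket$, pivot $(i,1)$ forces $Z_H \cdots Z_{i+1} = 0$ or $Z_{i-1} \cdots Z_2 = 0$, the second alternative being excluded when $i = 2$ since $W_1 \cdots W_2 = I_{d_1}$ has rank $d_1 > r$. For each $j \in \llbracket 2, H-1 \rrbracket$, pivot $(H, j)$ forces $Z_{j-1} \cdots Z_1 \Sigma_{XY} U_Q = 0$ or $Z_{H-1} \cdots Z_{j+1} = 0$, the second alternative being excluded when $j = H-1$.

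I would then define $p$ as the smallest integer in $\llbracket 3, H \rrbracket$ with $Z_{p-1} \cdots Z_2 = 0$, well-defined by the pivot $(H,1)$ conclusion. Property \eqref{W_i-1...W_2 simplified} is immediate by left-extending this vanishing. For \eqref{W_H...W_p simplified}, by downward closure it suffices to show $Z_H \cdots Z_p = 0$: when $p = 3$ this is pivot $(2,1)$, and when $p \geq 4$ pivot $(p-1, 1)$ gives $Z_H \cdots Z_p = 0$ since the alternative $Z_{p-2} \cdots Z_2 = 0$ would contradict minimality of $p$. Next, let $\bar{C}$ be the largest index in $\llbracket 1, H-2 \rrbracket$ with $Z_{H-1} \cdots Z_{\bar{C}+1} = 0$ (well-defined for $\bar{C} = 1$ by the pivot $(H,1)$ conclusion) and set $q = \min(\bar{C}, p-1) \in \llbracket 1, \min(p-1, H-2) \rrbracket$. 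Property \eqref{W_H-1...W_k+1 simplified} follows from $q \leq \bar{C}$ and downward closure. For \eqref{hammou}, by upward closure it reduces to showing $Z_q \cdots Z_1 \Sigma_{XY} U_Q = 0$: if $q = \bar{C} < p-1$, maximality of $\bar{C}$ gives $Z_{H-1} \cdots Z_{q+2} \neq 0$, so pivot $(H, q+1)$ yields the identity; if $q = p-1$, it factors directly through $Z_{p-1} \cdots Z_2 = 0$.

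The main obstacle is the interdependence between $p$ and $q$: the ``natural'' independent extremal choices can violate $q \leq p-1$ (e.g., when $Z_2 = 0$ and $Z_{H-1} = 0$ simultaneously force $p = 3$ while $\bar{C} = H-2$). The resolution is to cap $q$ at $p-1$ and recover \eqref{hammou} via a direct factorization argument rather than through pivot $(H, q+1)$, which no longer applies in that regime.
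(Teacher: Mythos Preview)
Your proof is correct and follows essentially the same approach as the paper: both exploit the block structure to compute ranks of the complementary blocks at pivots $(i,1)$ and $(H,j)$, then choose $p$ and $q$ as extremal indices at which tightenedness forces the relevant $Z$-products to vanish. The only difference is a dual choice of extremum---the paper defines $p$ as the \emph{largest} index with $Z_H\cdots Z_p=0$ and then uses pivot $(p,1)$ to derive $Z_{p-1}\cdots Z_2=0$ (and similarly $q$ as the \emph{smallest} index with $Z_q\cdots Z_1\Sigma_{XY}U_Q=0$, deriving $Z_{H-1}\cdots Z_{q+1}=0$ from pivot $(H,q)$), whereas you define $p$ and $q$ via the complementary conditions and derive the others from pivots $(p-1,1)$ and $(H,q+1)$; your explicit capping $q=\min(\bar C,p-1)$ plays the same role as the paper's restriction of the search range for $q$ to $\llbracket 1,\min(p-1,H-2)\rrbracket$.
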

The proof of Lemma \ref{E} is in Appendix \ref{proof E}.\\

\subsubsection{Useful Technical Lemmas}
We now present technical lemmas which will be useful in Sections \ref{ section simplifying FT}, \ref{section simplifying ST} and \ref{section concluding}.
In all of these Lemmas, we have $\Ss = \llbracket 1,r \rrbracket $ and $Q = \llbracket r+1,d_y \rrbracket$, and Assumption \ref{Assump H} holds true.
\begin{lemma} \label{Sigma_XY U_Q}
    We have $$\Sigma_{XY} U_Q = X V_Q \Delta^{(Q)} \;.$$
\end{lemma}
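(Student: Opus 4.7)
The plan is to reduce this identity to two facts about the SVD of $\Sigma^{1/2}$, then unpack the block structure produced by the partition $\llbracket 1,d_y\rrbracket = \Ss \cup Q$ with $\Ss = \llbracket 1,r\rrbracket$.

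First I would observe a useful consequence of the definition $\Sigma^{1/2} = \Sigma_{YX}\Sigma_{XX}^{-1}X$: since $\Sigma_{XX}=XX^T$, we get
\[
\Sigma^{1/2}X^T = \Sigma_{YX}\Sigma_{XX}^{-1}XX^T = \Sigma_{YX}.
\]
Taking transposes and substituting the SVD $\Sigma^{1/2}=U\Delta V^T$ from \eqref{svd de sigma 1/2} yields
\[
\Sigma_{XY} = X(\Sigma^{1/2})^T = XV\Delta^T U^T.
\]

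Next I would right-multiply by $U_Q$ and use the orthogonality properties of $U$ from Lemma~\ref{properties of orhtogonality}. Since $\Ss$ and $Q$ partition $\llbracket 1,d_y\rrbracket$,
\[
U^T U_Q = \begin{bmatrix} U_\Ss^T U_Q \\ U_Q^T U_Q \end{bmatrix}
= \begin{bmatrix} 0_{r\times (d_y-r)} \\ I_{d_y-r} \end{bmatrix}.
\]
The matrix $\Delta^T \in\mathbb{R}^{m\times d_y}$ is (rectangular) diagonal with entries $\sqrt{\lambda_1},\ldots,\sqrt{\lambda_{d_y}}$; applying it to the above block matrix extracts rows $r+1,\ldots,d_y$ scaled by $\sqrt{\lambda_{r+1}},\ldots,\sqrt{\lambda_{d_y}}$, i.e.\
\[
\Delta^T U^T U_Q
= \begin{bmatrix} 0_{r\times (d_y-r)} \\ \Delta^{(Q)} \\ 0_{(m-d_y)\times (d_y-r)} \end{bmatrix}.
\]

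Finally, multiplying on the left by $V$ picks out the columns of $V$ indexed by $Q$, scaled by $\Delta^{(Q)}$, which is exactly $V_Q\Delta^{(Q)}$. Combining, $\Sigma_{XY}U_Q = X V_Q\Delta^{(Q)}$, as required. This is a purely notational unwinding; no step is a genuine obstacle, the only care needed is to keep track of the rectangular shape of $\Delta$ (since $m\geq d_y$) so that the zero padding at the bottom does not change the conclusion.
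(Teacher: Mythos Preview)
Your proof is correct and follows essentially the same approach as the paper: both derive $\Sigma_{XY}=XV\Delta^T U^T$ from the SVD and the identity $\Sigma_{XY}=X(\Sigma^{1/2})^T$, then right-multiply by $U_Q$. The only cosmetic difference is that the paper first writes $\Sigma_{XY}U = XV\Delta^T$ and then restricts to the columns indexed by $Q$, whereas you track the block structure of $\Delta^T U^T U_Q$ explicitly (including the $(m-d_y)$ zero rows), which amounts to the same computation.
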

The proof of Lemma \ref{Sigma_XY U_Q} is in Appendix  \ref{proof Sigma_XY U_Q}. 
\begin{lemma} \label{the formula}
    Let $n$ be a positive integer. For any matrices $A \in \mathbb{R}^{d_y \times n}$ and $B \in \mathbb{R}^{r \times n}$ we have
    \begin{align*}
        \| A + U_{\Ss}B \|^2 = \| U_{\Ss}^T A + B\|^2 + \|U_Q^T A\|^2 \;.
    \end{align*}
\end{lemma}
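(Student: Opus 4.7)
The plan is to exploit the orthogonality of $U$ in order to express the left-hand side as the squared Frobenius norm of a vertically stacked matrix whose two blocks are exactly the two terms appearing on the right-hand side. Since the Frobenius norm is invariant under left multiplication by an orthogonal matrix, $\|A + U_{\Ss} B\|^2 = \|U^T A + U^T U_{\Ss} B\|^2$, and it remains to identify the block structure of the right-hand side.

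Concretely, I would proceed as follows. Using $\Ss = \llbracket 1,r \rrbracket$ (as fixed at the beginning of this section of the appendix), write $U^T = \begin{bmatrix} U_{\Ss}^T \\ U_Q^T \end{bmatrix}$. The orthogonality relations from Lemma \ref{properties of orhtogonality}, namely $U_{\Ss}^T U_{\Ss} = I_r$ and $U_Q^T U_{\Ss} = 0$, give $U^T U_{\Ss} = \begin{bmatrix} I_r \\ 0 \end{bmatrix}$, so that
\[
U^T(A + U_{\Ss} B) = \begin{bmatrix} U_{\Ss}^T A + B \\ U_Q^T A \end{bmatrix}.
\]
Taking the squared Frobenius norm of both sides and using that the squared Frobenius norm of a vertically concatenated matrix equals the sum of the squared Frobenius norms of its blocks yields the desired identity.

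No genuine obstacle is anticipated: the whole argument is purely algebraic and relies only on the orthogonality relations already collected in Lemma \ref{properties of orhtogonality}. An equivalent route, should a direct trace-based expansion be preferred, would be to write $\|A + U_{\Ss} B\|^2 = \|A\|^2 + 2\,\tr(B^T U_{\Ss}^T A) + \|B\|^2$, then decompose $\|A\|^2 = \|U_{\Ss}^T A\|^2 + \|U_Q^T A\|^2$ via $I_{d_y} = U_{\Ss} U_{\Ss}^T + U_Q U_Q^T$, and finally recognise the cross term as $2\,\langle U_{\Ss}^T A , B \rangle$, yielding the same conclusion.
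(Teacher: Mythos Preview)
Your proposal is correct. Your alternative route via the trace expansion is exactly the paper's proof; your primary argument via the orthogonal invariance $\|A + U_{\Ss}B\| = \|U^T(A + U_{\Ss}B)\|$ and the block decomposition is a slightly cleaner packaging of the same orthogonality relations, but the content is identical.
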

The proof of Lemma \ref{the formula} is in Appendix \ref{proof the formula}.
\begin{lemma}\label{C}
    Let $n$ be any positive integer. For any matrices $ A \in \mathbb{R}^{n \times r}$ and $B \in \mathbb{R}^{n \times (d_y-r)}$ we have: 
    $$\left\langle  A U_{\Ss}^T \Sigma_{YX}\Sigma_{XX}^{-1}X \ , \ B V_Q^T\right\rangle= 0 \;.$$
\end{lemma}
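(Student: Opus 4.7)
The plan is to unfold the SVD $\Sigma^{1/2} = U\Delta V^T$ on the left-hand factor and then reduce the inner product to a trace involving the orthogonality identity $V_{\Ss}^T V_Q = 0$.

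First I would compute $U_{\Ss}^T \Sigma_{YX}\Sigma_{XX}^{-1}X = U_{\Ss}^T \Sigma^{1/2} = U_{\Ss}^T U \Delta V^T$. Since $U$ is orthogonal and $\Ss = \llbracket 1, r\rrbracket$ selects its first $r$ columns, Lemma~\ref{properties of orhtogonality} gives $U_{\Ss}^T U = [\,I_r\ ,\ 0_{r\times (d_y-r)}\,]$. Recall that $\Delta \in \mathbb{R}^{d_y \times m}$ has $\Delta_{ii} = \sqrt{\lambda_i}$ on its main diagonal and zeros elsewhere, while $\Delta^{(\Ss)} = \mathrm{diag}(\sqrt{\lambda_1},\ldots,\sqrt{\lambda_r})$ by \eqref{delta S}. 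Multiplying block-wise yields
\begin{equation*}
U_{\Ss}^T U \Delta = \bigl[\,\Delta^{(\Ss)}\ ,\ 0_{r \times (m-r)}\,\bigr].
\end{equation*}

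Next, writing $V = [\,V_{\Ss}\ ,\ V_Q\ ,\ V_{\mathrm{rest}}\,]$ with $V_{\Ss} \in \mathbb{R}^{m\times r}$, $V_Q \in \mathbb{R}^{m\times (d_y-r)}$ and $V_{\mathrm{rest}} \in \mathbb{R}^{m\times (m-d_y)}$, the product with $V^T$ simplifies to
\begin{equation*}
U_{\Ss}^T \Sigma^{1/2} \;=\; \bigl[\,\Delta^{(\Ss)}\ ,\ 0_{r \times (m-r)}\,\bigr] V^T \;=\; \Delta^{(\Ss)} V_{\Ss}^T.
\end{equation*}
Hence $A U_{\Ss}^T \Sigma_{YX}\Sigma_{XX}^{-1}X = A\Delta^{(\Ss)} V_{\Ss}^T$.

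Finally, rewriting the Frobenius inner product as a trace,
\begin{equation*}
\bigl\langle A U_{\Ss}^T \Sigma_{YX}\Sigma_{XX}^{-1}X \,,\, B V_Q^T\bigr\rangle = \tr\!\bigl(A\Delta^{(\Ss)} V_{\Ss}^T V_Q B^T\bigr) = 0,
\end{equation*}
since $V$ is orthogonal and $\Ss \cap Q = \emptyset$, so that $V_{\Ss}^T V_Q = 0_{r \times (d_y-r)}$ (the analog of Lemma~\ref{properties of orhtogonality} for $V$). There is no real obstacle here; the only subtlety is keeping track of the block structure of $\Delta$ when $m > d_y$, which is why the split $V = [V_{\Ss}, V_Q, V_{\mathrm{rest}}]$ is convenient.
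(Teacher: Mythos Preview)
Your proof is correct and follows essentially the same approach as the paper's: both expand $U_{\Ss}^T \Sigma^{1/2}$ via the SVD and reduce the inner product to a trace that vanishes by orthogonality of singular vectors. The only cosmetic difference is that the paper first computes $\Sigma^{1/2} V_Q = U_Q \Delta^{(Q)}$ and then uses $U_{\Ss}^T U_Q = 0$, whereas you first simplify $U_{\Ss}^T \Sigma^{1/2} = \Delta^{(\Ss)} V_{\Ss}^T$ and then use $V_{\Ss}^T V_Q = 0$; these are mirror images of the same argument.
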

The proof of Lemma \ref{C} is in Appendix \ref{proof C}.
\begin{lemma}\label{D}
    Let $n$ be any positive integer.
    Let $\Wbf = (W_H, \ldots, W_1)$ be a first-order critical point of $L$ verifying the hypotheses of Proposition \ref{prop 4}, and $r$, $\Ss$, $Q$, $(Z_h)_{h=1..H}$ as in Proposition \ref{prop 4}.
    If $W$ is tightened, then, for $q$ as in Lemma \ref{E},
    for any matrices $A \in \mathbb{R}^{n \times (d_q - r)} $ and $B \in \mathbb{R}^{n \times (d_y - r)}$, we have: 
    $$\left\langle  A Z_q \cdots Z_2 Z_1 X \ ,\ B V_Q^T  \right\rangle =0 \;.$$
\end{lemma}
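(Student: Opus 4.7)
The plan is to reduce the inner product to a trace and then exploit the tightening condition through Lemma \ref{E}. Using $\langle M, N\rangle = \tr(MN^T)$ and the cyclic property of the trace, I would first rewrite
\[
\bigl\langle A Z_q \cdots Z_2 Z_1 X \ ,\ B V_Q^T \bigr\rangle = \tr\!\bigl(A Z_q \cdots Z_1 X V_Q B^T\bigr),
\]
so it will suffice to establish the stronger statement $Z_q \cdots Z_2 Z_1 X V_Q = 0$, which is independent of $A$ and $B$.

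To obtain this, I would apply \eqref{hammou} from Lemma \ref{E} with $i = q+1$ (which is in $\llbracket q+1, H \rrbracket$ since $q \leq H-2 < H$), yielding
\[
Z_q Z_{q-1} \cdots Z_2 Z_1 \, \Sigma_{XY} U_Q = 0.
\]
Then I would invoke Lemma \ref{Sigma_XY U_Q} to substitute $\Sigma_{XY} U_Q = X V_Q \Delta^{(Q)}$, giving
\[
Z_q \cdots Z_2 Z_1 X V_Q \, \Delta^{(Q)} = 0.
\]

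The final step is to cancel $\Delta^{(Q)}$ on the right. Recall from \eqref{delta Q} that $\Delta^{(Q)} = \text{diag}(\sqrt{\lambda_{r+1}}, \ldots, \sqrt{\lambda_{d_y}})$, and Lemma \ref{sigma invertible} together with Assumption $\mathcal{H}$ gives $\lambda_{d_y} > 0$, so every diagonal entry of $\Delta^{(Q)}$ is strictly positive. Hence $\Delta^{(Q)}$ is invertible and we may conclude $Z_q \cdots Z_2 Z_1 X V_Q = 0$, which immediately yields the desired vanishing of the inner product. No serious obstacle is anticipated: the content of the lemma is essentially a direct consequence of Lemma \ref{E}\eqref{hammou} combined with the change of basis provided by Lemma \ref{Sigma_XY U_Q}.
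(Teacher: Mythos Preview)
Your proposal is correct and follows essentially the same argument as the paper: both reduce to showing $Z_q \cdots Z_2 Z_1 X V_Q = 0$ by combining \eqref{hammou} at $i=q+1$ with Lemma~\ref{Sigma_XY U_Q} and the invertibility of $\Delta^{(Q)}$, then conclude via the trace identity.
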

The proof of Lemma \ref{D} is in Appendix \ref{proof D}.
\begin{lemma} \label{decomposition A delta s carre}
    For any matrix $A \in \mathbb{R}^{(d_y - r) \times r}$ we have
    $$ \| A U_{\Ss}^T \Sigma_{YX}\Sigma_{XX}^{-1}X \|^2 = \sum_{a=1}^r \sum_{b=r+1}^{d_y} (\lambda_a - \lambda_b) ( A_{b-r,a})^2 + \| \Delta^{(Q)} A \|^2 \;.$$
\end{lemma}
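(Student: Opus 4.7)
The plan is to expand both sides via direct computation, using the singular value decomposition $\Sigma^{1/2} = \Sigma_{YX}\Sigma_{XX}^{-1}X = U\Delta V^T$ given in \eqref{svd de sigma 1/2}, together with the orthogonality properties of $U$ and $V$ recalled in Lemma \ref{properties of orhtogonality}.

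First, I would simplify the left-hand side. Since $U_\Ss^T U = [U_\Ss^T U_\Ss,\, U_\Ss^T U_Q] = [I_r,\, 0_{r\times(d_y-r)}]$ (recall $\Ss = \llbracket 1,r\rrbracket$ here), we obtain
\[
A U_\Ss^T \Sigma_{YX}\Sigma_{XX}^{-1}X \;=\; A U_\Ss^T U \Delta V^T \;=\; [A,\, 0_{(d_y-r)\times(d_y-r)}]\,\Delta\, V^T.
\]
Because $\Delta \in \mathbb{R}^{d_y\times m}$ is diagonal with entries $\sqrt{\lambda_1},\ldots,\sqrt{\lambda_{d_y}}$ followed by zero columns, multiplying $[A,\,0]$ by $\Delta$ only picks up the first $r$ columns of $A$, scaled by the $\sqrt{\lambda_a}$, so
\[
[A,\, 0]\,\Delta \;=\; [A\Delta^{(\Ss)},\; 0_{(d_y-r)\times(m-r)}],
\]
with $\Delta^{(\Ss)}$ as defined in \eqref{delta S}. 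Since $V$ is orthogonal, the Frobenius norm is preserved when multiplying on the right by $V^T$, so
\[
\|A U_\Ss^T \Sigma_{YX}\Sigma_{XX}^{-1}X\|^2 \;=\; \|A\Delta^{(\Ss)}\|^2 \;=\; \sum_{a=1}^{r} \lambda_a \sum_{b=1}^{d_y-r} A_{b,a}^2 \;=\; \sum_{a=1}^{r}\sum_{b=r+1}^{d_y} \lambda_a\, A_{b-r,a}^{\,2}.
\]

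Next, I would evaluate the right-hand side. By definition \eqref{delta Q} of $\Delta^{(Q)}$, a straightforward column-wise computation gives
\[
\|\Delta^{(Q)} A\|^2 \;=\; \sum_{a=1}^{r}\sum_{b=1}^{d_y-r} \lambda_{b+r}\, A_{b,a}^{\,2} \;=\; \sum_{a=1}^{r}\sum_{b=r+1}^{d_y} \lambda_b\, A_{b-r,a}^{\,2}.
\]
Adding this to $\sum_{a=1}^{r}\sum_{b=r+1}^{d_y}(\lambda_a-\lambda_b)A_{b-r,a}^{\,2}$ telescopes the $\lambda_b$ terms and yields exactly $\sum_{a=1}^{r}\sum_{b=r+1}^{d_y}\lambda_a A_{b-r,a}^{\,2}$, matching the left-hand side computed above. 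The identity then follows.

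There is no real obstacle here: the lemma is a bookkeeping identity whose only subtlety is carefully tracking the dimensions of $\Delta$ (which is $d_y\times m$, not square) and confirming that the trailing zero columns of $\Delta$ as well as the trailing zero block appended to $A$ do not contribute to the Frobenius norm. Once the SVD is plugged in and the zero blocks are discarded, the two sides collapse to the same double sum.
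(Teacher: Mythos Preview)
Your proof is correct and follows essentially the same route as the paper: both reduce the left-hand side to $\|A\Delta^{(\Ss)}\|^2 = \sum_{a,b}\lambda_a A_{b-r,a}^2$ and then add and subtract the $\lambda_b$ terms to produce $\|\Delta^{(Q)}A\|^2$. The only cosmetic difference is that the paper reaches $\|A\Delta^{(\Ss)}\|^2$ via the trace identity $\|A U_\Ss^T \Sigma^{1/2}\|^2 = \tr(A U_\Ss^T \Sigma U_\Ss A^T)$, avoiding the explicit handling of $V$ and the non-square $\Delta$, whereas you push the SVD through directly; both are equally valid.
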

The proof of Lemma \ref{decomposition A delta s carre} is in Appendix  \ref{proof decomposition A delta s carre}.
\begin{lemma}\label{simplif cross-product}
     Let $\Wbf = (W_H, \ldots , W_1)$ be a first-order critical point associated with $\Ss$. %
     For any matrix $A \in \mathbb{R}^{d_y \times d_x}$, we have
     $$ \left\langle AX \ ,\ W_H \cdots W_1X - Y \right\rangle = \left\langle  A \ , \ -U_{Q} U_{Q}^T \Sigma_{YX} \right\rangle \;.$$
\end{lemma}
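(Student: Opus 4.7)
The plan is to expand both inner products as traces and exploit the characterization of first-order critical points from Proposition \ref{global map and critical values}, namely that $W_H \cdots W_1 = U_{\Ss} U_{\Ss}^T \Sigma_{YX} \Sigma_{XX}^{-1}$ whenever $\Wbf$ is associated with $\Ss$. No deep argument is required; the identity should fall out of a short sequence of rewrites.

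First, I would unfold the left-hand side using $\langle M,N\rangle = \tr(MN^T)$ together with the definitions $\Sigma_{XX} = XX^T$ and $\Sigma_{XY} = XY^T$. Explicitly, I expect
\[
\bigl\langle AX,\, W_H \cdots W_1 X - Y \bigr\rangle = \tr\bigl(A\,\Sigma_{XX}\,(W_H \cdots W_1)^T\bigr) - \tr(A\,\Sigma_{XY}).
\]
Substituting the critical-point identity gives $(W_H \cdots W_1)^T = \Sigma_{XX}^{-1} \Sigma_{XY} U_{\Ss} U_{\Ss}^T$, hence $\Sigma_{XX}(W_H \cdots W_1)^T = \Sigma_{XY} U_{\Ss} U_{\Ss}^T$, and the left-hand side collapses to $\tr\bigl(A\,\Sigma_{XY}(U_{\Ss}U_{\Ss}^T - I_{d_y})\bigr)$.

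Next, using the orthogonal decomposition $I_{d_y} = U_{\Ss}U_{\Ss}^T + U_Q U_Q^T$ from Lemma \ref{properties of orhtogonality}, I get $U_{\Ss}U_{\Ss}^T - I_{d_y} = -U_Q U_Q^T$, so the left-hand side equals $-\tr(A\,\Sigma_{XY}\,U_Q U_Q^T)$. On the other side, a direct trace expansion together with $\Sigma_{YX}^T = \Sigma_{XY}$ and the cyclic property of the trace yields
\[
\bigl\langle A,\, -U_Q U_Q^T \Sigma_{YX} \bigr\rangle = -\tr\bigl(A\,\Sigma_{XY}\,U_Q U_Q^T\bigr),
\]
which matches, concluding the proof. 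The only potential pitfall is keeping transposes and the order of $\Sigma_{XY}/\Sigma_{YX}$ straight — but since each rewrite is a one-line identity, I do not anticipate any real obstacle.
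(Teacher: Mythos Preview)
Your proof is correct and follows essentially the same approach as the paper: both substitute the critical-point identity $W_H\cdots W_1 = U_{\Ss}U_{\Ss}^T \Sigma_{YX}\Sigma_{XX}^{-1}$, use $XX^T=\Sigma_{XX}$ to cancel $\Sigma_{XX}^{-1}$, and then apply $I_{d_y} - U_{\Ss}U_{\Ss}^T = U_Q U_Q^T$. The only cosmetic difference is that the paper keeps the computation in inner-product form (moving $X$ onto the second argument) whereas you expand via traces and transposes on the first argument.
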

The proof of Lemma \ref{simplif cross-product} is in Appendix \ref{proof simplif cross-product}.\\ 

\subsubsection{Simplifying \texorpdfstring{$FT$}{}} \label{ section simplifying FT}
In this section and the next one, we simplify the expressions of $FT$ and $ST$ as defined in \eqref{FT} and \eqref{ST}.
In order to decompose $FT = a_1 + \|A_2\|^2 + \|A_3\|^2 + \|A_4\|^2$, with $a_1 \geq 0$, we first simplify the terms $T_i$, for $i \in \llbracket 1 , H \rrbracket$, defined in \eqref{T_i}.
Let us first consider $\Wbf$ tightened satisfying the hypotheses of Proposition \ref{prop 4}, and $p$ and $q$ defined as in Lemma \ref{E}.
The simplification of $T_i$ depends on the position of $i$ with regard to $1$ , $q$, $p$ and $H$.
We define $J_1 = \llbracket p, H-1\rrbracket$, $J_2 = \llbracket q+1, p-1\rrbracket$ and $J_3 = \llbracket 2, q\rrbracket$. \\
Note that, according to the convention in Section \ref{Settings}, these sets could be empty.
\begin{itemize}
    \item if $p=H$, $J_1 = \emptyset$
    \item if $q=p-1$, $J_2 = \emptyset$
    \item if $q=1$, $J_3 = \emptyset$ \;.
\end{itemize}
Note also that $\{1\},J_3,J_2,J_1,\{H\}$ are disjoint and $\{1\} \cup J_3 \cup J_2 \cup J_1 \cup \{H\} = \llbracket 1,H \rrbracket$.\\
Depending on the position of $i$, we need to distinguish four cases, in order to simplify $T_i$.

\begin{lemma} \label{simplif all T_i}
    Suppose Assumption \ref{Assump H} in Section \ref{Settings} holds true.
    Let $\Wbf = (W_H, \ldots , W_1)$ be a first-order critical point satisfying the hypotheses of Proposition \ref{prop 4}, and $r$, $\Ss$, $Q$, $(Z_h)_{h=1..H}$ as in Proposition \ref{prop 4}.
    Let $i \in \llbracket 1,H \rrbracket$.
    For any $\Wbf' = (W'_H, \ldots, W'_1)$, recall that 
    , as defined in \eqref{T_i},
    $$T_{i} = W_H \cdots W_{i+1} W'_i W_{i-1} \cdots W_1 X \;.$$
    If $\Wbf$ is tightened, then, for $p$ and $q$ as defined in Lemma \ref{E} and $J_1$,$J_2$,$J_3$ as defined above, we have
    \begin{itemize}
        \item For $i=H$: 
        \begin{align}
            T_H = (W'_H)_{.,1:r} U_{\Ss}^T \Sigma_{YX}\Sigma_{XX}^{-1}X \label{T_H simplified} 
        \end{align}
        \item For $i \in J_1$:
        \begin{align}
            T_i &= U_{\Ss} (W'_i)_{1:r,1:r} U_{\Ss}^T \Sigma_{YX}\Sigma_{XX}^{-1}X + U_Q Z_H Z_{H-1} \cdots Z_{i+1} (W'_i)_{r+1:d_i,1:r} U_{\Ss}^T \Sigma_{YX}\Sigma_{XX}^{-1}X \label{T_i J_1 simplified}
        \end{align}
        \item For $i \in J_2 \cup J_3$:
        \begin{align}
            T_i &= U_{\Ss} (W'_i)_{1:r,1:r} U_{\Ss}^T \Sigma_{YX}\Sigma_{XX}^{-1}X + U_{\Ss} (W'_i)_{1:r,r+1:d_{i-1}} Z_{i-1} \cdots Z_2Z_1 X \label{T_i J_2 J_3 simplified}
        \end{align}
        \item For $i=1$:
        \begin{align}
            T_1 &= U_{\Ss} (W'_1)_{1:r,.} X \label{T_1 simplified}
        \end{align}
    \end{itemize}
\end{lemma}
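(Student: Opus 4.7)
The plan is to evaluate each $T_i = (W_H \cdots W_{i+1})\, W'_i\, (W_{i-1} \cdots W_1)\, X$ by first reducing one of the two outer products of consecutive weight matrices to a simple block form using the tightening identities \eqref{W_H...W_p simplified}--\eqref{W_H-1...W_k+1 simplified} of Lemma \ref{E}, and then computing the other factor directly from the explicit block forms \eqref{W_H simplified}--\eqref{W_1 simplified}. I would run a case analysis on the position of $i$ relative to the threshold $p$ from Lemma \ref{E}: the range $\llbracket 1,H \rrbracket$ splits as $\{1\} \cup J_3 \cup J_2 \cup J_1 \cup \{H\}$, and the identities \eqref{W_H...W_p simplified} and \eqref{W_i-1...W_2 simplified} cover the two overlapping halves $\llbracket 1, p-1 \rrbracket$ and $\llbracket p, H \rrbracket$, so exactly one of the four target formulas will be triggered in each sub-case.

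For $i \in \llbracket p, H \rrbracket$ (i.e.\ $i = H$ or $i \in J_1$), I would apply \eqref{W_i-1...W_2 simplified} to get $W_{i-1} \cdots W_2 = \bigl[\begin{smallmatrix} I_r & 0 \\ 0 & 0 \end{smallmatrix}\bigr]$, then multiply by $W_1$ from \eqref{W_1 simplified} to obtain $W_{i-1} \cdots W_1 = \bigl[\begin{smallmatrix} U_{\Ss}^T \Sigma_{YX}\Sigma_{XX}^{-1} \\ 0 \end{smallmatrix}\bigr]$. The case $i=H$ then gives \eqref{T_H simplified} after noting that only the first $r$ columns of $W'_H$ contribute; for $i \in J_1 \subset \llbracket 1, H-1 \rrbracket$ I would also evaluate $W_H \cdots W_{i+1}$ by telescoping the block-diagonal $W_h$'s from \eqref{W_h simplified} and multiplying on the left by $W_H = [U_{\Ss}, U_Q Z_H]$ from \eqref{W_H simplified}, which produces $[U_{\Ss},\, U_Q Z_H Z_{H-1} \cdots Z_{i+1}]$; splitting $W'_i$ into its four blocks then gives \eqref{T_i J_1 simplified}. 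Symmetrically, for $i \in \llbracket 1, p-1 \rrbracket$ (i.e.\ $i=1$ or $i \in J_2 \cup J_3$), I would apply \eqref{W_H...W_p simplified} to get $W_H \cdots W_{i+1} = [U_{\Ss}, 0]$, so that only the top $r$ rows $(W'_i)_{1:r, .}$ of $W'_i$ survive, and compute the other factor as $W_{i-1} \cdots W_1 = \bigl[\begin{smallmatrix} U_{\Ss}^T \Sigma_{YX}\Sigma_{XX}^{-1} \\ Z_{i-1} \cdots Z_2 Z_1 \end{smallmatrix}\bigr]$ by telescoping the block-diagonal $W_h$'s with $W_1$. Splitting $(W'_i)_{1:r, .}$ horizontally into its two column blocks then yields \eqref{T_i J_2 J_3 simplified}, and the boundary case $i=1$ (where the first factor reduces to $W_H \cdots W_2 = [U_{\Ss}, 0]$ directly from \eqref{simplif W_H...W_2}) is just \eqref{T_1 simplified}.

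The proof is essentially a careful bookkeeping of block-matrix products with no deep obstacle. The only subtlety is to handle the boundary and degenerate cases ($i \in \{1, 2, H-1, H\}$, together with the possibly empty sets $J_1, J_2, J_3$) consistently with the conventions of Section \ref{Settings} on empty products and zero-sized blocks, so that the stated formulas remain meaningful (e.g.\ $Z_{i-1} \cdots Z_2$ is the identity when $i=2$, and $Z_H Z_{H-1} \cdots Z_{i+1}$ is $Z_H$ when $i = H-1$). Notice also that the tightening identities \eqref{hammou} and \eqref{W_H-1...W_k+1 simplified} are not actually used at this stage; they will enter only in the subsequent simplification of the cross terms $T_{i,j}$ (Lemma \ref{simplif all T_ij}) and in the final cancellation that produces the decomposition $FT + ST = a_1 + \|A_2\|^2 + \|A_3 - A_4\|^2$.
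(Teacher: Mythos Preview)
Your proposal is correct and follows essentially the same approach as the paper's proof: the same case split on the position of $i$ relative to $p$, the same identities \eqref{W_H...W_p simplified} and \eqref{W_i-1...W_2 simplified} from Lemma~\ref{E} to collapse one outer factor, and the same block-matrix telescoping via \eqref{W_H simplified}--\eqref{W_1 simplified} to compute the other. Your remarks on the boundary cases, the empty-product conventions, and the fact that \eqref{hammou} and \eqref{W_H-1...W_k+1 simplified} are not needed here are all accurate and match the paper's treatment.
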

The proof of Lemma \ref{simplif all T_i} is in Appendix \ref{proof simplif all T_i}. \\
We now simplify $FT$.
Substituting the formulas of Lemma \ref{simplif all T_i} in \eqref{FT} we have
\begin{align*}
    FT &= \left\| \sum_{i=1}^H T_i \right\|^2 \\
    &= \left\| (W'_H)_{.,1:r} U_{\Ss}^T \Sigma_{YX}\Sigma_{XX}^{-1}X \right. \\
    & \quad + \sum_{i \in J_1} \left(U_{\Ss} (W'_i)_{1:r,1:r} U_{\Ss}^T \Sigma_{YX}\Sigma_{XX}^{-1}X + U_Q Z_H Z_{H-1} \cdots Z_{i+1} (W'_i)_{r+1:d_i,1:r} U_{\Ss}^T \Sigma_{YX}\Sigma_{XX}^{-1}X \right) \\
    & \quad + \left. \sum_{i \in J_2 \cup J_3}  \left(U_{\Ss} (W'_i)_{1:r,1:r} U_{\Ss}^T \Sigma_{YX}\Sigma_{XX}^{-1}X + U_{\Ss} (W'_i)_{1:r,r+1:d_{i-1}} Z_{i-1} \cdots Z_2 Z_1 X\right) + U_{\Ss} (W'_1)_{1:r,.} X \right\|^2 \;.
\end{align*}
$FT$ can be identified with a term as $\|A + U_{\Ss}B\|^2$ if we take $$A = (W'_H)_{.,1:r} U_{\Ss}^T \Sigma_{YX}\Sigma_{XX}^{-1}X + \sum_{i \in J_1} U_QZ_H Z_{H-1} \cdots Z_{i+1} (W'_i)_{r+1:d_i,1:r}U_{\Ss}^T \Sigma_{YX}\Sigma_{XX}^{-1}X \;.$$
and 
\begin{align*}
    B &= \sum_{i \in J_1} (W'_i)_{1:r,1:r} U_{\Ss}^T \Sigma_{YX}\Sigma_{XX}^{-1}X \\
    & \quad + \sum_{i \in J_2 \cup J_3}  \left( (W'_i)_{1:r,1:r} U_{\Ss}^T \Sigma_{YX}\Sigma_{XX}^{-1}X + (W'_i)_{1:r,r+1:d_{i-1}} Z_{i-1} \cdots Z_2 Z_1 X\right) + (W'_1)_{1:r,.} X \;.
\end{align*}
Applying Lemma \ref{the formula}, $FT$ becomes:
\begingroup
\allowdisplaybreaks
\begin{align*}
    FT &= \|U_{\Ss}^T A + B \|^2 + \|U_Q^T A\|^2 \\
    &= \left\| U_{\Ss}^T (W'_H)_{.,1:r} U_{\Ss}^T \Sigma_{YX}\Sigma_{XX}^{-1}X + \sum_{i \in J_1} U_{\Ss}^T U_Q Z_H Z_{H-1} \cdots Z_{i+1} (W'_i)_{r+1:d_i,1:r} U_{\Ss}^T \Sigma_{YX}\Sigma_{XX}^{-1}X \right. \\
    & \quad + \sum_{i \in J_1} (W'_i)_{1:r,1:r} U_{\Ss}^T \Sigma_{YX}\Sigma_{XX}^{-1}X \\
    & \quad + \left. \sum_{i \in J_2 \cup J_3}  \left( (W'_i)_{1:r,1:r} U_{\Ss}^T \Sigma_{YX}\Sigma_{XX}^{-1}X + (W'_i)_{1:r,r+1:d_{i-1}} Z_{i-1} \cdots Z_2 Z_1 X \right) + (W'_1)_{1:r,.} X \right\|^2 \\
    & \quad + \left\| U_{Q}^T (W'_H)_{.,1:r} U_{\Ss}^T \Sigma_{YX}\Sigma_{XX}^{-1}X + \sum_{i \in J_1} U_{Q}^T U_Q Z_H Z_{H-1} \cdots Z_{i+1} (W'_i)_{r+1:d_i,1:r} U_{\Ss}^T \Sigma_{YX}\Sigma_{XX}^{-1}X \right\|^2 \;.
\end{align*}
Using Lemma \ref{properties of orhtogonality}, we have $U_{\Ss}^T U_Q = 0$ and $U_Q^T U_Q = I_{d_y-r}$, hence we can write
\begin{align*}
    FT &= FT_1 + FT_2 \;,
\end{align*}
where
\begin{align*}
    FT_1 &= \left\| U_{\Ss}^T (W'_H)_{.,1:r} U_{\Ss}^T \Sigma_{YX}\Sigma_{XX}^{-1}X + \sum_{i \in J_1} (W'_i)_{1:r,1:r} U_{\Ss}^T \Sigma_{YX}\Sigma_{XX}^{-1}X \right. \\
    & \quad + \left. \sum_{i \in J_2 \cup J_3} \left((W'_i)_{1:r,1:r} U_{\Ss}^T \Sigma_{YX}\Sigma_{XX}^{-1}X + (W'_i)_{1:r,r+1:d_{i-1}} Z_{i-1} \cdots Z_2 Z_1 X \right) + (W'_1)_{1:r,.} X \right\|^2 \;,
\end{align*}
and
\begin{align*}
    FT_2 &= \left\| U_{Q}^T (W'_H)_{.,1:r} U_{\Ss}^T \Sigma_{YX}\Sigma_{XX}^{-1}X + \sum_{i \in J_1} Z_H Z_{H-1} \cdots Z_{i+1} (W'_i)_{r+1:d_i,1:r} U_{\Ss}^T \Sigma_{YX}\Sigma_{XX}^{-1}X \right\|^2 \;.
\end{align*}
Let us first simplify $FT_1$. \\
Recall that $m$ is the number of examples in our sample, $V \in \mathbb{R}^{m \times m}$ is the orthogonal matrix defined in \eqref{svd de sigma 1/2} and $Q = \llbracket r+1,d_y \rrbracket$.
We set $\Ss' = \Ss \cup \llbracket d_y+1,m \rrbracket = \llbracket 1,r \rrbracket \cup \llbracket d_y+1,m \rrbracket $ such that $\Ss' \cup Q = \llbracket 1,m \rrbracket$. \\
Reordering the terms and, since $V$ is orthogonal, using $I_m = VV^T = V_{\Ss'}V_{\Ss'}^T + V_{Q}V_{Q}^T$ , we have
\begin{align*}
    FT_1 &= \left\| \left(U_{\Ss}^T (W'_H)_{.,1:r} + \sum_{i \in J_1 \cup J_2 \cup J_3} (W'_i)_{1:r,1:r}\right) U_{\Ss}^T \Sigma_{YX}\Sigma_{XX}^{-1}X \right. \\
    & \quad + \sum_{i \in J_2} (W'_i)_{1:r,r+1:d_{i-1}} Z_{i-1} \cdots Z_2 Z_1 X \\
    & \quad +\left. \left(\sum_{i \in J_3}  (W'_i)_{1:r,r+1:d_{i-1}} Z_{i-1} \cdots Z_2 Z_1 X + (W'_1)_{1:r,.} X\right) \left(V_{\Ss'}V_{\Ss'}^T+V_Q V_Q^T\right) \right\|^2 \;.
\end{align*}
Since for $i \in J_2$, we have $i-1 \geq q$, we denote
\begin{align*}
    N &:= \sum_{i \in J_2} (W'_i)_{1:r,r+1:d_{i-1}} Z_{i-1} \cdots Z_{q+1} \;,
\end{align*}
Recall that, using the convention in Section \ref{Settings}, for $i-1=q$,
we have $Z_{i-1} \cdots Z_{q+1} = I_{d_q-r}$.\\
We also denote
\begin{align*}
    M &:= \sum_{i \in J_3}  (W'_i)_{1:r,r+1:d_{i-1}} Z_{i-1} \cdots Z_2 Z_1 X V_Q + (W'_1)_{1:r,.}X V_Q \;, \\
    J &:= \sum_{i \in J_3}  (W'_i)_{1:r,r+1:d_{i-1}} Z_{i-1} \cdots Z_2 Z_1 X V_{\Ss'} + (W'_1)_{1:r,.}X V_{\Ss'}  \;,\\
    L &:= U_{\Ss}^T (W'_H)_{.,1:r} + \sum_{i \in J_1 \cup J_2 \cup J_3} (W'_i)_{1:r,1:r} \;.
\end{align*}
Therefore, we obtain 
\begin{align*}
    FT_1 &= \left\|   L U_{\Ss}^T \Sigma_{YX}\Sigma_{XX}^{-1}X + N Z_q \cdots Z_2 Z_1 X + J V_{\Ss'}^T + M V_{Q}^T \right\|^2 \\
    &= \left\|   L U_{\Ss}^T \Sigma_{YX}\Sigma_{XX}^{-1}X + N Z_q \cdots Z_2 Z_1 X + J V_{\Ss'}^T \right\|^2 + \left\| M V_{Q}^T \right\|^2 \\
    & \quad + 2 \left\langle L U_{\Ss}^T \Sigma_{YX}\Sigma_{XX}^{-1}X + N Z_q \cdots Z_2 Z_1 X +  J V_{\Ss'}^T \ , \ M V_{Q}^T  \right\rangle \;. \\
\end{align*}
Using Lemma \ref{C} and Lemma \ref{D} and $V_Q^T V_{\Ss'} = 0$ (since $V$ is orthogonal), the cross-product is equal to zero. \\
Noting also that since $V$ is orthogonal $\|M V_Q^T\|^2 = \tr(M V_Q^T V_Q M^T) = \tr(M M^T) = \|M\|^2 = \|M^T\|^2$, we have
\begin{align*}
    FT_1 &= \left\|   L U_{\Ss}^T \Sigma_{YX}\Sigma_{XX}^{-1}X + N Z_q \cdots Z_2 Z_1 X + J V_{\Ss'}^T \right\|^2 + \left\| M^T \right\|^2 \\
    &= \|A_2\|^2 + \|A_4\|^2 
\end{align*}
where
\begin{align}
    A_2 &:= L U_{\Ss}^T \Sigma_{YX}\Sigma_{XX}^{-1}X + N Z_q \cdots Z_2 Z_1 X + J V_{\Ss'}^T \nonumber \\
    & = U_{\Ss}^T(W'_H)_{.,1:r} U_{\Ss}^T \Sigma_{YX}\Sigma_{XX}^{-1}X + \sum_{i \in J_1}  (W'_i)_{1:r,1:r} U_{\Ss}^T \Sigma_{YX}\Sigma_{XX}^{-1}X \nonumber \\
    &\quad + \sum_{i \in J_2}  \left((W'_i)_{1:r,1:r} U_{\Ss}^T \Sigma_{YX}\Sigma_{XX}^{-1}X + (W'_i)_{1:r,r+1:d_{i-1}} Z_{i-1} \cdots Z_2 Z_1 X \right) \nonumber \\
    &\quad + \sum_{i \in J_3} \left( (W'_i)_{1:r,1:r} U_{\Ss}^T \Sigma_{YX}\Sigma_{XX}^{-1}X + (W'_i)_{1:r,r+1:d_{i-1}} Z_{i-1} \cdots Z_2 Z_1 X V_{\Ss'} V_{\Ss'}^T \right) 
    %\nonumber \\
    %& \quad 
    + (W'_1)_{1:r,.} X V_{\Ss'} V_{\Ss'}^T \label{A_2} \\
    A_4 &:= M^T = \left(\sum_{i \in J_3}  (W'_i)_{1:r,r+1:d_{i-1}} Z_{i-1} \cdots Z_2 Z_1 X V_{Q} + (W'_1)_{1:r,.} X V_{Q} \right)^T \;. \label{A_4}
\end{align}
Let us now simplify $FT_2$.\\
We have $FT_2 = \left\| A U_{\Ss}^T \Sigma_{YX}\Sigma_{XX}^{-1}X \right\|^2$, with $$A := U_Q^T(W'_H)_{.,1:r} + \sum_{i \in J_1} Z_H Z_{H-1} \cdots Z_{i+1} (W'_i)_{r+1:d_i,1:r} \in \mathbb{R}^{(d_y - r) \times r} \;.$$
Hence, using Lemma \ref{decomposition A delta s carre}, we have
\begin{align*}
    FT_2  &= \sum_{a=1}^r \sum_{b=r+1}^{d_y} (\lambda_a - \lambda_b) ( A_{b-r,a})^2 + \| \Delta^{(Q)} A \|^2 \\
    &= \sum_{a=1}^r \sum_{b=r+1}^{d_y} (\lambda_a - \lambda_b) \left(U_b^T(W'_H)_{.,a} + \sum_{i \in J_1} (Z_H)_{b-r,.} Z_{H-1} \cdots Z_{i+1} (W'_i)_{r+1:d_i,a}\right)^2 \\
    & \quad + \left\| \Delta^{(Q)} \left(U_Q^T(W'_H)_{.,1:r} + \sum_{i \in J_1} Z_H Z_{H-1} \cdots Z_{i+1} (W'_i)_{r+1:d_i,1:r}\right) \right\|^2 \\
    &= a_1 + \|A_3\|^2 \;,
\end{align*}
where
\begin{align}
    a_1 &:= \sum_{a=1}^r \sum_{b=r+1}^{d_y} (\lambda_a - \lambda_b) \left(U_b^T(W'_H)_{.,a} + \sum_{i \in J_1} (Z_H)_{b-r,.} Z_{H-1} \cdots Z_{i+1} (W'_i)_{r+1:d_i,a}\right)^2 \label{a_1} \\
    A_3 &:= \Delta^{(Q)} \left(U_Q^T(W'_H)_{.,1:r} + \sum_{i \in J_1} Z_H Z_{H-1} \cdots Z_{i+1} (W'_i)_{r+1:d_i,1:r}\right) \label{A_3} 
\end{align}
Finally,
\begin{align*}
    FT &= FT_1 + FT_2 \\
    &= a_1 + \|A_2\|^2 + \|A_3\|^2 + \|A_4\|^2 \;, \numberthis \label{FT final}
\end{align*}
where $a_1, A_2 , A_3 , A_4$ are defined in \eqref{a_1}, \eqref{A_2}, \eqref{A_3}, \eqref{A_4}.
Notice that, since $\lambda_1 > \cdots > \lambda_{d_y}$, we have 
\begin{align}\label{a_1 geq 0}
    a_1 \geq 0 \;.
\end{align}

\subsubsection{Simplifying \texorpdfstring{$ST$}{}}\label{section simplifying ST}
In this section, we prove that $ST = -2 \left\langle A_3 , A_4 \right\rangle $, where $ST$, $A_3$ and $A_4$ are defined in \eqref{ST}, \eqref{A_3} and \eqref{A_4}.
In order to do so, we first state a lemma that simplifies the terms $T_{i,j}$ defined in \eqref{T_ij}.
We remind that the sets $J_1$, $J_2$ and $J_3$ are defined at the beginning of Section \ref{ section simplifying FT}. 
\begin{lemma} \label{simplif all T_ij}
    Suppose Assumption \ref{Assump H} in Section \ref{Settings} holds true.
    Let $\Wbf = (W_H, \cdots , W_1)$ be a first-order critical point satisfying the hypotheses of Proposition \ref{prop 4}, and $r$, $\Ss$, $Q$, $(Z_h)_{h=1..H}$ defined as in Proposition \ref{prop 4}.
    Let $(i,j) \in \llbracket 1,H \rrbracket^2$, with $i>j$.
    For any $\Wbf' = (W'_H, \ldots, W'_1)$, recall that 
    , as defined in \eqref{T_ij},
    $$T_{i,j} = \left\langle  W_H \cdots W_{i+1} W'_i W_{i-1} \cdots W_{j+1} W'_j W_{j-1} \cdots W_1 X  \ ,\ W_H \cdots W_1 X - Y \right\rangle \;.$$
    If $\Wbf$ is tightened, then, for $p$ and $q$ as defined in Lemma \ref{E} and $J_1$,$J_2$,$J_3$ as defined above, we have
    \begin{itemize}
        \item For $i=H$:
        \begin{itemize}
            \item For $j \in J_3$:
            \begin{align}
                T_{H,j} &= - \left\langle  \Delta^{(Q)} U_Q^T (W'_H)_{.,1:r}\ ,\ \left((W'_j)_{1:r,r+1:d_{j-1}} Z_{j-1} \cdots Z_2 Z_1 X V_Q\right)^T \right\rangle \;. \label{T_Hj simplified}
            \end{align}
            \item For $j=1$:
            \begin{align}
                T_{H,1} &= -\left\langle  \Delta^{(Q)} U_Q^T (W'_H)_{.,1:r}  \ , \ \left((W'_1)_{1:r,.} X V_Q \right)^T \right\rangle \;. \label{T_H1 simplified}
            \end{align}
            \item For $j \in J_1 \cup J_2$:
            \begin{align}
                T_{H,j} &= 0 \;. \label{T_Hj j in J_1 cup J_2}
            \end{align}
        \end{itemize}
        \item For $i \in J_1$:
        \begin{itemize}
            \item For $j \in J_3$:
            \begin{align}
                T_{i,j} &= - \left\langle  \Delta^{(Q)} Z_H Z_{H-1} \cdots Z_{i+1} (W'_{i})_{r+1:d_{i},1:r} \  , \  \left((W'_j)_{1:r,r+1:d_{j-1}} Z_{j-1} \cdots Z_2 Z_1 X V_Q\right)^T \right\rangle \;. \label{T_ij J_1 x J_3 simplified}
            \end{align}
            \item For $j=1$:
            \begin{align}
                T_{i,1} &= - \left\langle \Delta^{(Q)} Z_H Z_{H-1} \cdots Z_{i+1} (W'_{i})_{r+1:d_{i},1:r} \ , \ ((W'_1)_{1:r,.} X V_Q)^T \right\rangle \;. \label{T_i1 simplified}
            \end{align}
            \item For $j \in J_1 \cup J_2$:
            \begin{align}
                T_{i,j} &= 0 \;. \label{T_ij i in J_1, j in J_1 cup J_2}
            \end{align}
        \end{itemize}
        \item For $i \in J_2 \cup J_3$, for all $j<i$, we have 
        \begin{align}
            T_{i,j} = 0 \;. \label{T_ij, i in J_2 cup J_3}
        \end{align}
    \end{itemize}
\end{lemma}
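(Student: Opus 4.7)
The plan is to proceed in four broad stages. First, I would use Lemma~\ref{simplif cross-product} with $A_{i,j} := W_H \cdots W_{i+1} W'_i W_{i-1} \cdots W_{j+1} W'_j W_{j-1} \cdots W_1$ to rewrite
\[
T_{i,j} \;=\; \langle A_{i,j}, -U_Q U_Q^T \Sigma_{YX} \rangle \;=\; -\tr(U_Q^T A_{i,j} \Sigma_{XY} U_Q),
\]
and then apply Lemma~\ref{Sigma_XY U_Q} to obtain $T_{i,j} = -\tr(\Delta^{(Q)} U_Q^T A_{i,j} X V_Q)$. This reduction is uniform across all cases and concentrates the remaining work on simplifying $U_Q^T A_{i,j}$.

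Second, I would dispatch the two vanishing families. For $i \in J_2 \cup J_3$, Lemma~\ref{E}\,\eqref{W_H...W_p simplified} gives $W_H \cdots W_{i+1} = [U_{\Ss}, 0]$, hence $U_Q^T W_H \cdots W_{i+1} = 0$ by Lemma~\ref{properties of orhtogonality}, so $U_Q^T A_{i,j} = 0$ and \eqref{T_ij, i in J_2 cup J_3} follows. For $i \in \{H\} \cup J_1$ with $j \geq q+1$, I would establish $W_{j-1} \cdots W_1 \Sigma_{XY} U_Q = 0$ by decomposing this product, via the parameterization of Proposition~\ref{simplif mat}, into the two blocks $U_{\Ss}^T \Sigma U_Q$ and $Z_{j-1} \cdots Z_1 \Sigma_{XY} U_Q$; the first vanishes by Lemma~\ref{U_S sigma U_Q = 0}, the second by Lemma~\ref{E}\,\eqref{hammou}. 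This propagates to $A_{i,j} \Sigma_{XY} U_Q = 0$, yielding \eqref{T_Hj j in J_1 cup J_2} and \eqref{T_ij i in J_1, j in J_1 cup J_2}.

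Third, for the non-trivial formulas (when $i \in \{H\} \cup J_1$ and $j \in \{1\} \cup J_3$), I would substitute the block forms of Proposition~\ref{simplif mat} into $U_Q^T A_{i,j}$ and expand. Three systematic simplifications occur: (i) left-multiplication by $U_Q^T$ collapses $[U_{\Ss}, U_Q Z_H \cdots Z_{i+1}]$ to $[0, Z_H \cdots Z_{i+1}]$, killing the contribution of the first $r$ rows of $W'_i$ when $i \in J_1$; (ii) the factor $\bigl[\begin{smallmatrix} I_r & 0 \\ 0 & 0 \end{smallmatrix}\bigr]$ appearing either in $W_{H-1} \cdots W_{j+1}$ (for $j \in J_3$, via \eqref{W_H-1...W_k+1 simplified}) or in $W_{i-1} \cdots W_2$ (for $i \in J_1$, via \eqref{W_i-1...W_2 simplified}) annihilates the lower block of the adjacent $W'$-factor; (iii) any surviving term containing $U_{\Ss}^T \Sigma_{YX} \Sigma_{XX}^{-1} X V_Q$ vanishes because, via the SVD $\Sigma_{YX} \Sigma_{XX}^{-1} X = U \Delta V^T$, this reduces to $[\Delta^{(\Ss)}, 0]\, V^T V_Q = 0$ (the first $r$ rows of $\Delta$ have no mass outside the first $r$ columns, while $V^T V_Q$ selects columns $r+1, \dots, d_y$). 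What remains coincides with the claimed \eqref{T_Hj simplified}, \eqref{T_H1 simplified}, \eqref{T_ij J_1 x J_3 simplified}, and \eqref{T_i1 simplified}.

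Fourth, the main obstacle arises only for $i \in J_1,\ j \in J_3$: expanding $U_Q^T A_{i,j}$ leaves, in addition to the ``good'' term, a residual cross term proportional to
\[
Z_H \cdots Z_{i+1} (W'_i)_{r+1:d_i,\, r+1:d_{i-1}}\, Z_{i-1} \cdots Z_{j+1}\, (W'_j)_{r+1:d_j,\, r+1:d_{j-1}}\, Z_{j-1} \cdots Z_1\, X V_Q,
\]
which is \emph{not} killed by Lemma~\ref{E} alone. To eliminate it I would invoke tightness of the pivot $(i,j)$ itself: its two complementary blocks are $W_{i-1} \cdots W_{j+1}$ and $W_{j-1} \cdots W_1 \Sigma_{XY} W_H \cdots W_{i+1}$, and after substituting Proposition~\ref{simplif mat} their respective ranks equal $r$ only if $Z_{i-1} \cdots Z_{j+1} = 0$ or $Z_{j-1} \cdots Z_1 \Sigma_{XY} U_Q Z_H \cdots Z_{i+1} = 0$. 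In the first alternative the cross term vanishes outright; in the second, cyclic rearrangement inside $\tr(\Delta^{(Q)} \cdot X V_Q)$ puts this product at the front, again giving zero. Identifying and exploiting this per-pivot constraint, which goes beyond the uniform structural identities of Lemma~\ref{E}, is the main conceptual difficulty of the proof; everything else is systematic block-matrix bookkeeping.
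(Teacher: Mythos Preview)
Your proposal is correct and follows essentially the same approach as the paper. The only organizational difference is that you perform the reduction $T_{i,j} = -\tr(\Delta^{(Q)} U_Q^T A_{i,j} X V_Q)$ once at the outset (combining Lemmas~\ref{simplif cross-product} and~\ref{Sigma_XY U_Q}) and then simplify $U_Q^T A_{i,j}$, whereas the paper reapplies Lemma~\ref{simplif cross-product} separately in each case; your handling of the key $J_1 \times J_3$ case---invoking tightness of the pivot $(i,j)$ itself and splitting into the two alternatives $Z_{i-1}\cdots Z_{j+1}=0$ or $Z_{j-1}\cdots Z_1 \Sigma_{XY} U_Q Z_H \cdots Z_{i+1}=0$---matches the paper's argument exactly.
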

The proof of Lemma \ref{simplif all T_ij} is in Appendix \ref{proof simplif all T_ij}.\\
Let us now prove that $ST = -2 \left\langle  A_3 , A_4 \right\rangle$.
We remind that $\llbracket 1,H \rrbracket = \{H\} \cup J_1 \cup J_2 \cup J_3 \cup \{1\}$ and separate the sum appearing in \eqref{ST} accordingly. \\
We then substitute the formulas of Lemma \ref{simplif all T_ij} in \eqref{ST} and obtain
\begin{align*}
    ST &= 2 \sum_{H \geq i>j \geq 1} T_{i,j} \\
    &= 2 \left(\sum_{j \in J_1 \cup J_2} T_{H,j} + \sum_{j \in J_3} T_{H,j} + T_{H,1}
    + \sum_{i \in J_1} \sum_{\substack{j \in J_1 \cup J_2,\\ j<i}} T_{i,j}
    + \sum_{i \in J_1} \sum_{j \in J_3} T_{i,j} + \sum_{i \in J_1} T_{i,1} + \sum_{i \in J_2 \cup J_3} \sum_{j=1}^{i-1} T_{i,j} \right) \\
    &= - 2\sum_{j \in J_3} \left\langle  \Delta^{(Q)} U_Q^T (W'_H)_{.,1:r}\ ,\ \left((W'_j)_{1:r,r+1:d_{j-1}} Z_{j-1} \cdots Z_2 Z_1 X V_Q\right)^T \right\rangle \\
    & \quad -2 \left\langle  \Delta^{(Q)} U_Q^T (W'_H)_{.,1:r}  \ , \ ((W'_1)_{1:r,.} X V_Q )^T \right\rangle \\
    & \quad - 2 \sum_{i \in J_1} \sum_{j \in J_3} \left\langle  \Delta^{(Q)} Z_H Z_{H-1} \cdots Z_{i+1} (W'_{i})_{r+1:d_{i},1:r} \ , \ \left((W'_j)_{1:r,r+1:d_{j-1}}Z_{j-1} \cdots Z_2 Z_1 X V_Q\right)^T \right\rangle \\
    & \quad - 2\sum_{i \in J_1} \left\langle \Delta^{(Q)}Z_H Z_{H-1} \cdots Z_{i+1} (W'_{i})_{r+1:d_{i},1:r} \ , \ ((W'_1)_{1:r,.} X V_Q)^T \right\rangle \\
    & = -2 \left\langle \Delta^{(Q)} U_Q^T (W'_H)_{.,1:r} \ , \ \left(\sum_{j \in J_3} \ (W'_j)_{1:r,r+1:d_{j-1}}Z_{j-1} \cdots Z_2 Z_1 X V_Q + (W'_1)_{1:r,.} X V_Q\right)^T \right\rangle \\
    & \quad -2 \left\langle \Delta^{(Q)} \sum_{i \in J_1} Z_H Z_{H-1} \cdots Z_{i+1} (W'_{i})_{r+1:d_{i},1:r} \  , \right.\\
    & \qquad\qquad
     \left.\ \left(\sum_{j \in J_3} \ (W'_j)_{1:r,r+1:d_{j-1}}Z_{j-1} \cdots Z_2 Z_1 X V_Q + (W'_1)_{1:r,.} X V_Q\right)^T \right\rangle \\
    &= - 2\left\langle  \Delta^{(Q)} \left( U_Q^T (W'_H)_{.,1:r} + \sum_{i \in J_1} Z_H Z_{H-1} \cdots Z_{i+1} (W'_{i})_{r+1:d_{i},1:r}\right) \right. \ , \\
    & \qquad \qquad \left. \left(\sum_{j \in J_3} \ (W'_j)_{1:r,r+1:d_{j-1}}Z_{j-1} \cdots Z_2 Z_1 X V_Q + (W'_1)_{1:r,.} X V_Q\right)^T \right\rangle \\
    &= - 2 \left\langle  A_3 , A_4 \right\rangle \;, \numberthis \label{ST final}
\end{align*}
where we remind that $A_3$ and $A_4$ are defined in \eqref{A_3} and \eqref{A_4}.

\subsubsection{Concluding the Proof of Proposition \ref{prop 4}}\label{section concluding}
Using the simplifications  \eqref{FT final} and \eqref{ST final} above, for any $\Wbf$ satisfying the hypotheses of Proposition \ref{prop 4}, if $\Wbf$ is tightened, then for any $\Wbf'$,
\begin{align*}
    c_2(\Wbf , \Wbf') &= FT + ST \\
    &= a_1 + \|A_2\|^2 +\|A_3\|^2 + \|A_4\|^2 - 2 \left\langle  A_3, A_4 \right\rangle \\
    &= a_1 + \|A_2\|^2 +\|A_3 - A_4\|^2 \;.
\end{align*}
Using \eqref{a_1 geq 0}, we find 
$c_2(\Wbf , \Wbf') \geq 0$.\\
Therefore, $\Wbf =(W_H, \ldots ,W_1)$ is a second-order critical point.

\subsubsection{Proof of Lemma \ref{E}}\label{proof E}
First note that, for $r=0$, we can easily follow the same proof and see that the result still holds with the conventions adopted in Section \ref{Settings}.\\
\textbf{Let us prove \eqref{W_H...W_p simplified}}.\\
Consider the pivot $(i,j) = (2,1)$.
Its complementary blocks are $\Sigma_{XY} W_H \cdots W_3$ and $I_{d_1}$.
Since $\Wbf$ is tightened and $\rk(I_{d_1}) = d_1 \geq r_{max} > r$, we have $\rk(\Sigma_{XY} W_H \cdots W_3)=r$.
Since $\Sigma_{XY}$ is full-column rank, we obtain $\rk( W_H \cdots W_3)=r$. \\
Let $p \in \llbracket 3,H \rrbracket$ be the largest index such that 
\begin{align}\label{def p}
    \textit{rk}(W_H \cdots W_p)=r \;.
\end{align}
Using \eqref{W_H simplified} and \eqref{W_h simplified}, we have $W_H \cdots W_p = \left[ U_{\Ss}\ , \ U_Q Z_H Z_{H-1} \cdots Z_p \right]$.\\
Since $\textit{rk}(W_H \cdots W_p)=r$ and since the columns of $ U_Q Z_H Z_{H-1} \cdots Z_p $ are in the vector space spanned by the columns of $U_Q$ (which are orthogonal to the columns of $U_{\Ss})$, \eqref{def p} implies
$$ Z_H Z_{H-1} \cdots Z_p = 0 \;.$$ 
Therefore,
\begin{align*}
    W_H \cdots W_p = \left[ U_{\Ss}\ , \ 0 \right]\;. 
\end{align*}
Using \eqref{W_h simplified}, for all $i \in \llbracket 1,p-1 \rrbracket$,
\begin{align*}
    W_H \cdots W_{i+1} &= (W_H \cdots W_{p}) (W_{p-1} \cdots W_{i+1}) \\
    &= \left[ U_{\Ss}\ , \ 0 \right] \begin{bmatrix}
    I_r & 0 \\
    0 & Z_{p-1}\cdots Z_{i+1}
    \end{bmatrix} \\
    &= \left[ U_{\Ss}\ , \ 0 \right] \;.
\end{align*}
This proves \eqref{W_H...W_p simplified}. \\
\textbf{Let us prove \eqref{W_i-1...W_2 simplified}}.\\
We consider the pivot $(p,1)$.
Its complementary blocks are $\Sigma_{XY} W_H \cdots W_{p+1}$ and $W_{p-1} \cdots W_2$.
We have, by definition of $p$, $\rk(W_H \cdots W_{p+1})>r $.
Therefore, since $\Sigma_{XY}$ is full-column rank, we have $\rk(\Sigma_{XY} W_H \cdots W_{p+1}) = \rk(W_H \cdots W_{p+1})>r$.
Note that this holds both for $p=H$ and for $p<H$.
Hence, since $\Wbf$ is tightened, the second complementary block is of rank $r$, i.e. 
\begin{align*}
    \rk(W_{p-1} \cdots W_2)=r \;.
\end{align*}
 
Using \eqref{W_h simplified}, we also have $W_{p-1} \cdots W_2 = \left[
\begin{array}{c  c}
I_r & 0 \\
0 & Z_{p-1} \cdots Z_2
\end{array}
\right]$.\\
Then, since $\rk(W_{p-1} \cdots W_2)=r$, we have $Z_{p-1} \cdots Z_2 = 0$ and $$W_{p-1} \cdots W_2 = \left[
\begin{array}{c  c}
I_r & 0 \\
0 & 0
\end{array}
\right].$$\\
Using \eqref{W_h simplified} again, for all $i \in \llbracket p,H \rrbracket$,
\begin{align*}
    W_{i-1} \cdots W_2 &= (W_{i-1} \cdots W_p)(W_{p-1} \cdots W_2) \\
    &= \begin{bmatrix}
    I_r & 0 \\
    0 & Z_{i-1} \cdots Z_p 
    \end{bmatrix}
    \begin{bmatrix}
    I_r & 0 \\
    0 & 0 
    \end{bmatrix} \\
    &= 
    \left[
    \begin{array}{c  c}
    I_r & 0 \\
    0 & 0
    \end{array}
    \right] \;.
\end{align*}
This proves \eqref{W_i-1...W_2 simplified}. \\
\textbf{Let us now prove \eqref{hammou}}.\\
Using Proposition \ref{global map and critical values}, Lemma \ref{sigma invertible} and Lemma \ref{properties of orhtogonality}, we have
\begin{align*}
    \rk(W_{p-1} \cdots W_1 \Sigma_{XY}) \geq \rk(W_H \cdots W_1 \Sigma_{XY}) = \rk( U_{\Ss}U_{\Ss}^T \Sigma) \geq \rk( U_{\Ss}^T (U_{\Ss}U_{\Ss}^T \Sigma) \Sigma^{-1} U_{\Ss}) =  \rk( I_r) = r \;.
\end{align*}
Using 
\eqref{W_i-1...W_2 simplified} for $i=p$, we also have $ \rk(W_{p-1} \cdots W_1 \Sigma_{XY}) \leq \rk(W_{p-1} \cdots W_2) = r$.
Hence, $\rk(W_{p-1} \cdots W_1 \Sigma_{XY}) = r.$ \\
Notice that, considering the tightened pivot $(H,H-1)$, since $\rk(I_{d_{H-1}}) = d_{H-1} \geq r_{max} > r$, we obtain $\rk(W_{H-2} \cdots W_1 \Sigma_{XY}) = r$. \\
We consider $q \in \llbracket 1, \min(p-1,H-2) \rrbracket$ the smallest index such that $\textit{rk}(W_q \cdots W_1\Sigma_{XY})=r$.\\
Using \eqref{W_h simplified} and \eqref{W_1 simplified}, we have
\begin{align*}
    W_q \cdots W_1\Sigma_{XY} &=  \left[
    \begin{array}{c}
    U_{\Ss}^T\Sigma \\
    Z_q \cdots Z_2 Z_1 \Sigma_{XY}
    \end{array}
    \right] \\
    &= \left[
    \begin{array}{c}
    \lambda_1 U_1^T \\
    \vdots \\
    \lambda_r U_r^T \\
    Z_q \cdots Z_2 Z_1 \Sigma_{XY}
    \end{array}
    \right] \;.
\end{align*}
Since $\rk(W_q \cdots W_1\Sigma_{XY})=r$, every row of $Z_q \cdots Z_2 Z_1 \Sigma_{XY}$ lies in $\text{Vec}(U_1^T,\ldots,U_r^T)$, hence we have
\begin{align*}
    Z_q \cdots Z_2 Z_1 \Sigma_{XY} U_Q =0 \;.
\end{align*}
Finally, we conclude that, for all $i \in \llbracket q+1 , H \rrbracket$,
\begin{align*}
    Z_{i-1} \cdots Z_2 Z_1 \Sigma_{XY} U_Q &= Z_{i-1} \cdots Z_{q+1} Z_{q} \cdots Z_2 Z_1 \Sigma_{XY} U_Q \\
    &= Z_{i-1} \cdots Z_{q+1} 0 \\
    &= 0 \;.
\end{align*}
This proves \eqref{hammou}. \\
\textbf{Let us now prove \eqref{W_H-1...W_k+1 simplified}}.\\
Consider the pivot $(H,q)$.
Its complementary blocks are $W_{q-1} \cdots W_1 \Sigma_{XY}$ and $W_{H-1} \cdots W_{q+1}$.
We have, by definition of $q$, $\rk(W_{q-1} \cdots W_1 \Sigma_{XY})>r$.
Hence, since $\Wbf$ is tightened, the other complementary block is of rank $r$, i.e. $\rk(W_{H-1} \cdots W_{q+1}) = r$.
Using \eqref{W_h simplified}, we have
$$W_{H-1} \cdots W_{q+1} = \left[
\begin{array}{c  c}
I_r & 0 \\
0 & Z_{H-1} \cdots Z_{q+1}
\end{array}
\right].$$
Therefore, $Z_{H-1} \cdots Z_{q+1} = 0$ and $$W_{H-1} \cdots W_{q+1} = \left[
\begin{array}{c  c}
I_r & 0 \\
0 & 0
\end{array}
\right].$$
Finally, using \eqref{W_h simplified}, for all $i \in \llbracket 1,q \rrbracket$,
\begin{align*}
    W_{H-1} \cdots W_{i+1} &= W_{H-1} \cdots W_{q+1} W_{q} \cdots W_{i+1} \\
    &= \begin{bmatrix}
    I_r & 0 \\
    0 & 0
    \end{bmatrix}
    \begin{bmatrix}
    I_r & 0 \\
    0 & Z_q \cdots Z_{i+1}
    \end{bmatrix} \\
    &=
    \left[
    \begin{array}{c  c}
    I_r & 0 \\
    0 & 0
    \end{array}
    \right] \;.
\end{align*}
This proves \eqref{W_H-1...W_k+1 simplified} and concludes the proof. \\

\subsubsection{Proof of Lemma \ref{Sigma_XY U_Q}} \label{proof Sigma_XY U_Q}
Recall that $\Sigma^{1/2} = \Sigma_{YX} \Sigma_{XX}^{-1} X$.
We have
\begin{align*}
    \Sigma_{XY} &= XY^T \\
    &= XX^T(XX^T)^{-1}XY^T \\
    &= X (\Sigma^{1/2})^T \;.
\end{align*}
Using \eqref{svd de sigma 1/2}, we obtain $$\Sigma_{XY} = X V \Delta^T U^T , $$
and, since $U$ is orthogonal, we have $$\Sigma_{XY}U =X V \Delta^T.$$ Restricting the equality to the columns in $Q$, we obtain $$\Sigma_{XY}U_Q =X V_Q \Delta^{(Q)} \;,$$
where $\Delta^{(Q)}$ is defined in \eqref{delta Q}.
This concludes the proof.

\subsubsection{Proof of Lemma \ref{the formula}} \label{proof the formula}
Let $A \in \mathbb{R}^{d_y \times n}$ and $B \in \mathbb{R}^{r \times n}$.
We have
\begin{align*}
    \| A + U_{\Ss}B \|^2 &= \|A\|^2 + \|U_{\Ss}B\|^2 + 2 \left\langle  A \ , \ U_{\Ss}B \right\rangle \\
    &= tr(A^TA) + tr(B^T U_{\Ss}^T U_{\Ss}B) + 2 \left\langle  U_{\Ss}^T A \ , \ B \right\rangle \;.
\end{align*}
Using Lemma \ref{properties of orhtogonality}, this becomes
\begin{align*}
    \| A + U_{\Ss}B \|^2 &= tr\left(A^T(U_{\Ss} U_{\Ss}^T + U_Q U_Q^T)A\right) + tr\left(B^T B\right) + 2 \left\langle  U_{\Ss}^T A \ , \ B \right\rangle \\
    &= tr(A^T U_Q U_Q^T A) + tr(A^TU_{\Ss} U_{\Ss}^TA) + tr(B^T B) + 2 \left\langle  U_{\Ss}^T A \ , \ B \right\rangle \\
    &= \|U_Q^T A \|^2 + \|U_{\Ss}^T A \|^2 + \|B\|^2 + 2 \left\langle  U_{\Ss}^T A \ , \ B \right\rangle \\
    &= \|U_Q^T A \|^2 + \| U_{\Ss}^T A + B\|^2 \;.
\end{align*}

\subsubsection{Proof of Lemma \ref{C}} \label{proof C}
Recall that $\Sigma^{1/2} = \Sigma_{YX}\Sigma_{XX}^{-1}X$ has a Singular Value Decomposition $\Sigma^{1/2} = U \Delta V^T $ (see \eqref{svd de sigma 1/2}).
Hence, we have $\Sigma^{1/2}V = U \Delta $ and therefore $\Sigma^{1/2}V_Q = U_Q \Delta^{(Q)} $, where $\Delta^{(Q)}$ is defined in \eqref{delta Q}. \\
As a consequence,
\begin{align*}
    U_{\Ss}^T \Sigma_{YX}\Sigma_{XX}^{-1}X V_Q &= U_{\Ss}^T \Sigma^{1/2}V_Q \\
    &= U_{\Ss}^T U_Q \Delta^{(Q)} \\
    &= 0 \;,
\end{align*}
where the last equality follows from Lemma \ref{properties of orhtogonality}.
Finally, we obtain for any $A \in \mathbb{R}^{n \times r}$, $B \in \mathbb{R}^{n \times (d_y-r)}$
\begin{align*}
    \left\langle  A U_{\Ss}^T \Sigma_{YX}\Sigma_{XX}^{-1}X \ , \ BV_Q^T\right\rangle&= tr(A U_{\Ss}^T \Sigma_{YX}\Sigma_{XX}^{-1}X V_Q B^T) \\
    &= 0 \;.
\end{align*}

\subsubsection{Proof of Lemma \ref{D}} \label{proof D}
Using Lemma \ref{Sigma_XY U_Q}, we have $\Sigma_{XY}U_Q =X V_Q \Delta^{(Q)}$, then 
replacing this formula in \eqref{hammou} with $i=q+1$, we have  $$Z_q \cdots Z_2 Z_1 X V_Q \Delta^{(Q)} =0 \;.$$
Since $\Delta^{(Q)}$ is diagonal and its diagonal elements are non-zero, it is invertible, hence
$$Z_q \cdots Z_2 Z_1 X V_Q =0 \;. $$
Finally, for any matrices $A \in \mathbb{R}^{n \times (d_q - r)}$ and $ B \in \mathbb{R}^{n \times (d_y - r)} $, we have
\begin{align*}
    \left\langle  AZ_q \cdots Z_2 Z_1 X \ ,\ B V_Q^T  \right\rangle &= tr(AZ_q \cdots Z_2 Z_1 X V_Q B^T)  \\
    &= 0 \;.
\end{align*}

\subsubsection{Proof of Lemma \ref{decomposition A delta s carre}} \label{proof decomposition A delta s carre}
Recall that $\Delta^{(\Ss)}$ is defined in \eqref{delta S} and $\Sigma = U \Lambda U^T$.
Let $ A \in \mathbb{R}^{(d_y - r) \times r} $, we have
\begin{align*}
    \left\| A U_{\Ss}^T \Sigma_{YX}\Sigma_{XX}^{-1}X \right\|^2 
    &= tr(A U_{\Ss}^T \Sigma U_{\Ss} A^T) \\
    &= tr(A \ \text{diag}(\lambda_1, \ldots ,\lambda_r)  A^T) \\
    &= \left\| A \Delta^{(\Ss)} \right\|^2 \\
    &= \sum_{a=1}^r \sum_{b=r+1}^{d_y} \lambda_a (A_{b-r,a})^2 \\
    &= \sum_{a=1}^r \sum_{b=r+1}^{d_y} (\lambda_a - \lambda_b) ( A_{b-r,a})^2 + \sum_{a=1}^r \sum_{b=r+1}^{d_y} \lambda_b ( A_{b-r,a})^2 \\
    &= \sum_{a=1}^r \sum_{b=r+1}^{d_y} (\lambda_a - \lambda_b) ( A_{b-r,a})^2 + \| \Delta^{(Q)} A \|^2 \;.
\end{align*}

\subsubsection{Proof of Lemma \ref{simplif cross-product}}\label{proof simplif cross-product}
Let $\Wbf = (W_H, \ldots , W_1)$ be a first-order critical point associated with $\Ss$ verifying the hypotheses of Proposition \ref{prop 4} and let $A \in \mathbb{R}^{d_y \times d_x}$. Using \eqref{W_H...W_2 simplified}, \eqref{W_1 simplified}, and Lemma \ref{properties of orhtogonality}, we have
\begin{align*}
    \left\langle  AX \ ,\ W_H \cdots W_1X - Y \right\rangle &= \left\langle  A \ , \ W_H \cdots W_1 XX^T - YX^T \right\rangle \\
    &= \left\langle  A \ , \ U_{\Ss} U_{\Ss}^T \Sigma_{YX}\Sigma_{XX}^{-1}XX^T - \Sigma_{YX} \right\rangle \\
    &= \left\langle  A \ , \ U_{\Ss} U_{\Ss}^T \Sigma_{YX} - \Sigma_{YX} \right\rangle \\
    &= \left\langle  A \ , \ -U_{Q} U_{Q}^T \Sigma_{YX} \right\rangle \;.
\end{align*}

\subsubsection{Proof of Lemma \ref{simplif all T_i}}\label{proof simplif all T_i}
Let $\Wbf = (W_H, \cdots , W_1)$ be a tightened first-order critical point satisfying the hypotheses of Proposition \ref{prop 4}, and $r$, $\Ss$, $Q$, $(Z_h)_{h=1..H}$ defined as in Proposition \ref{prop 4}.
Since $\Wbf$ satisfies the hypotheses of Proposition \ref{prop 4}, we are going to use all the equations \eqref{W_H simplified}, \eqref{W_1 simplified}, \eqref{W_h simplified} and \eqref{W_H...W_2 simplified} defined by these hypotheses and \eqref{W_H...W_p simplified}, \eqref{W_i-1...W_2 simplified}, \eqref{hammou} and \eqref{W_H-1...W_k+1 simplified} of Lemma \ref{E}.\\
Let $\Wbf' = (W'_H, \ldots, W'_1)$ and $i \in \llbracket 1,H \rrbracket$.
Recall that $T_{i}$ is defined in \eqref{T_i} and $J_1 = \llbracket p,H-1 \rrbracket$, $J_2 = \llbracket q+1 , p-1 \rrbracket$, $J_3 = \llbracket 2,q \rrbracket$, where $p$ and $q$ are defined as in Lemma \ref{E}. \\

 \textbf{Consider the case $i=H$.} \\
Substituting \eqref{W_i-1...W_2 simplified} and \eqref{W_1 simplified} in \eqref{T_i}, we have
\begin{align*}
    T_H &=  W'_H(W_{H-1} \cdots W_2)W_1 X  \\
    &= W'_H \left[
    \begin{array}{c  c}
    I_r & 0 \\
    0 & 0
    \end{array}
    \right] \left[
    \begin{array}{c}
    U_{\Ss}^T\Sigma_{YX}\Sigma_{XX}^{-1} \\
    Z_1
    \end{array}
    \right]X \\
    &= W'_H\left[
    \begin{array}{c}
    U_{\Ss}^T\Sigma_{YX}\Sigma_{XX}^{-1} \\
    0
    \end{array}
    \right]X \\
    &= (W'_H)_{.,1:r} U_{\Ss}^T \Sigma_{YX}\Sigma_{XX}^{-1}X \;.
\end{align*}
This proves \eqref{T_H simplified}. \\

 \textbf{Consider now the case $i \in J_1$.} \\
Substituting \eqref{W_H simplified}, \eqref{W_h simplified}, \eqref{W_i-1...W_2 simplified} and \eqref{W_1 simplified}, in \eqref{T_i}, we have, for $i \in J_1$
\begin{align*}
    T_i &= W_H(W_{H-1} \cdots W_{i+1})W'_i(W_{i-1} \cdots W_2)W_1X \\
    &= \left[U_{\Ss} \ ,\ U_Q Z_H\right] \left[
    \begin{array}{c  c}
    I_r & 0 \\
    0 & Z_{H-1}
    \end{array}
    \right] \cdots  \left[
    \begin{array}{c  c}
    I_r & 0 \\
    0 & Z_{i+1}
    \end{array}
    \right] W'_i \left[
    \begin{array}{c  c}
    I_r & 0 \\
    0 & 0
    \end{array}
    \right] \left[
    \begin{array}{c}
    U_{\Ss}^T\Sigma_{YX}\Sigma_{XX}^{-1} \\
    Z_1
    \end{array}
    \right] X \\
    &= \left[U_{\Ss}\ ,\ U_Q Z_H Z_{H-1} \cdots Z_{i+1}\right] W'_i \left[
    \begin{array}{c}
    U_{\Ss}^T\Sigma_{YX}\Sigma_{XX}^{-1} \\
    0
    \end{array}
    \right] X \\
    &= \left[U_{\Ss}\ ,\ U_Q Z_H Z_{H-1} \cdots Z_{i+1}\right](W'_i)_{.,1:r}U_{\Ss}^T\Sigma_{YX}\Sigma_{XX}^{-1} X \\
    &=  U_{\Ss} (W'_i)_{1:r,1:r} U_{\Ss}^T \Sigma_{YX}\Sigma_{XX}^{-1}X + U_Q Z_H Z_{H-1} \cdots Z_{i+1} (W'_i)_{r+1:d_i,1:r} U_{\Ss}^T \Sigma_{YX}\Sigma_{XX}^{-1}X \;.
\end{align*}
Note that the above calculations are still valid in the case $i=H-1$.
In this case using the convention in Section \ref{Settings},
$W_{H-1} \cdots W_{i+1} = I_{d_{H-1}}$ and $Z_{H-1} \cdots Z_{i+1} = I_{d_{H-1}-r}$.\\
This proves \eqref{T_i J_1 simplified}. \\

 \textbf{Consider now the case $i \in J_2 \cup J_3 = \llbracket 2,p-1 \rrbracket$.} \\
Substituting \eqref{W_H...W_p simplified}, \eqref{W_h simplified} and \eqref{W_1 simplified}, in \eqref{T_i}, we have, for $i \in J_2 \cup J_3$,
\begin{align*}
    T_i &= (W_H \cdots W_{i+1})W'_i(W_{i-1} \cdots W_2)W_1X \\
    &= \biggl[U_{\Ss} \ ,\ 0 \biggr] W'_i \left[
    \begin{array}{c  c}
    I_r & 0 \\
    0 & Z_{i-1}
    \end{array}
    \right] \cdots \left[
    \begin{array}{c  c}
    I_r & 0 \\
    0 & Z_2
    \end{array}
    \right] \left[
    \begin{array}{c}
    U_{\Ss}^T\Sigma_{YX}\Sigma_{XX}^{-1} \\
    Z_1
    \end{array}
    \right] X \\
    &=  U_{\Ss}(W'_i)_{1:r,.} \left[
    \begin{array}{c}
    U_{\Ss}^T\Sigma_{YX}\Sigma_{XX}^{-1} \\
    Z_{i-1} \cdots Z_2 Z_1
    \end{array}
    \right] X \\
    &=  U_{\Ss} (W'_i)_{1:r,1:r} U_{\Ss}^T \Sigma_{YX}\Sigma_{XX}^{-1}X + U_{\Ss} (W'_i)_{1:r,r+1:d_{i-1}} Z_{i-1} \cdots Z_2Z_1 X \;.
\end{align*}
Note that the above calculations are still valid in the case $i=2$.
In this case, using the conventions of Section \ref{Settings}, $W_{i-1} \cdots W_{2} = I_{d_{1}}$ and $Z_{i-1} \cdots Z_2 = I_{d_{2}-r}$.\\
This proves \eqref{T_i J_2 J_3 simplified}.
\\

 \textbf{Consider finally the case $i=1$.} \\
Substituting \eqref{W_H...W_p simplified} in \eqref{T_i}, we have
\begin{align*}
    T_1 &= (W_H \cdots W_2)W'_1X \\
    &= \left[U_{\Ss} \ ,\ 0 \right] W'_1X \\
    &= U_{\Ss} (W'_1)_{1:r,.} X \;.
\end{align*}
This proves \eqref{T_1 simplified}. \\
Note that, using the conventions of Section \ref{Settings}, the proof still holds for $r=0$.
In this case, $T_i = 0,  \forall i$.\\
This concludes the proof.

\subsubsection{Proof of Lemma \ref{simplif all T_ij}\label{proof simplif all T_ij}}
Let $\Wbf = (W_H, \cdots , W_1)$ be a tightened first-order critical point satisfying the hypotheses of Proposition \ref{prop 4}, and $r$, $\Ss$, $Q$, $(Z_h)_{h=1..H}$ defined as in Proposition \ref{prop 4}.
Since $\Wbf$ satisfies the hypotheses of Proposition \ref{prop 4}, we are going to use all the equations \eqref{W_H simplified}, \eqref{W_1 simplified}, \eqref{W_h simplified} and \eqref{W_H...W_2 simplified} defined by these hypotheses and \eqref{W_H...W_p simplified}, \eqref{W_i-1...W_2 simplified}, \eqref{hammou} and \eqref{W_H-1...W_k+1 simplified} of Lemma \ref{E}.\\
Let $\Wbf' = (W'_H, \ldots, W'_1)$ and $(i,j) \in \llbracket 1,H \rrbracket^2$, with $i>j$.
Recall that $T_{i,j}$ is defined in \eqref{T_ij} and $J_1 = \llbracket p,H-1 \rrbracket$, $J_2 = \llbracket q+1 , p-1 \rrbracket$, $J_3 = \llbracket 2,q \rrbracket$, where $p$ and $q$ are defined as in Lemma \ref{E}.\\

\textbf{Consider the case $i \in \{H\} \cup J_1$ and $j \in J_1 \cup J_2$ with $i>j$.} \\
Applying Lemma \ref{simplif cross-product} to \eqref{T_ij} and using \eqref{W_h simplified} and \eqref{W_1 simplified}, we obtain
\begin{align*}
    T_{i,j} &= \left\langle  W_H \cdots W_{i+1} W'_i W_{i-1} \cdots W_{j+1} W'_j W_{j-1} \cdots W_1 X \ , \ W_H \cdots W_1 X - Y \right\rangle \\
    &= \left\langle  W_H \cdots W_{i+1} W'_i W_{i-1} \cdots W_{j+1} W'_j W_{j-1} \cdots W_1 \ , \ -U_{Q} U_{Q}^T \Sigma_{YX} \right\rangle \\
    &= - tr\left(W_H \cdots W_{i+1} W'_i W_{i-1} \cdots W_{j+1} W'_j W_{j-1} \cdots W_1 \Sigma_{XY} U_Q U_Q^T\right) \\
    &= - tr\left((W_H \cdots W_{i+1} W'_i W_{i-1} \cdots W_{j+1} W'_j \left[
    \begin{array}{c}
    U_{\Ss}^T\Sigma U_Q \\
    Z_{j-1} \cdots Z_2 Z_1 \Sigma_{XY} U_Q
    \end{array}
    \right] U_Q^T\right) \;.
\end{align*}
Using Lemma \ref{U_S sigma U_Q = 0} and since $j \geq q+1$, using \eqref{hammou},  we obtain
$$ T_{i,j} = 0.$$
This proves \eqref{T_Hj j in J_1 cup J_2} and \eqref{T_ij i in J_1, j in J_1 cup J_2}. \\

\textbf{Consider now the case $i=H$ and $j \in J_3$.} \\
Applying Lemma \eqref{simplif cross-product} to \eqref{T_ij} and using \eqref{W_H-1...W_k+1 simplified}, \eqref{W_h simplified} and \eqref{W_1 simplified}, we obtain
\begin{align*}
    T_{H,j} &= \left\langle  W'_H W_{H-1} \cdots W_{j+1} W'_j W_{j-1} \cdots W_1 X \ , \ W_H \cdots W_1 X - Y \right\rangle \\
    &= \left\langle  W'_H W_{H-1} \cdots W_{j+1} W'_j W_{j-1} \cdots W_1 \ , \ -U_{Q} U_{Q}^T \Sigma_{YX} \right\rangle \\
    &= \left\langle W'_H \left[
    \begin{array}{c  c}
    I_r & 0 \\
    0 & 0
    \end{array}
    \right] W'_j \left[
    \begin{array}{c}
    U_{\Ss}^T\Sigma_{YX}\Sigma_{XX}^{-1} \\
    Z_{j-1} \cdots Z_2 Z_1 
    \end{array}
    \right] \ , \  U_Q U_Q^T \Sigma_{YX} \right\rangle \\
    &= - tr\left( W'_H \left[
    \begin{array}{c  c}
    I_r & 0 \\
    0 & 0
    \end{array}
    \right] W'_j \left[
    \begin{array}{c}
    U_{\Ss}^T\Sigma U_Q U_Q^T\\
    Z_{j-1} \cdots Z_2 Z_1 \Sigma_{XY} U_Q U_Q^T
    \end{array}
    \right] \right) \;.
\end{align*}
Using Lemma \ref{Sigma_XY U_Q}, Lemma \ref{U_S sigma U_Q = 0} and the cyclic property of the trace, we have
\begin{align*}
    T_{H,j} &= -tr\left((W'_H)_{.,1:r} (W'_j)_{1:r,.} \left[
    \begin{array}{c}
    0 \\
    Z_{j-1} \cdots Z_2 Z_1 X V_Q \Delta^{(Q)} U_Q^T
    \end{array}
    \right] \right) \\
    &= -tr\left((W'_H)_{.,1:r} (W'_j)_{1:r,r+1:d_{j-1}} Z_{j-1} \cdots Z_2 Z_1 X V_Q \Delta^{(Q)} U_Q^T\right) \\
    &= -tr\left(\Delta^{(Q)} U_Q^T (W'_H)_{.,1:r} (W'_j)_{1:r,r+1:d_{j-1}} Z_{j-1} \cdots Z_2 Z_1 X V_Q\right) \\
    &= - \left\langle  \Delta^{(Q)} U_Q^T (W'_H)_{.,1:r}\ ,\ \left((W'_j)_{1:r,r+1:d_{j-1}} Z_{j-1} \cdots Z_2 Z_1 X V_Q\right)^T \right\rangle \;.
\end{align*}
This proves \eqref{T_Hj simplified}. \\

\textbf{Consider now the case $i=H$ and $j=1$.} \\
Applying Lemma \ref{simplif cross-product} to \eqref{T_ij} and using \eqref{W_H-1...W_k+1 simplified} and Lemma \ref{Sigma_XY U_Q}, we obtain
\begin{align*}
    T_{H,1} &= \left\langle  W'_H W_{H-1} \cdots W_2 W'_1 X \ , \ W_H \cdots W_1 X - Y \right\rangle \\
    &= \left\langle  W'_H W_{H-1} \cdots W_2 W'_1 \ , \ -U_{Q} U_{Q}^T \Sigma_{YX} \right\rangle \\
    &= - \left\langle  W'_H \left[
    \begin{array}{c  c}
    I_r & 0 \\
    0 & 0
    \end{array}
    \right]W'_1 \ , \ U_Q(X V_Q \Delta^{(Q)})^T \right\rangle \\
    &= - \left\langle  (W'_H)_{.,1:r} (W'_1)_{1:r,.} \ , \ U_Q \Delta^{(Q)} V_Q^T X^T \right\rangle \\
    &= - \left\langle  \Delta^{(Q)} U_Q^T (W'_H)_{.,1:r} \ , \ V_Q^T X^T \left( (W'_1)_{1:r,.} \right)^T \right\rangle \\
    &= -\left\langle  \Delta^{(Q)} U_Q^T (W'_H)_{.,1:r}  \ , \ \left((W'_1)_{1:r,.} X V_Q \right)^T \right\rangle \;.
\end{align*}
This proves \eqref{T_H1 simplified}. \\

\textbf{Consider now the case $i \in J_1$ and $j \in J_3$.} \\
Applying Lemma \ref{simplif cross-product} to \eqref{T_ij} and using \eqref{W_H simplified}, \eqref{W_h simplified} and \eqref{W_1 simplified} , we obtain
\begin{align*}
    T_{i,j} &= \left\langle  W_H \cdots W_{i+1}W'_{i}W_{i-1} \cdots W_{j+1}W'_{j}W_{j-1} \cdots W_1 X \ , \ W_H \cdots W_1 X - Y \right\rangle \\
    &= \left\langle  W_H \cdots W_{i+1}W'_{i}W_{i-1} \cdots W_{j+1}W'_{j}W_{j-1} \cdots W_1 \ , \ -U_{Q} U_{Q}^T \Sigma_{YX} \right\rangle \\
    & = - \left\langle \left[ U_{\Ss} \ , \ U_Q Z_H Z_{H-1} \cdots Z_{i+1} \right] W'_i W_{i-1} \cdots W_{j+1} W'_j \left[
    \begin{array}{c}
    U_{\Ss}^T \Sigma_{YX} \Sigma_{XX}^{-1} \\
    Z_{j-1} \cdots Z_2 Z_1 
    \end{array}
    \right] \ , \ U_Q U_Q^T \Sigma_{XY}  \right\rangle \\
    & = -tr\left(\left[ U_{\Ss} \ , \ U_Q Z_H Z_{H-1} \cdots Z_{i+1} \right] W'_i W_{i-1} \cdots W_{j+1} W'_j \left[
    \begin{array}{c}
    U_{\Ss}^T\Sigma \\
    Z_{j-1} \cdots Z_2 Z_1 \Sigma_{XY}
    \end{array}
    \right] U_Q U_Q^T\right) \\
    &= -tr\left(\left[ U_Q^T U_{\Ss} \ , \ U_Q^TU_Q Z_H Z_{H-1} \cdots Z_{i+1} \right] W'_i W_{i-1} \cdots W_{j+1} W'_j \left[
    \begin{array}{c}
    U_{\Ss}^T\Sigma U_Q \\
    Z_{j-1} \cdots Z_2 Z_1 \Sigma_{XY}U_Q
    \end{array}
    \right] \right) \;.
\end{align*}
Using Lemma \ref{properties of orhtogonality} and Lemma \ref{U_S sigma U_Q = 0}, we have
\begin{align}
    T_{i,j} &= -tr\left(\left[ 0 \ , \  Z_H Z_{H-1} \cdots Z_{i+1} \right] W'_i W_{i-1} \cdots W_{j+1} W'_j \left[
    \begin{array}{c}
    0 \\
    Z_{j-1} \cdots Z_2 Z_1 \Sigma_{XY}U_Q
    \end{array}
    \right] \right) \nonumber \\
    &= -tr\left(Z_H Z_{H-1} \cdots Z_{i+1} (W'_{i})_{r+1:d_{i},.}W_{i-1} \cdots W_{j+1} (W'_j)_{.,r+1:d_{j-1}} Z_{j-1} \cdots Z_2 Z_1 \Sigma_{XY} U_Q\right) \;. \label{middle T_ij}
\end{align}
Here, since $\Wbf$ is tightened, taking the tightened pivot $(i,j)$ we have two possible cases: either $\rk(W_{i-1} \cdots W_{j+1}) = r$ or $\rk(W_{j-1} \cdots W_1 \Sigma_{XY}W_H \cdots W_{i+1})=r \;.$ We treat the two cases separately. \\
In the first case, using \eqref{W_h simplified} we have
\begin{align*}
    W_{i-1} \cdots W_{j+1} &= \left[
    \begin{array}{c  c}
    I_r & 0 \\
    0 & Z_{i-1}
    \end{array}
    \right] \cdots \left[
    \begin{array}{c  c}
    I_r & 0 \\
    0 & Z_{j+1}
    \end{array}
    \right] \\
    &= \left[
    \begin{array}{c  c}
    I_r & 0 \\
    0 & Z_{i-1} \cdots Z_{j+1}
    \end{array}
    \right] \;.
\end{align*}
Hence, $\rk(W_{i-1} \cdots W_{j+1}) = r$ implies $Z_{i-1} \cdots Z_{j+1} = 0$ and we conclude that $$ W_{i-1} \cdots W_{j+1} = \left[
\begin{array}{c  c}
I_r & 0 \\
0 & 0
\end{array}
\right] \;.$$
Then, using this last equality, \eqref{middle T_ij} becomes
\begin{align}
    T_{i,j} &= -tr\left(Z_H Z_{H-1} \cdots Z_{i+1} (W'_{i})_{r+1:d_{i},.} \left[
    \begin{array}{c  c}
    I_r & 0 \\
    0 & 0
    \end{array}
    \right] (W'_j)_{.,r+1:d_{j-1}} Z_{j-1} \cdots Z_2 Z_1 \Sigma_{XY} U_Q\right) \nonumber \\
    &= -tr\left(Z_H Z_{H-1} \cdots Z_{i+1} (W'_{i})_{r+1:d_{i},1:r} (W'_j)_{1:r,r+1:d_{j-1}} Z_{j-1} \cdots Z_2 Z_1 \Sigma_{XY} U_Q\right) \;. \label{etoile}
\end{align}
In the second case, we have $\rk(W_{j-1} \cdots W_1 \Sigma_{XY}W_H \cdots W_{i+1})=r$.
Let us prove that \eqref{etoile} also holds in this case.
Using \eqref{W_h simplified}, \eqref{W_1 simplified}, \eqref{W_H simplified}, Lemma \ref{U_S sigma U_Q = 0} and $\Ss = \llbracket 1,r \rrbracket$, we have
\begin{align*}
    & W_{j-1} \cdots W_1 \Sigma_{XY}W_H \cdots W_{i+1} \\ &= \left[
    \begin{array}{c}
    U_{\Ss}^T\Sigma \\
    Z_{j-1} \cdots Z_2 Z_1 \Sigma_{XY}
    \end{array}
    \right] \left[ U_{\Ss} \ , \ U_Q Z_H Z_{H-1} \cdots Z_{i+1} \right] \\
    &= \left[
    \begin{array}{c  c}
    U_{\Ss}^T\Sigma U_{\Ss} & U_{\Ss}^T\Sigma U_Q Z_H Z_{H-1} \cdots Z_{i+1}\\
    Z_{j-1} \cdots Z_2 Z_1 \Sigma_{XY} U_{\Ss} & Z_{j-1} \cdots Z_2 Z_1 \Sigma_{XY}U_Q Z_H Z_{H-1} \cdots Z_{i+1}
    \end{array}
    \right] \\
    &= \left[
    \begin{array}{c  c}
    diag(\lambda_1, \cdots ,\lambda_r) & 0 \\
    Z_{j-1} \cdots Z_2 Z_1 \Sigma_{XY} U_{\Ss} & Z_{j-1} \cdots Z_2 Z_1 \Sigma_{XY}U_Q Z_H Z_{H-1} \cdots Z_{i+1}
    \end{array}
    \right] \;.
\end{align*}
Therefore since $\rk(W_{j-1} \cdots W_1 \Sigma_{XY}W_H \cdots W_{i+1})=r$ and for all $i \in \llbracket 1,r \rrbracket$, $\lambda_i \neq 0$, we must have 
\begin{align}
    Z_{j-1} \cdots Z_2 Z_1 \Sigma_{XY}U_Q Z_H Z_{H-1} \cdots Z_{i+1} = 0 \ . \label{2e cas T_ij J_1 J_3}
\end{align}
Using the above equation, and the cyclic property of the trace, \eqref{middle T_ij} becomes
\begin{align*}
    T_{i,j} &= -tr\left(Z_H Z_{H-1} \cdots Z_{i+1} (W'_{i})_{r+1:d_{i},.}W_{i-1} \cdots W_{j+1} (W'_j)_{.,r+1:d_{j-1}} Z_{j-1} \cdots Z_2 Z_1 \Sigma_{XY} U_Q\right) \\
    &= -tr\left(Z_{j-1} \cdots Z_2 Z_1 \Sigma_{XY} U_Q Z_H Z_{H-1} \cdots Z_{i+1} (W'_{i})_{r+1:d_{i},.}W_{i-1} \cdots W_{j+1} (W'_j)_{.,r+1:d_{j-1}}\right) \\
    &= 0 \;.
\end{align*}
We can use \eqref{2e cas T_ij J_1 J_3} again to write the equation $T_{i,j} = 0$ in the format of equation \eqref{etoile}. Indeed, we have
\begin{align*}
    & -tr\left(Z_H Z_{H-1} \cdots Z_{i+1} (W'_{i})_{r+1:d_{i},1:r} (W'_j)_{1:r,r+1:d_{j-1}} Z_{j-1} \cdots Z_2 Z_1 \Sigma_{XY} U_Q\right) \\
    &= -tr\left(Z_{j-1} \cdots Z_2 Z_1 \Sigma_{XY} U_Q Z_H Z_{H-1} \cdots Z_{i+1} (W'_{i})_{r+1:d_{i},1:r} (W'_j)_{1:r,r+1:d_{j-1}}\right) \\
    &=0 \\
    &= T_{i,j} \;.
\end{align*}
Therefore, in both cases we have 
\begin{align*}
    T_{i,j} &= -tr\left(Z_H Z_{H-1} \cdots Z_{i+1} (W'_{i})_{r+1:d_{i},1:r} (W'_j)_{1:r,r+1:d_{j-1}} Z_{j-1} \cdots Z_2 Z_1 \Sigma_{XY} U_Q\right) \;.
\end{align*}
Using Lemma \ref{Sigma_XY U_Q}, it becomes
\begin{align*}
    T_{i,j} &= -tr\left(Z_H Z_{H-1} \cdots Z_{i+1} (W'_{i})_{r+1:d_{i},1:r} (W'_j)_{1:r,r+1:d_{j-1}} Z_{j-1} \cdots Z_2 Z_1 X V_Q \Delta^{(Q)}\right) \\
    &= -tr\left(\Delta^{(Q)} Z_H Z_{H-1} \cdots Z_{i+1} (W'_{i})_{r+1:d_{i},1:r} (W'_j)_{1:r,r+1:d_{j-1}} Z_{j-1} \cdots Z_2 Z_1 X V_Q\right) \\
    &= - \left\langle  \Delta^{(Q)} Z_H Z_{H-1} \cdots Z_{i+1} (W'_{i})_{r+1:d_{i},1:r} \ , \ \left((W'_j)_{1:r,r+1:d_{j-1}} Z_{j-1} \cdots Z_2 Z_1 X V_Q\right)^T \right\rangle \;.
\end{align*}
This proves \eqref{T_ij J_1 x J_3 simplified}. \\

\textbf{Consider now the case $i \in J_1$ and $j=1$.} \\
Using Lemma \ref{simplif cross-product} to simplify \eqref{T_ij}, we have
\begin{align*}
    T_{i,1} &= \left\langle  W_H \cdots W_{i+1}W'_{i}W_{i-1} \cdots W_2 W'_1 X \ , \ W_H \cdots W_1 X - Y \right\rangle \\
    &= \left\langle  W_H \cdots W_{i+1}W'_{i}W_{i-1} \cdots W_2 W'_1 \ , \ -U_{Q} U_{Q}^T \Sigma_{YX} \right\rangle \;.
\end{align*}
Using Lemma \ref{Sigma_XY U_Q} and substituting \eqref{W_H simplified}, \eqref{W_h simplified}, and since $i \geq p$, using \eqref{W_i-1...W_2 simplified} , this becomes
\begin{align*}
    T_{i,1} &= -\left\langle  \left[ U_{\Ss} \ , \ U_Q Z_H Z_{H-1} \cdots Z_{i+1} \right] W'_i \left[
    \begin{array}{c  c}
    I_r & 0 \\
    0 & 0
    \end{array}
    \right] W'_1 \ , \ U_Q (X V_Q \Delta^{(Q)})^T \right\rangle \\
    &= - \left\langle  \Delta^{(Q)} \left[ U_Q^T U_{\Ss} \ , \ U_Q^T U_Q Z_H Z_{H-1} \cdots Z_{i+1} \right] (W'_i)_{.,1:r}(W'_1)_{1:r,.} \ , \ (X V_Q)^T \right\rangle \;.
\end{align*}
Using Lemma \ref{properties of orhtogonality}, it becomes
\begin{align*}
    T_{i,1} &= - \left\langle  \Delta^{(Q)}  \left[ 0 \ , \ Z_H Z_{H-1} \cdots Z_{i+1} \right] (W'_i)_{.,1:r}(W'_1)_{1:r,.} \ , \ (X V_Q)^T \right\rangle \\
    &= - \left\langle \Delta^{(Q)} Z_H Z_{H-1} \cdots Z_{i+1} (W'_{i})_{r+1:d_{i},1:r} \ , \ ((W'_1)_{1:r,.} X V_Q)^T \right\rangle \;.
\end{align*}
This proves \eqref{T_i1 simplified}. \\

 \textbf{Consider now the case $i \in J_2 \cup J_3 = \llbracket2,p-1 \rrbracket$ and $j<i$}. \\
Applying Lemma \ref{simplif cross-product} to \eqref{T_ij} and , since $i<p$, using \eqref{W_H...W_p simplified}, we obtain
\begin{align*}
    T_{i,j} &= \left\langle  W_H \cdots W_{i+1}W'_{i}W_{i-1} \cdots W_{j+1}W'_{j}W_{j-1} \cdots W_1 X \ , \ W_H \cdots W_1 X - Y \right\rangle \\
    &= \left\langle  W_H \cdots W_{i+1}W'_{i}W_{i-1} \cdots W_{j+1}W'_{j}W_{j-1} \cdots W_1 \ , \ -U_{Q} U_{Q}^T \Sigma_{YX} \right\rangle \\
    &= -tr( [ U_{\Ss} \ , \ 0] W'_{i}W_{i-1} \cdots W_{j+1}W'_{j}W_{j-1} \cdots W_1 \Sigma_{XY} U_Q U_Q^T)
\end{align*}
The cyclic property of the trace and Lemma \ref{properties of orhtogonality} lead to
\begin{align*}
    T_{i,j} &= -tr( [ U_Q^T U_{\Ss} \ , \ 0] W'_{i}W_{i-1} \cdots W_{j+1}W'_{j}W_{j-1} \cdots W_1 \Sigma_{XY} U_Q) \\
    &= 0 \;.
\end{align*}
This proves \eqref{T_ij, i in J_2 cup J_3} and concludes the proof. \\
Note that, with the convention of Section \ref{Settings}, the proof still holds for $r=0$.
In this case, $T_{i,j} = 0,  \forall i>j$.

\subsection{Proof of Proposition \ref{main prop non strict saddles}} \label{Proof of main prop non strict saddles}

Let $\Wbf = (W_H, \ldots , W_1)$ be a tightened first-order critical point associated with $\Ss = \llbracket 1,r \rrbracket$ with $r<r_{max}$.
Then, using Proposition \ref{simplif mat} there exist invertible matrices $D_{H-1} \in \mathbb{R}^{d_{H-1} \times d_{H-1}}, \ldots ,D_1 \in \mathbb{R}^{d_1 \times d_1}$ and matrices $Z_H \in \mathbb{R}^{(d_y-r) \times (d_{H-1}-r)}$, $Z_1 \in \mathbb{R}^{(d_1-r) \times d_x}$ and $Z_h \in \mathbb{R}^{(d_h-r) \times (d_{h-1}-r)}$ for $h \in \llbracket 2 , H-1 \rrbracket$ such that if we denote $\widetilde{W}_H = W_H D_{H-1}$ , $\widetilde{W}_1 = D_1^{-1} W_1$ and  $\widetilde{W}_h = D_{h}^{-1} W_h D_{h-1}$ for all $h \in \llbracket 2 , H-1  \rrbracket$, and $\widetilde{\Wbf} = (\widetilde{W}_H, \ldots ,\widetilde{W}_1)$, then
\begin{align*}
        \widetilde{W}_H &= [U_{\Ss} , U_Q Z_H]  \\
        \widetilde{W}_1 &= \begin{bmatrix}
        U_{\Ss}^T\Sigma_{YX}\Sigma_{XX}^{-1} \\ 
        Z_1
        \end{bmatrix}  \\
         \widetilde{W}_h &= \left[
        \begin{array}{c  c}
        I_r & 0 \\
        0 & Z_h
        \end{array}
        \right] \quad \forall h \in \llbracket 2 , H-1 \rrbracket  \\
        \widetilde{W}_H \cdots \widetilde{W}_2 &= \left[U_{\Ss} , 0 \right] \;.
\end{align*}
Then, due to Lemma \ref{same nature}, and since $\Wbf$ is a first-order critical point, we have that $\widetilde{\Wbf}$ is a first-order critical point.
We also have $ \widetilde{W}_H \cdots \widetilde{W}_1 = W_H \cdots W_1$.
Hence, according to Proposition \ref{global map and critical values} $\widetilde{\Wbf}$ is also associated with $\Ss$. \\
Since $\Wbf$ is tightened and multiplication by invertible matrices does not change the rank, $\widetilde{\Wbf}$ is also tightened.
Hence, $\widetilde{\Wbf}$ satisfies the hypotheses of Proposition \ref{prop 4} and therefore is a second-order critical point.
Finally, using Lemma \ref{same nature}, we conclude that $\Wbf$ is a second-order critical point.
Since $r<r_{max}$ and $\Sigma$ is invertible (Lemma \ref{sigma invertible}), using Proposition \ref{global map and critical values}, we have 
$$L(\Wbf) = \tr(\Sigma_{YY}) - \sum_{i=1}^r \lambda_i > \tr(\Sigma_{YY}) - \sum_{i=1}^{r_{max}} \lambda_i \;.$$
Therefore, $\Wbf$ is not a global minimizer, hence $\Wbf$ is a non-strict saddle point.

\section{A Simple Illustrative Experiment}\label{sec experiments}

Next we provide more details on the experiment whose results were plotted in Figures~\ref{fig:eg loss near saddle point} and  \ref{fig:histo escape points}. The goal is to illustrate the behavior of the ADAM optimizer in the vicinity of strict or non-strict saddle points.

\textbf{Experimental setting.} We optimize a linear neural network starting in the vicinity either of a strict saddle point ($10000$ runs in total) or of a non-strict saddle point ($10000$ runs in total). For each run, the setting is the following:
\begin{itemize}
    \item Network architecture: $d_x=10$, $d_y=4$, $H=5$ and $d_4=d_3=d_2=d_1=10$.
    \item Data construction: $m=100$ i.i.d. data points $(x_1,y_1),\ldots,(x_m,y_m) \in \mathbb{R}^{d_x} \times \mathbb{R}^{d_y}$ such that, for all $i=1,\ldots,m$, the points $x_i$ and $y_i$ are drawn independently at random from the Gaussian distributions $\mathcal{N}(0,\,I_{d_x})$ and $\mathcal{N}(0,\,I_{d_y})$ respectively.
    \item Initial iterate: we define it as $(W_1,\ldots, W_H) = \Wbf^{cp} + (V_1,\ldots, V_H) $, for a critical point $\Wbf^{cp}$ (defined later) and a random perturbation $(V_1,\ldots, V_H)$ whose components $(V_h)_{i,j}$ are drawn independently from the distributions $\mathcal{N}(0,\sigma_h^2)$, with $\sigma_h=0.1 \frac{\|W_h^{cp}\|_F}{\sqrt{d_{h-1}d_h}}$. The critical point $\Wbf^{cp}$ is  defined as in \eqref{eqpourex} in Appendix~\ref{proof of existence of tightened and non tightened critical points}, for $r=2$ ($\Ss=\{1,2\}$) and
    \[
    Z_h = \left\{ \begin{array}{ll}
        I_{d_h-2} & \textrm{for all $h \in \llbracket2,4 \rrbracket$, for runs starting at a strict saddle point;} \\
        0_{(d_h-2) \times (d_{h}-2)} & \textrm{for all $h \in \llbracket2,4 \rrbracket$, for runs starting at a non-strict saddle point.}
    \end{array} \right.
    \]
 
    Since $d_4=d_3=d_2=d_1$, note that the sizes of the above matrices $Z_h$ are consistent with \eqref{eqpourex}. As explained in Appendix \ref{proof of existence of tightened and non tightened critical points}, when $Z_h=I_{d_h-2}$ for all $h \in \llbracket2,4 \rrbracket$, the critical point $\Wbf^{cp}$ is non-tightened and therefore Theorem \ref{main theorem} guarantees that it is a strict saddle point. Similarly, when $Z_h=0_{(d_h-2)\times (d_{h}-2)}$ for all $h \in \llbracket2,4 \rrbracket$, the critical point $\Wbf^{cp}$ is tightened and Theorem \ref{main theorem} guarantees that it is a non-strict saddle point.
    \item Optimizer: we use the ADAM optimizer of the Keras library, with the default parameters. 
\end{itemize}

\textbf{Observations.}
Figure \ref{fig:eg loss near saddle point} in Section~\ref{sec:intro-importanceordre2} shows the evolution of the loss along the optimization process for two representative runs (initialization near a strict or a non-strict saddle point). We can see that, when initialized in the vicinity of the strict saddle point, ADAM rapidly decreases below the initial value $L(\Wbf^{cp})$. On the contrary, ADAM needs many epochs to exit the plateau at the critical value of the non-strict saddle point.

In order to assess the importance of this phenomenon, we repeated the above experiment $10000$ times for both strict saddle points and non-strict saddle points. For each run, we define and compute the {\em escape epoch} as the first epoch such that $ L(\Wbf) < L(\Wbf^{cp})-\frac{\lambda_3}{2}$ (the average of the critical values associated with $\Ss=\{1,2\}$ and $\Ss'=\{1,2,3\}$). On Figure~\ref{fig:histo escape points} (Section~\ref{sec:intro-importanceordre2}) the histograms of the escape epoch are displayed separately for runs corresponding to strict saddle points (in red) or non-strict saddle points (in blue). We can see that, while ADAM quickly escapes from the vicinity of the strict saddle points, it takes many more epochs to escape from the vicinity of the non-strict saddle points. In the last case, the plateau can easily be confused with a global minimum.

\bibliography{references.bib}

\end{document}